\documentclass[american]{article}
\usepackage[T1]{fontenc}
\usepackage[utf8]{inputenc}
\usepackage{babel}
\usepackage{verbatim}
\usepackage{pifont}
\usepackage{wrapfig}
\usepackage{enumitem}
\usepackage{amsmath}
\usepackage{amsthm}
\usepackage{amssymb}
\usepackage{graphicx}
\usepackage{geometry}
\usepackage{xargs}[2008/03/08]
\usepackage[pdfusetitle,
 bookmarks=true,bookmarksnumbered=false,bookmarksopen=false,
 breaklinks=true,pdfborder={0 0 1},backref=false,colorlinks=false]
 {hyperref}

\makeatletter
\numberwithin{figure}{section}
\theoremstyle{plain}
\newtheorem{thm}{\protect\theoremname}[section]
\theoremstyle{definition}
\newtheorem{defn}[thm]{\protect\definitionname}
\theoremstyle{plain}
\newtheorem{conjecture}[thm]{\protect\conjecturename}
\theoremstyle{plain}
\newtheorem{cor}[thm]{\protect\corollaryname}
\theoremstyle{definition}
\newtheorem{example}[thm]{\protect\examplename}
\theoremstyle{remark}
\newtheorem{claim}[thm]{\protect\claimname}
\theoremstyle{plain}
\newtheorem{lem}[thm]{\protect\lemmaname}
\theoremstyle{remark}
\newtheorem{rem}[thm]{\protect\remarkname}
\newlist{casenv}{enumerate}{4}
\setlist[casenv]{leftmargin=*,align=left,widest={iiii}}
\setlist[casenv,1]{label={{\itshape\ \casename} \arabic*.},ref=\arabic*}
\setlist[casenv,2]{label={{\itshape\ \casename} \roman*.},ref=\roman*}
\setlist[casenv,3]{label={{\itshape\ \casename\ \alph*.}},ref=\alph*}
\setlist[casenv,4]{label={{\itshape\ \casename} \arabic*.},ref=\arabic*}
\theoremstyle{plain}
\newtheorem{prop}[thm]{\protect\propositionname}
\theoremstyle{plain}
\newtheorem{lyxalgorithm}[thm]{\protect\algorithmname}

\@ifundefined{date}{}{\date{}}

\usepackage{flafter}

\usepackage{fancyhdr}
\renewcommand{\emph}[1]{\textbf{#1}}  

\usepackage{tikz-cd}

\ifdefined\showcaptionsetup
 \PassOptionsToPackage{caption=false}{subfig}
\fi
\usepackage{subfig}
\AtBeginDocument{
  
}

\makeatother

\providecommand{\algorithmname}{Algorithm}
\providecommand{\casename}{Case}
\providecommand{\claimname}{Claim}
\providecommand{\conjecturename}{Conjecture}
\providecommand{\corollaryname}{Corollary}
\providecommand{\definitionname}{Definition}
\providecommand{\examplename}{Example}
\providecommand{\lemmaname}{Lemma}
\providecommand{\propositionname}{Proposition}
\providecommand{\remarkname}{Remark}
\providecommand{\theoremname}{Theorem}

\begin{document}
\global\long\def\smallparens#1{\mathopen{{(}}#1\mathclose{{)}}}%

\global\long\def\largeparens#1{\mathopen{{}}\left(#1\right)\mathclose{{}}}%

\global\long\def\mathemph#1{\boldsymbol{#1}}%

\global\long\def\spi{\operatorname{s\pi}}%

\global\long\def\scl{\operatorname{scl}}%

\global\long\def\sql{\operatorname{sql}}%

\global\long\def\ssql{\operatorname{ssql}}%

\global\long\def\cc{\operatorname{CC}}%
\global\long\def\cycles{\operatorname{Cycles}}%

\global\long\def\imm{\looparrowright}%

\global\long\def\boundary{\partial}%

\global\long\def\U#1{U\smallparens{#1}}%

\global\long\def\aut{\operatorname{Aut}}%

\global\long\def\skeleton#1{{#1}^{\smallparens 1}}%

\global\long\def\universalCover#1{\widetilde{{#1}}}%

\global\long\def\fundamental#1{\pi_{1}\smallparens{#1}}%

\newcommandx\num[2][usedefault, addprefix=\global, 1=w]{\#_{#1}\smallparens{#2}}%

\newcommandx\expectation[2][usedefault, addprefix=\global, 1=w]{\mathbb{E}_{#1}\left[#2\right]}%

\newcommandx\heightened[3][usedefault, addprefix=\global, 1=X, 2=H, 3=L]{\smallparens{#1,#2,#3}}%

\global\long\def\inducedGamma#1{\Gamma\smallparens{#1}}%

\global\long\def\inducedprimeGamma#1{\tilde{\Gamma}\smallparens{#1}}%

\global\long\def\Gammaprime{\tilde{\Gamma}}%

\global\long\def\Dprime{\tilde{D}}%

\global\long\def\s{s}%

\newcommandx\sn[3][usedefault, addprefix=\global, 1=\lambda, 2=\mu, 3=n]{s_{#1,#2}^{\smallparens{#3}}}%

\title{Cycle Counting and Character Expectations Using Alternating Structures}
\author{Noam Ta Shma}
\maketitle
\begin{abstract}
Recently, two related papers \cite{cassidy,MageeDeLaSalle2024} found
a connection between two subjects: the $w$-cycle theorem, which is
a theorem about counting appearances of cycles reading out a word
$w$ in certain graphs, and character expectations on word measures.

The $w$-cycle theorem was proven independently by \cite{Louder_Wilton_2017}
using stackings and by \cite{helferwise} using bislim structures.
In the current work, we generalize stackings and bislim structures
to alternating stackings and alternating bislim structures. We show
how this significantly strengthens the $w$-cycle theorem for words
admitting such alternating structures, and as a result, also strengthens
the recent results of \cite{cassidy} and \cite{MageeDeLaSalle2024}.
We show that generic words admit alternating bislim structures, and
therefore, the strengthened results hold for generic words.

Using our new machinery, we address conjectures of Wilton, of Hanany--Puder
and of Puder--Shomroni. We prove that all three conjectures hold
for generic words, but we also find counterexamples for the first
two.

\tableofcontents{}
\end{abstract}

\section{Introduction}

The $w$-cycle theorem was originally conjectured to hold in \cite[Conjecture 1.1]{Wise2005}
as part of a program to prove that one relator groups are coherent,
a goal which was recently achieved in \cite{JaikinZapirain2023}.
This theorem was proven independently by \cite{Louder_Wilton_2017}
using ``stackings'' and by \cite{helferwise} using ``bislim structures''.
Stackings, bislim structures and the $w$-cycle theorem turned out
to be useful beyond their original motivation, and are in particular
a key component in the recent results of \cite{cassidy} and \cite{MageeDeLaSalle2024}\footnote{To be precise, a primitive version of the $w$-cycle theorem was used
in \cite{MageeDeLaSalle2024}. We show that the full $w$-cycle theorem
could have been used.} regarding word measures and strong convergence.

In the current work, we generalize stackings and bislim structures
to ``alternating stackings'' and ``alternating bislim structures''.
We show how this significantly strengthens the $w$-cycle theorem,
and as a result, also strengthens the recent results of \cite{cassidy}
and \cite{MageeDeLaSalle2024}. Additionally, we show that almost
all words admit alternating bislim structures, and therefore the strengthened
results hold for almost all words.

\subsection{\label{subsec:intro:The-w-cycle-theorem}The $w$-cycle theorem}

We first explain the statement of the $w$-cycle theorem. Let $\Sigma$
be a finite alphabet. A \emph{core graph} is a directed graph without
any leaves or isolated vertices, with edges labeled by an alphabet
$\Sigma$, such that at each vertex of $\Gamma$ there is at most
one outgoing edge of each label, and at most one incoming edge of
each label (see Figure \ref{fig:core_graph}).\begin{wrapfigure}[17]{O}{0.25\columnwidth}%
\centering{}\includegraphics[width=0.2\columnwidth]{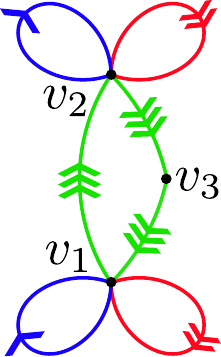}\caption{\label{fig:core_graph}A core graph. The labels $a$, $b$ and $c$
are drawn using one, two and three arrows in blue, red and green,
respectively.}
\end{wrapfigure}%

A word over an alphabet $\Sigma$ is a word $w=x_{1}\dots x_{n}$
where $x_{i}\in\Sigma\cup\Sigma^{-1}$ and $\Sigma^{-1}$ is the set
of inverses of the elements of $\Sigma$. In the following, $w$ is
always a nontrivial cyclically reduced word.\footnote{A word $w$ is \emph{cyclically reduced} if $x_{i}\neq x_{i+1}^{-1}$
for all $i$ and $x_{1}\neq x_{n}^{-1}$, e.g., $w=aba^{-1}b^{-1}$.}

\begin{defn}
Let $w$ be a nontrivial cyclically reduced word. A \emph{$\mathemph w$-cycle}
$\ell$ in a core graph $\Gamma$ is a cycle in $\Gamma$ spelling
$w^{k}$ which is not a repetition of a smaller $w$-cycle. A $w$-cycle
need not be a simple cycle. We say that $\ell$ is of degree $k$,
and we denote by $\mathemph{\num{\Gamma}}$ the number of $w$-cycles
in $\Gamma$ counted by their degrees.
\end{defn}

For example, if we take the graph $\Gamma$ shown in Figure \ref{fig:core_graph}
and $w=a^{2}b^{2}c^{3}$, then $\Gamma$ has two $w$-cycles, starting
on $v_{1}$ and $v_{2}$, and $\num{\Gamma}=2$. If we take the same
graph and $w=c$, then $\Gamma$ has one degree-3 $w$-cycle, so $\num{\Gamma}=3$.
Wise \cite[Conjecture 1.1]{Wise2005} conjectured that if $w$ is
not a proper power, the number of $w$-cycles in $\Gamma$ can be
bounded in terms of its Euler characteristic $\chi\smallparens{\Gamma}=\left|V\smallparens{\Gamma}\right|-\left|E\smallparens{\Gamma}\right|$.
This conjecture was proven independently in \cite{Louder_Wilton_2017}
and in \cite{helferwise}. One formulation of the theorem is as follows: 
\begin{thm}[{The $w$-cycle Theorem, \cite{helferwise} and \cite[Theorem~2]{Louder_Wilton_2017}}]
\label{thm:the_w_cycle_theorem}Let $\Gamma$ be a $\Sigma$-labeled
core graph, and let $w$ be a nontrivial cyclically reduced word over
$\Sigma$ which is not a proper power. Assume that each edge of $\Gamma$
is covered by $w$-cycles at least twice. Then 
\[
\num{\Gamma}\ \le\ -\chi\smallparens{\Gamma}.
\]
\end{thm}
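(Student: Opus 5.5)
Following the approach of \cite{Louder_Wilton_2017}, the plan is to build a two-complex $X$ from $\Gamma$ and its $w$-cycles and run a stacking argument on it. Let $\ell_1,\dots,\ell_m$ be the $w$-cycles in $\Gamma$, of degrees $k_1,\dots,k_m$, so that $\num{\Gamma}=\sum_i k_i$. For each $\ell_i$, glue a disc $D_i$ to $\Gamma$ along $\ell_i$; its boundary has length $k_i|w|$. This gives a two-complex $X$ with
\[
\chi(X)=\chi(\Gamma)+m .
\]
The target inequality $\sum_i k_i\le-\chi(\Gamma)$ is not directly about $\chi(X)$, so I will instead use a weighted count coming from a stacking.

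The key input is that $w$ is not a proper power. By Louder--Wilton, the cyclic covering of the bouquet determined by $w$ then admits a \emph{stacking}: an embedding of the $w$-cycle of length $|w|$ into $R\times\mathbb{R}$, where $R$ is the rose, lifting the immersion. The stacking determines, for each cycle, a nonempty set of ``highest'' letters (above everything else in its fibre) and a nonempty set of ``lowest'' letters (below everything else). Because $w$ is not a proper power, one can arrange that these two sets are disjoint.

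Next I pull the stacking back to $\Gamma$. The map $\bigsqcup_i \ell_i\to\Gamma$ is an immersion, and over each edge $e$ the preimages are ordered consistently by the stacking; I need to check that this order is well defined, which holds because lifts to the universal cover are ordered. For each edge $e$, let $n_e$ be the number of $w$-cycle positions covering it; by hypothesis $n_e\ge2$. In each fibre exactly one position is top and exactly one is bottom, and these are distinct since $n_e\ge2$. Counting tops therefore gives
\[
\#\{\text{top positions}\}=|E(\Gamma)|,
\]
and similarly for bottoms. Each $\ell_i$, being a degree-$k_i$ repetition, has at least $k_i$ top positions: by periodicity, one per period.

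The main obstacle is that these counts only give $\sum_i k_i\le|E|$, while I need $\sum_i k_i\le|E|-|V|$. To close the gap I would run the Helfer--Wise style ``bislim'' count on vertices. Each vertex $v$ of degree $d_v$ contributes $d_v-2$ to $-2\chi(\Gamma)=\sum_v(d_v-2)$. The aim is to show that each unit of $k_i$ can be charged to a distinct pair consisting of a top-edge incidence and a bottom-edge incidence at vertices. More precisely, consider the positions of $\ell_i$ just after a top letter and just before a bottom letter. The stacking forces the $w$-cycles passing through a vertex to enter and leave through distinct extremal edges, and I expect this to yield the inequality $2\sum_i k_i\le\sum_v(d_v-2)$.

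I would first try to prove this inequality locally at each vertex. At a vertex $v$, the incoming and outgoing half-edges carry their top and bottom marks, and a top mark cannot be continued by a top mark unless the letter is repeated. The hard part is arranging the combinatorics so that exactly two units are lost per vertex. The ``covered at least twice'' hypothesis is used precisely here: it guarantees that top and bottom are distinct in every fibre, so that both marks are actually present.
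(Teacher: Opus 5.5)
Your proposal has a genuine gap, and it is exactly where the theorem's content lies. What you actually establish (one fibre-top position per edge, at least $k_i$ of them on $\ell_i$) only gives $\sum_i k_i\le |E(\Gamma)|$. The missing $-|V(\Gamma)|$ is left as something you ``expect'' from a charging scheme at vertices, and no such scheme is given. Your sketch does not say where the charge of a top position goes when the nearby vertices have degree $2$, which contribute $0$ to $\sum_v(d_v-2)$. The input that really produces the extra $|V|$ is a global extremality property of the stacking, not a vertex-local one.

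The fibre-top positions are also the wrong set to count. The useful set is $H$, the edges of $\Gamma$ carrying a position that lies over one fixed highest letter of $w$. Since $\Gamma$ is a core graph and each $w$-cycle is not a repetition, the positions over a single edge map injectively to positions of $w$. This, rather than the universal cover, is also why your fibre orders are well defined. Hence no edge carries two such positions, and $|H|=\sum_i k_i$ exactly. Then $\chi(\Gamma)=\chi(\Gamma-H)-\sum_i k_i$, and the whole theorem reduces to showing $\chi(\Gamma-H)\le 0$, i.e.\ that no component of $\Gamma-H$ is a tree.

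That non-tree statement is the key lemma of the paper's route, and it is proved globally. It is Lemma \ref{lem:no_tree_islands}, which feeds Theorems \ref{thm:cycle_counting_complex_version_single_complex} and \ref{thm:the_w_cycle_theorem_extended}; the $1$-alternating bislim structure on $\langle\Sigma\mid w\rangle$ comes from Helfer--Wise because $w$ is not a proper power. Let $Y$ be $\Gamma$ with discs attached along the $w$-cycles. Suppose a component $T$ of $\Gamma-H$ were a tree; then every edge adjacent to $T$ but not in $T$ lies in $H$.

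\begin{itemize}
\item Thicken $T$ to a simply connected complex $M$ with a branched near-immersion $M\to Y$ that is a local isomorphism along $T$ (Lemma \ref{lem:neighborhood_technical_lemma}).
\item By bislimness, the induced digraph on the 2-cells of $M$ is acyclic, so it has a source $C_{\max}$. Simple connectivity is essential here: on $Y$ itself the ``is above'' relation between $w$-cycles can have directed cycles, even self-loops.
\item The maximal arc of $\partial C_{\max}$ running through $T$ and its adjacent edges begins and ends on edges of $H$.
\item The high sides of these two end edges must be sides of $C_{\max}$ itself; otherwise they would induce an arrow into $C_{\max}$.
\item High and low sides alternate around $\partial C_{\max}$, so a low side of $C_{\max}$ sits between them on an internal edge of $T$. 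It again induces an arrow into $C_{\max}$, a contradiction.
\end{itemize}

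Without this argument, or Louder--Wilton's analogous one, your proposal does not reach $\sum_i k_i\le-\chi(\Gamma)$.
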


Edges of $\Gamma$ are considered to be covered twice if they are
either covered by two distinct $w$-cycles or covered twice by the
same $w$-cycle. 

\subsection{\label{subsec:intro:Stackings-and-bislim-structures}Stackings and
bislim structures}

There are two kinds of structures which can be used to prove the $w$-cycle
theorem, namely stackings and bislim structures, which serve parallel
roles in \cite{Louder_Wilton_2017} and \cite{helferwise} respectively.
In fact, in an upcoming work we show that bislim structures are equivalent
to a variant of stackings \cite{partial_stackings_and_bislim_structures}.\footnote{The paper \cite{Bamberger2024} claims that stackings are equivalent
to bislim structures, but it contains an error, as we discuss in \cite{partial_stackings_and_bislim_structures}.} We find that bislim structures are better suited for proving the
theorems of Section \ref{sec:Existence-of-bislim-structures}, so
we chose to work with bislim structure in the main body of the paper.
However, we find that stackings are more intuitive, and so we chose
to focus on stackings in the introduction.

Let $w$ be a cyclically reduced word over an alphabet $\Sigma$,
let $S_{w}$ be a cycle graph spelling $w$ and let $\Omega$ be a
bouquet of circles consisting of one vertex and $\left|\Sigma\right|$
edges --- one edge per element of $\Sigma$, so there is a canonical
map $\eta:S_{w}\to\Omega$ given by the labeling.
\begin{defn}[Stackings, geometric definition, \cite{Louder_Wilton_2017}]
 A \emph{stacking} of $w$ is a continuous height map $h:S_{w}\to\mathbb{R}$
such that the combined function $\widehat{\eta}:S_{w}\hookrightarrow\Omega\times\mathbb{R}$
defined by $\widehat{\eta}\smallparens p=\smallparens{\eta\smallparens p,h\smallparens p}$
is injective.
\end{defn}

Visually, the cycle $S_{w}$ is embedded in the space $\Omega\times\mathbb{R}$,
such that for each edge $e$ in $S_{w}$ labeled by a letter $\sigma\in\Sigma$,
when $e$ is projected into $\Omega$ it is projected to the edge
of $\Omega$ corresponding to $\sigma$, as in Figure \ref{fig:stacking}.

\begin{wrapfigure}[25]{O}{0.2\columnwidth}%
\begin{centering}
\includegraphics[width=0.2\columnwidth]{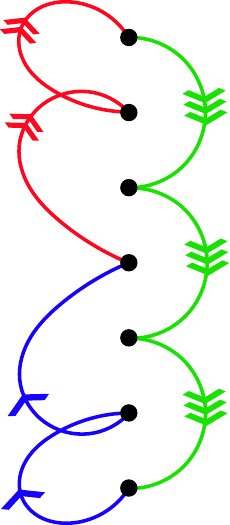}\caption{\label{fig:stacking}A stacking of $w=a^{2}b^{2}c^{3}$. Drawn as
the embedding of $S_{w}$ in $\Omega\times\mathbb{R}$, projected
to the page where the vertical axis represents height.}
\par\end{centering}
\end{wrapfigure}%

For a given directed edge $e$, denote by $\mathemph{\iota\smallparens e}$
and $\mathemph{\tau\smallparens e}$ the source and target vertices
of $e$ respectively. Let $e_{1}$ and $e_{2}$ be edges in $S_{w}$
where $\eta\smallparens{e_{1}}=\eta\smallparens{e_{2}}$, i.e., $e_{1}$
and $e_{2}$ are labeled by the same letter of $\Sigma$. It follows
from the injectivity of $\widehat{\eta}$ that $h\smallparens{p_{1}\smallparens 0}<h\smallparens{p_{2}\smallparens 0}$
if and only if $h\smallparens{p_{1}\smallparens 1}<h\smallparens{p_{2}\smallparens 1}$.
Thus by forgetting the exact height map $h$, and only retaining the
ordering of the vertices by their heights, we get an equivalent combinatorial
definition of stackings:
\begin{defn}[Stackings, combinatorial definition, \cite{Louder_Wilton_2017}]
\label{def:stackings_combinatorial_definition} A \emph{stacking}
of a word $w$ is an ordering $\preceq$ of the vertices of $S_{w}$
such that for any two edges $e_{1}$ and $e_{2}$ in $S_{w}$ which
are labeled by the same label, we have that
\[
\iota\smallparens{e_{1}}\preceq\iota\smallparens{e_{2}}\iff\tau\smallparens{e_{1}}\preceq\tau\smallparens{e_{2}}.
\]
\end{defn}

The w-cycle theorem is proven in \cite{Louder_Wilton_2017} in the
following two parts:
\begin{thm}[\cite{Louder_Wilton_2017}]
\label{thm:louder_wilton_non_power_words_admit_stackings} A word
$w\neq1$ admits a stacking if and only if $w$ is not a proper power.
\end{thm}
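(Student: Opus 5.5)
The statement has two directions. The ``only if'' direction is a short symmetry argument; the ``if'' direction needs an ordering of prefixes of $w$ coming from an ordering on a coset space of the free group.

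\textbf{Proper powers admit no stacking.} Suppose $w=u^{k}$ with $k\ge2$, and write $n=|w|$, $m=|u|$. Label the vertices of $S_{w}$ as $v_{0},\dots,v_{n-1}$ (indices mod $n$), with the $i$-th edge joining $v_{i-1}$ and $v_{i}$ and labelled $x_{i}$. The rotation $v_{i}\mapsto v_{i+m}$ is a label-preserving automorphism of $S_{w}$, since $x_{i}=x_{i+m}$ for all $i$. Suppose $\preceq$ is a stacking.
\begin{itemize}
\item For each $i$, the edges $i$ and $i+m$ carry the same letter of $\Sigma$ with the same orientation.
\item So Definition~\ref{def:stackings_combinatorial_definition} gives $v_{i-1}\prec v_{i-1+m}\iff v_{i}\prec v_{i+m}$. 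The order is strict here because these are distinct vertices.
\item Hence the sign of the comparison between $v_{j}$ and $v_{j+m}$ is the same for every $j$. Say $v_{0}\prec v_{m}$; then $v_{m}\prec v_{2m}\prec\dots\prec v_{km}=v_{0}$, which is a contradiction.
\end{itemize}

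\textbf{Non-powers admit a stacking: reduction to an ordering problem.} Let $F=F(\Sigma)$, let $p_{i}=x_{1}\cdots x_{i}\in F$ be the prefixes of $w$, and let $H=\langle w\rangle$. Since $w$ is cyclically reduced and not a proper power, $H$ is a maximal cyclic subgroup of $F$. I will check that the right cosets $Hp_{0},\dots,Hp_{n-1}$ are pairwise distinct, i.e.\ that $S_{w}$ embeds in the Schreier graph $H\backslash F$ via $v_{i}\mapsto Hp_{i}$. An equality $Hp_{i}=Hp_{j}$ with $0\le i<j<n$ means $p_{j}p_{i}^{-1}\in H$. Then the cyclic conjugate of $w$ starting at position $i$ would have a proper period $j-i$, and this is impossible for a cyclically reduced non-power.

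Under this embedding every edge of $S_{w}$ is right multiplication by a generator, $Hp_{i-1}\cdot x_{i}=Hp_{i}$. Therefore any total order on $H\backslash F$ that is invariant under the right action of $F$ restricts to a stacking. Indeed, if edges $e_{1},e_{2}$ have the same label $\sigma$, their targets are the sources multiplied on the right by $\sigma$, and invariance gives exactly the stacking condition.

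\textbf{Main obstacle: the invariant order.} Such an invariant order on $H\backslash F$ exists as soon as $H$ is relatively convex in some left-invariant order of $F$. Taking inverses turns left cosets into right cosets, and a convex subgroup $H$ of a left-ordered $G$ induces a $G$-invariant order on $G/H$. So the crux is to show that every maximal cyclic subgroup of a free group is relatively convex.

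My first attempt would be to use that $F$ is residually torsion-free nilpotent and to build a Magnus-type order adapted to $w$. I would choose the lower central series term $\gamma_{c}F$ in which $w$ first becomes nontrivial modulo $\gamma_{c+1}F$. I would then order lexicographically through the torsion-free abelian quotients, making $w$'s class ``small'' at that level. Maximality of $\langle w\rangle$ should be what gives convexity, via the isolation of $\langle w\rangle$ in these quotients.

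If that fails, I would use the action of $F$ on its Cayley tree. Cutting along the axis of $w$ gives a dynamical realization on $\mathbb{R}$ in which $H$ stabilizes an interval, and this too yields convexity. This is the step I expect to require the real work.
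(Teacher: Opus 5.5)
The paper does not prove this result; it quotes it from Louder--Wilton. So I am judging your argument on its own.

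\textbf{What is correct.} The ``only if'' part is correct, and so is the reduction to a right-$F$-invariant total order on $H\backslash F$. Your reason for the distinctness of the $Hp_i$ is off, though. If $p_jp_i^{-1}\in H$, then $x_{i+1}\cdots x_j$ is a power of the cyclic conjugate $p_i^{-1}wp_i$, and this is impossible by length for \textit{every} cyclically reduced $w$. So the embedding also exists for $w=u^k$; there it is the order that fails, since $H\prec Hu$ would force $Hu\prec\cdots\prec Hu^k=H$.

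\textbf{The gap.} The existence of this invariant order carries the entire theorem, and neither sketch provides it.

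The first sketch fails as described. Take $c$ with $w\in\gamma_cF\setminus\gamma_{c+1}F$, as you do.
Ordering ``lexicographically through the layers'' $\gamma_iF/\gamma_{i+1}F$ makes every $\gamma_kF$ convex. Convex subgroups of a left order form a chain. But for $k>c$ neither inclusion holds: $\langle w\rangle\not\subseteq\gamma_kF$, and $\gamma_kF\not\subseteq\langle w\rangle$ because $\langle w\rangle\cap\gamma_{c+1}F=1$. So $\langle w\rangle$ is never convex in such an order, whatever you do on the layers. The hoped-for isolation also fails: $w=a^2b^2$ is not a proper power, but its image $(2,2)$ in $F/\gamma_2F\cong\mathbb{Z}^2$ is.

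The tree sketch is not an argument. A line in a tree has no two sides to order by. An action on $\mathbb{R}$ in which $H$ stabilizes an interval is essentially a restatement of the relative convexity you want.

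\textbf{How the first idea can be repaired.} A stacking involves only the finitely many vertices of $S_w$, so you never need convexity of $\langle w\rangle$ in all of $F$.

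Let $N=F/\gamma_cF$, which is torsion-free nilpotent. Let $I$ be the isolator of $\langle\bar w\rangle$ in $N$, and let $K\le F$ be its preimage; $K$ shrinks as $c$ grows.

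Suppose some $g=p_ip_j^{-1}$ with $i\neq j$ lay in $K$ for every $c$. Then $\bar g^m\in\langle\bar w\rangle$ with $m\neq0$ would force $\bar g$ to commute with $\bar w$, since roots are unique in torsion-free nilpotent groups. Hence $[g,w]\in\bigcap_c\gamma_cF=1$, so $g\in C_F(w)=\langle w\rangle$, contradicting the length argument. This is exactly where non-power-ness enters.

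So for large $c$ the map $v_i\mapsto Kp_i$ is injective into $K\backslash F\cong I\backslash N$. Any right-$N$-invariant order on $I\backslash N$ then restricts to a stacking. Such an order exists because isolated subgroups of torsion-free nilpotent groups are relatively convex. That is a real input you would have to cite or prove. Without it, or some other argument for the invariant order, the proposal stops exactly at the step that carries the theorem.
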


\begin{thm}[\cite{Louder_Wilton_2017}]
 Let $\Gamma$ be a $\Sigma$-labeled core graph, and let $w\in F_{\Sigma}$
be a word in $\Sigma$. Assume that each edge of $\Gamma$ is covered
by $w$-cycles at least twice. If $w$ admits a stacking then 
\[
\num{\Gamma}\ \le\ -\chi\smallparens{\Gamma}.
\]
\end{thm}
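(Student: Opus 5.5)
Our plan follows the Louder--Wilton argument. From the stacking we build a combinatorial object on $\Gamma$ and then count ``top'' and ``bottom'' positions. Let $\ell_{1},\dots,\ell_{m}$ be the $w$-cycles of $\Gamma$, with degrees $k_{1},\dots,k_{m}$, so that $\num{\Gamma}=\sum k_{i}$. Each $\ell_{i}$ lifts to an immersion $S_{w^{k_{i}}}\to\Gamma$, which factors through the $k_{i}$-fold cyclic cover of $S_{w}$. Let $Y$ be the disjoint union of these cycles, so there is a map $f:Y\to\Gamma$. The stacking of $w$ pulls back along each covering $S_{w^{k_{i}}}\to S_{w}$ to a \emph{preorder} in which the $k_{i}$ lifts of a vertex are tied. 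It will be cleaner to use the geometric picture: $Y$ maps into $\Gamma\times\mathbb{R}$ by $\smallparens{f,h\circ\pi}$. Because each $\ell_{i}$ is not a repetition of a smaller $w$-cycle and $w$ is not a proper power, distinct $w$-cycles passing through the same edge of $\Gamma$ are distinct paths. We would then perturb the heights, for example by adding small distinct constants to each component, so that for every edge $e$ of $\Gamma$ the points of $Y$ over $e$ sit at distinct heights with consistent order at both endpoints. This gives a ``stacking'' of $Y$ over $\Gamma$: over each point of $\Gamma$, the preimages in $Y$ are totally ordered, and the orders are compatible along edges.

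\textbf{Counting.} For each edge $e$ of $\Gamma$ there is a highest and a lowest strand of $Y$ over $e$. Since $e$ is covered at least twice, these two strands are different edges of $Y$. Call an edge of $Y$ \emph{top} if it is highest over its image edge, and \emph{bottom} if it is lowest. The number of top edges and the number of bottom edges are each exactly $|E(\Gamma)|$. Call a vertex of $Y$ top if it is highest over its image vertex; there are at most $|V(\Gamma)|$ such vertices. The key claim is that every component $\ell_{i}$ of $Y$ contains at least $k_{i}$ edges that are \emph{not} top, and similarly for bottom, with enough disjointness to conclude. We would follow Louder--Wilton and show that each cycle $\ell_{i}$ contains an edge that is not top and an edge that is not bottom, counted with multiplicity $k_{i}$ via the covering.

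The cleanest route is an Euler characteristic comparison. Consider the 2-complex obtained by attaching discs to $\Gamma$ along the cycles $\ell_{i}$, or equivalently count sheets. Along a strand in $Y$, top edges and top vertices alternate consistently: if an edge is top, then both of its endpoints are top, since the orders are compatible. Then
\[
\#\{\text{top edges}\}-\#\{\text{top vertices}\}\ \le\ -\sum_{i}(\text{number of maximal top arcs in }\ell_{i}),
\]
and summing gives $|E(\Gamma)|-|V(\Gamma)|\ge\sum_{i}(\ldots)$. The inequality therefore reduces to showing that each $\ell_{i}$ contributes at least $k_{i}$ maximal top arcs. This in turn reduces to showing that $\ell_{i}$ is not entirely top. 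Indeed, if it were, the highest point of $h$ on $S_{w}$ would lie over a single point of $\Gamma$, and that point would be covered only once, contradicting double coverage.

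\textbf{Main obstacle.} The hardest step is making the per-component count $k_{i}$ rather than $1$. A degree-$k_{i}$ cycle has period $w$, so its top arcs repeat $k_{i}$ times, provided the perturbation respects the deck symmetry. We would therefore use the unperturbed pulled-back heights and argue directly that tied lifts lie over distinct points of $\Gamma$, which follows because $\ell_{i}$ is not a proper repetition. This gives $k_{i}$ disjoint non-top, and respectively non-bottom, positions in each $\ell_{i}$. Combining the top count and the bottom count with the double-cover hypothesis then yields $\num{\Gamma}\le|E(\Gamma)|-|V(\Gamma)|=-\chi\smallparens{\Gamma}$.
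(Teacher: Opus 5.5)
Your strategy (pull the stacking of $w$ back to the $w$-cycles and count top strata over $\Gamma$) can be made to work, but the central count is wrong as written. The claim ``if an edge of $Y$ is top then both of its endpoints are top'' is false. What holds is the converse: if a vertex $y$ is top over $v$, then both edges of $Y$ at $y$ are top. The reason is that every strand over an edge $f$ at $v$ has its corresponding endpoint in the fiber over $v$, and the order is compatible along $f$. A strand can be top over $f$ while some strand through $v$ that avoids $f$ is higher at $v$. For instance, with $\Gamma$ the bouquet and $Y=S_w$ for $w=a^{2}b^{2}c^{3}$, there are three top edges but only one top vertex.

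Your displayed inequality is consistent with the false implication, and it points the wrong way. With the true counts it reads $-\chi(\Gamma)\le-\sum_i(\cdots)$, which is the opposite of what you need, so the subsequent ``summing gives'' does not follow. Likewise ``at most $|V(\Gamma)|$ top vertices'' is the wrong direction of bound. The correct count goes as follows. $\Gamma$ is a core graph with every edge covered, so every vertex is covered. Hence there are exactly $|E(\Gamma)|$ top edges and exactly $|V(\Gamma)|$ top vertices. Every top vertex is interior to a maximal run of consecutive top edges in some $\ell_i$. A run of $m$ edges that is not the whole cycle contains at most $m-1$ top vertices, since its end vertices meet a non-top edge. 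Therefore $-\chi(\Gamma)\ge\#\{\text{maximal top runs}\}$, provided no $\ell_i$ is entirely top.

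The per-cycle bound is also unjustified. Being top over a point does not make that point singly covered; only a strand that is both top and bottom contradicts double coverage. Top status is also not invariant under the deck translation of $\ell_i$. The $k_i$ translates of an edge lie over different edges of $\Gamma$ with different competing strands, so ``top arcs repeat $k_i$ times'' fails.

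The missing idea is to use the extremal edges of $S_w$ itself:

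1. Fix a letter $a$ of $w$, and let $e^{+}$ and $e^{-}$ be the $a$-edges of $S_w$ with maximal and minimal $\iota$.
2. Every strand over an $a$-edge of $\Gamma$ is a lift of an $a$-edge of $S_w$. No two lifts of the same edge of $S_w$ lie over the same edge of $\Gamma$, by determinism of the core graph and non-repetition of $w$-cycles. This also shows that $h\circ\pi$ is already injective on fibers, so no perturbation is needed.
3. Hence every lift of $e^{+}$ is top, and every lift of $e^{-}$ is bottom and therefore not top by double coverage.
4. Around $\ell_i$ these $k_i$ top and $k_i$ non-top lifts interleave, so $\ell_i$ has at least $k_i$ maximal top runs.

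Summing gives $\#_{w}(\Gamma)=\sum_i k_i\le-\chi(\Gamma)$.

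Repaired this way, your argument is a direct stacking count. It is a different route from the paper, which does not reprove this cited result. The paper obtains such bounds, including the $k$-alternating strengthening (Theorem~\ref{thm:w-cycle-theorem-alternating}), from bislim structures. There one deletes the $\#_{w}(\Gamma)$ edges carrying high sides and shows via Lemma~\ref{lem:no_tree_islands} that no component of the remaining graph is a tree.
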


In an upcoming joint work with Daniel Wise and Jonah Gaster \cite{partial_stackings_and_bislim_structures},
we define partial stackings, and prove that ``good'' partial stackings
are equivalent to bislim structures. Partial stackings are defined
the same way as in Definition \ref{def:stackings_combinatorial_definition}
except that $\preceq$ is allowed to be a partial order:
\begin{defn}[Partial stackings, \cite{partial_stackings_and_bislim_structures}]
\label{def:partial_stackings_combinatorial_definition} A \emph{partial
stacking} of a word $w$ is a \emph{partial} ordering $\preceq$ of
the vertices of $S_{w}$ such that for any two edges $e_{1}$ and
$e_{2}$ in $S_{w}$ which are labeled by the same label, we have
that
\[
\iota\smallparens{e_{1}}\preceq\iota\smallparens{e_{2}}\iff\tau\smallparens{e_{1}}\preceq\tau\smallparens{e_{2}}.
\]
\end{defn}

\subsection{\label{subsec:intro:Word-measures}Word measures and character expectations}

Let $w\in F_{r}$ be a word in the free group with $r$ generators,
and let $G$ be a compact group. A $\mathemph w$\emph{-random} group
element is obtained by substituting each free generator of $F_{r}$
by an independent, uniformly random group element and evaluating $w\smallparens{g_{1},\dots,g_{r}}$.
The resulting probability measure is called a word measure. To study
word measures, we consider the expectations of characters\footnote{That is, a trace of a complex representation of $G$.}
$\psi$ over a $w$-random element, i.e., 
\[
\expectation{\psi}=\expectation[g_{i}\in G]{\psi\smallparens{w\smallparens{g_{1},\dots,g_{r}}}}.
\]
We focus on the cases where $G$ is either a symmetric group $S_{n}$
or a unitary group $\U n$. We point the reader to \cite{Puder2023}
for additional results and conjectures of this type including additional
families of groups. 

\subsubsection{Stable characters}

Foundational works \cite{Puder2015,MageePuder2019} %
{} in this field started by studying the specific cases of $\psi_{n}=\text{\#fix}$
in $S_{n}$ and $\psi_{n}=\text{Tr}$ in $\U n$, which are the standard
representations of their respective groups. In each of these families
the characters are almost ``the same'' across all values of $n$
in many different respects, and one way in which this manifests is
the fact that $\mathbb{E}_{w}\left[\psi_{n}\right]$ is a rational
function in $n$ in both cases. This can be generalized to ``stable
characters'', which are families of characters $\left\{ \psi_{n}\right\} _{n}$
where all $\psi_{n}$ are essentially ``the same'' across all large
enough values of $n$. In the cases of $S_{n}$ and $\U n$, a \emph{stable
character} is given as a sum of \emph{irreducible stable characters},
which can be characterized as follows.

In the case of $S_{n}$, each irreducible stable character is characterized
by an integer partition: Given an integer partition $\lambda=\smallparens{\lambda_{1},\dots,\lambda_{p}}\vdash d$,
if $n\ge\lambda_{1}+d$ we denote by $\mathemph{\lambda^{+}\smallparens n}$
the partition $\smallparens{n-d,\lambda_{1},\dots,\lambda_{p}}$.
We denote by $\mathemph{\chi^{\lambda^{+}\smallparens n}}$ the unique
irreducible character of $S_{n}$ corresponding to the partition $\lambda^{+}\smallparens n$.
Taking this for all $n\ge\lambda_{1}+d$ gives an irreducible stable
character, and every irreducible stable character is of this form.
Every irreducible stable character $\psi_{n}=\chi^{\lambda^{+}\smallparens n}$
is of dimension $\dim\psi_{n}=\Theta\largeparens{n^{d}}$ by the hook
length formula. In \cite{Hanany2020} it is shown that $\expectation{\psi_{n}}$
is always a rational function in $n$ for large enough $n$.

In the case of $\U n$, each irreducible stable character is characterized
by two integer partitions $\lambda$ and $\mu$: Given two integer
partitions $\lambda=\smallparens{\lambda_{1},\dots,\lambda_{p}}$
and $\mu=\smallparens{\mu_{1},\dots,\mu_{q}}$ where $n\ge p+q$,
we denote by $\mathemph{\sn}$ the unique irreducible character of
$\U n$ with dominant weight vector
\[
\left(\lambda_{1},\dots,\lambda_{p}\underset{n-p-q\text{ zeros}}{,\underbrace{0,\dots,0},}-\mu_{q},\dots,-\mu_{1}\right)
\]
(see \cite[Chapters~19,~21]{Bump2013}) where there are $n-p-q$ zeros.
Taking this for all $n\ge p+q$ gives an irreducible stable character,
and every irreducible stable character is of this form. Every irreducible
stable character $\psi_{n}=\sn$ where $\lambda\vdash k$ and $\mu\vdash\ell$
is of dimension $\dim\psi=\Theta\left(n^{k+\ell}\right)$ (see \cite[Equation~3.10]{Puder2023}).
It follows from \cite{MageePuder2019} that $\expectation{\psi_{n}}$
is always a rational function in $n$.

The theory of stable characters can be generalized to additional stable
families of groups, see \cite{Church2013,PuderShomroni2025,Puder2023,Sam2015}.

\subsubsection{Asymptotic growth conjectures}

We focus on the asymptotic behavior of $\expectation{\psi_{n}}$ as
$n$ tends to infinity. In the case of $\U n$, the asymptotic growth
rate is conjectured to be as follows:
\begin{conjecture}[{The asymptotic growth conjecture for $\U n$ (\cite[Conjecture~5.7]{Puder2023})}]
\label{conj:asymptotic_growth_conjecture_Un}Let $\psi_{n}$ be an
irreducible stable character of $\U n$, and let $w$ be a word. Then
\[
\expectation{\psi_{n}}=O\largeparens{\smallparens{\dim\psi_{n}}^{-\ssql\smallparens w}},
\]
where $\ssql\smallparens w$ is the stable square length of $w$ (\cite[Definition~5.1]{Puder2023}).
\end{conjecture}

We have similar conjectures for $S_{n}$, though for $S_{n}$ the
history is more complicated. Originally, the following was conjectured:
\begin{conjecture}[{The original asymptotic growth conjecture for $S_{n}$ (\cite[Conjecture~1.8]{Hanany2020})}]
\label{conj:asymptotic_growth_conjecture_Sn_pi}Let $\psi_{n}$ be
an irreducible stable character of $S_{n}$ or $\U n$, and let $w$
be a word. Then
\[
\expectation{\psi_{n}}=O\largeparens{\smallparens{\dim\psi_{n}}^{1-\pi\smallparens w}},
\]
where $\pi\smallparens w$ is the primitivity rank of $w$ (\cite[Definition~1.7]{Puder2011}).
\end{conjecture}

Later, Wilton defined the stable primitivity rank $\spi\smallparens w$
of a word $w$, an invariant which is closely related to the $w$-cycle
theorem and the primitivity rank $\pi\smallparens w$ (\cite[Definition~10.6]{wilton_curvature_invariants}).
By their definitions, $\spi\smallparens w\le\pi\smallparens w-1$
for all $w$. Wilton conjectured that in fact, they are always equal:
\begin{conjecture}[{Wilton, see \cite[Conjecture~4.7]{Puder2023}}]
\label{conj:spi_equals_pi-1}For all words $w$, $\spi\smallparens w=\pi\smallparens w-1$.
\end{conjecture}

The definition of $\spi$ enabled \cite{Puder2023} to split Conjecture
\ref{conj:asymptotic_growth_conjecture_Sn_pi} into Conjecture \ref{conj:spi_equals_pi-1}
and the following conjecture:
\begin{conjecture}[{The corrected asymptotic growth conjecture for $S_{n}$ (\cite[Conjecture~1.2]{Puder2023})}]
\label{conj:asymptotic_growth_conjecture_Sn_spi}Let $\psi_{n}$
be an irreducible stable character of $S_{n}$, and let $w$ be a
word. Then
\[
\expectation{\psi_{n}}=O\largeparens{\smallparens{\dim\psi_{n}}^{-\spi\smallparens w}}.
\]
\end{conjecture}

Puder and Shomroni \cite{Puder2023} explain that Conjecture \ref{conj:asymptotic_growth_conjecture_Sn_spi}
is more natural than Conjecture \ref{conj:asymptotic_growth_conjecture_Sn_pi}.
In the current work, we prove the following:
\begin{thm}
\label{thm:generic_words}Conjectures \ref{conj:asymptotic_growth_conjecture_Sn_pi},
\ref{conj:spi_equals_pi-1} and \ref{conj:asymptotic_growth_conjecture_Sn_spi}
hold for a generic word $w$.
\end{thm}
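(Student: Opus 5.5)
The plan is to deduce all three conjectures from a single strengthened $w$-cycle theorem for words that admit an alternating bislim structure, together with the fact that such words are generic. The proof splits into three steps.

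\textbf{Step 1 (strengthened counting).} I would first show that if $w$ admits an alternating bislim structure (equivalently, a suitable alternating stacking), then a sharper $w$-cycle inequality holds. The natural target is: for every core graph $\Gamma$ in which each edge is covered at least twice by $w$-cycles, $\num{\Gamma}\le-\chi\smallparens{\Gamma}$, with a gain proportional to how far $\Gamma$ is from being built out of very few cycles. Ideally the inequality is strong enough that the ratio $\frac{-\chi\smallparens{\Gamma}}{\num{\Gamma}}$ in the definition of $\spi$ is minimized exactly at graphs realizing $\pi\smallparens w$.

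The proof would follow the bislim argument of \cite{helferwise}. One counts ``bislim'' excess at vertices, showing that each $w$-cycle contributes at least two extremal (top/bottom) positions. The alternating condition should force these extremal positions to alternate along each cycle, so that more of them are charged against distinct vertex or edge deficits. Summing over vertices of $\Gamma$ then gives the improved bound.

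\textbf{Step 2 (genericity).} Next I would show that a generic (random cyclically reduced) word of length $n$ admits an alternating bislim structure with probability tending to $1$. The plan is to construct the structure greedily or by a first-moment argument. Generic words have small-cancellation-like properties: subwords of length about $C\log n$ appear at most once. This should let one define the required order locally and verify consistency. Since $\pi\smallparens w=r$ generically by known results, the needed inequality for $\spi$ reduces to proving $\spi\smallparens w\ge r-1$ for such $w$.

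\textbf{Step 3 (deducing the conjectures).} From Step 1, I would obtain $\spi\smallparens w\ge\pi\smallparens w-1$, and combined with $\spi\le\pi-1$ this gives Conjecture \ref{conj:spi_equals_pi-1}. For Conjecture \ref{conj:asymptotic_growth_conjecture_Sn_spi}, I would use the expansion of \cite{cassidy} (and \cite{MageeDeLaSalle2024}). That expansion writes $\expectation{\psi_n}$ as a sum over core graphs / $w$-covers $\Gamma$ with weights $n^{\chi\smallparens{\Gamma}}$ times representation-theoretic factors indexed by $w$-cycle configurations of degree $d$, where $\dim\psi_n=\Theta\smallparens{n^d}$. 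The strengthened $w$-cycle bound then controls every term by $n^{-d\cdot\spi\smallparens w}$. Conjecture \ref{conj:asymptotic_growth_conjecture_Sn_pi} follows by substituting $\spi=\pi-1$.

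\textbf{Main obstacle.} I expect the hardest part to be Step 1, specifically the fact that covers with edges covered only once do not satisfy the hypothesis of the $w$-cycle theorem. The expansion of \cite{cassidy} requires a version that allows such edges. The alternating structure must therefore handle graphs with partial coverage, with a penalty term for singly-covered edges. I would try first to prune singly-covered edges: fold them away and track how $\chi$ and $\num{\Gamma}$ change under the pruning, then apply the alternating bislim count to the remaining core.
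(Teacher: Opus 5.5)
Your overall architecture is the paper's. An alternating structure yields a strengthened $w$-cycle bound, hence $\spi(w)\ge k$ and, through the expansions of \cite{cassidy,MageeDeLaSalle2024}, $\mathbb{E}_{w}[\psi_{n}]=O((\dim\psi_{n})^{-k})$. Generic words admit such structures with $k=r-1$, and $r-1\le\spi(w)\le\pi(w)-1\le r-1$ closes the argument. But both substantive steps are left as hopes, and the concrete guesses in them would not work.

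In Step 1 the right target is not a $\Gamma$-dependent ``gain''. It is the uniform multiplicative bound $k\cdot\#_{w}(\Gamma)\le-\chi(\Gamma)$ of Theorem \ref{thm:w-cycle-theorem-alternating}, where $k$ is the alternation degree; since $\spi$ is an infimum of ratios, only a uniform factor helps. The engine is Lemma \ref{lem:no_tree_islands}. Delete the edges carrying high sides: no edge carries two, else the induced graph has a $2$-cycle, so at least $k\cdot\#_{w}(\Gamma)$ edges are deleted. Then show that no remaining component is a tree, so that $\chi(\Gamma)=\chi(\Gamma-H)-|H|\le-k\cdot\#_{w}(\Gamma)$. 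Alternation is used precisely in this tree exclusion. Around a putative tree component one builds a simply connected complex $M$ without internal 2-cells and takes a source of the acyclic induced graph $\Gamma(M)$. Its boundary must enter and leave the tree through its own high sides, and alternation then places one of its low sides on an internal tree edge in between, producing an incoming edge.

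The most serious gap is Step 2. You never fix the alternation degree, and you need exactly $r-1$. Since a bislim structure has no trivial cycles, this means $r-1$ distinct letters carry high sides and $r-1$ carry low sides, so at most one letter is neutral. Generic small-cancellation behaviour does not give this. Even the paper's local sufficient condition, the simple criterion (Theorem \ref{thm:simple-criterion}), requires the neighbourhoods of the extremal letters to be spelled in neutral letters, a non-generic pattern that must be searched for. It also needs at least two neutral letters, so it caps at $r-2$ and yields only $\spi(w)\ge r-2$.

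The missing idea is to use a single neutral letter $e_{*}$ and put the extremal letters at rare positions flanked on both sides by runs of $e_{*}^{\pm1}$ of length $\ell_{b}-1$, with $\ell_{b}=\Theta(\log\ell_{r})$ (blockers). Bislimness then comes from the relaxed criterion (Theorem \ref{thm:full-criterion}). Its proof needs the auxiliary graph $\tilde{\Gamma}$ and a minimal diagram $\tilde{D}$, because purely local counting on a minimal counterexample cannot work (Remark \ref{rem:full_construction_obstruction}). Existence comes from $\Omega(\ell/\ell_{r})$ well-separated, approximately independent candidate configurations in a word of length $\ell$, each succeeding with probability only $\ell_{r}^{-O(1)}$.

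Lastly, your fix for singly covered edges fails as stated: deleting edges destroys $w$-cycles, and an additive penalty spoils the exponent. The paper instead does three things:
\begin{itemize}
\item It completes the partial bijections to full ones, absorbing $\sum_{i}\operatorname{del}\pi_{i}$.
\item It folds only edges with $\rho$-equivalent initial vertices. This cannot decrease $\chi$, and admissibility gives double coverage and the near-immersion property.
\item It applies Theorem \ref{thm:w_cycles_version_with_S}, which needs no core graph; the same theorem, applied to ribbon graphs, handles $U(n)$.
\end{itemize}
Note also that $\spi$, being defined through core graphs, does not itself bound these terms. The alternation bound does, and it matches $n^{-d\,\spi(w)}$ only because $\operatorname{alt}_{w}=r-1=\spi(w)$ for generic $w$.
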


An additional application of this work is an efficient algorithm that
may confirm the above conjectures for arbitrary words $w$ (see Section
\ref{subsec:Algorithms}). Using this algorithm we obtain:
\begin{thm}
\label{thm:conjecture_holds_for_small_words}Conjectures \ref{conj:asymptotic_growth_conjecture_Sn_pi},
\ref{conj:spi_equals_pi-1} and \ref{conj:asymptotic_growth_conjecture_Sn_spi}
hold for all words $w\in F_{4}$ of length at most $10$.
\end{thm}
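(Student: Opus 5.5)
The statement is computational: it should follow from running the algorithm of Section \ref{subsec:Algorithms} on every relevant word and checking that it certifies each word. The plan has three parts.

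\textbf{Reduce to a finite list.} All three conjectures are invariant under automorphisms of $F_{4}$ that permute and invert letters. They are also invariant under cyclic rotation and inversion of $w$, since $\pi$, $\spi$ and the word measure are conjugation- and inversion-invariant. Words lying in a free factor generated by fewer letters behave the same way there, because $\pi$ and $\spi$ are unchanged when passing to a free factor. So I would enumerate cyclically reduced words of length at most $10$ up to these symmetries.

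Two classes I would handle by hand. Proper powers $w=u^{k}$ have $\pi\smallparens w=1$ and $\spi\smallparens w=0$, and the conjectures reduce to the trivial bound $\left|\expectation{\psi_n}\right|\le\dim\psi_n$. Primitive words are also immediate: $\pi=\infty$ by convention, and $\expectation{\psi_n}=0$ for nontrivial $\psi_n$.

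\textbf{Certify each remaining word.} For every other word $w$, the algorithm searches for an alternating bislim structure, or alternating stacking, of $w$ (or of $w$ with the appropriate multiplicities). By the main theorems of the paper, which I assume are proved earlier in the same way they are used in Theorem \ref{thm:generic_words}, such a structure yields the strengthened $w$-cycle bound. That bound gives $\spi\smallparens w\ge\pi\smallparens w-1$, where $\pi\smallparens w$ is computed by Puder's algorithm on the finitely many relevant core graphs. Together with $\spi\le\pi-1$ this gives Conjecture \ref{conj:spi_equals_pi-1}. The strengthened $w$-cycle bound then feeds into the Cassidy / Magee--de la Salle machinery to give Conjecture \ref{conj:asymptotic_growth_conjecture_Sn_spi}. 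Conjecture \ref{conj:asymptotic_growth_conjecture_Sn_pi} follows by combining the two.

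\textbf{Main obstacle.} I expect the difficulty to be words for which no alternating structure exists even though the conjectures may hold. The paper announces counterexamples to Wilton's conjecture, but presumably only for longer words. For such leftover words of length at most $10$, I would first try a modified certificate, for example an alternating structure on a different cyclic conjugate or on a power. Failing that, I would fall back on directly enumerating core graphs $\Gamma$ with $\num{\Gamma}>-\chi\smallparens{\Gamma}$ up to the bounded rank that matters, to confirm $\spi=\pi-1$ exhaustively. Showing that every such residual case terminates is where the real work lies.
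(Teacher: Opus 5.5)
\textbf{There is a genuine gap, at the residual cases.} Your certification logic is the same as the paper's: a $\smallparens{\pi\smallparens w-1}$-alternating structure gives $\spi\smallparens w\ge\pi\smallparens w-1$ and, through Theorem \ref{thm:alternating_both_cases}, the character bounds. The gap is in how you handle words without such a structure, and the missing idea is to use the \emph{full} automorphism group. The symmetries you allow (permuting and inverting letters, cyclic rotation, inverting $w$) all preserve the presentation complex $\left\langle F\vert w\right\rangle$, and hence its alternating bislim structures. So trying ``a different cyclic conjugate'' gives nothing new. Passing to a power is also useless, since a proper power admits no stacking at all (Theorem \ref{thm:louder_wilton_non_power_words_admit_stackings}). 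By contrast, $\pi$, $\spi$ and the word measure are invariant under all of $\aut\smallparens{F_4}$, while alternating structures are not. For example, $w=abcab^{-1}c^{-1}$ has $\pi\smallparens w=3$ but admits no $2$-alternating partial stacking, whereas the automorphic word $a^{2}b^{2}c^{2}$ does. Your procedure therefore leaves short words like this unresolved.

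The paper's proof is the computation of Algorithm \ref{alg:exhaustive_search}. It runs over Whitehead-minimal $\text{PCI}^{\pm}$-representatives. Whenever one of them lacks a $\smallparens{\pi\smallparens w-1}$-alternating bislim structure, it searches the automorphism class for some $w'$ that has one: first the other Whitehead-minimal words, then a capped search of non-minimal ones. For every word of length at most $10$ such a $w'$ is found, so no residual cases remain. Corollary \ref{cor:alternating_bislim_structures_bound_stable_primitivity_rank} and Theorem \ref{thm:alternating_both_cases} applied to $w'$ then transfer to $w$ by automorphism invariance.

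\textbf{Your last-resort fallback would not close the gap.}
\begin{itemize}
\item $\spi\smallparens w$ is an infimum over core graphs of unbounded size and unbounded $w$-cycle degree. Enumerating graphs ``up to the bounded rank that matters'' therefore cannot certify $\spi\smallparens w\ge\pi\smallparens w-1$. The only general procedure, \cite{Wilton_rationality_theorem}, is doubly exponential and impractical.
\item Even a proof that $\spi\smallparens w=\pi\smallparens w-1$ would give only Conjecture \ref{conj:spi_equals_pi-1}. Knowing the value of $\spi\smallparens w$ does not by itself give the character bound of Conjecture \ref{conj:asymptotic_growth_conjecture_Sn_spi}, since that bound is exactly the content of the conjecture. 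The character bounds need a $w$-cycle inequality for the graphs $H$ of Section \ref{subsec:Proof-of-Theorem}, which need not be core graphs. That inequality is precisely what an alternating structure supplies, through Theorem \ref{thm:w_cycles_version_with_S}.
\end{itemize}

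\textbf{A smaller error.} For proper powers the conjectures assert $\expectation{\psi_n}=O\smallparens 1$, not the trivial bound $O\smallparens{\dim\psi_n}$. The $O\smallparens 1$ bound does hold, by \cite[Corollary~1.4]{Hanany2020}, but you need to cite it rather than call it trivial.
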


However, we found that the first words for which the conjectures were
not confirmed were in fact counterexamples to Conjectures \ref{conj:asymptotic_growth_conjecture_Sn_pi}
and \ref{conj:spi_equals_pi-1}. Still, we believe that Conjecture
\ref{conj:asymptotic_growth_conjecture_Sn_spi} is indeed true (see
Section \ref{subsec:Computational-experiments}).

\subsubsection{Previous partial results}

From the $w$-cycle theorem it follows that Conjecture \ref{conj:spi_equals_pi-1}
holds when $\pi\smallparens w\le2$, and in particular for all $w\in F_{2}$.
Conjecture \ref{conj:asymptotic_growth_conjecture_Sn_pi} holds for
the special case of $\psi_{n}=\#\text{fix}-1$, and consequently Conjecture
\ref{conj:asymptotic_growth_conjecture_Sn_spi} also holds in this
case (\cite[Theorem~1.8]{Puder2015}). In \cite[Corollary~1.4]{Hanany2020}
it is proven that over $S_{n}$, $\expectation{\psi_{n}}=O\largeparens{n^{1-\pi\smallparens w}}$
if $\smallparens{\psi_{n}}_{n}$ is a nontrivial irreducible stable
character.

It follows from \cite[Corollary~1.8]{MageePuder2019} that $\expectation{\psi_{n}}=O\largeparens{\smallparens{\dim\psi_{n}}^{-2\scl\smallparens w}}$
if $\psi_{n}$ is a \textit{polynomial} stable character, where $\scl\smallparens w$
is the stable commutator length of $w$, see \cite[Theorem~1.1]{Puder2023}.
This result proves Conjecture \ref{conj:asymptotic_growth_conjecture_Un}
for the case of the polynomial characters of $\U n$ since $\ssql\smallparens w\le2\scl\smallparens w$
(\cite[Lemma~10.9]{wilton_curvature_invariants}).

\global\long\def\part{\text{P}}%

Recently, \cite{cassidy,MageeDeLaSalle2024} proved striking new results
on word measures in $S_{n}$ and $\U n$, namely that $\expectation{\psi_{n}}=O\largeparens{\frac{1}{\dim\psi_{n}}}$
(\cite[Theorem~1.6]{cassidy}) in the case of $S_{n}$ and $\expectation{\psi_{n}}=\allowbreak O\largeparens{\smallparens{\dim\psi_{n}}^{-\frac{1}{6}}}$
in the case of $\U n$ (\cite[Corollary~11.3]{MageeDeLaSalle2024}),
assuming that $w$ is not a proper power. Cassidy's result proves
Conjectures \ref{conj:asymptotic_growth_conjecture_Sn_pi} and \ref{conj:asymptotic_growth_conjecture_Sn_spi}
when $\pi\smallparens w\le2$, and in particular for all $w\in F_{2}$.
Note that Conjecture \ref{conj:asymptotic_growth_conjecture_Sn_pi}
is the ``logical union'' of Cassidy's result and \cite[Corollary~1.4]{Hanany2020}.

Both results (\cite[Theorem~1.6]{cassidy} and \cite[Corollary~11.3]{MageeDeLaSalle2024})
bound character expectations $\expectation[w]{\psi_{n}}$ by reducing
the problem to versions of the $w$-cycle theorem, through representation-theoretic
computations of $\expectation[w]{\psi_{n}}$. In the following, we
show how to generalize the $w$-cycle theorem, and so by using the
same representation-theoretic computations, we obtain new bounds on
character expectations. See further discussions in Sections \ref{subsec:intro:Alternating-structures-and-new-results}
and \ref{sec:Applications-to-word-measures}.

Prior to this work, lower bounds of $\spi\smallparens w$ larger than
$1$ were rare, and very little was known about actual values of $\spi\smallparens w$
besides some special cases.\footnote{Roughly, $\spi\smallparens w$ was known when $\pi\smallparens w\le2$
or when $w$ is a product $w=w_{1}w_{2}$ where $\spi\smallparens{w_{1}}$
and $\spi\smallparens{w_{2}}$ are known and $w_{1}$, $w_{2}$ do
not have any letters in common.}%
{} The paper \cite[Theorem~A]{Wilton_rationality_theorem} shows that
$\spi\smallparens w$ is in fact rational and computable, the computation
requires double-exponential running time, and is to the best of our
knowledge intractable in practice even for relatively short words.%
{} Additionally, from \cite{Louder_Wilton_2017,helferwise} it follows
that $\spi\smallparens w\ge1$ whenever $w$ is not a proper power,
and from \cite[Theorem~1.16]{Louder2018} together with \cite{Wilton_rationality_theorem}
it follows that $\spi\smallparens w>1$ whenever $\pi\smallparens w\ge3$.

\subsection{\label{subsec:intro:Alternating-structures-and-new-results}Alternating
structures and new results}

We generalize stackings and bislim structures into $k$-alternating
stackings and $k$-alternating bislim structures. Recall the definition
of partial stackings (Definition \ref{def:partial_stackings_combinatorial_definition}).
We generalize it as follows:
\begin{defn}
\label{def:alternating}Let $w$ be a word, $S_{w}$ be a cycle graph
spelling $w$, and let $\preceq$ be a partial stacking of $w$. We
say that an edge $e$ of $S_{w}$ is \emph{high} if $\iota\smallparens e\succ\iota\largeparens{e'}$
for each other edge $e'$ of $S_{w}$ with the same label as $e$.
Respectively, $e$ is \emph{low} if $\iota\smallparens e\prec\iota\smallparens{e'}$
for each other edge $e'$ with the same label as $e$.

We say that a partial stacking $\preceq$ of $w$ is $\mathemph k$\emph{-alternating}
if there are $k$ high edges and $k$ low edges that appear in an
alternating order around $S_{w}$.
\end{defn}

For example, the stacking of the word $w=a^{2}b^{2}c^{3}$ shown in
Figure \ref{fig:stacking} is a $2$-alternating stacking. If $w$
is a word over an alphabet $\Sigma$, it follows that $k\le\left|\Sigma\right|$,
and in fact, we will show momentarily that $k\le\left|\Sigma\right|-1$.
Every (non-partial) stacking necessarily has at least one high edge
and one low edge, and so is $1$-alternating. Thus every word $w$
which is not a proper power admits a $1$-alternating stacking by
Theorem \ref{thm:louder_wilton_non_power_words_admit_stackings}.

We similarly generalize bislim structures into $k$-alternating bislim
structures in Section \ref{sec:Alternating-bislim-structures} (Definitions
\ref{def:bislim_structure_for_good_complex} and \ref{def:alternating_heightened_complex}).
We note that $k$-alternating partial stackings are equivalent to
$k$-alternating bislim structures, as we prove in an upcoming work
\cite{partial_stackings_and_bislim_structures}. We show that $k$-alternating
partial stackings and $k$-alternating bislim structures can be used
to strengthen the $w$-cycles theorem as follows:
\begin{thm}
\label{thm:w-cycle-theorem-alternating}Let $\Gamma$ be a $\Sigma$-labeled
core graph, and let $w$ be a cyclically reduced word admitting a
$k$-alternating partial stacking (or a $k$-alternating bislim structure).
Assume that each edge of $\Gamma$ is covered at least twice by $w$-cycles.
Then
\[
k\cdot\num{\Gamma}\ \le\ -\chi\smallparens{\Gamma}.
\]
\end{thm}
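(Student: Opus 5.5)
We follow the Louder--Wilton stacking argument and sharpen its counting step. Let $\ell_{1},\dots,\ell_{m}$ be the $w$-cycles of $\Gamma$, with degrees $d_{1},\dots,d_{m}$, so $\num{\Gamma}=\sum_{j}d_{j}$. Form the disjoint union $Y=\bigsqcup_{j}S_{w^{d_{j}}}$ of cycle graphs and the immersion $f:Y\to\Gamma$ sending each $S_{w^{d_{j}}}$ onto $\ell_{j}$. By the covering hypothesis, $f$ is surjective on edges and every edge of $\Gamma$ has at least two preimages.

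We first transport the partial stacking $\preceq$ of $w$ to a partial order on the vertex set of $Y$ lying over each vertex of $\Gamma$. Each component $S_{w^{d_{j}}}$ covers $S_{w}$ with degree $d_{j}$, so its vertices inherit the order from $S_{w}$, compared via their positions in $S_{w}$. For vertices in different components, or in the same component over the same vertex of $\Gamma$, we use the standard lifting construction (as in \cite{Louder_Wilton_2017}): pull back along the universal cover. The tree $\universalCover{\Gamma}$ lets us compare two lifted $w$-lines that share a vertex by following them until they diverge. The condition $\iota\smallparens{e_{1}}\preceq\iota\smallparens{e_{2}}\iff\tau\smallparens{e_{1}}\preceq\tau\smallparens{e_{2}}$ guarantees the comparison is consistent along common segments. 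This produces, for each edge $\epsilon$ of $\Gamma$, a partial order on its preimage edges $f^{-1}\smallparens{\epsilon}$.

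The key notion is the following: call an edge $\tilde e$ of $Y$ \emph{$\Gamma$-high} (resp.\ \emph{$\Gamma$-low}) if it is maximal (resp.\ minimal) in $f^{-1}\smallparens{f\smallparens{\tilde e}}$. An edge of $Y$ covering a high edge of $S_{w}$ is a natural candidate to be $\Gamma$-high. We then count as follows:
\begin{itemize}
\item Each edge $\epsilon$ of $\Gamma$ has at least one maximal and one minimal preimage. Because $\epsilon$ has at least two preimages, these are distinct edges whenever the order on $f^{-1}\smallparens{\epsilon}$ is total. For partial orders we choose one maximal and one minimal element, and need them distinct.
\item Delete from $Y$ a set $H$ of edges consisting of one chosen top and one chosen bottom preimage per edge of $\Gamma$. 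The plan is to show that the resulting graph $Y\setminus H$ immerses into $\Gamma$ as a ``doubly peeled'' complex.
\item Then $\left|E\smallparens{\Gamma}\right|\cdot2\le\left|H\right|$-type inequalities combine with $\chi\smallparens{Y}=0$ to give $-\chi\smallparens{\Gamma}\ge$ the number of components of $Y\setminus H$ minus a correction.
\item Each component $S_{w^{d_{j}}}$ contains at least $k d_{j}$ high edges and $k d_{j}$ low edges of $S_{w}$-type, alternating around the cycle. Whenever those map to $\Gamma$-top and $\Gamma$-bottom edges, removing them cuts the cycle into at least $2kd_{j}$ arcs.
\end{itemize}

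Concretely, we aim for the inequality $\sum_{j}(\text{number of arcs of }\ell_{j}\text{ after removing top/bottom edges})/2\le-\chi\smallparens{\Gamma}$. This should come from an Euler characteristic count of the form $\chi\smallparens{\Gamma}\le\chi\smallparens{\text{arcs}}-\left|\text{arcs}\right|/2$: each arc is a path with $\chi=1$, and gluing arcs along the edges where each edge of $\Gamma$ is doubly covered loses Euler characteristic. The alternation is exactly what guarantees that arcs between a high and a low edge are separated, so that each cycle contributes $k d_{j}$ rather than $d_{j}$.

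The main obstacle is showing that the edges of $Y$ lying over high edges of $S_{w}$ are genuinely maximal among \emph{all} preimages of their image edge in $\Gamma$. This includes preimages coming from other $w$-cycles, and from the same cycle at other positions if $w$ has repeated subwords. Here we would use the defining property of high edges: $\iota\smallparens e\succ\iota\smallparens{e'}$ for every other edge $e'$ of $S_{w}$ with the same label. Since every preimage of a given edge of $\Gamma$ has the same label, the comparison reduces to comparing the underlying edges of $S_{w}$, provided the lifted order is compatible with the $S_{w}$-order. We expect the subtle part to be proving this compatibility, and ensuring that high and low preimages of the same $\Gamma$-edge are never identified. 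If the direct stacking argument becomes messy, we would instead phrase it through the bislim structure of Section \ref{sec:Alternating-bislim-structures}. There, the corresponding statement is that the $k$ alternating high/low pairs give $k$ disjoint ``slim'' collapses per $w$-cycle, and we would run the Helfer--Wise counting $k$ times.
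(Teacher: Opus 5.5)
Your proposal has a genuine gap. The counting step, which carries all the content of the theorem, is never carried out, and the version you do write down is false.

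\begin{itemize}
\item \textbf{The stated target fails.} Suppose you delete one top and one bottom preimage of every edge of $\Gamma$. As soon as $k\ge1$, every circle loses at least one edge, so it is cut into exactly as many arcs as edges removed from it. The total number of arcs is therefore $2|E(\Gamma)|$, and your target inequality reads $|E(\Gamma)|\le|E(\Gamma)|-|V(\Gamma)|$, which never holds.
\item \textbf{The proposed mechanism has the wrong sign.} You suggest $\chi(\Gamma)\le\chi(\text{arcs})-|\text{arcs}|/2$. Since each arc has $\chi=1$, the right-hand side is $+|\text{arcs}|/2$, which gives nothing.
\item \textbf{The alternative reading is circular.} If you delete only the edges over high and low edges of $S_w$, you get $2k\#_{w}(\Gamma)$ arcs and the target is just the theorem restated. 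Cutting circles into arcs is trivial and says nothing about $\chi(\Gamma)$.
\item \textbf{The fallback does not add up.} Running the Helfer--Wise count $k$ times bounds the same quantity $-\chi(\Gamma)$ by $\#_{w}(\Gamma)$ each time, and these bounds do not sum.
\end{itemize}

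The missing idea is the paper's Lemma \ref{lem:no_tree_islands} together with Theorem \ref{thm:cycle_counting_complex_version_single_complex}. These are applied to the 2-complex with 1-skeleton $\Gamma$ and one 2-cell per $w$-cycle, as in Theorem \ref{thm:the_w_cycle_theorem_extended}.

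\begin{itemize}
\item \textbf{Remove only the high edges.} Let $A$ be the set of edges of $\Gamma$ that are images of edges lying over high edges of $S_w$. Each such preimage is the strict top of its fiber, so $|A|=k\#_{w}(\Gamma)$. Hence $\chi(\Gamma)=\chi(\Gamma-A)-k\#_{w}(\Gamma)$, and everything reduces to showing that no component $T$ of $\Gamma-A$ is a tree.
\item \textbf{This is where acyclicity and alternation enter.} Every edge adjacent to $T$ but not in $T$ lies in $A$. Consider the maximal segments of $w$-cycles through $T$; these are the 2-cells of the complex $M$ of Lemma \ref{lem:neighborhood_technical_lemma}. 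Acyclicity of the ``lies above'' relation gives a topmost segment. Its two boundary edges lie over $A$ and must be the high preimages, since otherwise the high preimage lies above it.
\item \textbf{The contradiction.} Alternation forces a low edge strictly between those two high edges, and it lies over an edge of $T$. That edge is covered twice, so another segment lies above there, contradicting topmost-ness.
\end{itemize}

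Your proposal never formulates this lemma. In the stacking language you could obtain the needed acyclicity over the tree $T$ directly, from the Helly property, chordality of intersection graphs of subtrees, and transitivity of the fiber orders. The paper instead gets it from the bislim hypothesis via Theorem \ref{thm:bislim_equivalent_condition_simply_connected_no_internal_2_cell}.

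Conversely, the step you flag as the main obstacle is immediate. Order each fiber over $\Gamma$ by pulling back along the map of your union of circles to $S_w$. This is a partial order because two distinct points over the same vertex of $\Gamma$ have distinct images in $S_w$; this follows from $\Gamma$ being a core graph and $w$-cycles not being repetitions, which is exactly the near-immersion property. Consequently, an edge over a high edge of $S_w$ is automatically strictly above every other preimage of its image. No comparison of lines in $\widetilde{\Gamma}$ is needed.
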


As an example, if we take the graph $\Gamma$ from Figure \ref{fig:core_graph},
the word $w=a^{2}b^{2}c^{3}$ and its $2$-alternating stacking from
Figure \ref{fig:stacking}, we have equality in Theorem \ref{thm:w-cycle-theorem-alternating}.
Theorem \ref{thm:w-cycle-theorem-alternating} bounds $\spi$ from
below, the same way that $\spi\smallparens w\ge1$ for non powers
$w$ follows from Theorem \ref{thm:the_w_cycle_theorem} (see the
discussion preceding \cite[Theorem~4.4]{Puder2023}):
\begin{cor}
\label{cor:alternating_bislim_structures_bound_stable_primitivity_rank}If
a word $w$ admits a $k$-alternating partial stacking, then $\spi\smallparens w\ge k$.
\end{cor}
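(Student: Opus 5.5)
The plan is to unfold the definition of the stable primitivity rank and apply Theorem~\ref{thm:w-cycle-theorem-alternating} directly, exactly as the bound $\spi\smallparens w\ge1$ follows from Theorem~\ref{thm:the_w_cycle_theorem}.

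Recall the definition from \cite[Definition~10.6]{wilton_curvature_invariants}, in the form used in \cite{Puder2023}. We have $\spi\smallparens w=\inf\frac{-\chi\smallparens{\Gamma}}{d}$, where the infimum runs over pairs consisting of:
\begin{itemize}
\item a finite connected $\Sigma$-labeled core graph $\Gamma$;
\item a finite collection $\mathcal{C}$ of $w$-cycles in $\Gamma$ (with multiplicities), of total degree $d\ge1$, such that every edge of $\Gamma$ is traversed at least twice by the cycles of $\mathcal{C}$.
\end{itemize}
Pairs in which some edge is traversed fewer than twice are excluded, since they correspond to the primitive, or ``inessential'', cases. The infimum of the empty set is taken to be $\infty$.

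So it suffices to show that $-\chi\smallparens{\Gamma}\ge k\cdot d$ for every admissible pair $\smallparens{\Gamma,\mathcal{C}}$. Fix such a pair. I will compare the degree $d$ of $\mathcal{C}$ with the quantity $\num{\Gamma}$ appearing in Theorem~\ref{thm:w-cycle-theorem-alternating}.

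The definition of $\spi$ is phrased with multisets of $w$-cycles, whereas $\num{\Gamma}$ counts each distinct $w$-cycle once, weighted by its degree. I would therefore proceed in two steps.

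\textbf{Step 1: reduce to a set of distinct cycles.} If a cycle appears in $\mathcal{C}$ with multiplicity $m>1$, replacing it by a single copy lowers $d$ and may destroy the covering condition. So this reduction must be done carefully, or else avoided.

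\textbf{Step 2: avoid the reduction by passing to covers.} The cleaner route is to replace $\Gamma$ by a finite cover $\widehat{\Gamma}\to\Gamma$ of degree $N$. In such a cover the $m$ copies of a cycle lift to distinct $w$-cycles, which is standard, e.g.\ via the $N$ lifts in a suitable cyclic cover. The Euler characteristic and the degree both multiply by $N$, and the double-covering condition is preserved for the lifted collection. Alternatively, I expect the paper's proof of Theorem~\ref{thm:w-cycle-theorem-alternating} (via bislim structures on the associated $2$-complex) to work verbatim for multisets of $w$-cycles attached as $2$-cells, so that it directly gives $k\cdot d\le-\chi\smallparens{\Gamma}$.

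Once the cycles of $\mathcal{C}$ are distinct, the argument concludes quickly. Every edge is covered at least twice by $\mathcal{C}$, hence at least twice by all $w$-cycles of $\Gamma$, so Theorem~\ref{thm:w-cycle-theorem-alternating} applies. Since $\num{\Gamma}\ge d$, it gives
\[
k\,d\;\le\;k\cdot\num{\Gamma}\;\le\;-\chi\smallparens{\Gamma}.
\]
There is one more subtlety: $\num{\Gamma}$ counts all $w$-cycles, not only those in $\mathcal{C}$. This is harmless, because the inequality only uses $\num{\Gamma}\ge d$.

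Dividing by $d$ and taking the infimum over all admissible pairs yields $\spi\smallparens w\ge k$. The only real obstacle is the multiplicity issue in Step~1, i.e.\ making sure the hypotheses of Theorem~\ref{thm:w-cycle-theorem-alternating} match the class of objects in the definition of $\spi$. I would handle it by the covering trick of Step~2, or by noting that the theorem's proof is insensitive to repeated relators.
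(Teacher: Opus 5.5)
Your overall strategy, unfolding the definition of $\spi$ and applying Theorem~\ref{thm:w-cycle-theorem-alternating} to every admissible graph, is exactly the paper's argument. Your closing paragraph is essentially the whole proof. The problem is the definition you start from: if $\mathcal{C}$ is allowed to contain repeated copies of a $w$-cycle, the statement is false. Take $\Gamma=S_w$ and let $\mathcal{C}$ be its unique $w$-cycle taken twice. Every edge is traversed twice, $d=2$ and $\chi(\Gamma)=0$, so your infimum would be $\le 0$ for every $w$. Any correct definition of $\spi$ (\cite[Definition~4.1]{Puder2023}, or Wilton's condition that no two edges of $\mathbb{S}$ have the same image in both $W$ and $\Gamma$) therefore uses \emph{distinct} $w$-cycles counted by degree. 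There, ``covered twice'' means covered by two distinct $w$-cycles or twice by a single one, which is precisely the hypothesis of Theorem~\ref{thm:w-cycle-theorem-alternating}.

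For the same reason, both repairs in your Step~2 are false, not merely unnecessary. A finite cover does not separate duplicates. In a core graph a $w$-cycle is determined by any vertex at which a reading of $w$ starts. So the full preimages of the $m$ copies are the same set of lifts, and the lifted collection again has multiplicity $m$. The bislim argument also does not survive repeated $2$-cells. Gluing two $2$-cells along the same $w$-cycle makes $Y\to X$ fail to be a branched near-immersion, since both sides of each edge map to the same side of $X$; the paper stresses that this condition is essential. In the example above $Y$ is a $2$-sphere with $\chi(Y^{(1)})=0>-2k$, so the inequality genuinely fails. The fix is to drop Steps~1--2 and use the correct definition. Then Theorem~\ref{thm:w-cycle-theorem-alternating} applies directly to every admissible $\Gamma$ and gives $-\chi(\Gamma)/\num{\Gamma}\ge k$, hence $\spi(w)\ge k$.
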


We also apply Theorem \ref{thm:w-cycle-theorem-alternating} to the
results of \cite{cassidy} and \cite{MageeDeLaSalle2024}, and obtain
the following:
\begin{thm}
\label{thm:alternating_both_cases}Let $w$ be a word admitting a
$k$-alternating partial stacking (or a $k$-alternating bislim structure)
and let $\psi_{n}$ be an irreducible stable character of $S_{n}$
or $\U n$. Then
\[
\expectation{\psi_{n}}=O\largeparens{\smallparens{\dim\psi_{n}}^{-k}}.
\]
\end{thm}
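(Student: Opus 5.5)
The plan is to follow the pipeline of \cite{cassidy} for $S_{n}$ and of \cite{MageeDeLaSalle2024} for $\U n$. In both, a representation-theoretic expansion writes $\expectation{\psi_{n}}$ as a finite (for fixed $\psi$) signed combination of terms indexed by combinatorial objects: core graphs $\Gamma$ together with collections of $w$-cycles (or, for $\U n$, matchings/Weingarten data assembled into such graphs). Each term contributes at most $O(n^{\chi(\Gamma)})$, up to factors depending only on $\psi$ and $w$. In those papers the $w$-cycle theorem is applied to every graph that appears, giving $\chi(\Gamma)\le-\num{\Gamma}$. Since the relevant $\num{\Gamma}$ is comparable to the degree $d$ (respectively $k+\ell$) of the stable character, this yields the $O\smallparens{(\dim\psi_{n})^{-1}}$ bound for $S_{n}$, using $\dim\psi_{n}=\Theta(n^{d})$.

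I would substitute Theorem \ref{thm:w-cycle-theorem-alternating} at exactly this point. First I would check its hypotheses: each graph arising in the expansion should be a core graph whose edges are covered at least twice by $w$-cycles, or it should reduce to one after removing edges covered once. Removing an edge covered once by a $w$-cycle deletes one edge and one $w$-cycle without decreasing $-\chi$ below what is needed. This is the same reduction Cassidy uses, and Theorem \ref{thm:w-cycle-theorem-alternating} has the same hypotheses as Theorem \ref{thm:the_w_cycle_theorem}, so this step should carry over verbatim. The theorem then gives $\chi(\Gamma)\le-k\cdot\num{\Gamma}$. Here $\num{\Gamma}$ equals the number of copies of $w$ involved, namely $d$ for $\psi_{n}=\chi^{\lambda^{+}(n)}$ with $\lambda\vdash d$, and $k+\ell$ for $s_{\lambda,\mu}^{(n)}$. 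Each term is therefore $O(n^{-kd})=O\smallparens{(\dim\psi_{n})^{-k}}$. Summing finitely many such terms, with the number depending on $\psi$ and $w$ only, gives the claim.

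The main obstacle is the $\U n$ case. In \cite{MageeDeLaSalle2024} the exponent $1/6$ comes from a lossy argument: Weingarten function asymptotics and a count of the number of terms whose growth in $n$ must be controlled, using only a primitive form of the $w$-cycle theorem. To obtain exponent $k$, I would first redo their argument with the full $w$-cycle theorem, as the footnote in the introduction suggests. The aim is to show that every surviving term has order $n^{\chi}$ with $\chi$ the Euler characteristic of a core graph covered twice, after absorbing Weingarten corrections of order $n^{-|\sigma^{-1}\tau|}$ into $\chi$. The cancellations coming from projecting onto the irreducible $s_{\lambda,\mu}$ should be handled the way Cassidy handles them for $S_{n}$, via a Möbius inversion over partitions that kills the graphs with edges covered once. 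Only after this bookkeeping is in place would I apply Theorem \ref{thm:w-cycle-theorem-alternating}, which then gives $O(n^{-k(k'+\ell)})$ where $\lambda\vdash k'$ and $\mu\vdash\ell$.
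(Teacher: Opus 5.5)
The step that fails is your reduction ``remove an edge covered once together with its $w$-cycle.'' It is harmless only when $k=1$. Suppose $e$ is covered once, by a $w$-cycle of degree $m\ge 1$. Deleting $e$ raises $\chi$ by exactly $1$ but lowers the count by $m$, so from $\chi(\Gamma')\le -k(N-m)$ you only recover $\chi(\Gamma)\le -kN+(km-1)$. For $k\ge 2$ every deletion costs at least one power of $n$, so the alternating gain is lost exactly where you need it. Such singly covered edges do occur in Cassidy's graphs (Figure~\ref{fig:W_Gamma}).

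The paper goes the other way: it glues more instead of deleting. First it completes each partial bijection $\pi_i$ to a full one. This costs at most $\sum_i\operatorname{del}\pi_i$ in $\chi$, which is paid for by the $-\sum_i\operatorname{del}\pi_i$ in Cassidy's exponent. Your sketch drops that term, and without completion $\Gamma(\emptyset,\pi)$ is not even a cover of $W$. Then the paper forms $H(\rho,\overline{\pi})$ by also gluing every pair of edges whose initial vertices are $\rho$-equivalent. Each such gluing identifies two edges with a common source, so $\chi$ cannot decrease, and $\chi(\Gamma(\rho,\overline{\pi}))\le\chi(H)$. In $H$ the admissibility conditions now give exactly what is needed. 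No singleton blocks means every edge is covered at least twice by the $d$-fold cover $\Gamma(\emptyset,\overline{\pi})\to W$. No two same-position vertices in a block means no two edges of that cover have the same image both in $W$ and in $H$.

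Second, Theorem~\ref{thm:w-cycle-theorem-alternating} is the wrong form to invoke. $H$, like $\Gamma(\rho,\pi)$ itself, is generally not a core graph, and the closed paths come as a degree-$d$ cover $\mathbb{S}\to W$ rather than as $d$ distinct $w$-cycles. Folding to a core graph can merge two lifts into one $w$-cycle, which drops $\#_{w}$ below $d$ and breaks double coverage. The paper instead applies Theorem~\ref{thm:w_cycles_version_with_S}. This is Theorem~\ref{thm:w_cycles_complex_version_two_complexes} for the branched near-immersion $Y\to X$ with $Y^{(1)}=H$ and one 2-cell per component of $\mathbb{S}$, and its hypotheses are precisely the two consequences above.

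The same version also makes the $U(n)$ case short, so you do not need a Weingarten-level rework. \cite[Proposition~11.1]{MageeDeLaSalle2024} already bounds every term by $n^{\chi(\Sigma)}$; the $\frac{1}{6}$ loss comes only from their Proposition~11.2. The ribbon graph $R\simeq\Sigma$ carries $w$-cycles of total degree $\deg\psi$ covering each edge exactly twice, and the forbidden-matchings condition gives the near-immersion hypothesis. Since $R$ need not be a core graph, the $\mathbb{S}$-version is again what applies, giving $\chi(\Sigma)=\chi(R)\le-k\cdot\deg\psi$.
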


This leaves open the question of the existence of alternating partial
stackings, and therefore, the applicability of these results. We say
that a property holds for a \emph{generic} word $w$ over a finite
alphabet $\Sigma$ if the probability that the property holds for
a uniformly random word $w$ of length $\ell$ tends to $1$ as $\ell$
tends to infinity.
\begin{thm}
\label{thm:generic_bislim_structures_one_relator_case}A generic word
$w$ over an alphabet $\Sigma$ admits a $\smallparens{\left|\Sigma\right|-1}$-alternating
partial stacking (and a $\smallparens{\left|\Sigma\right|-1}$-alternating
bislim structure).
\end{thm}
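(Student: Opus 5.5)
We construct an explicit partial order on the vertices of $S_{w}$ from the statistics of a random word, and read off $\left|\Sigma\right|-1$ high and low edges in alternating order.

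\textbf{Construction.} Write $\Sigma=\{a_{1},\dots,a_{r}\}$ and $\ell=|w|$. Lift $S_{w}$ to its universal cover, a bi-infinite line. Record the position of each vertex $v$ by the abelianized walk $\phi(v)\in\mathbb{Z}^{r}$, taken along the lift from a basepoint; on the cycle itself this is defined up to the drift $\phi(w)$. For a suitable linear functional $f:\mathbb{R}^{r}\to\mathbb{R}$, declare $u\prec v$ when $f(\phi(u))+c<f(\phi(v))$, with a margin $c$ to be chosen. The idea is that edges with the same label $a_{i}$ shift $f\circ\phi$ by the same amount $f(e_{i})$. Hence a comparison between the sources of two $a_{i}$-edges holds exactly when the same comparison holds between their targets. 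This gives the compatibility condition of Definition \ref{def:partial_stackings_combinatorial_definition} automatically, except on the cyclic wrap-around.

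\textbf{Handling the wrap-around.} The path does not close up in $\mathbb{Z}^{r}$, so the functional cannot be used naively. I would instead fix one cut point, take $\phi$ along the resulting linear segment, and check that comparisons straddling the cut are never forced. For a generic word, the random walk $\phi$ has fluctuations of order $\sqrt{\ell}$. Choosing $c$ of order $\ell^{1/2+\epsilon}$ makes the order sparse enough that consistency holds, yet leaves it rich enough to distinguish the extremal edges. Alternatively, one could use several functionals $f_{1},\dots,f_{r-1}$ and take the intersection of the resulting orders, which is again a partial order satisfying the iff condition.

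\textbf{Producing the alternation.} The goal is, for each $i\le r-1$, a high $a_{i}$-edge and a low edge, interleaved around the cycle. I would use a segment-based scheme: partition $w$ into $r-1$ consecutive arcs, and in arc $j$ place a strict global maximum of a chosen coordinate combination followed by a strict global minimum. Generic random walk estimates supply such excursions. A local-limit or invariance-principle argument, via the Brownian bridge approximation of the $r$-dimensional walk, shows that with probability tending to $1$ the relevant maxima and minima are unique and exceed all competitors by more than $c$. This makes the corresponding edge high or low in the sense of Definition \ref{def:alternating}.

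\textbf{Bislim version.} The parenthetical claim about $(|\Sigma|-1)$-alternating bislim structures would then follow from the equivalence cited from \cite{partial_stackings_and_bislim_structures}. Failing that, I would build the bislim structure directly on the same data.

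\textbf{Main obstacle.} The hardest step is to get $r-1$ \emph{distinct letters} realizing extremal edges simultaneously in alternating order. Each letter can contribute at most one high edge, and the walk must separate them. The bound $k\le|\Sigma|-1$ suggests that one letter must be sacrificed, which is precisely what the wrap-around correction absorbs. I would first try projecting onto the $(r-1)$-dimensional subspace orthogonal to $\phi(w)$. There the walk becomes a closed bridge, which eliminates the wrap-around issue. I would then use letter-specific directions $f_{i}$, so that the $a_{i}$-edges are sorted primarily by $f_{i}$, and combine these orders lexicographically.
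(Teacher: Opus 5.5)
There is a genuine gap. The mechanism you propose typically produces only a $1$-alternating order, and none of the modifications you suggest fixes this.

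The wrap-around and the margin are not where the difficulty lies. If $f(\phi(w))=0$, then $h=f\circ\phi$ is well defined on $S_{w}$, and every $a_{i}$-edge changes $h$ by the same amount. So $u\prec v\iff h(u)<h(v)$ already satisfies the condition of Definition \ref{def:partial_stackings_combinatorial_definition} exactly, with no margin needed. Conversely, a margin $c\sim\ell^{1/2+\epsilon}$ exceeds the entire range of $h$, which is $O(\sqrt{\ell\log\ell})$ with high probability. Such a margin makes all vertices incomparable and leaves no high or low edges at all.

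The real problem is what a single such height function gives. The high $a_{i}$-edge is the $a_{i}$-edge whose source maximizes $h$. In a generic word every letter occurs within $O(\log\ell)$ of every position, while $h$ fluctuates on scale $\sqrt{\ell}$. So all these maxima fall in the near-maximal region of $h$, typically one window around its argmax, and all low edges fall near the argmin. Around the cycle you therefore see $H\cdots H\,L\cdots L$, which is alternation degree $1$.

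Your proposed fixes do not help:
\begin{itemize}
\item Intersecting several such orders makes high edges rarer, since a source would have to be maximal for every $f_{j}$ at once.
\item A lexicographic combination is sorted primarily by its first functional, so it collapses to the single-height case.
\item Letter-specific sorting (comparing $a_{i}$-sources by $f_{i}$ and $a_{j}$-sources by $f_{j}$) is what your arc-by-arc extrema scheme needs, but it does not define a partial order. The $\Theta(\ell)$ vertices at occurrences of $a_{j}^{-1}a_{i}$ are sources of both an $a_{i}$-edge and an $a_{j}$-edge. Unless $f_{i},f_{j}$ are positively proportional (in which case nothing is gained), two of these vertices are ordered oppositely, violating antisymmetry. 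Moreover, the transitive closure of the required comparisons would itself have to satisfy the iff condition, and you never check this.
\end{itemize}

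That consistency check is the actual content of the theorem, and the paper works in the opposite direction from you. It imposes the alternation directly by choosing extremal places $H,L$: $r-1$ each, alternating, with distinct letters, and with one neutral letter $e_{*}$ reserved (this is the sacrificed letter). It then proves the heightened complex is bislim via the relaxed criterion (Theorem \ref{thm:full-criterion}):
\begin{itemize}
\item each extremal letter is flanked by runs $e_{*}^{\mp(\ell_{b}-1)}$ (the blockers);
\item remainders share only short subwords with the relators;
\item a minimal-counterexample argument on reduced disk diagrams, using the enlarged graph $\tilde{\Gamma}$, counts sided pieces against leaves of the internal tree.
\end{itemize}
The probabilistic input is only that such a rare local configuration, of probability $\ell_{r}^{-O(1)}$ at a given placement, occurs at one of $\Omega(\ell/\ell_{r})$ well-separated, nearly independent candidate placements. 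The partial stacking then comes from the equivalence in \cite{partial_stackings_and_bislim_structures}.

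In your approach consistency is free but alternation is unattainable. What is missing is a way to place the extremal edges locally, together with an argument (like the disk-diagram acyclicity) that the comparisons they force never close into a cycle.
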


An alternation degree of $\left|\Sigma\right|-1$ is optimal: by letting
$\Gamma$ be the bouquet of circles with one edge for each element
of $\Sigma$ in Theorem \ref{thm:w-cycle-theorem-alternating} we
obtain that $k\le\left|\Sigma\right|-1$, unless some letter of $\Sigma$
occurs at most once in $w$. In that case, there cannot be $2\left|\Sigma\right|$
high and low edges: there is at most one high and one low edge for
each label in $\Sigma$, and that label has at most one corresponding
edge. 

Additionally, by \cite{Puder2011} $\spi\smallparens w\le\pi\smallparens w-1\le\left|\Sigma\right|-1$,
unless $w$ belongs to a free basis of the free group generated by
$\Sigma$, in which case $s\pi\smallparens w=\infty$. Since a generic
word does not belong to a free basis (\cite{Burillo2002}), Theorem
\ref{thm:generic_words} follows directly from Corollary \ref{cor:alternating_bislim_structures_bound_stable_primitivity_rank}
and Theorems \ref{thm:alternating_both_cases} and \ref{thm:generic_bislim_structures_one_relator_case}.

\subsection{\label{subsec:intro:Organization}Organization}

In Section \ref{sec:Alternating-bislim-structures} we introduce $k$-alternating
bislim structures (Definitions \ref{def:bislim_structure_for_good_complex}
and \ref{def:alternating_heightened_complex}), develop their theory,
and prove the strengthened versions of the $w$-cycle theorem. In
Section \ref{sec:Existence-of-bislim-structures} we show that random
words have optimally alternating bislim structures with high probability
(Theorem \ref{thm:generic_bislim_structures_one_relator_case}). In
Section \ref{sec:Applications-to-word-measures} we prove Theorem
\ref{thm:alternating_both_cases}, bounding character expectations
on $w$-measures in the symmetric and unitary groups. In Section \ref{sec:nonpositive_and_negative_immersions}
we apply $k$-alternating bislim structures back to geometric group
theory, the field where bislim structures originated from. We show
how alternating bislim structures can be used to show that a 2-complex
$X$ has negative immersions, that it is hyperbolic and coherent,
and to bound a curvature invariant of $X$ due to Wilton which generalizes
$\spi$. In Section \ref{subsec:Algorithms} we discuss how alternating
bislim structures can be efficiently computed, and therefore can be
used to certify that a 2-complex $X$ is hyperbolic and has negative
immersions. In Section \ref{subsec:Computational-experiments} we
use this to algorithmically prove Theorem \ref{thm:conjecture_holds_for_small_words}.

Our definition of bislim structures (Definition \ref{def:bislim_structure_for_good_complex})
is different than the original definition in \cite[Definition~2.1]{helferwise}.
Appendix \ref{sec:equivalence-to-bi-slim} shows that the definitions
are equivalent.

\section*{Acknowledgments}

I am deeply grateful for my advisor, Prof.\ Doron Puder, for his
invaluable guidance, deep insight, and for the considerable time and
effort devoted to improving this paper. I am sincerely grateful to
Yotam Shomroni for our ongoing collaboration and the many deep discussions
we have had. I also thank Prof. Daniel T. Wise for his insights and
advice. Finally, I thank my father for his encouragement and support.
This work was supported by the European Research Council (ERC) under
the European Union’s Horizon 2020 research and innovation programme
(grant agreement No 850956) as well as by the Israel Science Foundation,
ISF grant 1140/23.

\section{\label{sec:Alternating-bislim-structures}Alternating bislim structures}

We start working our way to defining bislim structures. Most of the
content in this section is adapted from Helfer and Wise \cite{helferwise},
which introduced the original definition of bislim structures.

\subsection{\label{subsec:Combinatorial-complexes-and}Combinatorial complexes
and maps}

\begin{wrapfigure}[17]{O}{0.35\columnwidth}%
\begin{centering}
\includegraphics[width=0.35\textwidth]{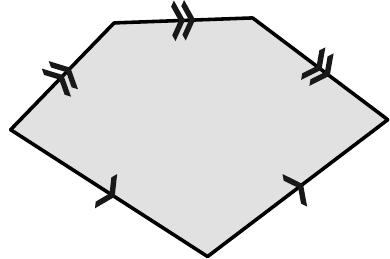}
\par\end{centering}
\caption{\label{fig:presentation_complex}Consider the presentation $\left\langle x,y\vert x^{2}y^{-3}\right\rangle $.
Its presentation complex is given by gluing edges with the same label
on the diagram. This complex is a combinatorial 2-complex.}
\end{wrapfigure}%

A 2-dimensional CW-complex (a \emph{2-complex}) $X$ is given by its
\emph{1-skeleton} $\mathemph{\skeleton X}$ which is a 1-complex (a
graph), and a collection of 2-cells $C_{i}$ which are topological
disks with an attaching map $\boundary C_{i}\to\skeleton X$ from
their boundary $\boundary C_{i}$ which is a topological circle to
the 1-skeleton. Gluing the 2-cells to $\skeleton X$ along the attaching
maps gives the 2-complex $X$.

Let $X$ be a 2-complex. We call its 0-cells \emph{vertices }and its
1-cells \emph{edges}. We denote the \emph{universal cover} of $X$
by $\mathemph{\universalCover X}$.

A map of 2-complexes $Y\to X$ is \emph{combinatorial} if every open
cell of $Y$ is homeomorphically mapped to an open cell of $X$. A
2-complex $X$ is \emph{combinatorial} if for each 2-cell $C$ of
$X$, the boundary $\boundary C$ has the structure of a 1-complex
such that the attaching map $\boundary C\to\skeleton X$ is a combinatorial
map (e.g., Figure \ref{fig:presentation_complex}). 

A map $Y\to X$ is a \emph{branched combinatorial map} if every vertex
is mapped to a vertex, every open edge is mapped homeomorphically
to an open edge, and every open 2-cell $C$ of $Y$ is mapped into
an open 2-cell $C'$ of $X$ such that the map $C\to C'$ is a branched
map, i.e., locally injective at every interior point of $C$ other
than the center, similarly to the maps $z\mapsto z^{k}$ from the
complex unit disk to itself. 

If a complex $X$ contains a 2-cell $C$ such that $\boundary C$
gives a contractible loop in $\skeleton X$, we say that $X$ is \emph{degenerate}.
All complexes in this paper are non-degenerate combinatorial 2-complexes,
and all maps of complexes in this paper are branched combinatorial
maps.

\subsubsection{Sides}

Throughout this paper we will use the notion of a \emph{side} of an
edge quite a lot. A naive but wrong definition is that a side $s$
is a tuple $\smallparens{C,e}$ of a 2-cell $C$ and an adjacent edge
$e$, see Figure \ref{fig:book_with_three_sides}. However, Figure
\ref{fig:side_edgecase} is an edgecase where two different sides
correspond to the same pair $\smallparens{C,e}$.

Formally, a \emph{side} $s$ of $X$ is an edge of $\boundary C$
for some 2-cell $C$ of $X$. We remind the reader that $\boundary C$
is \emph{not} just the set of edges which are adjacent to $C$, but
rather a topological circle that is mapped into $\skeleton X$ by
its attaching map. Since $X$ is a combinatorial complex, $\boundary C$
also has the structure of a graph. Indeed, the two different sides
drawn in Figure \ref{fig:side_edgecase} are two different edges $s_{1}$
and $s_{2}$ of $\boundary C$.

We draw sides as markings on $C$ adjacent to their edges (see Figure
\ref{fig:sides}). Each side $s$ corresponds to an edge $e$ and
a 2-cell $C$, and we say that $s$ is a side of $e$ and $e$ has
a side $s$. We also say that $s$ is a side of $C$ and $C$ has
a side $s$.

Maps of complexes also induce maps of their sides, i.e., given a map
$\phi:Y\to X$ and a side $s$ of $Y$ we have that $\phi\smallparens s$
is a side of $X$. We say that an edge of $X$ is \emph{internal}
if it has at least two sides in $X$, and a 2-cell of $X$ is \emph{internal}
if all of the edges adjacent to it are internal.

\begin{figure}
\subfloat[\label{fig:book_with_three_sides}A complex made from three rectangles
glued together on an edge $e$, which has three sides, where $s_{3}$
is hidden behind a 2-cell.]{\begin{centering}
\includegraphics[width=0.3\textwidth]{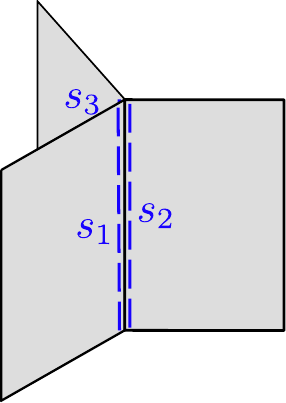}
\par\end{centering}
}\hfill{}\subfloat[\label{fig:side_edgecase}The sides $s_{1}$ and $s_{2}$ are different,
even though they are sides of the same edge and of the same 2-cell.]{\begin{centering}
\includegraphics[width=0.35\textwidth]{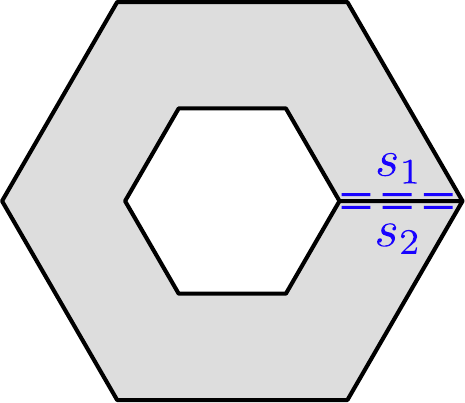}
\par\end{centering}
}\caption{\label{fig:sides}Figures demonstrating the concept of sides. Sides
are marked by blue striped lines. }
\end{figure}

A branched combinatorial map $\phi:Y\to X$ is a \emph{branched near-immersion}
if $\phi$ is locally injective at every point of an open edge (see
also \cite[Definition~5.2]{Gaster2018}). Equivalently, any two different
sides $s\neq s'$ of the same edge in $Y$ are mapped to two different
sides $\phi\smallparens s\neq\phi\smallparens{s'}$ in $X$. See Figure
\ref{fig:reduced_disk_diagram} for an example and Figure \ref{fig:non_near_immersion}
for a non-example. The map $\phi$ might not be locally injective
at vertices or at the centers of 2-cells. In this paper, almost all
maps between complexes are branched near-immersions, but this will
always be stated explicitly.

\begin{figure}
\begin{centering}
\begin{minipage}[t]{0.45\columnwidth}%
\begin{center}
\includegraphics[width=0.8\columnwidth]{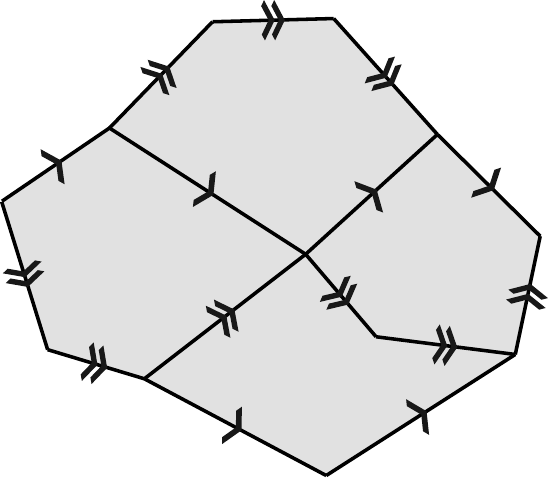}\caption{\label{fig:reduced_disk_diagram}A disk diagram $D$ with a branched
near-immersion $\phi:D\to X$ where $X$ is from Figure \ref{fig:presentation_complex},
and every edge is marked by its image in $X$. Thus $D$ is a reduced
disk diagram $D$ in $X$.}
\par\end{center}%
\end{minipage}\hfill{}%
\begin{minipage}[t]{0.45\columnwidth}%
\begin{center}
\includegraphics[width=0.8\columnwidth]{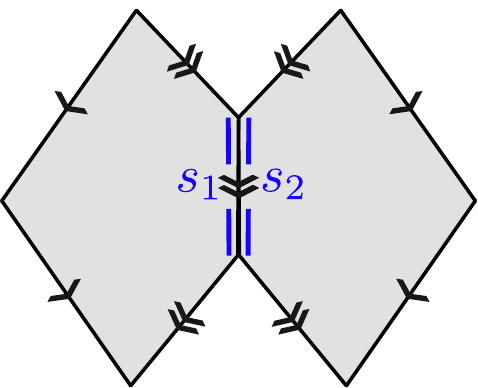}\caption{\label{fig:non_near_immersion}This complex $Y$ has a map $\phi:Y\to X$
where $X$ is from Figure \ref{fig:presentation_complex}, and every
edge is marked by its image in $X$. The map $\phi$ is not a branched
near-immersion since two sides $s_{1}$ and $s_{2}$ of the same edge
are mapped to the same side of $X$. Thus $Y$ is a non-reduced disk
diagram in $X$.}
\par\end{center}%
\end{minipage}
\par\end{centering}
\end{figure}

\subsubsection{Disk diagrams}

A \emph{disk diagram,} also known as a Van-Kampen diagram, is a compact
contractible planar 2-complex. Let $X$ be a complex. A \emph{disk
diagram} $\mathemph D$\textbf{ in }$\mathemph X$ is a disk diagram
$D$ with a combinatorial branched map $\phi:D\to X$. A disk diagram
$D$ in $X$ is \emph{reduced} if $\phi:D\to X$ is a branched near-immersion.
(See Figure \ref{fig:counterexampl_without_gamma} for an example
of a reduced disk diagram in $X$.)

This definition of a reduced disk diagram is exactly equivalent to
the standard definition of a reduced disk diagram, with the addition
of allowing branched 2-cells: given that $D$ is a standard disk diagram
in $X$ (i.e., $D$ has no branched 2-cells), two sides $s$ and $s'$
of the same edge $e$ map into the same side in $X$ if and only if
their 2-cells $C$ and $C'$ are a reduction pair (see Figure \ref{fig:non_near_immersion}).

\subsection{Bislim structures and heightened complexes}

Helfer and Wise defined bislim structures (\cite[Definition 2.1]{helferwise}).
We introduce a new definition of bislim structures (Definition \ref{def:bislim_structure_for_good_complex}
below) which gives a slightly different notion. Below, in this subsection
and in Appendix \ref{sec:equivalence-to-bi-slim} we explain the connection
between our definition and the original one. Our definition allows
us to consider alternating bislim structures (Definition \ref{def:alternating_heightened_complex}
below), which give stronger bounds in the $w$-cycle theorem (Theorem
\ref{thm:the_w_cycle_theorem_extended} below). Additionally, we find
our definition of bislim structures elegant and symmetric and thus
we believe that it is also valuable on its own.

We now begin discussing bislim structures. The data required to define
bislim structures is a heightened complex:
\begin{defn}
\label{def:heightened_complexes}We call $\heightened$ a \emph{heightened
complex }if $X$ is a 2-complex and $H$ and $L$ are disjoint sets
of sides of $X$. We call the sides in $H$ \emph{high sides}, the
sides in $L$ \emph{low sides}, and the sides in $H\cup L$ \emph{extremal
sides}. The extremal sides of a 2-cell $C$ are $H_{C}=\boundary C\cap H$
and $L_{C}=\boundary C\cap L$. Some examples are illustrated in Figure
\ref{fig:examples-of-heightened-complexes}.

We say that $\heightened$ is \emph{good}\footnote{This concept is taken from \cite{Louder_Wilton_2017}.}
if $H_{C}$ and $L_{C}$ are nonempty for every 2-cell $C$.
\end{defn}

\begin{figure}
\subfloat[\label{fig:bislim_complex}A bislim heightened complex]{\includegraphics[scale=0.66]{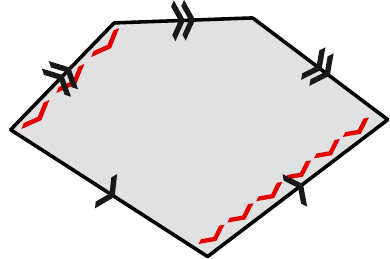}

}\hfill{}\subfloat[\label{fig:non_bislim_complex}A non-bislim heightened complex (see
Figure \ref{fig:counterexample})]{\includegraphics[scale=0.66]{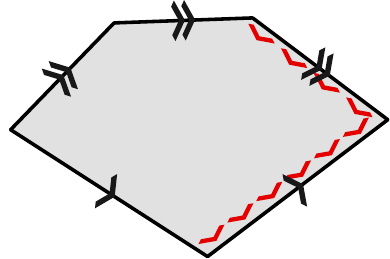}

}\hfill{}\subfloat[\label{fig:non_bislim_alternating}A 2-alternating heightened complex]{\includegraphics[scale=0.66]{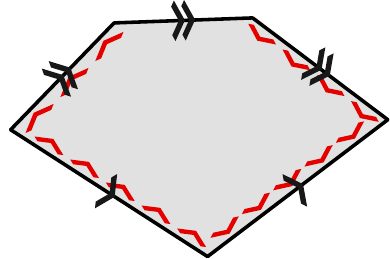}

}\caption{\label{fig:examples-of-heightened-complexes}Examples of heightened
complexes $\protect\heightened$ where $X$ is taken from Figure \ref{fig:presentation_complex}.
We mark extremal sides with red arrow like markings on the side of
the corresponding edge. The high sides $H$ point out of their cell
and the low sides $L$ point into their cell.}
\end{figure}

\global\long\def\alt{\operatorname{alt}}%

The original definition of bislim structures (\cite[Definition 2.1]{helferwise})
requires that the corresponding heightened complex is good, and almost
all heightened complexes in this paper will be good. Therefore we
focus on good heightened complexes. However, sometimes heightened
complexes in which $H_{C}$ and $L_{C}$ are empty for some 2-cells
$C$ can still give nontrivial results, e.g., in Theorem \ref{thm:w_cycles_complex_version_two_complexes}
and Section \ref{sec:nonpositive_and_negative_immersions}. 

Every branched near-immersion $\phi:Y\to X$ where $X$ is a heightened
complex $\heightened$ induces on $Y$ the structure of a heightened
complex $\heightened[Y][\phi^{-1}\smallparens H][\phi^{-1}\smallparens L]$.
\begin{example}
Let $X$ be the presentation complex of $\left\langle x,y\vert x^{2}y^{-3}\right\rangle $
as in Figure \ref{fig:presentation_complex}, which is given a heightened
structure $\heightened$ as in Figure \ref{fig:non_bislim_complex}.
In Figure \ref{fig:reduced_disk_diagram} we see a disk diagram $D$
with a branched near-immersion $\phi:D\to X$. In Figure \ref{fig:counterexampl_without_gamma}
we see the heightened complex structure on $D$ which is induced by
$\phi$.

\begin{figure}
\subfloat[\label{fig:counterexampl_without_gamma} The disk diagram $D$ from
Figure \ref{fig:reduced_disk_diagram} has a map $\phi:D\to X$ with
$X$ from Figure \ref{fig:non_bislim_complex}. The map $\phi$ induces
on $D$ a heightened complex structure, which is shown in this diagram.]{\includegraphics[scale=0.6]{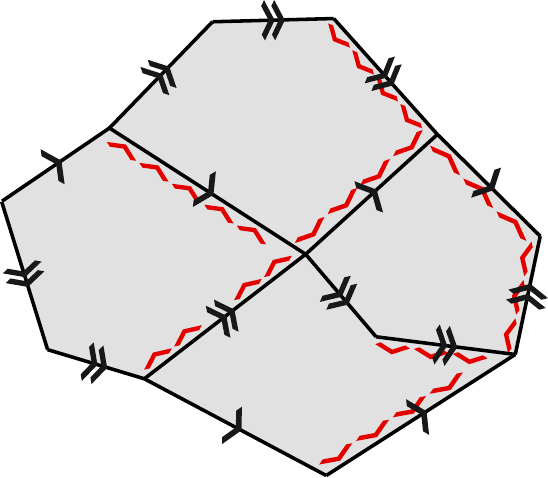}}\hfill{}\subfloat[\label{fig:counterexample} The induced graph $\protect\inducedGamma D$
(drawn in blue) for $D$ from Figure \ref{fig:counterexampl_without_gamma}.
It is induced by following the red arrows which depict the extremal
sides of $D$. Only internal extremal sides are shown since only they
induce edges in $\protect\inducedGamma D$.]{\includegraphics[scale=0.6]{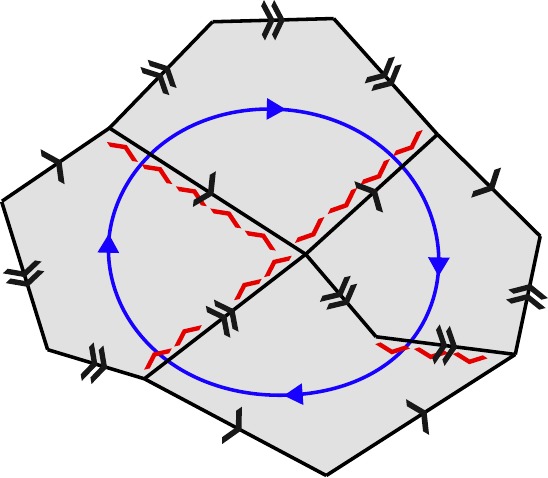}

}\caption{\label{fig:disk_diagram}A reduced disk diagram $D$ in $X$ which
shows that $\protect\heightened$ from Figure \ref{fig:non_bislim_complex}
is not bislim. Note that we mark the edges of $D$ according to their
image in $X$, and we do not glue together edges with the same marking.}
\end{figure}
\end{example}

\begin{defn}
\label{def:gamma}Let $\heightened[Y]$ be a heightened complex. Define
a directed graph\emph{ }$\mathemph{\inducedGamma Y=\inducedGamma{Y,H,L}}$
by ``following the red arrows'' as in Figure \ref{fig:counterexample}.
The vertex set of $\inducedGamma Y$ is the set of 2-cells of $Y$.
As for the edge set, roughly, high sides induce edges going out of
their 2-cells, and low sides induce edges going into their 2-cells.

More precisely, if $s_{1}\neq s_{2}$ are two sides of the same edge,
their corresponding 2-cells are $C_{1}$ and $C_{2}$,\footnote{$C_{2}$ may be equal to $C_{1}$.}
and $s_{1}$ is a high side, then $s_{1}$ induces an edge from $C_{1}$
to $C_{2}$ in $\inducedGamma Y$. If instead $s_{1}$ is low, then
$s_{1}$ induces an edge from $C_{2}$ to $C_{1}$ in $\inducedGamma Y$.
\end{defn}

This definition is inspired by \cite[Definition~4.1]{Bamberger2024}.
We are not being precise about the possibility of there being multiple
parallel edges, as this is immaterial to us --- we only care whether
$\inducedGamma Y$ is acyclic. For example, if a high side $s_{1}$
of $C_{1}$ is adjacent to a low side $s_{2}$ of $C_{2}$, they both
induce an edge from $C_{1}$ to $C_{2}$, and $\inducedGamma Y$ may
be considered to have a single edge or two parallel edges from $C_{1}$
to $C_{2}$.

A heightened complex $\heightened$ induces a graph $\inducedGamma Y$
for every complex $Y$ with a branched near-immersion $\phi:Y\to X$,
by first inducing a structure of a heightened complex $\heightened[Y][\phi^{-1}\smallparens X][\phi^{-1}\smallparens Y]$
on $Y$ and then considering the resulting graph $\inducedGamma Y$.
Additionally, $\phi$ induces a map $\phi:\inducedGamma Y\to\inducedGamma X$,
and if $\inducedGamma X$ is acyclic then $\inducedGamma Y$ is acyclic.
In particular, this applies to the universal cover $\universalCover X$
and to every reduced disk diagram $D$ in $X$ (e.g., Figure \ref{fig:counterexample}).
It is essential that $\phi$ be a branched near-immersion: otherwise,
suppose that $s\neq s'$ are sides of the same edge in $Y$ with $\phi\smallparens s=\phi\smallparens{s'}$,
as in Figure \ref{fig:non_near_immersion}, and suppose that $\phi\smallparens s$
is extremal. Then $s$ and $s'$ induce edges in $\inducedGamma Y$
which do not map to $\inducedGamma X$.

We now define bislim heightened complexes. We focus first on the specific
useful case of good heightened complexes.
\begin{defn}
\label{def:bislim_structure_for_good_complex}Let $\heightened$ be
a heightened complex. If it is a good heightened complex, we say that
$\heightened$ is \emph{bislim} if $\inducedGamma D$ is acyclic for
every reduced disk diagram $D$ in $X$.

In this case we call $\heightened$ a\emph{ bislim structure} or a
\emph{good bislim structure} (since $\heightened$ is assumed to be
good). For the case where $\heightened$ is not necessarily good,
we defer the full definition of bislim structures to Definition \ref{def:bislim-definition-general}
below. We note that it follows from Definition \ref{def:bislim_structure_for_good_complex}
that $X$ is non-degenerate, since if $\heightened$ is a good heightened
complex and $X$ is degenerate, then $\inducedGamma X$ must contain
a self loop.
\end{defn}

\begin{example}
Figure \ref{fig:counterexample} shows that the heightened complex
in Figure \ref{fig:non_bislim_complex} is not bislim.
\end{example}

We prove in Appendix \ref{sec:equivalence-to-bi-slim} that a 2-complex
$X$ admits a good bislim structure in the sense of Definition \ref{def:bislim_structure_for_good_complex}
if and only if $X$ admits a bislim structure in the sense of \cite[Definition 2.1]{helferwise}.
The main difference between the definitions is that in \cite[Definition 2.1]{helferwise}
every 2-cell $C$ of $X$ has exactly one designated ``high side''
and exactly one designated ``low side'', i.e., additional extremal
sides are not allowed. Instead, and this is the crux of our paper,
Definition \ref{def:bislim_structure_for_good_complex} enables us
to consider alternating bislim structures which have multiple high
and low sides, giving stronger results.

Additionally, the definition is symmetric to reversing height, i.e.,
switching low sides with high sides, which is hidden in \cite[Definition 2.1]{helferwise}.
Thus we believe that this definition may prove valuable even independently
of alternating bislim structures.

We note that the requirement in Definition \ref{def:heightened_complexes}
that $H$ and $L$ are disjoint is nonessential. If $H$ and $L$
have a side in common, which is internal, then they induce a cycle
in $\inducedGamma D$ for a disk diagram $D$ which consists of just
two 2-cells, so the heightened complex cannot be bislim. If $X$ has
any non-internal edge, then the corresponding 2-cell can be removed,
and most theorems (e.g., Lemma \ref{lem:no_tree_islands}) can be
proven easily.
\begin{defn}
\label{def:alternating_heightened_complex}A heightened complex $\heightened$
is \emph{alternating} if for every 2-cell $C$ of $X$, the sides
of $H_{C}\cup L_{C}$ alternate between $H_{C}$ and $L_{C}$ when
going around $\boundary C$, i.e., the cyclic ordering of $H_{C}\cup L_{C}$
in $\boundary C$ is $s_{1},\dots,s_{2d}$ where $H_{C}=\left\{ s_{1},s_{3},\dots,s_{2d-1}\right\} $
and $L_{C}=\left\{ s_{2},s_{4},\dots,s_{2d}\right\} $. In this case,
we define the \emph{alternation degree} of $C$ to be $\mathemph{\alt_{C}}=d=\left|H_{C}\right|=\left|L_{C}\right|$.
We say that $C$ is\emph{ $\mathemph k$-alternating} when $\alt_{C}\ge k$,
and we say that $\heightened$ is $\smallparens{d_{1},\dots,d_{r}}$-alternating
when the $i$'th 2-cell of $X$ is $d_{i}$-alternating. We say that
$\heightened$ is $d$-alternating if $X$ has a single 2-cell $C$
and $\alt_{C}\ge d$.

This is essentially the same concept as in Definition \ref{def:alternating},
although since here we are defining bislim structures rather than
partial stackings, the definitions are somewhat different. Figures
\ref{fig:bislim_complex}, \ref{fig:non_bislim_complex} and \ref{fig:non_bislim_alternating}
are all good, alternating and $1$-alternating heightened complexes.
Figure \ref{fig:non_bislim_alternating} is also $2$-alternating.
\end{defn}

Good bislim structures and 1-alternating bislim structures are equivalent:
any 1-alternating bislim structure is good, and any good bislim structure
$\heightened$ can be made into a 1-alternating heightened complex
$\heightened[][H'][L']$ with $H'\subseteq H$ and $L'\subseteq L$,
from which it follows that $\heightened[X][H'][L']$ is also bislim.

We give some additional equivalent definitions for bislim structures,
as follows:
\begin{thm}
\label{thm:equivalent_conditions_for_bislim_structure}Let $\heightened$
be a good heightened complex. The following conditions are equivalent:
\begin{enumerate}
\item \label{enu:bislim-definition-reduced-disk-diagram}$\heightened$
is bislim, i.e., $\inducedGamma D$ is acyclic for every reduced disk
diagram $D$ in $X$.
\item \label{enu:bislim-definition-universal-cover}$\Gamma\largeparens{\universalCover X}$
is acyclic, where $\universalCover X$ is the universal cover of $X$.
\item \label{enu:bislim-definition-most-general}$\inducedGamma Y$ is acyclic
for every branched near-immersion $\phi:Y\to X$ with $Y$ simply
connected.
\end{enumerate}
\end{thm}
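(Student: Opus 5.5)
I will prove the cycle of implications (3)$\Rightarrow$(2)$\Rightarrow$(1)$\Rightarrow$(3). The first two steps are formal. The covering map $\universalCover X\to X$ is a genuine immersion, hence a branched near-immersion, and $\universalCover X$ is simply connected; so (3) gives (2). For (2)$\Rightarrow$(1), let $D$ be a reduced disk diagram in $X$. Since $D$ is simply connected, the branched near-immersion $\phi:D\to X$ lifts to $\tilde\phi:D\to\universalCover X$. The lift is still a branched combinatorial map, and it is still injective on sides of each edge, because composing with the covering map recovers $\phi$. Hence $\tilde\phi$ is a branched near-immersion, and as noted after Definition \ref{def:gamma} it induces a graph map $\inducedGamma D\to\Gamma\largeparens{\universalCover X}$. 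A directed cycle in $\inducedGamma D$ would map to a directed closed walk in the acyclic graph $\Gamma\largeparens{\universalCover X}$, which is impossible. So $\inducedGamma D$ is acyclic.

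The real content is (1)$\Rightarrow$(3). Let $\phi:Y\to X$ be a branched near-immersion with $Y$ simply connected, and suppose $\inducedGamma Y$ has a directed cycle $C_1\to C_2\to\dots\to C_m\to C_1$. Each arrow $C_i\to C_{i+1}$ comes from a pair of distinct sides $s_i,s_i'$ of a common edge $e_i$, with $s_i\in\partial C_i$, $s'_i\in\partial C_{i+1}$, and one of them extremal (high on $C_i$ or low on $C_{i+1}$). I would build a closed path $\gamma$ in $Y$ that runs through the interiors of the cells $C_1,\dots,C_m$, crossing each $e_i$ transversally at an interior point. Since $Y$ is simply connected, $\gamma$ is null-homotopic. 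Taking a van Kampen diagram for the boundary word of the corresponding closed combinatorial path gives a disk diagram $E\to Y$. I then compose with $\phi$ and reduce.

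The reduction step is the main obstacle: the disk diagram must be reduced while still containing cells whose $\Gamma$-arrows form a cycle. My plan is to glue the "corridor" of cells $C_1,\dots,C_m$ (an annulus, or a chain of cells along $\gamma$, after subdividing multiply traversed cells) to a disk diagram filling its inner boundary, producing a disk diagram $D\to Y$. Among all such diagrams containing a lift of the cycle, I would choose one with the minimal number of 2-cells. Now suppose two sides of an edge of $D$ map to the same side of $X$, meaning $D\to X$ is not a near-immersion there. Because $Y\to X$ is a near-immersion, those two sides already map to the same side of $Y$. This folding pair can then be cancelled, with branched cells used to absorb the standard diamond-move issues; this is why branched maps are allowed in the definitions. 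The cancellation produces a smaller diagram, or it identifies parts of the corridor. In either case the arrows of the cycle survive, since each arrow is determined by sides in $Y$, which map injectively from the relevant pairs. So the image cycle persists in $\inducedGamma{D'}$ for a reduced $D'$, contradicting (1).

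An alternative I might try first, since it avoids a delicate surgery, runs as follows. Assume (1) fails for no reduced disk diagram but fails in $\universalCover X$. Take the finite subcomplex $K\subset\universalCover X$ consisting of the cycle's cells, and apply the standard fact that a finite subcomplex of a simply connected complex lies in the image of a reduced disk diagram (branched, in the sense of Gaster--Wise near-immersions). Then derive (3) from (2) by mapping $Y$ into $\universalCover X$, which is automatically a branched near-immersion on sides. For this I would restructure the implications as (1)$\Rightarrow$(2)$\Rightarrow$(3)$\Rightarrow$(1), with the single nontrivial lemma being the existence of a reduced branched disk diagram covering a given closed cell-chain in $\universalCover X$.
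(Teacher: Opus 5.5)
Your implications (3)$\Rightarrow$(2) and (2)$\Rightarrow$(1) are fine. They are the paper's lifting argument; the paper lifts an arbitrary simply connected $Y$ to $\widetilde{X}$, while you lift only $D$.

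The gap is in (1)$\Rightarrow$(3), at exactly the step you flag. Once you push $\gamma$ into $Y^{(1)}$ and glue the corridor to a filling $F$ of its inner boundary $\beta$, folds can still remain, even if you take $F$ reduced. They sit across $\beta$: a corridor cell $C_i'$ and a filling cell $R$ whose sides on a shared edge map to the same side of $Y$.

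Cancelling such a pair does not preserve the cycle. The standard cancellation deletes both $C_i'$ and $R$ and identifies their remaining boundary arcs. So $C_i'$, through which the cycle ran, disappears. Also, $C_{i-1}'$ becomes glued, along its edge over $e_{i-1}$, to the cell $R_1$ that lay beyond the corresponding edge of $R$. The side of $R_1$ there is some side $t\neq s_{i-1}'$ of $e_{i-1}$. If the arrow $C_{i-1}\to C_i$ came from $s_{i-1}'$ being low, there is in general no arrow from $C_{i-1}'$ to $R_1$. If $t=s_{i-1}$, you even create a new fold.

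Your remark that ``each arrow is determined by sides in $Y$'' does not address this. So minimizing the number of 2-cells among diagrams containing a lift of the cycle yields no contradiction.

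The alternative route only relabels the gap. Knowing that the cells $C_i$ lie in the image of a reduced diagram says nothing about lifts $C_i'$, $C_{i+1}'$ being adjacent along an edge over $e_i$ with sides over $s_i,s_i'$, which is what $\Gamma$ detects. The ``single nontrivial lemma'' you name is the entire content of (1)$\Rightarrow$(3).

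The missing idea is to keep the cycle as the boundary path of the diagram. Reduction never alters the boundary, so it then cannot destroy the cycle. This is what the paper does:
\begin{enumerate}
\item Subdivide $Y$ into $Z$ by joining the center of each 2-cell to the midpoints of its boundary edges.
\item Your transverse loop $\gamma$ (center of $C_i$, midpoint of $e_i$, center of $C_{i+1}$, \dots) then becomes a closed edge path $\ell$ in $Z^{(1)}$.
\item Since $Z$ is simply connected, $\ell$ bounds a reduced disk diagram $D$ in $Z$ (Theorem 2.17 of McCammond--Wise, \emph{Fans and ladders}).
\item Complete the partial cells of $D$ along $\ell$ to whole cells of $Y$. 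This gives a reduced disk diagram $D'$ in $Y$, hence in $X$, since a composite of branched near-immersions is one.
\end{enumerate}
Then $\Gamma(D')$ contains the cycle $\ell$, contradicting (1).
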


\begin{proof}
Condition \ref{enu:bislim-definition-most-general} implies all other
conditions, since the map $D\to X$ for a reduced disk diagram $D$
in $X$, and the natural projection $\universalCover X\to X$, are
both branched near-immersions.

Condition \ref{enu:bislim-definition-universal-cover} implies Condition
\ref{enu:bislim-definition-most-general}: let $\phi:Y\to X$ be a
branched near-immersion where $Y$ is simply connected. Thus $\phi$
lifts to $\psi:Y\to\universalCover X$ such that $\phi=\pi\circ\psi$
where $\pi$ is the covering map $\pi:\universalCover X\to X$. The
map $\psi$ is a branched near-immersion since for any two sides $s_{1}$
and $s_{1}$, if $\psi\smallparens{s_{1}}=\psi\smallparens{s_{2}}$
then also $\phi\smallparens{s_{1}}=\phi\smallparens{s_{2}}$. Thus
$\psi$ induces a map $\inducedGamma Y\to\Gamma\left(\universalCover X\right)$.
If there was any cycle in $\inducedGamma Y$, then its image would
be a cycle in $\Gamma\left(\universalCover X\right)$, a contradiction.

Condition \ref{enu:bislim-definition-reduced-disk-diagram} implies
Condition \ref{enu:bislim-definition-most-general}: Let $\heightened$
be a bislim structure, and let $Y$ be a simply connected 2-complex
with a branched near-immersion $Y\to X$. Assume by contradiction
that there is a directed cycle $\ell$ in $\inducedGamma Y$.

The idea of the proof is that since $Y$ is simply connected, $\ell$
bounds a disk, and a neighborhood of this disk can be made into a
reduced disk diagram $D$ containing $\ell$, in contradiction with
Condition \ref{enu:bislim-definition-reduced-disk-diagram}.

However, $\ell$ is not part of the skeleton of $Y$, so formally
it cannot be the boundary of a disk diagram. Thus, formally, we subdivide
$Y$ by connecting the center of each 2-cell to all the midpoints
of edges in its boundary, to obtain a finer complex $Z$. Then we
can realize $\ell$ as a path in $\skeleton Z$. By \cite[Theorem 2.17]{fans_and_ladders}
since $Z$ is simply connected, $\ell$ bounds a reduced disk diagram
$D$ in $Z$. Then we can add some 2-cells of $Z$ along the boundary
to complete the cells into cells of $Y$, i.e., to obtain a reduced
disk diagram $D'$ in $Y$ where $D$ is a subdivision of $D'$. Thus
$\inducedGamma{D'}$ contains the cycle $\ell$, in contradiction
with Definition \ref{def:bislim_structure_for_good_complex}.
\end{proof}
We now define bislim structures for general heightened complexes $\heightened$
which are not necessarily good. Although most bislim structures of
interest are indeed good, bislim structures which are not good can
give nontrivial results, e.g., Theorem \ref{thm:generic_bound_on_maximal_irreducible_curvature}.
\begin{defn}
\label{def:bislim-definition-general}A heightened complex $\heightened$
is \emph{bislim} if $X$ is non-degenerate and $\inducedGamma D$
is acyclic for every reduced disk diagram $D$ in $X$ without internal
2-cells.
\end{defn}

\begin{claim}
If $\heightened$ is a good heightened complex, then Definition \ref{def:bislim-definition-general}
is equivalent to definition \ref{def:bislim_structure_for_good_complex}.
\end{claim}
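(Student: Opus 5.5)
One direction is immediate. Suppose $\heightened$ is good and bislim in the sense of Definition \ref{def:bislim_structure_for_good_complex}. That definition already forces $X$ to be non-degenerate, as noted right after it. It also makes $\inducedGamma D$ acyclic for every reduced disk diagram $D$, so in particular for those without internal 2-cells. Hence Definition \ref{def:bislim-definition-general} holds.

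For the converse, I assume $X$ is non-degenerate and that $\inducedGamma D$ is acyclic for every reduced disk diagram without internal 2-cells. Suppose for contradiction that some reduced disk diagram $D$ in $X$ has a directed cycle $\ell$ in $\inducedGamma D$. I choose such a pair $(D,\ell)$ with $D$ having the minimal number of 2-cells, and I first reduce to the case where $\ell$ passes through every 2-cell of $D$. As in the proof of Theorem \ref{thm:equivalent_conditions_for_bislim_structure}, realize $\ell$ as a closed path in the subdivision of $D$, which is simply connected since $D$ is planar and contractible. That path bounds a reduced subdiagram, and completing its boundary cells gives a reduced disk diagram $D'$ in $X$ that still contains the cycle $\ell$. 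I then replace $D$ by the union of the 2-cells visited by $\ell$ together with the regions they enclose, and I expect minimality to rule out enclosed extra cells.

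The key step is showing that, in such a minimal counterexample, $D$ has no internal 2-cell. The idea is to use goodness to build a second cycle that avoids internal cells. Suppose $C$ is an internal 2-cell of $D$. Since $(X,H,L)$ is good, $C$ has a high side $s$ and a low side, and because $C$ is internal each of its edges has another side in $D$. So $C$ has an outgoing edge and an incoming edge in $\inducedGamma D$, and likewise every internal cell has both. Now start from an internal cell and follow outgoing edges. If the walk stays among internal cells, the finite graph forces a directed cycle among the internal cells alone. Then I pass to a smaller diagram bounded by that cycle, which I expect to contradict minimality, or else to reach a diagram in which every cell lies on the boundary.

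More cleanly, I will induct on the number of 2-cells. Take a cycle $\ell$ and consider the subdiagram $D_\ell$ spanned by the cells of $\ell$ and the disk they bound. If $D_\ell$ has an internal cell not on $\ell$, I use the out/in-edge argument inside the region bounded by $\ell$ to produce a cycle whose filling has strictly fewer cells. If every cell of $D_\ell$ lies on $\ell$, I still need to handle cells on $\ell$ that are internal in $D_\ell$. Here I use planarity: the cells of $\ell$ form an annulus-like chain around a bounded region, and any cell all of whose edges are shared must, in the outermost layer, have a boundary edge. So the outer cells of the chain are non-internal, and I want to choose the innermost cycle instead so that all its cells touch the boundary.

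I expect the main obstacle to be this combinatorial and planar step: arranging a cycle whose filled diagram has no internal 2-cells. Making precise that each cell in a minimal cycle's diagram meets $\boundary D$ is delicate, because of branched cells and the fact that a cell may meet itself along an edge. My first attempt will be a minimal-area argument: any internal cell gives, via goodness, a walk that closes up into a cycle with strictly smaller filling.
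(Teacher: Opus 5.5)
\textbf{There is a genuine gap.} Your easy direction is fine. Your handling of an internal 2-cell \emph{not} on $\ell$ also matches the first half of the paper's argument: use goodness to extend a directed path both ways until it closes up or hits $\ell$, giving a cycle with a smaller filling.

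The gap is the remaining case, internal 2-cells lying on $\ell$ itself. Your plan only ever passes to subdiagrams and picks a minimal or innermost cycle. The statement you are aiming for, that a minimal counterexample has no internal 2-cells, is false. Here is a counterexample.
\begin{itemize}
\item Let $D$ consist of a cell $C$ surrounded by a ring $C_1,\dots,C_k$, and take $X=D$ with the identity map.
\item Put a high side of $C$ on $C\cap C_1$, a low side of $C$ on $C\cap C_k$, and a high side of $C_i$ on $C_i\cap C_{i+1}$ for $i<k$.
\item Put all remaining extremal sides on outer boundary edges.
\end{itemize}
Then the only directed cycle in $\Gamma(X)$ is $C\to C_1\to\dots\to C_k\to C$. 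Any reduced $D'\to X$ with a cycle in $\Gamma(D')$ maps that cycle onto this one, so $D'$ has at least $k+1$ cells. Hence $D$ is a minimal counterexample, it has no other cycle to choose, and $C$ is internal. Note also that a cell meeting $\partial D$ only at a vertex is still internal. So even arranging that ``all cells of the innermost cycle touch the boundary'' would not suffice.

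The missing idea is that you must modify $D$ by surgery rather than take subdiagrams. The paper does this as follows.
\begin{itemize}
\item For an internal cell $C$ on $\ell$, some vertex $v$ of $C$ lies outside $\ell$. Join $v$ to $\partial D$ by a path in the exterior of $\ell$ and unglue $D$ along that path.
\item This ungluing keeps the diagram reduced. It keeps every edge crossed by $\ell$, so $\ell$ survives in $\Gamma$, and it puts $v$ on $\partial D$.
\item If $C$ is still internal, replace it by a degree-$2$ branched cell. Glue the first traversal of its boundary as before and leave the second traversal free, splitting $v$ into two vertices.
\end{itemize}
Branched cells are allowed in reduced disk diagrams, so this is a legitimate reduced disk diagram, it still contains $\ell$, and it has strictly fewer internal 2-cells. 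Iterating yields a counterexample to Definition~\ref{def:bislim-definition-general}.

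These moves never decrease the number of 2-cells, so no minimality-by-area argument can substitute for them. Without this ungluing/branching step your proposal does not establish the converse direction.
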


We note that if $\heightened$ is not good, then these definitions
are not equivalent.
\begin{proof}
Let $D$ be a counterexample to Definition \ref{def:bislim_structure_for_good_complex},
which is a reduced disk diagram $D$ in $X$ where $\inducedGamma D$
has a directed cycle. We will show that $D$ can be made to have no
internal 2-cells, contradicting Definition \ref{def:bislim-definition-general}.

Let $\ell$ be a simple directed cycle in $\inducedGamma D$. We can
view $\ell$ as a metric path inside $D$, so by the Jordan Curve
Theorem $\ell$ divides the space $D$ into an exterior $E$ and an
interior $I$ (including $\ell$ itself). Without loss of generality
we can remove every 2-cell $C$ which is completely outside $\ell$
(i.e., $C\subseteq E$). Without loss of generality $\ell$ is minimal
in the sense that no other simple directed cycle of $\inducedGamma D$
is contained within $I$.

Let $C$ be a 2-cell of $D$ which $\ell$ does not pass through.
Then $C$ is necessarily internal. Since $\heightened$ is good, $H_{C}$
and $L_{C}$ are nonempty, and thus we can find an edge going out
of $C$ and an edge going into $C$ in $\inducedGamma D$ (see Figure
\ref{fig:internal_cell_a}). 

Therefore, starting at $C$, we can form a directed path $p$ in $\inducedGamma D$
going from $C$ in both directions, until each end either loops back
into $p$ or stops at $\ell$ (e.g., Figure \ref{fig:internal_cell_a}).

If one of the ends loops back into $p$, that gives a simple directed
cycle contained in $I$, which does not exist. If both ends of $p$
stop at $\ell$ we can cut through $p$ and also obtain a simple directed
cycle contained in $I$ (Figure \ref{fig:internal_cell_a}), which
again does not exist. Thus the existence of $C$ gives a contradictions,
and $\ell$ passes through every internal 2-cell.

Now, let $C$ be an internal 2-cell, so $\ell$ passes through $C$,
and so some vertex $v$ of $C$ must be outside of $\ell$ (see Figure
\ref{fig:internal_cell_edgecase}). Now we use $v$ to make $C$ non-internal,
i.e., force $\boundary C$ to have a non-internal edge. There is a
path between $v$ and the boundary of $D$ without crossing $\ell$
since $v$ is outside of $\ell$. We can force $v$ to be on the boundary
of $D$ by separating the two sides of this path (see Figure \ref{fig:internal_cell_edgecase_reduced}).
However, it is still possible that there is a vertex $v$ on $\boundary C\cap\boundary D$
while $C$ is still internal (see Figure \ref{fig:internal_cell_edgecase_reduced}).
We can make $C$ non-internal by branching $C$ (see Figure \ref{fig:internal_cell_edgecase_doubled}).

Thus we can make any internal 2-cell $C$ non-internal. The number
of internal 2-cells strictly decreases, so by doing this again and
again we get a disk diagram $D$ without any internal 2-cells. We
retain the path $\ell$ in $\inducedGamma D$, so $D$ is now a counterexample
to Definition \ref{def:bislim-definition-general}.

\begin{figure}
\subfloat[\label{fig:internal_cell_a}Given an internal 2-cell $C$, $H_{C}$
and $L_{C}$ are nonempty. Thus we can find an edge going out of $C$
and an edge going into $C$ in $\protect\inducedGamma D$. Doing this
again and again we build a path. Here the path eventually meets the
cycle $\ell$ of $\protect\inducedGamma D$, contradicting the minimality
of $\ell$.]{\centering{}\includegraphics[width=0.55\columnwidth]{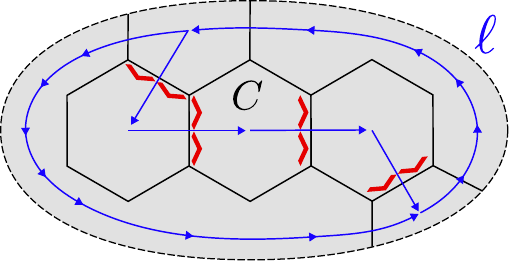}}\hfill{}\subfloat[\label{fig:internal_cell_edgecase}An example where the internal 2-cell
$C$ is on the cycle $\ell$ of $\protect\inducedGamma D$.]{\centering{}\includegraphics[width=0.35\textwidth]{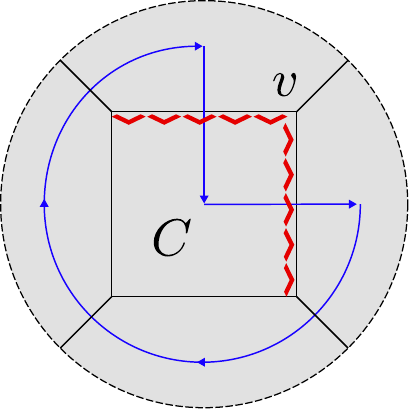}}

\subfloat[\label{fig:internal_cell_edgecase_reduced}We can unglue edges to
ensure that $C$ has an external vertex.]{\centering{}\includegraphics[width=0.35\textwidth]{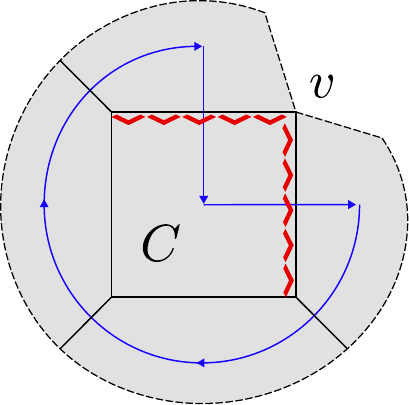}}\hfill{}\subfloat[\label{fig:internal_cell_edgecase_doubled}In this edgecase, we double
the 2-cell $C$ in Figure \ref{fig:internal_cell_edgecase_reduced}
to make it non-internal (we allow branched 2-cells in disk diagrams).
The vertex $v$ splits into two vertices $v'$ and $v''$.]{\centering{}\includegraphics[width=0.4\textwidth]{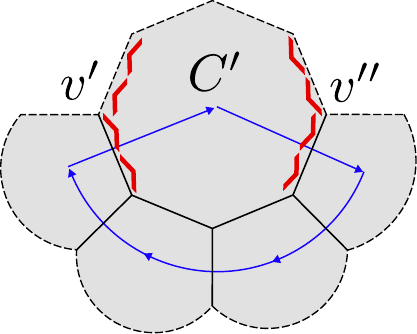}}\caption{How to get rid of internal 2-cells in counterexamples to Definition
\ref{def:bislim_structure_for_good_complex}. This implies Definition
\ref{def:bislim-definition-general}.}
\end{figure}
\end{proof}
We now show an equivalent condition for bislim structures, which is
a variant of Condition \ref{enu:bislim-definition-most-general} of
Theorem \ref{thm:equivalent_conditions_for_bislim_structure}, which
will be helpful in order to work with bislim structures which are
not necessarily good.
\begin{thm}
\label{thm:bislim_equivalent_condition_simply_connected_no_internal_2_cell}A
heightened complex $\heightened$ is bislim if and only if $\inducedGamma Y$
is acyclic for every branched near-immersion $\phi:Y\to X$ where
$Y$ is simply connected and does not have any internal 2-cells.
\end{thm}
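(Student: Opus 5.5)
The plan is to prove the two directions separately. For the backward direction I will use that reduced disk diagrams are a special case of the maps in the statement. For the forward direction I will adapt the argument for Condition \ref{enu:bislim-definition-reduced-disk-diagram} $\Rightarrow$ Condition \ref{enu:bislim-definition-most-general} in Theorem \ref{thm:equivalent_conditions_for_bislim_structure}. Throughout I take $X$ to be non-degenerate, as the paper assumes for all complexes; this condition is part of Definition \ref{def:bislim-definition-general} and is not affected by either direction.

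($\Leftarrow$) Let $D$ be a reduced disk diagram in $X$ without internal 2-cells. Then $D$ is contractible, hence simply connected, and $\phi:D\to X$ is a branched near-immersion by the definition of reduced. So the hypothesis applies to $Y=D$ and gives that $\inducedGamma D$ is acyclic, which is exactly Definition \ref{def:bislim-definition-general}.

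($\Rightarrow$) Suppose $\heightened$ is bislim, and let $\phi:Y\to X$ be a branched near-immersion with $Y$ simply connected and without internal 2-cells. Suppose for contradiction that $\inducedGamma Y$ contains a directed cycle $\ell$.
\begin{enumerate}
\item As in the proof of Theorem \ref{thm:equivalent_conditions_for_bislim_structure}, subdivide $Y$ into $Z$ by joining each 2-cell center to the midpoints of its boundary edges, so that $\ell$ becomes a closed path in $\skeleton Z$.
\item By \cite[Theorem 2.17]{fans_and_ladders}, $\ell$ bounds a reduced disk diagram in $Z$. Adding the missing pieces of 2-cells along its boundary gives a reduced disk diagram $D'\to Y$ with $\ell$ a cycle of $\inducedGamma{D'}$.
\item The composite $D'\to Y\to X$ is a branched near-immersion, since composites of branched near-immersions are branched near-immersions: distinct sides of an edge stay distinct at each stage. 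Hence $D'$ is a reduced disk diagram in $X$, and its induced heightened structure agrees with the one pulled back through $Y$. So $\ell$ is still a directed cycle of $\inducedGamma{D'}$ computed with respect to $X$.
\end{enumerate}

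The new ingredient is that $D'$ automatically has no internal 2-cells. Let $C$ be a 2-cell of $D'$ and let $C_Y$ be its image in $Y$. Since $Y$ has no internal 2-cells, $C_Y$ has a boundary edge $e_Y$ with a single side in $Y$. Choose a side $s$ of $C$ whose image is that unique side, lying on an edge $e$ of $D'$. If $e$ had a second side $s'\neq s$ in $D'$, then $s'$ would map to a side of $e_Y$, necessarily the unique one. This contradicts the near-immersion property of $D'\to Y$. So $e$ is non-internal and $C$ is non-internal, and $D'$ contradicts Definition \ref{def:bislim-definition-general}.

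The main obstacle is step 2: making sure the completed object $D'$ is still a genuine (planar, contractible) reduced disk diagram, and that completing the partial cells neither identifies two sides of an edge nor destroys the cycle $\ell$. I would handle this exactly as in the proof of Theorem \ref{thm:equivalent_conditions_for_bislim_structure}. Where completing cells would break planarity, I would unglue along boundary vertices and, if needed, branch the completed cell, as in the proof of the preceding Claim. These operations only separate sides, so they preserve the near-immersion property and the edges of $\ell$.
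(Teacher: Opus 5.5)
Your proposal is correct and follows the paper's own proof. The backward direction specializes to reduced disk diagrams. The forward direction reruns the subdivision/disk-diagram construction from Theorem \ref{thm:equivalent_conditions_for_bislim_structure} to get a reduced disk diagram $D'$ in $Y$ carrying the cycle, which then has no internal 2-cells. Your explicit near-immersion argument for that last point usefully fills in a step the paper only asserts: a second side on the preimage of a non-internal edge of $Y$ would have to map to the same unique side.
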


\begin{proof}
If the condition holds for all such $Y$, then it holds for all disk
diagrams without internal 2-cells, so $\heightened$ is bislim by
definition.

Assume that $\heightened$ is bislim, let $Y$ be such a 2-complex,
and assume by contradiction that $\inducedGamma Y$ contains a cycle.
By the same proof as in Theorem \ref{thm:equivalent_conditions_for_bislim_structure}
which shows Condition \ref{enu:bislim-definition-most-general} is
equivalent to bislim structure, we obtain a reduced disk diagram $D$
in $Y$ where $\inducedGamma D$ also contains a cycle. Since $Y$
does not have any internal 2-cells, it follows that $D$ also does
not have any internal 2-cells, in contradiction with Definition \ref{def:bislim-definition-general}.
\end{proof}

\subsection{The $w$-cycle theorem}

\global\long\def\isle{T}%

Our goal now is proving the $w$-cycle theorem for alternating bislim
structures (Theorem \ref{thm:w-cycle-theorem-alternating}) and multiple
variations on it. Similar claims and proofs appear in \cite[Sections~3~and~4]{helferwise},
though their bislim structures are effectively only 1-alternating.
We start with two technical lemmas:
\begin{lem}
\label{lem:neighborhood_technical_lemma}Let $\isle\hookrightarrow Y$
be an embedding of a tree $\isle$ in the skeleton of a finite 2-complex
$Y$. Then this map factors through an embedding into a 2-complex
$M$ as $\isle\hookrightarrow M\to Y$, where
\begin{itemize}
\item The map $M\to Y$ is a branched near-immersion.
\item The map $M\to Y$ is a local isomorphism on every point of $\isle$.
\item The 2-complex $M$ is simply connected.
\item Every 2-cell $C$ in $M$ is adjacent to $\isle$ and is not internal.
\end{itemize}
\end{lem}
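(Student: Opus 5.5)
We build $M$ explicitly as a thickened copy of $\isle$, taking one separate copy of each 2-cell for every place where it touches $\isle$. The construction starts from $\isle$ itself, viewed as a graph. For every edge $e$ of $\isle$ and every side $s$ of $e$ in $Y$, let $C_s$ be the 2-cell of $Y$ containing $s$. We take a fresh disk $D_s$ whose boundary is a copy of $\boundary C_s$, with a distinguished boundary edge $\tilde s$ corresponding to $s$. We glue $D_s$ to $\isle$ only along $\tilde s$, identifying it with $e$. There is one exception: if an endpoint of $\tilde s$ corresponds to a vertex $v$ of $\isle$, we also identify that endpoint with $v$.

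Local isomorphism at vertices of $\isle$ needs extra identifications. At a vertex $v$ of $\isle$, the link of $v$ in $Y$ has vertices given by the half-edges at $v$ and edges given by corners of 2-cells at $v$. For $M\to Y$ to be a local isomorphism at $v$, the link of $v$ in $M$ must map isomorphically onto it. So at each vertex $v$ of $\isle$ we do the following. Each corner of a 2-cell at $v$ whose two half-edges both lie in $\isle$ is realized by a single disk, namely the one attached along either of those half-edges. This means that if a corner of $C$ at $v$ lies between two $\isle$-edges $e,e'$ with sides $s,s'$ of $\boundary C$ consecutive at $v$, then $D_s$ and $D_{s'}$ are glued along the corresponding consecutive boundary edges. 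Every corner involving a half-edge not in $\isle$ is realized by a disk containing that corner. More cleanly, we define $M$ as a quotient of $\isle\sqcup\bigsqcup D_s$ by the equivalence relation generated by these identifications, together with one extra disk for each corner at $v$ that involves no $\isle$-edge. Where two half-edges at $v$ lie outside $\isle$ but are consecutive in some $\boundary C$, we glue in a copy of $C$ via that corner, and we make the non-$\isle$ half-edges at $v$ single edges so that the link matches. The map to $Y$ is the obvious one, combinatorial on each cell.

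The verification then goes property by property, where $M$ is finite because $Y$ is. It is a branched near-immersion because every edge of $M$ not in $\isle$ has at most the sides created by the disks glued to it. By construction these map to distinct sides of $Y$, being indexed by distinct sides or distinct corners. Edges in $\isle$ have exactly one disk per side of $Y$. $M$ deformation retracts, or at least is homotopy equivalent by collapsing, to $\isle$ together with disks attached along arcs. Gluing a disk to a simply connected space along a connected arc (a path of consecutive boundary edges lying in a tree) preserves simple connectivity, so $M$ is simply connected. Here we must check that the boundary path of each disk glued to $\isle$ is a connected arc, never two disjoint arcs, which would create a loop. For this we use that $\isle$ is a tree and that we only identify consecutive sides across a shared corner. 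Every 2-cell is adjacent to $\isle$ by construction. It is not internal because its boundary contains an edge not in $\isle$ with only one side. The exception is a 2-cell whose entire boundary lies in $\isle$, but that would make $\boundary C$ a closed loop in a tree, i.e. null-homotopic, contradicting non-degeneracy.

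The main obstacle is organizing the identifications at the vertices of $\isle$. They must make the link correct, giving local isomorphism there, without accidentally making a disk's attaching set disconnected or making $\isle\to M$ non-injective. I would handle this by building $M$ as a union of \emph{open stars}: $M$ is the pullback-style complex whose cells are germs of cells of $Y$ along $\isle$, which I would then complete to finite closed cells. Simple connectivity would then follow by a collapsing argument onto $\isle$, carried out one disk at a time along its arc of contact.
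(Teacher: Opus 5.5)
\paragraph{Gap: unbranched completion of 2-cells.}
Your overall picture matches the paper's: one copy of a 2-cell for each connected piece of its contact with $T$, link-matching at the vertices of $T$, then collapsing onto $T$. The completion step, however, has a genuine gap. You complete every piece into an unbranched disk whose boundary is a single copy of $\partial C_s$. When the part of $\partial C$ outside its contact arc $A$ is short, this forces identifications between distinct edges of $M$ adjacent to $T$.

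For example, suppose $\partial C=A\cdot h$, with $h\notin T$ running from the end $q$ of $A$ back to its start $p$. The single edge $\tilde h$ of your disk leaves $q$ along the half-edge $h^{\mathrm{start}}$ and enters $p$ along $h^{\mathrm{end}}$. Your rule that each non-$T$ half-edge is a single edge of $M$ therefore identifies the edge for $h^{\mathrm{start}}$ at $q$ with the edge for $h^{\mathrm{end}}$ at $p$. Far endpoints are identified in the same way when $\partial C=A\,h_{\mathrm{out}}\,h_{\mathrm{in}}$, or for a corner disk with $|\partial C|\le 2$. These identifications then propagate to every other disk through the shared edges.

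Here is a concrete failure. Let $Y$ have vertices $p,q$, edges $a\colon p\to q$ and $h,j\colon q\to p$, and 2-cells with boundaries $ah$, $aj$, $hj^{-1}$; let $T=\{a\}$.
\begin{itemize}
\item Your construction produces single edges $H,J$ from $q$ to $p$.
\item Both corner disks of $hj^{-1}$, the one at $q$ and the one at $p$, get boundary $HJ^{-1}$.
\item So $H$ has two sides over the same side of $Y$, and the link of $p$ in $M$ has a doubled edge. Neither the near-immersion property nor the local isomorphism holds.
\item If you fold the two disks together, you get $M\cong Y$, in which the 2-cell $hj^{-1}$ is internal.
\end{itemize}
Your verification misses this because it assumes every disk has a fresh one-sided boundary edge and meets the rest of $M$ along a single arc. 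That is exactly what fails for short boundaries.

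\paragraph{The missing idea.}
The paper avoids this as follows. Give every edge of $M$ adjacent to $T$ its own new far endpoint, so that $T'$ ($T$ together with these edges) is a tree. Then complete each piece of a 2-cell to a 2-cell of $M$ that is \emph{branched} when necessary, meaning its boundary is a multiple cover of $\partial C$. This guarantees that its boundary always contains fresh edges not adjacent to $T$.

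In the example, the cell over $ah$ becomes a degree-2 cell with boundary $a\,h_q\,a'\,h_p$. Here $h_q$ and $h_p$ are the distinct edges of $M$ for $h^{\mathrm{start}}$ at $q$ and $h^{\mathrm{end}}$ at $p$, and $a'$ is a fresh edge over $a$.

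With this construction the four properties follow:
\begin{itemize}
\item $\partial C\cap T'$ is a connected proper subpath for every 2-cell; it is proper by non-degeneracy.
\item The remaining boundary edges have a single side, so no 2-cell is internal.
\item Each 2-cell can be homotoped into $T'$, so $M$ is contractible.
\item Near-immersion and local isomorphism along $T$ hold, because each side of an edge of $T'$ comes from a distinct side or corner of $Y$ and all other edges are one-sided.
\end{itemize}
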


\begin{proof}
\global\long\def\first{\text{first}}%
\global\long\def\last{\text{last}}%
\global\long\def\textmax{\text{max}}%
\global\long\def\textmin{\text{min}}%

Consider a small neighborhood of $\isle$ in $I$. We explain how
to complete it into the new complex $M$. For example, consider Figure
\ref{fig:no_islands}.

We start by constructing the skeleton $\skeleton M$. In Figure \ref{fig:no_islands_neighborhood}
the neighborhood sees six ``half-edges'' adjacent to $\isle$, and
each is completed into a corresponding edge in $\skeleton M$ (see
Figure \ref{fig:no_islands_expanded}). The neighborhood only sees
two small parts of the edge $e_{1}$, and does not see that they come
from the same edge $e_{1}$, so they become different edges in $\skeleton M$.
In the same way, even though $e_{2}$ and $e_{3}$ are adjacent in
$Y$, this is not visible in the neighborhood, so they become disconnected
in $\skeleton M$.

Now we similarly construct the 2-cells of $M$. Every small part of
a 2-cell in the neighborhood is completed into a corresponding 2-cell
in $M$. As before, in Figure \ref{fig:no_islands} the 2-cell $C_{1}$
corresponds both to $C_{1}$ and $C_{1}'$ in \ref{fig:no_islands_expanded}
because the neighborhood only sees two separate small parts. Additionally,
$C_{2}$ becomes branched, since otherwise it would force $e_{2}$
and $e_{3}$ to be adjacent to each other.

There is a canonical branched near-immersion $M\to Y$. As we saw
before, there can be multiple edges or 2-cells in $M$ that correspond
to the same edge or 2-cell in $Y$, so this map is not an embedding.

\begin{figure}
\subfloat[\label{fig:no_islands_a}An island $\protect\isle$ (in blue) embedded
in a 2-complex $Y$. Adjacent edges to $\protect\isle$ are shown
in red.]{\begin{centering}
\includegraphics[scale=0.9]{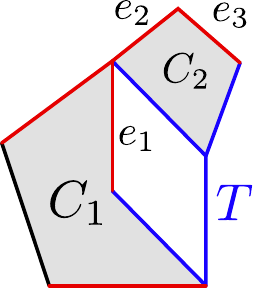}
\par\end{centering}
}\hfill{}\subfloat[\label{fig:no_islands_neighborhood}An $\epsilon$-neighborhood of
$I$]{\begin{centering}
\includegraphics[scale=0.9]{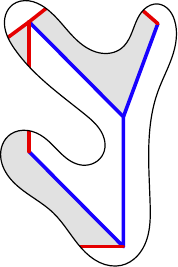}
\par\end{centering}
}\hfill{}\subfloat[\label{fig:no_islands_expanded}The complex $M$, created by completing
the $\epsilon$-neighborhood of $\protect\isle$. Note that $e_{1}$
and $C_{1}$ split into two, $e_{2}$ and $e_{3}$ disconnected, and
$C_{2}$ had to become branched.]{\begin{centering}
\includegraphics[scale=0.8]{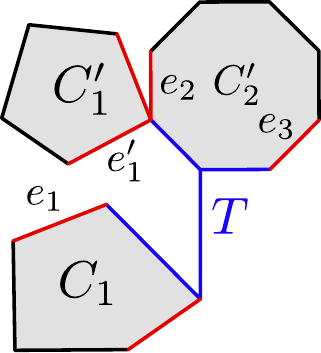}
\par\end{centering}
}\caption{\label{fig:no_islands}A demonstration of the construction of $M$
in Lemma \ref{lem:neighborhood_technical_lemma}. The tree $\protect\isle$
is shown in blue. Adjacent edges to $\protect\isle$ are shown in
red.}
\end{figure}

Let $\isle'$ be $\isle$ and the edges adjacent to $\isle$ in $M$,
so $\isle'$ is a tree. For every 2-cell $C$ in $M$, $\boundary C\cap\isle'$
is connected and is not the full cycle $\boundary C$ since otherwise
$Y$ would be degenerate (see Section \ref{subsec:Combinatorial-complexes-and}).
Additionally, edges in $\boundary C\backslash\isle'$ are necessarilly
non-internal. Therefore, $C$ is not internal, as required, and $C$
can be homotoped into $\isle'$. Since this is true for every 2-cell
$C$, $M$ is homotopic to $\isle'$, and thus, contractible.
\end{proof}

We now exploit Lemma \ref{lem:neighborhood_technical_lemma} to analyze
complexes which admit alternating bislim structures:
\begin{lem}
\label{lem:no_tree_islands}Let $\heightened[Y]$ be a finite alternating
bislim structure and assume that all the edges in $Y$ are internal
and $Y$ is not a single vertex. Then every connected component of
$\skeleton Y-H$ contains a cycle.\footnote{Here $H$ is the set of edges corresponding to the high sides in $H$
by abuse of notation. The vertices adjacent to edges in $H$ are not
removed in $\skeleton Y-H$.}
\end{lem}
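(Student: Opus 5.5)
Argue by contradiction: suppose some connected component $T$ of $\skeleton Y-H$ is a tree. Since $Y$ is not a single vertex and every edge is internal, $T$ either contains an edge or is a single vertex with adjacent high edges; in both cases $T$ is a tree embedded in $\skeleton Y$. Apply Lemma \ref{lem:neighborhood_technical_lemma} to $T\hookrightarrow Y$. This gives a simply connected complex $M$ with a branched near-immersion $M\to Y\to X$, in which every 2-cell is adjacent to $T$ and none is internal. Compose with $Y\to X$ and pull back the heightened structure. By Theorem \ref{thm:bislim_equivalent_condition_simply_connected_no_internal_2_cell}, $\inducedGamma M$ is then acyclic. (If $Y$ is itself the bislim structure, use $M\to Y$ directly.) The goal is to produce a directed cycle in $\inducedGamma M$, which gives the contradiction.

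To do this, consider the edges of $M$ that lie in $T$ together with the edges adjacent to $T$. Since $M\to Y$ is a local isomorphism at every point of $T$, each edge of $T$ is internal in $M$ and carries all its sides. None of its sides is high, because $T\subseteq\skeleton Y-H$ and an edge in $H$ is removed. Each edge adjacent to $T$ but not in $T$ must be a high edge, since otherwise it would lie in the same component $T$.

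The key step is a counting/orientation argument. Every 2-cell $C$ of $M$ meets $T$ in a connected proper arc of $\boundary C$, and the rest of $\boundary C$ consists of non-internal edges. Because the structure is alternating, walking along $\boundary C$ the extremal sides alternate high/low. The two edges of $\boundary C$ just outside the arc $\boundary C\cap T$ are high edges in $Y$. I would argue that each such 2-cell $C$ has a low side on the arc $\boundary C\cap T$. Since high sides live only on edges in $H$, and the arc is bounded on both ends by high sides, alternation forces a low side strictly between them. That low side sits on an internal edge of $T$, and it induces an edge in $\inducedGamma M$ coming into $C$ from another 2-cell $C'$ sharing that edge.

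Thus every 2-cell of $M$ adjacent to $T$ in this way has an incoming edge in $\inducedGamma M$. Since $M$ is finite, following incoming edges backwards must eventually repeat a vertex, giving a directed cycle and contradicting acyclicity.

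The main obstacle is making the ``bounded on both ends by high sides'' claim rigorous. One problem is 2-cells whose arc in $T$ is attached to the boundary edges not via high sides but via the high sides of the neighboring boundary edge of $C$ itself; here I need the high side to be on $C$, not merely that the edge lies in $H$ in $Y$. The other problem is the degenerate case where $\boundary C\cap T$ is a single vertex, so that no edge of $T$ lies on $C$.

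I would handle this by choosing, for each 2-cell, the side-level version of the argument. The complement $\boundary C\setminus T$ is a path whose edges are all in $H$ as edges. However, the sides of $C$ on these edges might not be high. So I may instead need to phrase the contradiction via a sink or source. Since $M$ is finite and acyclic, there is a 2-cell $C$ with no incoming edges, so all low sides of $C$ lie on non-internal edges, i.e., off $T$. The same holds for a sink with respect to high sides. Combining a source with alternation and the fact that internal edges of $\boundary C$ are exactly those on $T$, where no high sides exist, should force $\boundary C$ to have no extremal sides on internal edges at all. This contradicts goodness once the reduction is done to the case where the relevant extremal sides are forced onto $T$.
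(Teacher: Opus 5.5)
There is a genuine gap, located exactly at the point you flag as the main obstacle. Your first argument (``the arc is bounded on both ends by high sides, so alternation forces a low side between them'') assumes without justification that those high sides are sides of $C$. Your fallback does not repair this. It relies on the claim that the internal edges of $\partial C$ are exactly those on $T$, which is the same as your earlier assertion that the rest of $\partial C$ consists of non-internal edges.

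That claim is false. Let $T'$ be $T$ together with the edges of $M$ adjacent to it. The two edges of $\partial C$ just outside the arc $\partial C\cap T$ lie in $T'\setminus T$. Because $M\to Y$ is a local isomorphism at their endpoint on $T$, they keep all the sides of their images in $Y$. In particular they are internal, and each carries a high side. Only the edges of $\partial C$ outside $T'$ are non-internal, and, contrary to what you write, those need not be in $H$.

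Moreover, even if your claim held, ``no extremal sides of $C$ on internal edges'' contradicts nothing. Goodness only requires $H_C, L_C\neq\emptyset$, and these sides may sit on non-internal edges of $M$.

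The missing step is a two-case split using those two bounding edges. Let $e_{\mathrm{first}}, e_{\mathrm{last}}$ be the first and last edges of the arc $\partial C\cap T'$. They are distinct positions on $\partial C$, since $\partial C$ passes through a vertex of $T$.
\begin{enumerate}
\item If the high side on $e_{\mathrm{first}}$ (or on $e_{\mathrm{last}}$) is not the corresponding side of $C$, it induces an edge of $\Gamma(M)$ into $C$.
\item If both high sides are sides of $C$, alternation forces a low side of $C$ strictly between them along the arc. When they are consecutive, as in your single-vertex case, alternation is violated outright. That low side lies on an edge of $T'$, which is internal in $M$, so again $C$ has an incoming edge.
\end{enumerate}
Hence every 2-cell of $M$ has an incoming edge, which is exactly what your first paragraph needed. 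Finiteness of $M$ then gives a directed cycle, contradicting Theorem~\ref{thm:bislim_equivalent_condition_simply_connected_no_internal_2_cell}. The paper runs the same case split on a source $C_{\max}$ of $\Gamma(M)$; with this step added, your outline becomes the paper's proof.
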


\begin{proof}
Let $\isle$ be a component of $\skeleton Y-H$, and assume by contradiction
that $\isle$ is a tree. Let $\isle\hookrightarrow M\to Y$ be the
result of Lemma \ref{lem:neighborhood_technical_lemma}. By Theorem
\ref{thm:bislim_equivalent_condition_simply_connected_no_internal_2_cell}
$\inducedGamma M$ is acyclic. Since $Y$ is finite and not a single
vertex, $M$ is finite and contains some 2-cell. Therefore there is
a 2-cell $C_{\textmax}$ which is a vertex of $\inducedGamma M$ without
any incoming edge.

Let $\isle'$ be the union of $\isle$ and the edges which are adjacent
to $\isle$ in $M$. Note that in $\skeleton Y$, every edge which
is adjacent to $\isle$ but is not in $\isle$ is in $H$. Since $M\to Y$
is locally an isomorphism at every point of $\isle$, the same is
true for $M$: every edge in $\isle'-\isle$ has a high side.

The first edge $e_{\first}$ of the path $\boundary C_{\textmax}\cap\isle'$
is in $\isle'$ but not in $\isle$, and so $e_{\first}$ has a high
side $s_{\first}$. Similarly the last edge $e_{\last}$ has a high
side $s_{\last}$. Since $\boundary C_{\textmax}$ intersects at least
one vertex of $\isle$, its two adjacent sides in $\boundary C_{\textmax}$
are contained in $\isle'$, so $\left|\boundary C_{\textmax}\cap\isle'\right|\ge2$
and $e_{\first}\neq e_{\last}$.

The side $s_{\first}$ can be either a side of $C_{\textmax}$ or
a side of another 2-cell adjacent to $e_{\first}$. If $s_{\first}$
is not a side of $C_{\textmax}$, then $s_{\first}$ induces a directed
edge into $C_{\textmax}$, a contradiction (Figure \ref{fig:no_islands_max}).
Thus $s_{\first}$ is a high side of $C_{\textmax}$. The same applies
to $s_{\last}$, so $s_{\last}$ is also a high side of $C_{\textmax}$.
Since the bislim structure is alternating and $e_{\first}\neq e_{\last}$,
somewhere inside $\boundary C_{\textmax}\cap\isle'$ there is a low
side $s_{\text{min}}$ of $C_{\textmax}$ (Figure \ref{fig:no_islands_final}).
Every edge of $\isle$ is internal (since this is true in $Y$), so
$s_{\text{min}}$ induces an edge of $\inducedGamma M$ into $C_{\textmax}$,
a contradiction.

\begin{figure}
\subfloat[\label{fig:no_islands_max}Since $e_{\protect\first}$ is adjacent
to $\protect\isle$, it is in $H$ (hence marked in red), so it has
a high side $s_{\protect\first}$. If $s_{\protect\first}$ is not
a side of $C_{\protect\textmax}$, then $C_{\protect\textmax}$ has
an incoming edge of $\protect\inducedGamma M$.]{\begin{centering}
\includegraphics{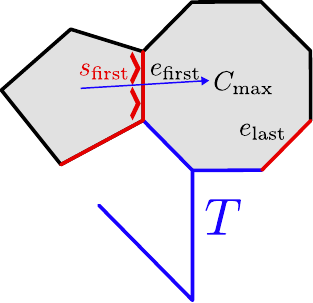}
\par\end{centering}
}\hfill{}\subfloat[\label{fig:no_islands_final}If $s_{\protect\first}$ and $s_{\protect\last}$
are sides of $C_{\protect\textmax}$, since $M$ is alternating, between
them there is a low side $s_{\protect\textmin}$, which again induces
an incoming edge of $\protect\inducedGamma M$.]{\centering{}\includegraphics{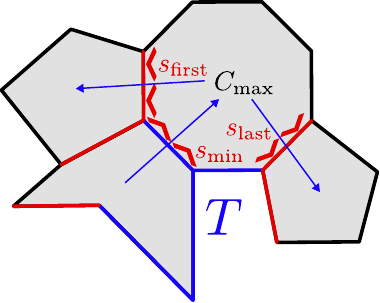}}\caption{A demonstration of the proof of Lemma \ref{lem:no_tree_islands}.
$\protect\isle$ is shown in blue. Adjacent edges to $\protect\isle$
are shown in red, and must be in $H$. For simplicity only relevant
edges and 2-cells are shown in each step.}
\end{figure}
\end{proof}
Lemma \ref{lem:no_tree_islands} provides bounds on Euler characteristics
of complexes:

\begin{thm}
\label{thm:cycle_counting_complex_version_single_complex}If $Y$
admits an alternating bislim structure, all the edges in $Y$ are
internal and $Y$ is not a single vertex, then
\[
\chi\largeparens{\skeleton Y}\ \le\ -\sum_{C\in\text{2-cells}\smallparens Y}\alt_{C}.
\]
\end{thm}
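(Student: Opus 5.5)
We use Lemma \ref{lem:no_tree_islands}, which says every connected component of $\skeleton Y-H$ contains a cycle, and then compare Euler characteristics. Let $E_{H}$ denote the set of edges of $Y$ that carry at least one high side. Removing the open edges of $E_{H}$ from $\skeleton Y$ (keeping all vertices) gives the graph $\skeleton Y-H$. Therefore
\[
\chi\largeparens{\skeleton Y}=\chi\largeparens{\skeleton Y-H}-\left|E_{H}\right|.
\]

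\textbf{Step 1: the remaining graph has nonpositive Euler characteristic.} By Lemma \ref{lem:no_tree_islands}, every component of $\skeleton Y-H$ contains a cycle. A connected graph that contains a cycle has Euler characteristic at most $0$. Summing over components gives $\chi\largeparens{\skeleton Y-H}\le0$, and hence $\chi\largeparens{\skeleton Y}\le-\left|E_{H}\right|$.

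\textbf{Step 2: count the edges carrying high sides.} It remains to show $\left|E_{H}\right|\ge\sum_{C}\alt_{C}=\left|H\right|$. This needs each edge to carry at most one high side. Suppose instead that an edge $e$ carries two distinct high sides $s_{1}\neq s_{2}$, belonging to cells $C_{1}$ and $C_{2}$. Then $\inducedGamma{}$ has an edge $C_{1}\to C_{2}$ induced by $s_{1}$ and an edge $C_{2}\to C_{1}$ induced by $s_{2}$, which form a directed cycle. To contradict bislimness, I would realise this cycle in a reduced disk diagram with no internal 2-cells. The natural choice is the diagram obtained by gluing $C_{1}$ and $C_{2}$ along $e$ (branching if $C_{1}=C_{2}$), as in the remark above that $H$ and $L$ may be assumed disjoint.

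This diagram is reduced because $s_{1}\neq s_{2}$. It is a disk, since each cell's boundary is a circle and the gluing is along a single edge; non-degeneracy of $X$ keeps the cells honest disks. Neither cell is internal, because all other edges of each cell lie on the boundary of the diagram. A cleaner alternative is to apply Theorem \ref{thm:bislim_equivalent_condition_simply_connected_no_internal_2_cell} directly, taking $Y'$ to be the two cells glued along $e$, which maps to $Y$ by a branched near-immersion.

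\textbf{Main obstacle.} The delicate case is $C_{1}=C_{2}$, where $s_{1}$ and $s_{2}$ are two sides of the same cell on the same edge. Then the induced cycle is a self-loop at $C_{1}$. I would handle it by taking two copies of $C_{1}$ glued along the edge, with $s_{1}$ on the first copy and $s_{2}$ on the second. This is still a near-immersion, so it yields a 2-cycle in $\inducedGamma{}$ and the same contradiction.

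With each edge carrying at most one high side, the map $H\to E_{H}$ is injective, so $\left|E_{H}\right|=\left|H\right|=\sum_{C}\left|H_{C}\right|=\sum_{C}\alt_{C}$. Combining this with Step 1 gives
\[
\chi\largeparens{\skeleton Y}\ \le\ -\sum_{C\in\text{2-cells}\smallparens Y}\alt_{C}.
\]
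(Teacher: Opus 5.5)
Your proposal is correct and follows the paper's proof. Lemma \ref{lem:no_tree_islands} gives $\chi\largeparens{\skeleton Y-H}\le0$, and a two-cell reduced disk diagram with no internal 2-cells rules out an edge carrying two high sides, so exactly $\left|H\right|=\sum_{C}\alt_{C}$ edges are removed. Your handling of the case $C_{1}=C_{2}$, using two copies of the cell rather than branching, is the right way to turn the self-loop into a genuine 2-cycle; the paper leaves this detail implicit.
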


\begin{proof}
By Lemma \ref{lem:no_tree_islands}, no connected component of $\skeleton Y-H$
is a tree. Thus all components have nonpositive Euler characteristic,
so $\chi\largeparens{\skeleton Y-H}\le0$. No edge has two high sides,
since they would induce a length-2 cycle in $\inducedGamma E$ in
a disk diagram $E$ consisting of the corresponding two 2-cells. Thus
$\left|H\right|=\sum_{C\in\text{2-cells}\smallparens Y}\alt_{C}$,
since $\left|H_{C}\right|=\alt_{C}$. Therefore, 
\[
\chi\largeparens{\skeleton Y}\ =\ \chi\largeparens{\skeleton Y-H}-\sum_{C\in\text{2-cells}\smallparens Y}\alt_{C}\ \le\ -\sum_{C\in\text{2-cells}\smallparens Y}\alt_{C}.\qedhere
\]
\end{proof}
We will usually be interested in a base complex $X$, and we will
be interested in $\chi\smallparens Y$ for complexes $Y$ with a branched
near-immersion $Y\to X$. In this case an alternating bislim structure
on $X$ induces an alternating bislim structure on $Y$, and by applying
Theorem \ref{thm:cycle_counting_complex_version_single_complex},
we obtain the following:
\begin{thm}
\label{thm:w_cycles_complex_version_two_complexes}If $Y\to X$ is
a branched near-immersion, the complex $X$ admits an alternating
bislim structure, all the edges of $Y$ are internal and $Y$ is not
a single vertex, then
\[
\chi\largeparens{\skeleton Y}\ \le\ -\sum_{C\in\text{2-cells}\smallparens X}\#_{C}\smallparens Y\cdot\alt_{C},
\]
where $\#_{C}\smallparens Y$ is the number of 2-cells of $Y$ in
the preimage of $C$, counted with multiplicity by their branching
degree.
\end{thm}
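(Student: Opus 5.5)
The plan is to pull back the alternating bislim structure on $X$ to $Y$ along $\phi:Y\to X$ and then apply Theorem \ref{thm:cycle_counting_complex_version_single_complex} to $Y$. Concretely, if $\heightened$ is the alternating bislim structure on $X$, we equip $Y$ with $\heightened[Y][\phi^{-1}\smallparens H][\phi^{-1}\smallparens L]$, as described after Definition \ref{def:heightened_complexes}. Since $Y$ has only internal edges and is not a single vertex, the theorem then gives
\[
\chi\largeparens{\skeleton Y}\ \le\ -\sum_{C'\in\text{2-cells}\smallparens Y}\alt_{C'}.
\]
It remains to check three things: the induced structure is bislim, it is alternating, and $\sum_{C'}\alt_{C'}=\sum_{C}\#_{C}\smallparens Y\cdot\alt_{C}$.

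\textbf{Bislim.} I would use Theorem \ref{thm:bislim_equivalent_condition_simply_connected_no_internal_2_cell}. Let $Z\to Y$ be a branched near-immersion with $Z$ simply connected and without internal 2-cells. Then the composition $Z\to Y\to X$ is again a branched near-immersion. Indeed, if two distinct sides of an edge of $Z$ had the same image in $X$, then their images in $Y$ would be distinct sides of the same edge of $Y$ with a common image in $X$, contradicting that $\phi$ is a near-immersion. The heightened structure that $Z$ inherits from $Y$ coincides with the one it inherits from $X$, since preimages compose. So $\inducedGamma Z$ is acyclic by the bislim property of $X$. Non-degeneracy of $Y$ follows because a 2-cell whose boundary is null-homotopic in $\skeleton Y$ maps to one whose boundary is null-homotopic in $\skeleton X$.

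\textbf{Alternation and the count.} Let $C'$ be a 2-cell of $Y$ mapping to $C$ with branching degree $m$. The map $\boundary C'\to\boundary C$ is then an $m$-fold covering of circles compatible with the combinatorial structure. The extremal sides of $C'$ are exactly the preimages of those of $C$, and they appear in the same cyclic pattern repeated $m$ times. Hence they alternate, with $\alt_{C'}=m\cdot\alt_{C}$. Summing over the 2-cells of $Y$ in the preimage of each $C$, counted by branching degree, gives the required identity.

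I expect the main obstacle to be the bislim verification, in particular making sure that the composition of branched near-immersions remains one, and that the correct version of the bislim definition is used. Since $X$'s structure need not be good, this is why I route the argument through Theorem \ref{thm:bislim_equivalent_condition_simply_connected_no_internal_2_cell} rather than through disk diagrams. The remaining steps are bookkeeping.
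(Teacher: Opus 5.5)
Your proposal is correct and follows the paper's own proof: pull the alternating bislim structure back to $Y$, observe that a 2-cell of $Y$ of branching degree $m$ over $C$ has alternation degree $m\cdot\operatorname{alt}_{C}$, and apply Theorem \ref{thm:cycle_counting_complex_version_single_complex}. The paper only asserts that the pulled-back structure is bislim, and your verification via Theorem \ref{thm:bislim_equivalent_condition_simply_connected_no_internal_2_cell} fills in that step validly, using that near-immersions compose and that $Y$ is non-degenerate.
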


\begin{proof}
The bislim structure of $X$ induces a bislim structure on $Y$ where
if a 2-cell $C'$ of $Y$ maps to a 2-cell $C$ of $X$ with branching
degree $d$, then $\alt_{C'}=d\cdot\alt_{C}$. Then the result follows
from Theorem \ref{thm:cycle_counting_complex_version_single_complex}.
\end{proof}
This concludes the core of the theory of bislim structures, and now
we turn to proving a few variants and generalizations that will be
useful in certain applications in the rest of the paper. First of
all, in Section \ref{subsec:Hyperbolicity-and-Negative-immersions}
we will relate bislim structures to hyperbolicity, and it will be
useful to give results for the case where $Y$ does have non-internal
edges. Note that the proof of Lemma \ref{lem:no_tree_islands} only
considers the edges which are adjacent to $\isle$. Therefore the
statement can be strengthened as follows:
\begin{lem}
\label{lem:no_tree_islands_with_boundary}Let $\heightened[Y]$ be
a finite alternating bislim structure such that $Y$ is not a single
vertex. If $\isle$ is a connected component of $\skeleton Y-H$ which
is a tree, then there is a non-internal edge of $Y$ in $\isle$ or
adjacent to $\isle$.
\end{lem}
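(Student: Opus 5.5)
We argue by contradiction, repeating the proof of Lemma \ref{lem:no_tree_islands} and noting that internality of edges was used only for edges in or adjacent to $\isle$. So let $\isle$ be a tree component of $\skeleton Y-H$. Suppose, for contradiction, that every edge of $Y$ lying in $\isle$ or adjacent to $\isle$ is internal.

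First we apply Lemma \ref{lem:neighborhood_technical_lemma} to $\isle\hookrightarrow Y$. This gives $\isle\hookrightarrow M\to Y$, where $M$ is simply connected, has no internal 2-cells, and $M\to Y$ is a branched near-immersion that is a local isomorphism at every point of $\isle$. The heightened structure pulls back to $M$ and stays alternating. By Theorem \ref{thm:bislim_equivalent_condition_simply_connected_no_internal_2_cell}, $\inducedGamma M$ is acyclic. Since $Y$ is finite and not a single vertex, $M$ is finite and contains a 2-cell. Hence there is a 2-cell $C_{\textmax}$ of $M$ with no incoming edge in $\inducedGamma M$.

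Next let $\isle'$ be the union of $\isle$ with the edges of $M$ adjacent to $\isle$. Because $M\to Y$ is a local isomorphism along $\isle$, every edge of $\isle'-\isle$ maps to an edge of $H$. It therefore carries a high side, and by our assumption it is internal in $M$. The local isomorphism identifies the sides of each such edge in $M$ with its sides in $Y$, so the number of sides is preserved. For the same reason, every edge of $\isle$ is internal in $M$.

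The final step follows the proof of Lemma \ref{lem:no_tree_islands} verbatim. Let $e_{\first}\neq e_{\last}$ be the first and last edges of the path $\boundary C_{\textmax}\cap\isle'$, with high sides $s_{\first}$ and $s_{\last}$. Each of these edges is internal. So if $s_{\first}$ were a side of a 2-cell other than $C_{\textmax}$, it would induce an edge of $\inducedGamma M$ into $C_{\textmax}$, which is impossible. Hence $s_{\first}$, and likewise $s_{\last}$, are high sides of $C_{\textmax}$. By alternation, some low side $s_{\textmin}$ of $C_{\textmax}$ lies between them, on an edge of $\boundary C_{\textmax}\cap\isle'$. That edge is internal, so $s_{\textmin}$ induces an edge into $C_{\textmax}$, a contradiction.

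The main obstacle is the bookkeeping that internality, which we assume only for the relevant edges of $Y$, transfers to the corresponding edges of $M$. This requires the local-isomorphism property along $\isle$ to control the side count of edges meeting $\isle$ only at a vertex, and not merely of edges in $\isle$. It should follow from how $M$ is built from the $\epsilon$-neighborhood of $\isle$: each half-edge at a vertex of $\isle$ sees all of its sides.
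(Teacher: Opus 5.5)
Your proposal is correct and follows essentially the same route as the paper, which derives this lemma only by remarking that the proof of Lemma \ref{lem:no_tree_islands} uses internality just for edges in or adjacent to $\isle$. You carry out exactly that rerun, and you add the bookkeeping the paper leaves implicit: the local isomorphism of $M\to Y$ along $\isle$ preserves the sides of those edges, so they remain internal in $M$.
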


And then similarly to Theorem \ref{thm:cycle_counting_complex_version_single_complex},
we obtain:
\begin{thm}
\label{thm:cycle_counting_complex_version_with_boundary}If $Y$ admits
an alternating bislim structure and $Y$ is not a single point, then
\[
\chi\largeparens{\skeleton Y}\ \le\ -\sum_{C\in\text{2-cells}\smallparens Y}\alt_{C}+2\left|\boundary Y\right|,
\]
where $\boundary Y$ is the set of non-internal edges in $Y$.
\end{thm}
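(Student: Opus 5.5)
We mirror the proof of Theorem \ref{thm:cycle_counting_complex_version_single_complex}, using Lemma \ref{lem:no_tree_islands_with_boundary} in place of Lemma \ref{lem:no_tree_islands}. The plan is to bound $\chi\largeparens{\skeleton Y-H}$ by $2\left|\boundary Y\right|$ rather than by $0$, and to bound the number of edges carrying high sides from below.

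\emph{Step 1: tree components.} Decompose $\skeleton Y-H$ into connected components. A component that is not a tree has $\chi\le0$, and a tree component has $\chi=1$. By Lemma \ref{lem:no_tree_islands_with_boundary}, every tree component $\isle$ has a non-internal edge $e$ of $Y$ lying in $\isle$ or adjacent to $\isle$. Assign $e$ to $\isle$. Each edge has at most two endpoints, and so it is adjacent to (or contained in) at most two components. Hence each non-internal edge is assigned to at most two tree components, the number of tree components is at most $2\left|\boundary Y\right|$, and
\[
\chi\largeparens{\skeleton Y-H}\le2\left|\boundary Y\right|.
\]
This charging argument is the only genuinely new point. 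An edge in $\isle$ touches only $\isle$, and an edge adjacent to $\isle$ but not in it lies in $H$ and touches at most two components, so the factor $2$ accounts for every case.

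\emph{Step 2: counting high edges.} Exactly as before, no edge carries two high sides, because two high sides of the same edge give a $2$-cycle in $\inducedGamma E$ for the two-cell diagram $E$. The one subtlety is a single $2$-cell whose two high sides lie on the same edge. That configuration also yields a self-loop or $2$-cycle in a reduced disk diagram, obtained by taking two copies of the cell, or from the non-degenerate branched picture. It is therefore excluded by bislimness in the sense of Definition \ref{def:bislim-definition-general}, applied to a diagram with no internal $2$-cells. Consequently the number of edges in $H$ equals $\left|H\right|=\sum_{C}\left|H_{C}\right|=\sum_{C}\alt_{C}$, using that the structure is alternating.

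\emph{Step 3: combining.} Removing the $|H|$ edges keeps all vertices, so
\[
\chi\largeparens{\skeleton Y}=\chi\largeparens{\skeleton Y-H}-\left|H\right|\le2\left|\boundary Y\right|-\sum_{C}\alt_{C}.
\]
The remaining case is when $Y$ is a graph with no $2$-cells. Then the sum is empty and every edge is non-internal. Each component of $\skeleton Y$ that is a tree has at least one edge, unless $Y$ is a point, which is excluded. So the number of components is at most $\left|\boundary Y\right|$ and the bound holds directly.
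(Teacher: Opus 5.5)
Your proof is correct and is the paper's argument. Like the paper, you replace Lemma \ref{lem:no_tree_islands} with Lemma \ref{lem:no_tree_islands_with_boundary}, charge each tree component of $\skeleton Y-H$ to a non-internal edge lying in it or adjacent to it (in which case the edge lies in $H$ and touches at most two trees) to get the $2\left|\boundary Y\right|$ term, and then subtract $\left|H\right|=\sum_{C}\alt_{C}$ exactly as in Theorem \ref{thm:cycle_counting_complex_version_single_complex}. Your extra remarks on two high sides of one cell on the same edge and on the case with no $2$-cells only make explicit details the paper leaves implicit.
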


\begin{proof}
The proof is essentially the same as the proof of Theorem \ref{thm:cycle_counting_complex_version_single_complex},
but now connected components of $\skeleton Y-H$ may be trees if they
are adjacent to or contain an edge of $\boundary Y$. Each edge $e$
of $\boundary Y$ can be contained in one tree or be adjacent to two
different trees (if $e\in H$), giving rise to the new term $2\left|\boundary Y\right|$.
\end{proof}
We note the similarity between Theorem \ref{thm:w_cycles_complex_version_two_complexes}
and Theorem \ref{thm:w-cycle-theorem-alternating}. Furthermore, Theorem
\ref{thm:w-cycle-theorem-alternating} follows from Theorem \ref{thm:w_cycles_complex_version_two_complexes}
by letting $X$ be the presentation complex of $\left\langle \Sigma\vert w\right\rangle $,
and in fact we obtain a version of Theorem \ref{thm:w-cycle-theorem-alternating}
which allows more than one word:

\begin{thm}[The $w$-cycle Theorem, extended]
\label{thm:the_w_cycle_theorem_extended}Let $H$ be a core graph
labeled by an alphabet $\Sigma$, let $w_{1},\dots,w_{k}$ be words
in $\Sigma$, and assume that the $w$-cycles in $H$ cover each edge
in $H$ at least twice. Let $X$ be the presentation complex of $\left\langle \Sigma\vert w_{1},\dots,w_{k}\right\rangle $.
If $X$ admits an alternating bislim structure with alternation degrees
$\alt_{w_{i}}$, then 
\[
\sum_{i=1}^{k}\alt_{w_{i}}\cdot\#_{w_{i}}\smallparens H\ \le\ -\chi\smallparens H.
\]
\end{thm}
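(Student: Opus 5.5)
We reduce the statement to Theorem \ref{thm:w_cycles_complex_version_two_complexes} by building a 2-complex $Y$ over the core graph $H$ (here $H$ denotes the core graph, not the set of high sides). Take $\skeleton Y=H$. For each $w_{i}$-cycle $\ell$ in $H$ of degree $d$, attach a 2-cell $C_{\ell}$ along $\ell$; its boundary has length $d\left|w_{i}\right|$ and reads $w_{i}^{d}$. The labeling gives a combinatorial map $\skeleton Y\to\skeleton X$, since both the edges of $H$ and the edges of $X$ are labeled by $\Sigma$. We extend it over each $C_{\ell}$ by the branched map of degree $d$ onto the 2-cell $C_{w_{i}}$ of $X$, modeled on $z\mapsto z^{d}$. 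Because the boundary of $C_{\ell}$ reads $w_{i}^{d}$, this map is compatible with the attaching maps. This gives a branched combinatorial map $\phi:Y\to X$ with $\#_{C_{w_{i}}}\smallparens Y=\#_{w_{i}}\smallparens H$, using that each $w_{i}$-cycle is counted by its degree in both quantities.

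Next we verify the hypotheses of Theorem \ref{thm:w_cycles_complex_version_two_complexes}.

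\textbf{Branched near-immersion.} Take two different sides of an edge $e$ of $Y$. Suppose they map to the same side of $X$, i.e.\ to the same position $j$ in $w_{i}$, and lie on cycles $\ell,\ell'$. Then both cycles pass through $e$ at position $j$ of $w_{i}$. Because $H$ is a core graph, each vertex has at most one outgoing and one incoming edge of each label, so a path reading a given word from a given edge is unique. Hence $\ell$ and $\ell'$ agree as cyclic paths, and the two sides come from the same cycle at the same offset modulo $\left|w_{i}\right|$. If $\ell=\ell'$ has degree $d$, the sides are positions $j$ and $j+t\left|w_{i}\right|$ of $\partial C_{\ell}$. These correspond to the same point under the degree-$d$ branched map, and they are genuinely different sides of $C_{\ell}$, not identified sides of $X$.

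This is the step I expect to be the main obstacle: one must make sure that sides differing by a rotation of the same branched cell count as mapping to \emph{different} sides in the sense of the near-immersion definition. I would handle it by noting that such a coincidence would require the cycle $\ell$ to be a repetition of a smaller $w_{i}$-cycle, which is excluded by the definition of degree. Alternatively, one can argue directly via local injectivity of the branched map on open edges: the $d$ preimages lie in distinct sheets away from the center. A related point to check is whether the $w_{i}$ need to be distinct and not conjugate, so that no cycle is counted twice. If some are conjugate I would either assume distinctness or observe that the bislim hypothesis already excludes this, since two 2-cells with the same boundary would produce a length-2 cycle in $\inducedGamma D$.

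\textbf{Remaining hypotheses.} Every edge of $Y$ is internal precisely because $H$ is covered at least twice by $w$-cycles; covering twice by the same cycle gives two distinct sides of $\partial C_{\ell}$. Also, $Y$ is not a single vertex because $H$ is a nonempty core graph. The pulled-back heightened structure is alternating and bislim by the remarks preceding Theorem \ref{thm:w_cycles_complex_version_two_complexes}.

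\textbf{Conclusion.} Applying Theorem \ref{thm:w_cycles_complex_version_two_complexes} and using $\chi\smallparens{\skeleton Y}=\chi\smallparens H$, we obtain
\[
\chi\smallparens H\le-\sum_{i}\#_{w_{i}}\smallparens H\cdot\alt_{w_{i}},
\]
which is the claim.
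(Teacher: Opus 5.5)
Your construction is the paper's: attach one branched 2-cell to each $w_i$-cycle of $H$, check that $Y\to X$ is a branched near-immersion using unique reading in a core graph, and apply Theorem \ref{thm:w_cycles_complex_version_two_complexes}.

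The step you flagged as the main obstacle is framed the wrong way around. Sides of $\partial C_\ell$ at positions $j$ and $j+t|w_i|$ always map to the \emph{same} side of $X$, namely position $j$ of the 2-cell of $w_i$. So arguing that they ``count as different'' cannot help. What must be shown is that two such sides never lie on the same edge of $H$.

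Your first resolution does exactly this, and it is the paper's argument. If $\ell$ traversed an edge $e$ at both positions, unique reading would make $\ell$ invariant under rotation by $t|w_i|$. Then $\ell$ would be a repetition of a closed path reading $w_i^{\gcd(t,d)}$ with $\gcd(t,d)<d$, contradicting that $\ell$ is a $w_i$-cycle of degree $d$.

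Drop the alternative argument via local injectivity of $z\mapsto z^d$. The potential failure happens at points of the open edge $e$ of $Y$, where two distinct boundary arcs of $C_\ell$ would land on the same boundary arc of $C_{w_i}$. Injectivity of the branched map in the interior of $C_\ell$ says nothing about that.

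You also need no distinctness assumption on the $w_i$. Sides lying in 2-cells over different 2-cells of $X$ automatically have different images.
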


\begin{proof}
Let $Y$ be the complex which has $H$ as its 1-skeleton, and a 2-cell
$C$ for each $w_{i}$-cycle in $H$ where the boundary $\boundary C$
is exactly the $w_{i}$-cycle. There is a natural branched map $\varphi:Y\to X$.
Additionally, $\varphi$ is a near-immersion: Assume that two sides
$s_{1}$ and $s_{2}$ of the same edge $e\in H$ map to the same side
in $X$. This corresponds to two $w_{i}$-cycles in $H$ that meet
at an edge in $H$ in the same position within $w_{i}$. Since $H$
is a core graph, there is a unique way to complete the $w_{i}$-cycle,
so both $w_{i}$-cycles are the same. If still $s_{1}\neq s_{2}$,
this means that the cycle is in fact repeating the same $w_{i}$-cycle
which starts at $e$ more than once, so it is not in fact a $w_{i}$-cycle.

Thus the result follows from Theorem \ref{thm:w_cycles_complex_version_two_complexes}.
\end{proof}
Theorem \ref{thm:w-cycle-theorem-alternating} implies Theorem \ref{thm:the_w_cycle_theorem}:
Let $w$ be a word which is not a proper power. By \cite[Proposition 2.4]{helferwise},
and Theorem \ref{thm:equivalence_between_new_and_old_definitions_of_bislim_structures},
$\left\langle \Sigma\vert w\right\rangle $ admits a bislim structure.
This can also be shown to follow from \cite[Lemma 16]{Louder_Wilton_2017}.

Additionally, we present another variant of the $w$-cycle theorem,
which does away with the requirement that $H$ is a core graph, and
will be useful in Section \ref{subsec:Proof-of-Theorem}. This variant
is stated in terms of commutative diagrams of graphs, rather than
by counting $w$-cycles, similarly to \cite[Theorem 2]{Louder_Wilton_2017}.

One may view the data of a combinatorial 2-complex $X$ as the map
of graphs $\cup_{C\in\text{2-cells}\smallparens X}\boundary C\to\skeleton X$.
Conversely, any map of graphs $W\to\Omega$ where $W$ is a disjoint
union of cycles corresponds to a combinatorial 2-complex $X$, by
letting $\Omega$ be the 1-skeleton of $X$, and attaching a 2-cell
on $w$ for every cycle $w$ in $W$. Under this correspondence, sides
of $X$ correspond to edges of $W$, and the complex $X$ admits an
alternating bislim structure if and only if $W\to\Omega$ has a corresponding
alternating partial stacking (\cite{partial_stackings_and_bislim_structures}).\footnote{Alternatively, to make the following self-contained and not dependent
on \cite{partial_stackings_and_bislim_structures}, it is possible
to \textit{define} that $W\to\Omega$ admits an alternating partial
stacking if $X$ admits an alternating bislim structure.} For each component $w$ of $W$ we denote by $\alt_{w}$ the alternation
degree of the corresponding 2-cell of $X$.

If we have another 2-complex $Y$ with a branched map $\varphi:Y\to X$,
then this corresponds to the following commutative diagram:
\[\begin{tikzcd}
	{\mathbb{S}} & {Y^{\left( 1 \right)}} \\
	W & \Omega
	\arrow[from=1-1, to=1-2]
	\arrow["{\text{cover}}"', from=1-1, to=2-1]
	\arrow[from=1-2, to=2-2]
	\arrow[from=2-1, to=2-2]
\end{tikzcd}\] where the map $\mathbb{S}\to W$ is a covering map on each component
of $\mathbb{S}$ (and the covering degree corresponds to the branching
degree). If $\varphi$ is also a branched near-immersion, then this
corresponds to the following property: for every pair of edges $e_{1}$
and $e_{2}$ in $\mathbb{S}$, if their images in $W$ are the same
and their images in $\skeleton Y$ are the same, then $e_{1}=e_{2}$.

Thus, by renaming $\skeleton Y$ to $H$ and $\skeleton X$ to $\Omega$,
we obtain the following equivalent statement of Theorem \ref{thm:w_cycles_complex_version_two_complexes}:
\begin{thm}
\label{thm:w_cycles_version_with_S}Let $\Omega$, $W$, $H$ and
$\mathbb{S}$ be graphs fitting into the following commutative diagram:
\[\begin{tikzcd}
	{\mathbb{S}} & H \\
	W & \Omega
	\arrow[from=1-1, to=1-2]
	\arrow["{\text{cover}}"', from=1-1, to=2-1]
	\arrow[from=1-2, to=2-2]
	\arrow[from=2-1, to=2-2]
\end{tikzcd}\]where $W$ and $\mathbb{S}$ are nonempty disjoint unions of cycles,
and the map $\mathbb{S}\to H$ covers each edge of $H$ at least twice.
For each component $w$ of $W$ assume the restriction of the map
$\psi:\mathbb{S}\to W$ to $\psi^{-1}\smallparens w$ is a degree
$d_{w}$ covering map. Finally, assume that there is no pair of edges
in $\mathbb{S}$ which both have the same image both in $W$ and in
$H$. If $W\to\Omega$ admits an alternating partial stacking, then

\[
\chi\smallparens H\ \le\ -\sum_{w\text{ component of }W}d_{w}\cdot\alt_{w}.
\]
\end{thm}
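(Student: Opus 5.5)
The plan is to translate the diagram back into complexes and apply Theorem \ref{thm:w_cycles_complex_version_two_complexes}, following the dictionary described just before the statement.

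\textbf{Step 1: build $X$.} Let $X$ be the combinatorial 2-complex with $\skeleton X=\Omega$ and one 2-cell $C_{w}$ attached along each cycle component $w$ of $W$ via $W\to\Omega$. Sides of $X$ are exactly edges of $W$. By the assumed correspondence (or by the footnote's convention, taken as definition), the alternating partial stacking of $W\to\Omega$ gives an alternating bislim structure $\heightened$ on $X$ with $\alt_{C_{w}}=\alt_{w}$. Bislimness forces $X$ to be non-degenerate, so the standing assumption on complexes holds.

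\textbf{Step 2: build $Y$ and $\varphi$.} Let $Y$ have 1-skeleton $H$ and one 2-cell $C_{\sigma}$ for each component $\sigma$ of $\mathbb{S}$, attached along $\sigma\to H$. Since $\sigma\to w$ is a cover of degree $d_{\sigma}$ of cycles compatible with the maps to $\Omega$, we define $\varphi$ on $C_{\sigma}$ as the cone of $\sigma\to w$. This is a branched map $z\mapsto z^{d_{\sigma}}$ onto $C_{w}$, and on $H$ it is the given map $H\to\Omega$. Commutativity ensures this is well defined, so $\varphi$ is a branched combinatorial map.

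For the near-immersion property: sides of $Y$ are edges of $\mathbb{S}$, and $\varphi$ of a side is its image in $W$. Two distinct sides of the same edge of $Y$ are two distinct edges of $\mathbb{S}$ with the same image in $H$. By hypothesis they therefore have different images in $W$, i.e.\ different image sides in $X$.

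\textbf{Step 3: check the hypotheses.} Every edge of $H$ is covered at least twice by $\mathbb{S}$, so it has at least two sides, i.e.\ every edge of $Y$ is internal. Since $\mathbb{S}$ is nonempty, $Y$ has a 2-cell, so it is not a single vertex.

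\textbf{Step 4: apply the theorem.} $\#_{C_{w}}(Y)$, counted with branching degree, equals $\sum_{\sigma\subseteq\psi^{-1}(w)}d_{\sigma}=d_{w}$, the total covering degree of $\psi^{-1}(w)\to w$. Hence Theorem \ref{thm:w_cycles_complex_version_two_complexes} gives
\[
\chi(H)=\chi\largeparens{\skeleton Y}\le-\sum_{w}d_{w}\alt_{w}.
\]

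\textbf{Main obstacle.} The delicate point is Step 1, namely that the partial stacking hypothesis really yields an alternating bislim structure on $X$ with matching alternation degrees. I rely on \cite{partial_stackings_and_bislim_structures}, or on the footnote's definitional convention, rather than reproving it. A secondary point to check is that $H$ may have leaves or isolated vertices. These are excluded, since every edge is covered and every vertex of $H$ should lie on an image of $\mathbb{S}$; if isolated vertices were permitted they would only increase $\chi$, so the statement implicitly excludes them.
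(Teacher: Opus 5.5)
Your proposal is correct and follows the paper's proof. The paper likewise reinterprets the diagram as a branched near-immersion $Y\to X$ with $\skeleton Y=H$ and $\skeleton X=\Omega$, with the 2-cells of $Y$ and $X$ indexed by the components of $\mathbb{S}$ and $W$, and then invokes Theorem \ref{thm:w_cycles_complex_version_two_complexes}; you make explicit the checks the paper leaves implicit (the near-immersion property from the no-common-image hypothesis, internality of edges from the double covering, and $\#_{C_w}(Y)=d_w$ via branching degrees), and your remark that $H$ must implicitly have no isolated vertices is a fair one.
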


\begin{proof}
As explained before, the diagram corresponds to a map of 2-complexes
$Y\to X$ which is a branched near-immersion, where $H=\skeleton Y$,
the graph $\mathbb{S}$ corresponds to 2-cells of $Y$, and also $\Omega=\skeleton X$
and $W$ corresponds to the 2-cells of $X$. Now this statement is
equivalent to Theorem \ref{thm:w_cycles_complex_version_two_complexes}.
\end{proof}

\section{\label{sec:Existence-of-bislim-structures}Existence of bislim structures
for generic presentations}

In this section, we prove that generic presentations with $r$ generators
and $k$ relators have alternating bislim structures with optimal
alternation degrees. Given a presentation $\left\langle x_{1},\dots,x_{r}\vert w_{1},\dots,w_{k}\right\rangle $,
we construct its \emph{presentation complex} $X$ by starting with
a bouquet of $r$ circles with one circle for each generator, and
for each relator $w_{i}$ we add a disk $C_{i}$ with $\boundary C_{i}$
glued along $w_{i}$. We say that the presentation admits an $\smallparens{a_{1},\dots,a_{r}}$-alternating
bislim structure if $X$ admits an $\smallparens{a_{1},\dots,a_{r}}$-alternating
bislim structure. Finally, we say that $w_{i}$ is $a_{i}$-alternating
if $C_{i}$ is $a_{i}$-alternating. We note that bislim structures
are not necessarily preserved between different presentations of the
same group.

We work in the few relator model of random presentations, where we
fix positive integers $r$ and $k$, and we generate a random presentation
$\left\langle x_{1},\dots,x_{r}\vert w_{1},\dots,w_{k}\right\rangle $
by picking $k$ relators $w_{1},\dots,w_{k}$ uniformly out of all
cyclically-reduced words of length $\ell$. We say that a property
holds for a generic presentation if the probability that the property
holds tends to $1$ as $\ell$ tends to infinity. 

Theorem \ref{thm:generic_bislim_structures_one_relator_case} claims
that a generic one-relator presentation (i.e., $k=1$) admits an $\smallparens{r-1}$-alternating
bislim structure. Moreover, in the generic case,
\begin{thm}
\label{thm:generic_bislim_structures_generic_case}For every choice
of nonnegative integers $r$, $k$ and $a_{1},\dots,a_{k}$ such that
$\sum_{i}a_{i}<r$, a generic $r$-generator $k$-relator presentation
$\left\langle x_{1},\dots,x_{r}\vert w_{1},\dots,w_{k}\right\rangle $
admits a bislim structure where $w_{i}$ is $a_{i}$-alternating.
\end{thm}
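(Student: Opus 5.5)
We would build the structure explicitly from the letter statistics of a random word, and then verify bislimness through condition \ref{enu:bislim-definition-reduced-disk-diagram} of Theorem \ref{thm:equivalent_conditions_for_bislim_structure}, i.e.\ acyclicity of $\inducedGamma D$ for reduced disk diagrams $D$. Since $\sum_i a_i<r$, the generators can be partitioned into disjoint blocks. For each relator $w_i$ we reserve a set $A_i$ of $a_i$ generators (if $a_i>0$) so that the $A_i$ are pairwise disjoint, and we keep at least one generator, say $x_r$, unreserved. In $w_i$, for each $x\in A_i$ we select one occurrence of $x^{\pm1}$ as a high side and one as a low side. The choices are made so that the $2a_i$ chosen sides alternate around $\boundary C_i$, which is easy for a generic long word since every letter occurs linearly often. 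If $a_i=0$ we still need goodness, so we pick one high and one low side using $x_r$ or another letter, while making sure no edge of $X$ receives two high sides or two low sides. This is the analogue of the linear-algebra/ordering trick in the non-alternating case.

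Next we prove that this heightened complex is bislim. The plan is to use small cancellation. A generic presentation in the few-relator model is $C'(\lambda)$ for any fixed $\lambda>0$, so every piece has length $o(\ell)$. We also want the selected extremal sides to be deep inside long stretches of the word, away from one another. Then for a reduced disk diagram $D$, every 2-cell of $D$ maps to some $C_i$, and its internal edges are grouped into short pieces. An edge of $\inducedGamma D$ crossing a piece comes from a high or low side lying in that piece. The key point is this: if $C\to C'$ is an edge of $\inducedGamma D$ through a high side of $C$ on generator $x$, then the side of $C'$ on the same edge is an occurrence of $x^{\pm1}$ that is not extremal, because we chose a unique high and a unique low occurrence per generator per relator. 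We would like to assign to each 2-cell a "height" that strictly increases along edges of $\inducedGamma D$. The natural attempt is a stacking-like order on the occurrences of each letter: the chosen high occurrence is top, the low occurrence is bottom, and intermediate occurrences are ordered by their position in a fixed linear order compatible with the pieces. This gives a partial stacking in the sense of Definition \ref{def:partial_stackings_combinatorial_definition}, and acyclicity would then follow from the equivalence between alternating partial stackings and bislim structures.

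The main obstacle is proving acyclicity rigorously for arbitrary diagrams. A directed cycle in $\inducedGamma D$ can wander through many cells, and small cancellation alone only controls local configurations. Our first approach would be a Greendlinger/curvature argument. Take a minimal counterexample $D$ without internal 2-cells, as in Definition \ref{def:bislim-definition-general}. Such a $D$ is a "ladder-like" diagram in which every cell meets the boundary. By the $C'(1/6)$ structure theorem this diagram is a tree of cells, and in a tree of cells the dual graph is acyclic; a directed cycle in $\inducedGamma D$ then forces a cycle in the dual tree (or a cell adjacent to itself via a single piece). The remaining task is to rule out self-adjacencies and length-2 cycles. These correspond to a piece of $w_i$ with $w_j$ (or with itself) containing a high side on both of its sides, or a high side on one and a low side on the other. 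We plan to exclude these by choosing the extremal occurrences so that their neighborhoods of length $\gg\lambda\ell$ are not pieces; for a generic word a union bound shows that the probability of a failure tends to $0$.
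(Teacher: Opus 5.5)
\textbf{There is a genuine gap.} The proof breaks where you reduce acyclicity to self-adjacencies and 2-cycles. A minimal counterexample $D$ (reduced, without internal 2-cells) is not a ``tree of cells.'' Having no internal 2-cells only makes the internal edges form a tree $T_D$. The cells around an interior vertex of $T_D$ of degree at least $3$ form a cycle in the dual graph, and nothing in $C'(1/6)$ theory forbids such vertices. These are exactly the cycles of $\Gamma(D)$ that matter: in a minimal counterexample the directed cycle runs around the whole diagram through every 2-cell (compare Figure~\ref{fig:disk_diagram}).

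Your selection rule for extremal occurrences also fails, not just the argument. Take one relator with high sides $h_x$ at position $P$ and $h_y$ at position $Q$, and suppose $w_{P-1}w_Pw_{P+1}=x\,x\,y$ and $w_{Q-1}w_Q=x^{-1}y$. Glue three copies $C_1,C_2,C_3$ of the 2-cell around a vertex $v$, placed counterclockwise, with clockwise boundary readings $w,w,w^{-1}$:
\begin{itemize}
\item $C_1,C_2$ share the edge carrying position $P$ of $C_1$ and $Q-1$ of $C_2$;
\item $C_2,C_3$ share the edge carrying $Q$ of $C_2$ and $P+1$ of $C_3$;
\item $C_3,C_1$ share the edge carrying $P$ of $C_3$ and $P-1$ of $C_1$.
\end{itemize}
The labels and directions are consistent, and this is a reduced disk diagram without internal 2-cells. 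The three high sides give $C_1\to C_2\to C_3\to C_1$ in $\Gamma(D)$.

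This pattern involves only letters adjacent to extremal positions. So if you choose occurrences without controlling their neighbourhoods, it occurs with probability bounded away from $0$. Your requirement that neighbourhoods of length $\gg\lambda\ell$ not be pieces is automatic under $C'(\lambda)$ and excludes nothing. You would need a mechanism controlling all such configurations at once.

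The stacking-like order does not fill the gap either. The condition $\iota(e_1)\preceq\iota(e_2)\iff\tau(e_1)\preceq\tau(e_2)$ propagates comparisons all the way around $S_w$, so verifying it is the same global problem. You give no actual construction, and the equivalence with bislim structures is only cited from forthcoming work.

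\textbf{What the paper does instead.} It uses two ingredients you are missing.
\begin{itemize}
\item \emph{A rigid local structure around every extremal side.} A reserved neutral letter $e_*=x_1$ follows each extremal side $s$, giving blockers $s\,e_*^{\ell_b-1}$ (high) or $s\,e_*^{-(\ell_b-1)}$ (low). These are followed by remainders with overlaps at most $\ell_r/5$ that contain no $e_*^{\pm\ell_b/5}$ (Theorem~\ref{thm:full-criterion}).
\item \emph{A global counting argument.} In a minimal diagram for the augmented graph $\tilde\Gamma$, every leaf of $T_D$ is forced onto a blocker, and blockers do not cross. At least four sided pieces can then be attributed injectively to each leaf. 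This contradicts the tree bound $\#\mathrm{pieces}<2\cdot\#\mathrm{leaves}$ of Lemma~\ref{lem:counting_pieces}.
\end{itemize}
A blocked placement has probability only about $(2r-1)^{-4\ell_b\sum_i a_i}=\ell_r^{-O(1)}$. So the probabilistic step searches over $\Omega(\ell/\ell_r)$ far-apart candidate placements $(H,L)$ and uses approximate independence.

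\textbf{A secondary error.} Insisting on goodness when $a_i=0$ is unnecessary and in general impossible. For a generic complex all edges are internal, so a good bislim structure would give $1-r\le -k$ by Theorem~\ref{thm:cycle_counting_complex_version_single_complex}; this fails whenever $k\ge r$. The paper gives such relators no extremal sides and works with Definition~\ref{def:bislim-definition-general}.
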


The assumption that $\sum_{i}a_{i}<r$ is tight: by Theorem \ref{thm:cycle_counting_complex_version_single_complex}
we have that $1-r=\chi\largeparens{\skeleton X}\le-\sum_{i}a_{i}$
unless $X$ has a non-internal edge. In that case, if $\sum_{i}a_{i}\ge r$,
there are at least $2r$ extremal sides in $X$. However, each of
the $r$ edges of $X$ can have at most one high side and one low
side, and the non-internal edge of $X$ only has one side, a contradiction.

In order to develop the ideas of this section, we start by showing
a (relatively) simple sufficient criterion for a heightened complex
to be bislim in Section \ref{subsec:simple-criteria}. The simple
criterion gives infinitely many examples of complexes which admit
alternating bislim structures, yet does not capture any alternating
bislim structures with $\sum_{i}a_{i}=r-1$, and thus is insufficient
to prove Theorem \ref{thm:generic_bislim_structures_generic_case}.
However, the simple criterion will serve as a foundation on which
we will build a relaxed criterion in Section \ref{subsec:The-final-criteria},
and use the relaxed criterion to prove Theorem \ref{thm:generic_bislim_structures_generic_case}
in Section \ref{subsec:Generic-words-analysis}.

\subsection{\label{subsec:simple-criteria}A Simple criterion for bislim structures}

In this section we present a simple criterion, which guarantees that
a complex admits a bislim structure (Theorem \ref{thm:simple-criterion}).
Recall the notion of a piece from small cancellation theory:
\begin{defn}
Let $D$ be a disk diagram. A \emph{piece} in $D$ is a maximal path
of internal edges in $\skeleton D$ in which all of the vertices are
of degree exactly 2 except the first and last vertices. A \emph{piece}
in a 2-complex $X$ is a piece in some reduced disk diagram $D$ in
$X$.

It is enough to consider reduced disk diagrams $D$ in $X$ with only
two faces to exhibit all pieces of $X$. This notion of a piece of
$X$ is the same as the standard notion of a piece of a representation
from small cancellation theory when $X$ is a presentation complex.
\end{defn}

A piece and its inverse are considered to be the same piece. This
matters since we will count the number of pieces in disk diagrams
in the following. 
\begin{defn}
A heightened complex $\heightened$ has a \emph{trivial cycle} if
$X$ has two high sides of the same edge $e$, or two low sides of
the same edge $e$.
\end{defn}

See Figure \ref{fig:trivial_cycle_heightened_complex} for an example
of a heightened complex with a trivial cycle. A heightened complex
$\heightened$ has a trivial cycle if and only if there is a reduced
disk diagram $D$ in $X$ with just two 2-cells glued together at
exactly one edge, where $\inducedGamma D$ has a cycle of length 2,
as in Figure \ref{fig:trivial_cycle_diagram}. If $X$ is a presentation
complex of $\left\langle x_{1},\dots,x_{r}\vert w_{1},\dots,w_{k}\right\rangle $,
then $\heightened$ has a trivial cycle if and only if two appearances
of the same generator $a\in\left\{ x_{1},\dots,x_{r}\right\} $ in
$w_{1},\dots,w_{k}$ are both high or both low.

\begin{figure}
\subfloat[\label{fig:trivial_cycle_heightened_complex}]{
\begin{centering}
\includegraphics[totalheight=0.2\textwidth]{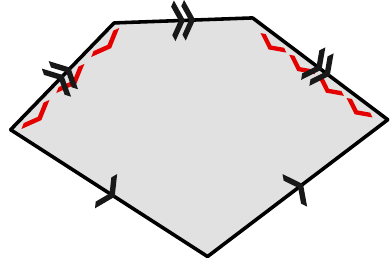}
\par\end{centering}
}\hfill{}\subfloat[\label{fig:trivial_cycle_diagram}]{
\begin{centering}
\includegraphics[totalheight=0.2\textwidth]{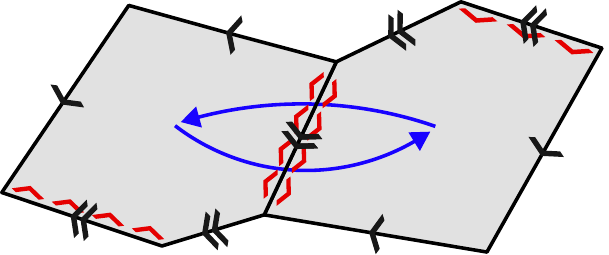}
\par\end{centering}
}\caption{\label{fig:trivial_cycles}Figure \ref{fig:trivial_cycle_heightened_complex}
depicts a heightened complex $\protect\heightened$ with a trivial
cycle where $X$ is given by Figure \ref{fig:presentation_complex}.
The trivial cycle is exemplified by the diagram $D$ in Figure \ref{fig:trivial_cycle_diagram},
where $\Gamma\protect\smallparens D$ contains a cycle, drawn in blue.}

\end{figure}

\begin{defn}
Let $X$ be a 2-complex. We say that an edge $e$ of $X$ is \emph{neutral}
if $e$ has no extremal sides. We say that a side $s$ of $X$ is
\emph{neutral} if it corresponds to a neutral edge, i.e., no side
$s'$ of the same edge as $s$ is extremal.
\end{defn}

\begin{thm}[The Simple criterion]
\label{thm:simple-criterion}Let $\heightened$ be a heightened complex.
Given an integer $R\ge4$, if the following criterion holds, $\heightened$
is bislim.
\global\long\def\simpltiny{\frac{R}{4}}%
\begin{enumerate}
\item \label{enu:no_trivial_cycle_condition}$\heightened$ has no trivial
cycle.
\item \label{enu:small_cancellation_condition}The pieces of $X$ are of
length at most $\simpltiny$.
\item \label{enu:radius_condition}For every extremal side $s$ of a 2-cell
$C$, all sides within distance $R-2$ of $s$ in $\boundary C$ are
neutral.
\item \label{enu:extremal-edge-distance-condition}For every two extremal
sides $s_{1}$ and $s_{2}$ of the same 2-cell $C$, the distance
between $s_{1}$ and $s_{2}$ in $\boundary C$ is at least $2R-1$.
\end{enumerate}
\end{thm}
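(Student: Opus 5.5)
We argue by contradiction. By Definition \ref{def:bislim-definition-general} it suffices to rule out a reduced disk diagram $D$ in $X$ with no internal 2-cells for which $\inducedGamma D$ has a directed cycle. Following the proof of the Claim after Definition \ref{def:bislim-definition-general}, we choose a counterexample $D$ with the fewest 2-cells and a simple directed cycle $\ell=C_{1}\to C_{2}\to\dots\to C_{m}\to C_{1}$ in $\inducedGamma D$. We discard every 2-cell outside the region bounded by $\ell$ and assume, by minimality, that $\ell$ passes through every 2-cell of $D$. Condition \ref{enu:no_trivial_cycle_condition} excludes $m=1$ and the two-cell cycles that come from a single edge. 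Hence, since each arc of $\ell$ crosses an internal edge carrying an extremal side, $\ell$ enters each $C_{i}$ through one extremal-induced edge and leaves through another. Concretely, the edge $C_{i}\to C_{i+1}$ comes from a high side of $C_{i}$ or a low side of $C_{i+1}$ on a shared internal edge $e_{i}$.

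The core of the argument is a curvature count on $D$, in the style of small cancellation. Each 2-cell $C_{i}$ has boundary that contains the two crossing edges $e_{i-1},e_{i}$. These are sides of $C_{i}$ lying in a piece, and the side of the neighbour on the same edge is extremal, or the side of $C_{i}$ itself is. I would show that each $C_{i}$ has "many" pieces on its boundary, contradicting that $D$ is a disk, in which some cell must be a shell with few interior pieces (Greendlinger-type lemma).

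The metric conditions enter here. Consider an extremal side $s$ of $C_{i}$, or a neutral side of $C_{i}$ whose partner side $s'$ on a neighbour $C'$ is extremal. In the second case, Condition \ref{enu:radius_condition} applied to $s'$ in $\boundary C'$, together with pieces of length at most $R/4$ (Condition \ref{enu:small_cancellation_condition}), means the piece through $e$ cannot reach beyond distance $R/4$. This forces the neighbouring structure along $\boundary C_{i}$ near $e$ to involve several pieces. Condition \ref{enu:extremal-edge-distance-condition} guarantees that the entry point and exit point of $\ell$ on $\boundary C_{i}$ are separated by at least about $2R-1$ edges along either arc of $\boundary C_{i}$ when they come from two extremal sides of the same cell. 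Combining this with Condition \ref{enu:radius_condition} on the neighbours gives the same separation in the mixed cases, since the gap shrinks by at most $R-2$ per endpoint while the piece contributes at most $R/4$. Each arc of $\boundary C_{i}$ between the two crossings therefore has length at least roughly $R$. Because $D$ has no internal 2-cells, one of these arcs meets $\boundary D$, but the other arc must be covered by pieces of length at most $R/4$, so it consists of at least $4$ pieces. The planned contradiction is an Euler-characteristic/Gauss--Bonnet count over the planar diagram: we assign each cell curvature $1-(\#\text{interior pieces})/\dots$. Alternatively, we take an outermost cell and show the pieces along the inner arc must come from cells that themselves lie on $\ell$, producing a shorter directed cycle or a smaller counterexample.

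The main obstacle is making this final contradiction rigorous: the cells along $\ell$ form an annulus-like or disk-like band, and we must show this band cannot close up. I would try first the direct approach. Order the cells by $\ell$ and look at a cell $C_{i}$ whose interior arc (facing the region enclosed by $\ell$) is shortest. The cells adjacent to that arc are all on $\ell$ by the minimality assumption. Using the length bound of at least $4$ pieces, we locate a chord of $\ell$ giving a shorter directed cycle in $\inducedGamma D$, contradicting minimality. If this fails, we fall back on a standard $C(4)$–$T(4)$/Lyndon curvature lemma applied to the subdiagram, with the extremal-side distance conditions guaranteeing that every cell of $\ell$ has positive-curvature deficit.
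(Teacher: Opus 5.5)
\textbf{There is a genuine gap.} The proposal never reaches a contradiction: you say yourself that the final step is the main obstacle, and you list three possible endings without carrying any of them out. More seriously, the per-cell estimate that all three endings depend on is false.

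You claim that for every cell $C_i$ on $\ell$, the arc of $\partial C_i$ between the two crossings that faces the inside of $\ell$ has length about $R$, and hence contains at least $4$ pieces. Conditions 3 and 4 give such a bound only when at least one crossing side is an extremal side of $C_i$ itself. If both are, Condition 4 gives separation $\ge 2R-1$; if one is, Condition 3 gives $\ge R-1$.

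Now suppose instead that $\ell$ enters $C_i$ across $e_{i-1}$ because of a high side of $C_{i-1}$, and leaves across $e_i$ because of a low side of $C_{i+1}$. Then the sides of $C_i$ on $e_{i-1}$ and $e_i$ are non-neutral but not extremal. Conditions 3 and 4 say nothing about $\partial C_i$ in this case. Even your own bookkeeping (start at $2R-1$, lose $R-2$ at each end) leaves only $3$.

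In fact $e_{i-1}$ and $e_i$ can be consecutive on $\partial C_i$. They can meet at an interior vertex $v$ whose third edge $f$ is a neutral edge shared by $C_{i-1}$ and $C_{i+1}$. This is locally compatible with all four conditions, and then the inner arc of $C_i$ is just the vertex $v$, so $C_i$ contributes no pieces. The heightened complex is not assumed to be good, so such cells cannot be excluded a priori; they may be short and have no extremal sides of their own. Consequently no cell-by-cell curvature or Greendlinger count can work. The chord idea fails too, because pieces made of neutral edges induce no edges of $\Gamma(D)$.

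\textbf{The missing idea} is to charge pieces to the leaves of the internal tree $T_D$ (the subgraph spanned by the internal edges) rather than to cells. In other words, charge to the owners of extremal sides. $T_D$ is a tree because $D$ has no internal $2$-cells, and it is connected by minimality.

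\textbf{Upper bound.} By minimality, vertices of $T_D$ on $\partial D$ are leaves. Contracting the degree-$2$ vertices of $T_D$ therefore turns pieces into edges of a tree with no degree-$2$ vertices. This gives $\#\text{pieces}<2\,\#\text{leaves}$.

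\textbf{Lower bound.} Here you must minimize the number of internal edges as well as the number of $2$-cells, since your minimality in $2$-cells alone does not allow the following step. If the edge of $T_D$ at a leaf $v$ were neutral, ungluing the two cells along it would keep $\Gamma(D)$ and remove an internal edge. So that edge carries an extremal side $s$ of some cell $C$. By Condition 3, the next $R-2$ sides of $\partial C$ after $s$ are neutral, hence are not leaf edges. Condition 1 gives that the internal boundary of $C$ has length at least $2$. Together these show that the internal boundary of $C$ contains a path $p_v$ of length $R$ starting at $v$. By Condition 2, $p_v$ contains at least $4$ pieces.

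\textbf{Counting.} Condition 4 makes the two such paths inside one cell disjoint. So each piece lies in at most two paths $p_v$, one from each side. Hence $2\,\#\text{pieces}\ge 4\,\#\text{leaves}$, which contradicts the upper bound. In the configuration above, the pieces along $f$ are charged to $C_{i-1}$ and $C_{i+1}$, which is exactly what a per-cell count misses.
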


While generic presentation complexes can be shown to satisfy Conditions
1, 2, and \ref{enu:extremal-edge-distance-condition}, it can be shown
that they do not satisfy Condition \ref{enu:radius_condition}.
\begin{example}
Consider the presentation complex $X$ of $\left\langle \alpha,\beta,x,y\,\vert\,w\right\rangle $
where
\[
w=xy\boldsymbol{\alpha}x^{2}\alpha yx\beta^{-2}xy^{-1}\boldsymbol{\beta}y^{2}\beta\alpha^{-1}x\beta.
\]
Let $H$ be the singleton set of just the side of $X$ corresponding
to the letter $\boldsymbol{\alpha}$ marked in bold in $w$, and let
$L$ be the singleton set of just the side of $X$ corresponding to
the letter $\boldsymbol{\beta}$ marked in bold in $w$. The simple
criterion applies to the heightened complex $\heightened$ with $R=4$.
Indeed,
\begin{enumerate}
\item There are no trivial cycles, since the two extremal sides are sides
of different edges (the edge corresponding to $\alpha$ and the edge
corresponding to $\beta$).
\item All pieces of $X$ are of length $1$.
\item The sides within distance $2$ of the extremal sides are adjacent
to the edges corresponding to $x$ and $y$, which are neutral edges.
\item The distance between the extremal sides is $9$.
\end{enumerate}
Thus $\heightened$ is bislim. Since the sole 2-cell is 1-alternating,
$\heightened$ is a 1-alternating bislim structure.
\end{example}

This example is somewhat redundant, since by \cite[Proposition~2.4]{helferwise}
every torsion-free one-relator presentation admits a 1-alternating
bislim structure. However, this example is clearly extensible to cases
where $w$ admits a k-alternating bislim structure with $k\ge2$,
or where the complex has multiple 2-cells. In these cases relatively
little was known before the current work.

Conditions \ref{enu:radius_condition} and \ref{enu:small_cancellation_condition}
of the simple criterion imply that there must be at least two neutral
edges: if there is only one neutral edge, then by Condition \ref{enu:radius_condition}
the neighborhoods of extremal sides must go through this single edge,
which thus must be the same as each other. Thus they are pieces of
length $R-2$, in contradiction with Condition \ref{enu:small_cancellation_condition}.

Since each edge can have at most one high side and one low side, and
none if it is neutral, the overall alternation degree $\left|H\right|=\left|L\right|$
is at most the number of non-neutral edges, so the simple criterion
cannot ever reach the optimal alternation degree $\left|H\right|=\left|L\right|=r-1$
required in Theorem \ref{thm:generic_bislim_structures_generic_case}.

However, it is easy to construct infinitely many presentation complexes
for which the criterion holds. First, reserve at least two neutral
letters, pick the positions of the extremal letters in the relators
to be evenly spaced, such that Condition \ref{enu:extremal-edge-distance-condition}
holds, and alternating according to the desired alternation degrees.
Pick all the high letters and all the low letters such that Condition
\ref{enu:no_trivial_cycle_condition} is satisfied. Then randomly
choose the neighborhood of radius $R$ around the extremal letters
to be a random word in the neutral letters so that Condition \ref{enu:radius_condition}
holds. Since there are at least two neutral letters, the chances that
these neighborhoods contain large pieces which contradict Condition
\ref{enu:small_cancellation_condition} tends to $0$ as $R$ tends
to infinity.

Finally, fill in the rest of the relators randomly as well, now using
the whole alphabet including the extremal letters. If $R$ grows even
moderately quickly as the length of the relators tends to infinity,
the probability that Condition \ref{enu:small_cancellation_condition}
holds tends to $1$. Note that this construction can only reach $r-2$
overall alternations instead of $r-1$, since at least two neutral
edges are required.
\begin{rem}
In fact, some cases of Theorem \ref{thm:generic_bislim_structures_generic_case}
where $\sum_{i}a_{i}$ is small enough in relation to $r$, can be
proven using the simple criterion by itself through careful analysis.
Since the relaxed criterion is required to prove Theorem \ref{thm:generic_bislim_structures_generic_case}
fully, we do not dwell on that here.
\end{rem}

\global\long\def\internalBoundary{\overline{\boundary}}%

\global\long\def\first{\text{first}}%
\global\long\def\last{\text{last}}%

\subsubsection{Proof of Theorem \ref{thm:simple-criterion}}

We now walk through a proof of Theorem \ref{thm:simple-criterion}.
Assume that $\heightened$ satisfies the conditions of Theorem \ref{thm:simple-criterion}
yet $\heightened$ is not bislim. We say that $D$ is a \emph{counterexample}
if $D$ is a counterexample to Definition \ref{def:bislim-definition-general},
i.e., $D$ is a reduced disk diagram in $X$ without internal 2-cells
such that $\inducedGamma D$ has a cycle. Additionally $D$ is a \emph{minimal
}counterexample if it has a minimal number of internal edges and 2-cells.
For example the disk diagram in Figure \ref{fig:disk_diagram} is
a minimal counterexample. The \emph{internal tree} $\mathemph{T_{D}}$
is the subgraph of $\skeleton D$ generated by its internal edges.
It is a connected graph since $D$ is minimal, and it is a tree since
$D$ has no internal 2-cells. The \emph{internal boundary} $\mathemph{\internalBoundary C}$
of a 2-cell $C$ is the set of sides of $C$ which are internal, i.e.,
$\boundary C\cap T_{D}$.

Our goal is to count the number of pieces in $T_{D}$ to arrive at
a contradiction. The number of pieces is bounded from above by a simple
argument (see Lemma \ref{lem:counting_pieces} below):
\begin{equation}
\#\text{pieces}\largeparens{T_{D}}\ <\ 2\cdot\#\text{leaves}\largeparens{T_{D}}.\label{eq:bounding_pieces_from_above}
\end{equation}
E.g., in Figure \ref{fig:disk_diagram} we have $4$ pieces and $4$
leaves in $T_{D}$. Now our goal is to use the conditions and the
heightened complex to bound the number of pieces from below, which
we do in Lemma \ref{lem:piece_lower_bound} below, as follows:
\begin{equation}
\#\text{pieces}\largeparens{T_{D}}\ \ge\ 2\cdot\#\text{leaves}\largeparens{T_{D}},\label{eq:bounding_pieces_from_below}
\end{equation}

arriving at a contradiction. Hence $\heightened$ is bislim as no
counterexample $D$ is possible.

We start by proving simple lemmas about $D$, and conclude that (\ref{eq:bounding_pieces_from_above})
holds.

\begin{wrapfigure}[15]{O}{0.4\columnwidth}%
\centering{}\includegraphics[width=0.4\columnwidth]{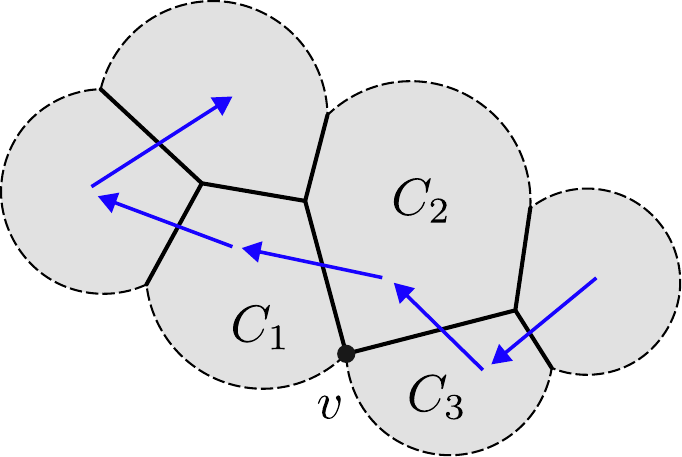}\caption{\label{fig:boundary_leaf_lemma} A diagram for Lemma \ref{lem:boundary_leaf_lemma}.
Since $C_{2}$ divides the disk diagram into two, $\protect\inducedGamma D$
cannot traverse a simple directed cycle through all 2-cells in $D$.
}
\end{wrapfigure}%

\begin{lem}
\label{lem:boundary_leaf_lemma}Let $v$ be a vertex on the boundary
of $D$ which is also in $T_{D}$. Then $v$ is a leaf of $T_{D}$.
\end{lem}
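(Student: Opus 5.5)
We argue by contradiction, using the minimality of the counterexample $D$ and the figure's hint. Suppose $v$ lies on $\boundary D$ and in $T_{D}$, but is not a leaf of $T_{D}$. Then at least two internal edges $e_{1},e_{2}$ of $T_{D}$ meet at $v$. Since $D$ is planar and $v$ is a boundary vertex, the link of $v$ in $D$ is a union of arcs. The internal edges at $v$ are interior points of these arcs, while the boundary edges at $v$ are arc endpoints.

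The first step is to show that $v$ separates $D$, in the sense that some 2-cell $C$ at $v$ splits $D$ into two nonempty parts. Take two consecutive internal edges in the arc containing $e_{1}$; the 2-cell $C$ between them has $v$ on $\boundary C$. We have $v\in\boundary D$, and $C$ has no internal 2-cells adjacent in a way that closes it off, since $D$ has no internal 2-cells. Hence removing $C$ (or cutting at $v$ along $C$) disconnects $D-C$ into at least two components, each containing some 2-cell, as in Figure \ref{fig:boundary_leaf_lemma}. An alternative route goes through the tree $T_{D}$: since $v\in\boundary D$, the boundary of $D$ passes through $v$, and the two sides of $v$ along $T_{D}$ lie in different complementary regions of $T_{D}\cup\boundary D$. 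So the 2-cells on either side of $v$ can meet only through 2-cells containing $v$.

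The second step applies minimality. Take a simple directed cycle $\ell$ in $\inducedGamma D$ and view it as a loop in $D$ through 2-cell centers and edge midpoints. Edges of $\inducedGamma D$ cross internal edges only, so $\ell$ avoids the vertex $v$. If $\ell$ uses 2-cells on only one side of $v$, we cut $D$ at $v$, discard the other side, and keep $\ell$. This yields a reduced disk diagram without internal 2-cells that still contains the cycle $\ell$ but has fewer internal edges or 2-cells, contradicting minimality. If instead $\ell$ visits both sides, then it must pass through the separating cell $C$ twice, or cross around $v$. A simple closed curve avoiding $v$ cannot cross from one side to the other and back except through a cell touching $v$. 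The key topological point is that $\ell$, together with $v\in\boundary D$, lies in the planar disk, so $\ell$ does not wind around $v$. Each part of $\ell$ on one side must therefore return through the same cell $C$, contradicting simplicity, unless $\ell$ enters and leaves $C$ within one side only. In that case we again discard the other side.

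The main obstacle is making this separation argument rigorous. One has to show that the 2-cells at $v$ on the two sides of a non-leaf internal vertex really disconnect $D$ away from $v$, handling branched cells and cells meeting $v$ several times. The approach I would try is to cut $D$ open along $v$: replace $v$ by one copy per arc of its link. This produces a complex $D'$ with a branched near-immersion to $X$ that is still simply connected, since we are splitting a boundary point. Then use the fact that a simple closed curve in a disk avoiding a boundary point lies in one component of $D'$ minus the relevant cell. If $D'$ turns out to be connected, its $\Gamma$ contains $\ell$, and the internal edges $e_{1},e_{2}$ at $v$ now have distinct endpoints, so a smaller counterexample exists. Either way the minimality of $D$ is contradicted.
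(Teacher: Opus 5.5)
Your overall strategy is the paper's. You find a non-internal 2-cell $C$ at $v$ that separates $D$, and then observe that a simple directed cycle in $\inducedGamma D$ either avoids one side, so that side can be discarded against minimality, or must pass through $C$ twice. The gap is that the step everything rests on, namely that $C$ actually separates $D$, is never established; you flag it yourself as the main obstacle.

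Neither of your justifications works:
\begin{itemize}
\item The first, that ``$C$ has no internal 2-cells adjacent in a way that closes it off,'' does not name a mechanism.
\item The second is vacuous. Every edge of $D$ lies either in $T_{D}$ or on $\boundary D$, so the complementary regions of $T_{D}\cup\boundary D$ are just the open 2-cells. Saying that two cells lie in different regions therefore says nothing about separation.
\end{itemize}

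The missing observation is short. $C$ is itself not internal, since $D$ has no internal 2-cells, so $\boundary C$ contains an edge $f$ lying on $\boundary D$. $C$ also meets $\boundary D$ at $v$, in the corner between consecutive internal edges $e,e'$. Hence there is an arc inside $C$ from $v$ to the midpoint of $f$, and this arc is a cross-cut of the disk. It separates the 2-cell across $e$ from the 2-cell across $e'$. Edges of $\inducedGamma D$ cross only internal edges, and those are disjoint from the cut, so any path in $\inducedGamma D$ between the two sides must go through $C$. This is exactly what the paper's phrase ``since $C_{2}$ is not internal'' encodes.

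Your proposed rigorous fix does not supply this. Splitting $v$ into one copy per arc of its link changes nothing when $v$ is not a cut vertex of $D$: such a boundary vertex has a single arc, so $e_{1}$ and $e_{2}$ still share an endpoint. Moreover, splitting a vertex never reduces the number of internal edges or 2-cells, so your ``connected $D'$'' branch cannot produce a smaller counterexample.

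The splitting idea is useful only in the case your first step silently skips. Choosing ``two consecutive internal edges in the arc containing $e_{1}$'' presumes $e_{1},e_{2}$ lie in the same arc. To justify that, note that if $v$ were a cut vertex, then $\ell$ (which avoids $v$) would lie in one block. The other block could then be discarded, contradicting minimality.
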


\begin{proof}
If $v$ is not a leaf of $T_{D}$, then $v$ is adjacent to at least
three 2-cells, and let $C_{1},C_{2},C_{3}$ be the first three in
clockwise order (see Figure \ref{fig:boundary_leaf_lemma}). Since
$C_{2}$ is not internal, if we remove $C_{2}$ from $D$ then $v$
becomes a cut vertex. Thus $\inducedGamma D$ cannot traverse a directed
cycle through all 2-cells in $D$, without going through $C_{2}$
twice, contradicting the minimality of $D$.
\end{proof}
\begin{cor}
\label{cor:boundary_leaf}The first and last vertices of $\internalBoundary C$
are leaves of $T_{D}$ for every 2-cell $C$ of $D$.
\end{cor}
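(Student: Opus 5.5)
The corollary follows directly from Lemma \ref{lem:boundary_leaf_lemma}, so the plan is to check that the first and last vertices of $\internalBoundary C$ lie both on the boundary of $D$ and in $T_{D}$. Then the lemma applies to each of them.

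First, fix a 2-cell $C$ of $D$. Since $D$ has no internal 2-cells, $\boundary C$ contains at least one non-internal edge. Hence $\internalBoundary C=\boundary C\cap T_{D}$ is not all of $\boundary C$.

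Next, I would argue that $\internalBoundary C$ is a single path in $\boundary C$. If it consisted of several arcs, then removing $C$ would disconnect $T_{D}$, or, equivalently, $T_{D}\cup\boundary C$ would contain a cycle enclosing other cells. That contradicts either $T_{D}$ being a tree or the minimality of $D$. In fact, for the corollary I only need that $\internalBoundary C$ is a nonempty union of arcs, each with a first and a last vertex.

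Now let $v$ be the first vertex of such an arc, the argument for the last vertex being symmetric. The edge of $\boundary C$ just before $v$ is not in $T_{D}$, so it is a non-internal edge and lies on the boundary of $D$. Hence $v$, as an endpoint of that edge, is a vertex of the boundary of $D$. The vertex $v$ is also in $T_{D}$, since it is an endpoint of an internal edge of $\internalBoundary C$.

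By Lemma \ref{lem:boundary_leaf_lemma}, $v$ is a leaf of $T_{D}$, and the same holds for the last vertex.

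The only delicate point is the case where $\internalBoundary C$ is empty, or is a single vertex, so that "first/last vertex" is degenerate. I would handle it by noting that the statement concerns the path's endpoints whenever it has edges. In the single-vertex case, that vertex is again on $\boundary D$ and in $T_{D}$, so the lemma still applies, provided the vertex lies in $T_{D}$ at all.
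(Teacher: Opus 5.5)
Your proposal is correct and is the paper's argument: the first (resp.\ last) vertex of $\internalBoundary C$ is an endpoint of a non-internal edge of $\boundary C$, so it lies on $\boundary D$, and it is an endpoint of an internal edge, so it lies in $T_{D}$; Lemma~\ref{lem:boundary_leaf_lemma} then makes it a leaf. Your digressions on whether $\internalBoundary C$ is a single arc and on the degenerate cases are not needed for the argument.
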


\begin{proof}
The first vertex of $\internalBoundary C$ is both on the boundary
$\boundary D$ (since it is adjacent to a non-internal edge) and on
$T_{D}$ (since it is adjacent to an internal edge). The same holds
for the last vertex.
\end{proof}
\begin{lem}
\label{lem:counting_pieces}Equation (\ref{eq:bounding_pieces_from_above})
holds, i.e.,
\[
\#\text{pieces}\largeparens{T_{D}}\ <\ 2\cdot\#\text{leaves}\largeparens{T_{D}}.
\]
\end{lem}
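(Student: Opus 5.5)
The inequality is a counting statement about the finite tree $T_D$. I would contract $T_D$ to a tree whose vertices have degree $1$ or at least $3$, and then count edges against leaves.

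Pieces are maximal paths in $T_D$ whose interior vertices all have degree exactly $2$. So the pieces of $T_D$ are exactly the edges of the tree $T'$ obtained from $T_D$ by suppressing every degree-$2$ vertex. Every vertex of $T'$ has degree $1$ or degree at least $3$, and its leaves are the leaves of $T_D$.

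One check is needed first. The definition of a piece requires the endpoints not to have degree $2$, so a single piece could only fail to fit this picture if $T_D$ were a cycle. That is impossible because $T_D$ is a tree, which was noted right before the lemma: it is connected by minimality of $D$, and acyclic since $D$ has no internal 2-cells. Also $T_D$ is nonempty. Since $\inducedGamma D$ has a cycle, $D$ has at least two 2-cells, and a directed edge of $\inducedGamma D$ requires an internal edge. Hence $T_D$ has at least one edge, and $T'$ is a tree with at least one edge.

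Now the count. Let $L$ be the number of leaves of $T'$ and $B$ the number of branch vertices, those of degree at least $3$. Summing degrees gives $2\#E(T')\ge L+3B$. Since $T'$ is a tree, $\#E(T')=L+B-1$. Combining,
\[
2(L+B-1)\ \ge\ L+3B,\qquad\text{so}\qquad B\ \le\ L-2 .
\]
Therefore
\[
\#\text{pieces}(T_D)\ =\ \#E(T')\ =\ L+B-1\ \le\ 2L-3\ <\ 2L .
\]
When $T'$ is a single edge we have $B=0$, $L=2$, and one piece, which still satisfies $1<4$.

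The only step needing care is identifying pieces of $T_D$ with edges of $T'$. Pieces are defined via vertex degrees in $\skeleton D$ rather than in $T_D$. A vertex of $T_D$ could have degree $2$ in $T_D$ but higher degree in $\skeleton D$ because it also lies on boundary edges. By Lemma \ref{lem:boundary_leaf_lemma}, every vertex of $T_D$ lying on $\boundary D$ is a leaf of $T_D$. So every non-leaf vertex of $T_D$ is interior to $D$, and all its edges in $\skeleton D$ are internal, so its degrees in $T_D$ and in $\skeleton D$ agree. Leaves remain piece endpoints either way. Hence the degree-$2$ vertices relevant to pieces are the same in both graphs, and the identification holds.
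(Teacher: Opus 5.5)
Your argument is correct and is essentially the paper's proof: suppress the degree-$2$ vertices of $T_D$, use Lemma~\ref{lem:boundary_leaf_lemma} to see that pieces of $D$ correspond exactly to edges of the resulting tree, and finish with the degree count $\#\text{edges}\le 2\cdot\#\text{leaves}-3$. Your extra checks, that $T_D$ has at least one edge and that degrees in $T_D$ and in $\skeleton D$ agree at non-leaf vertices, make explicit what the paper leaves implicit.
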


\begin{proof}
Contract vertices of degree 2 of $T_{D}$ to obtain a new tree $S$.
Pieces of $D$ correspond to edges of $S$ (cases such as Figure \ref{fig:boundary_leaf_lemma}
where edges of $S$ correspond to multiple pieces of $D$ are ruled
out by Lemma \ref{lem:boundary_leaf_lemma}). Now $S$ is a tree without
vertices of degree 2, and so
\[
2\cdot\#\text{leaves}\smallparens S\ge\sum_{v\in S}3-\deg v=3\cdot\#\text{vertices}\smallparens S-2\cdot\#\text{edges}\smallparens S=\#\text{edges}\smallparens S+3.\qedhere
\]
\end{proof}

Now we turn to proving a technical lemma:
\begin{lem}
\label{lem:length_two} Let $\heightened$ be a heightened complex
which is not bislim, and let $D$ be a minimal counterexample for
$X$. If $\heightened$ does not have trivial cycles, the internal
boundary $\internalBoundary C$ is of length at least two for each
2-cell $C$ of $D$.
\end{lem}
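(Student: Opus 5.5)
We argue by contradiction. Suppose some 2-cell $C$ of the minimal counterexample $D$ has internal boundary $\internalBoundary C$ of length at most one. By minimality, every 2-cell of $D$ lies on the directed cycle of $\inducedGamma D$; otherwise we could delete it and keep the cycle, with fewer cells and internal edges. So $C$ has at least one incoming and at least one outgoing edge in $\inducedGamma D$, and every such edge is induced by an extremal side of an internal edge of $\partial C$.

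First consider $\internalBoundary C=\emptyset$. Then $C$ has no incident edges in $\inducedGamma D$ at all, which is a contradiction.

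Now let $\internalBoundary C$ consist of the single internal edge $e$. Since $D$ has no internal 2-cells and $T_D$ is a tree, $e$ is shared by $C$ and exactly one other 2-cell $C'$, or $e$ meets $C$ twice. Every edge of $\inducedGamma D$ at $C$ comes from the sides of $e$. We need both an edge $C'\to C$ and an edge $C\to C'$; the case of a self-loop at $C$ is handled below. By Definition \ref{def:gamma}, an edge $C\to C'$ comes from a high side of $C$ on $e$ or a low side of $C'$ on $e$. An edge $C'\to C$ comes from a high side of $C'$ or a low side of $C$. The edge $e$ has exactly two sides, $s$ in $C$ and $s'$ in $C'$, since it is internal and $D$ is a tree-like diagram. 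So we need $\{s,s'\}$ to contain either two high sides, or two low sides, or $s$ both high and low, or $s'$ both high and low. Because $H$ and $L$ are disjoint, the last two options are impossible. The remaining options, two high sides or two low sides on one edge, mean that their images under the near-immersion are two distinct sides of the same edge of $X$ that are both high or both low. That is a trivial cycle in $\heightened$, contradicting the hypothesis.

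The main obstacle is the degenerate situation where the cycle in $\inducedGamma D$ has length one or two, or where $e$ appears twice on $\boundary C$ (so that $C'=C$). I would handle these directly. If $e$ occurs twice in $\boundary C$, then its two sides both belong to $C$, and a self-loop in $\inducedGamma D$ forces one of them to be high while the other is low. I would then argue, via the minimality of $D$ (cutting along $e$ produces a smaller counterexample), that the cycle must also pass through another cell. This puts us back in the two-cell analysis above, which again yields a trivial cycle. Finally, the near-immersion hypothesis guarantees that $\phi(s)\neq\phi(s')$, so the two same-type sides really are distinct sides of $X$, as the definition of a trivial cycle requires.
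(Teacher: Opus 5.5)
Your main argument is the paper's proof. If $\overline{\partial}C=\emptyset$, the cell $C$ can play no role in the cycle. If there is a single internal side, the unique neighbor $C'$ across $e$ must both send an edge of $\Gamma(D)$ to $C$ and receive one from it. Since $H\cap L=\emptyset$, and the near-immersion sends the two sides of $e$ to distinct sides of $X$, this forces two high or two low sides on one edge of $X$, which is a trivial cycle. Your four-way case split is a faithful, slightly more careful version of the paper's one-line argument.

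Your paragraph on the degenerate case $C'=C$ is wrong, though, and should be discarded.

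\begin{enumerate}
\item A self-loop at $C$ does not need one side of $e$ high and the other low. A single extremal side of $e$ whose opposite side also lies on $C$ already induces an edge $C\to C$.
\item Minimality cannot push the cycle through another cell. Let $X$ have one 2-cell attached along $xx^{-1}y^{3}$, with $H$ the side at the first letter and $L$ the side at the third. The one-cell diagram with a spike edge $e\mapsto x$ is reduced and has no internal 2-cells. Its $\Gamma$ is a self-loop, there is no trivial cycle, and ungluing $e$ destroys the only cycle.
\end{enumerate}

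What actually disposes of this case is the definition. $\overline{\partial}C$ is the set of internal \emph{sides} of $C$, so if $e$ occurred twice on $\partial C$ you would already have $|\overline{\partial}C|\ge 2$. Hence, under the hypothesis $|\overline{\partial}C|\le 1$, the other side of $e$ automatically lies on a cell $C'\neq C$, and your main argument finishes the proof. If one counted distinct edges rather than sides, as your phrase ``the single internal edge $e$'' suggests, the example above would falsify the statement.
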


\begin{proof}
If $\internalBoundary C$ is empty, then $C$ can be removed from
$D$ and thus $D$ is not minimal. If $\internalBoundary C=\left\{ e\right\} $,
let $C'$ be the only neighbor of $C$ on the other side of $e$.
If $C$ has both an incoming edge and an outgoing edge to $C'$ in
$\inducedGamma D$, then $C$ and $C'$ make a trivial cycle. Otherwise,
$C$ can be removed from $D$ without removing any cycles from $\inducedGamma D$,
so $D$ is not minimal.
\end{proof}
Now we proceed to proving (\ref{eq:bounding_pieces_from_below}).
Consider a leaf $v$ of $T_{D}$, and we wish to find at least $2$
pieces for every leaf $v$. Let $e$ be the edge in $T_{D}$ adjacent
to $v$ (see Figure \ref{fig:leaf_edge_lemma_1}). We can construct
a new disk diagram $D'$ by separating the two 2-cells adjacent to
$e$ (see Figure \ref{fig:leaf_edge_lemma}). If $e$ is neutral we
have $\inducedGamma D=\inducedGamma{D'}$ in contradiction with the
minimality of $D$. Thus $e$ must have an extremal side $s$. Overall
we proved the following:

\begin{figure}
\subfloat[\label{fig:leaf_edge_lemma_1}A disk diagram which does not satisfy
Lemma \ref{lem:leaf_edge_lemma}]{\begin{centering}
\includegraphics[width=0.3\columnwidth]{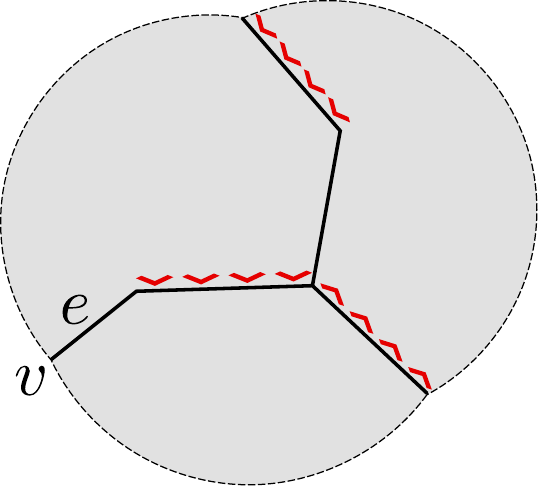}
\par\end{centering}
}\hfill{}\subfloat[\label{fig:leaf_edge_lemma_2}An ungluing step of \ref{fig:leaf_edge_lemma_1},
showing that the diagram was not minimal. The vertex $v$ is split
into $v_{1}$ and $v_{2}$, and $e$ is split into $e_{1}$ and $e_{2}$.]{\begin{centering}
\includegraphics[width=0.3\columnwidth]{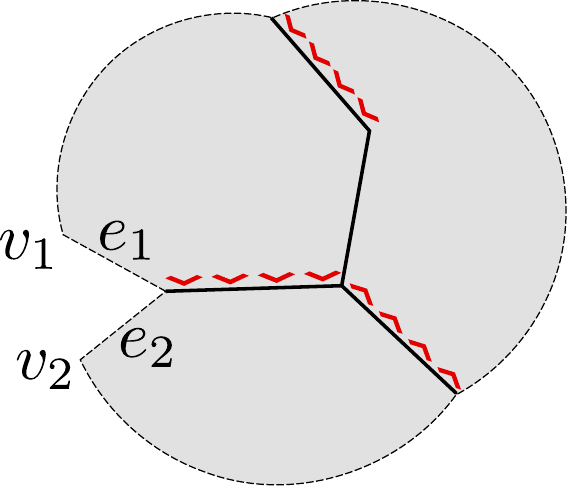}
\par\end{centering}
}

\caption{\label{fig:leaf_edge_lemma}A demonstration of Lemma \ref{lem:leaf_edge_lemma}}
\end{figure}

\begin{lem}
\label{lem:leaf_edge_lemma}If $e\in T_{D}$ be an edge adjacent to
a leaf of $T_{D}$, then $e$ has an extremal side (see Figure \ref{fig:leaf_edge_lemma}).
\end{lem}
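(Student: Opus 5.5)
The argument is by contradiction against minimality, made precise through an ungluing construction. Let $v$ be a leaf of $T_D$ and $e$ the unique edge of $T_D$ at $v$. Suppose $e$ is neutral, i.e.\ neither of its two sides $s_1,s_2$ (belonging to 2-cells $C_1,C_2$, possibly equal) is extremal. We build a new disk diagram $D'$ by cutting $D$ open along $e$.

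\textbf{Step 1: the cut $D'$ is a reduced disk diagram.}
\begin{itemize}
\item Since $v$ is a leaf of $T_D$ and lies on $\boundary D$ (by Lemma \ref{lem:boundary_leaf_lemma} and its proof), the vertex $v$ and the edge $e$ can be split. This produces $v_1,v_2$ and $e_1,e_2$, with $e_i$ carrying only the side $s_i$, exactly as in Figure \ref{fig:leaf_edge_lemma}.
\item Splitting along a path that starts at a boundary vertex does not disconnect the disk, because the other endpoint of $e$ still holds the diagram together. So $D'$ is still a planar contractible complex.
\item The map $D'\to D\to X$ is a branched near-immersion, since the sides of each edge of $D'$ form a subset of the sides of the corresponding edge of $D$.
\item $D'$ still has no internal 2-cells, since ungluing only makes edges non-internal.
\item $D'$ has strictly fewer internal edges than $D$.
\end{itemize}

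\textbf{Step 2: the graph $\inducedGamma{D}$ is unchanged.} By Definition \ref{def:gamma}, edges of $\inducedGamma D$ come only from pairs of distinct sides of a common edge, one of which is extremal. The only pair destroyed by the ungluing is $\{s_1,s_2\}$ at $e$, and neither of these is extremal. Hence this pair induced no edge of $\inducedGamma D$. The 2-cells are unchanged, so $\inducedGamma{D'}=\inducedGamma D$ and it still contains a cycle.

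\textbf{Step 3: conclusion.} By Steps 1 and 2, $D'$ is a counterexample with fewer internal edges than $D$, contradicting the minimality of $D$. Therefore one of $s_1,s_2$ is extremal, i.e.\ $e$ has an extremal side.

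The main point to check carefully is Step 1: that ungluing at a leaf keeps $D'$ a genuine disk diagram rather than something non-planar or disconnected. This is where the fact that $v$ is a leaf on $\boundary D$ is used. Once that is granted, the remaining steps are bookkeeping.
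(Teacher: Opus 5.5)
Your proof is correct and is the same argument as the paper's. Unglue the two 2-cells along $e$ at the leaf $v$, observe that a neutral $e$ contributes no edge to $\inducedGamma D$ so that $\inducedGamma{D'}=\inducedGamma D$, and contradict minimality; you simply write out the checks on $D'$ that the paper leaves implicit.

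One small slip: Lemma \ref{lem:boundary_leaf_lemma} states the converse of what you cite it for, so the fact that the leaf $v$ lies on $\boundary D$ needs its own short justification. Every edge at an interior vertex of $D$ is internal, so an interior leaf would be a spur inside a single 2-cell, i.e.\ a backtrack $ee^{-1}$ in that cell's boundary path, which cannot occur when the attaching maps have no backtracks (e.g.\ cyclically reduced relators); the paper itself passes over this point too.
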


Moreover, consider a simple cycle $\ell$ in $\inducedGamma D$. By
the above argument, $\ell$ goes through every edge of $\inducedGamma D$
which induced by an extremal side $s$ which is adjacent to a leaf
of $T_{D}$, since otherwise, $D$ would not be minimal. Going around
the boundary of $D$, the edges of $\inducedGamma D$ of this type
in fact close a cycle themselves, so they are the entirety of $\ell$.
Now consider an internal extremal side $s'$ which is not adjacent
to a leaf of $T_{D}$. It induces another edge in $\inducedGamma D$,
which together with half of $\ell$ closes a cycle. This cycle does
not go through every edge that $\ell$ does, and so by the above argument,
$D$ is not minimal. Therefore we conclude the following:
\begin{lem}
\label{lem:No_internal_extremal_sides}There is no internal extremal
side in $D$.
\end{lem}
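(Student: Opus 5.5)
We argue by contradiction, formalizing the sketch that precedes the statement. Let $D$ be a minimal counterexample, and suppose some internal side $s'$ is extremal, say of a 2-cell $C'$ and an internal edge $e'\in T_{D}$. Fix a simple directed cycle $\ell$ in $\inducedGamma D$. We will build a reduced disk diagram $D''$ without internal 2-cells, with strictly fewer internal edges or 2-cells than $D$, such that $\inducedGamma{D''}$ still has a directed cycle. This contradicts minimality.

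\emph{Step 1: the leaf edges determine $\ell$.} For every leaf $v$ of $T_{D}$ with adjacent edge $e$, Lemma \ref{lem:leaf_edge_lemma} gives an extremal side of $e$. The ungluing argument preceding that lemma proves more: if $\ell$ did not use the edge of $\inducedGamma D$ induced by $e$, then separating the two 2-cells along $e$ would give a smaller counterexample $D'$ still containing $\ell$. So $\ell$ uses every such ``leaf edge''. Next we order the leaves of $T_{D}$ cyclically by walking around $\boundary D$. Consecutive leaves are joined along $\boundary D$ by the non-internal part of a single 2-cell, and by Corollary \ref{cor:boundary_leaf} these leaves are the endpoints of $\internalBoundary C$. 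Hence consecutive leaf edges share a 2-cell $C$, and the leaf edges, traversed in this boundary order, form a closed walk in $\inducedGamma D$. Since $\ell$ is simple and contains all of them, $\ell$ is exactly this cycle. In particular $\ell$ visits each 2-cell once, in the cyclic order of $\boundary D$, entering through one end of $\internalBoundary C$ and leaving through the other.

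\emph{Step 2: the shortcut.} If $e'$ is itself a leaf edge, then $s'$ is one of the sides used above and is not a new chord. We must check that this case is already excluded: a second extremal side on a leaf edge would produce a trivial-cycle-like configuration, or would still be usable by the argument below. Otherwise $e'$ is a non-leaf internal edge separating $C'$ from some 2-cell $C''\neq C'$, and $s'$ induces an edge $C'\to C''$ or $C''\to C'$. The two 2-cells split $\ell$ into two arcs, one from $C''$ to $C'$ and one from $C'$ to $C''$. Combining the chord with the arc that closes it gives a directed cycle $\ell'$. Because $e'$ is not a leaf edge, $\ell'$ omits every leaf edge on the other arc, and that arc contains at least one leaf edge since $C'\neq C''$ and both arcs pass between distinct 2-cells. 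Applying the ungluing of Step 1 at an omitted leaf edge gives a smaller counterexample still containing $\ell'$, contradicting minimality.

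\emph{Main obstacle.} The delicate part is Step 1's identification of $\ell$ with the boundary-ordered cycle of leaf edges, together with the orientation bookkeeping in Step 2. Two things need care. First, each leaf edge's induced $\inducedGamma D$-edge points consistently around the boundary, which follows because $\ell$ already traverses them. Second, the chord really creates a cycle using only one arc; I would handle this with a case split on the direction of the chord. If the chord points the ``wrong'' way, then together with the opposite arc it still forms a cycle, so either direction works. We should also confirm that ungluing a leaf edge preserves reducedness and keeps ``no internal 2-cells''. Both hold because ungluing only makes edges non-internal.
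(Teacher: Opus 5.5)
Your argument is the paper's own, spelled out in more detail. Every leaf edge of $T_D$ must lie on $\ell$ (otherwise unglue it), and these leaf edges already close up around $\partial D$, so they are all of $\ell$. An extremal side on a non-leaf internal edge then gives a chord which, with the appropriate half of $\ell$, forms a cycle avoiding some leaf edge; ungluing that leaf edge contradicts minimality. Your orientation case split is fine, as is your check that ungluing preserves reducedness and the absence of internal 2-cells.

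What needs fixing is your handling of the case where $e'$ is a leaf edge. That case cannot be ``excluded'': Lemma~\ref{lem:leaf_edge_lemma} says every leaf edge carries an extremal side, and that side is internal. Your claim that a second extremal side on a leaf edge would be trivial-cycle-like, or would feed the chord argument, is also false. Without trivial cycles, the only possibility is a high side on one adjacent 2-cell and a low side on the other. These induce parallel edges of $\Gamma(D)$ in the same direction, which is perfectly consistent. The resulting ``chord'' cycle still passes through that leaf edge (and all others), so there is nothing to unglue.

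So the lemma cannot be meant literally. The paper's sketch only considers extremal sides ``not adjacent to a leaf of $T_D$'', and its later uses in Lemmas~\ref{lem:no_blocker_intersection} and~\ref{lem:no_double_count_full} confirm the intended meaning: internal extremal sides occur only on leaf edges, i.e.\ as the first or last side of $\overline{\partial}C$. State the lemma that way and delete the hedge; your ``otherwise'' branch is then a complete proof. The case $C''=C'$, which you pass over, is harmless: a self-loop at $C'$ is itself a cycle avoiding every leaf edge.
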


Now we continue to examine $v$, $e$ and its extremal side $s$.
Let $C$ be the 2-cell containing $s$. By Condition \ref{enu:no_trivial_cycle_condition}
of Theorem \ref{thm:simple-criterion}, $\heightened$ has no trivial
cycle, and so By Lemma \ref{lem:length_two}, $\left|\internalBoundary C\right|\ge2$,
so $s=s_{1}$ is adjacent to an internal side $s_{2}\in\internalBoundary C$.
Continuing from $s_{2}$ consider the $R-2$ next sides $s_{2},\dots,s_{R-1}$
in $\boundary C$ (see Figure \ref{fig:simple_construction}). By
Condition \ref{enu:radius_condition} the sides $s_{2},\dots,s_{R-1}$
are all neutral, and they cannot have extremal sides, and so by Lemma
\ref{lem:leaf_edge_lemma} they cannot be adjacent to leaves of $T_{D}$.
It follows that $\left|\internalBoundary C\right|\ge R$ (see Figure
\ref{fig:simple_construction_all}). Overall we proved the following:

\begin{figure}
\centering{}\subfloat[\label{fig:simple_construction}An example demonstrating Lemma \ref{lem:large_cells}.
Given a vertex $v$ of $T_{D}$, by Lemma \ref{lem:leaf_edge_lemma}
we find an adjacent extremal side $s$. Since $R=5$, by Condition
\ref{enu:radius_condition} the sides $s_{2},s_{3},s_{4}$ must be
neutral. However, by Lemma \ref{lem:leaf_edge_lemma} on $v_{5}$
the side $s_{4}$ cannot be neutral --- a contradiction.]{\begin{centering}
\includegraphics[width=0.3\columnwidth]{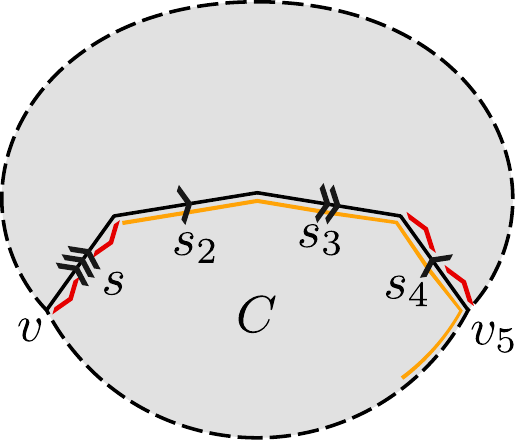}
\par\end{centering}
}\hfill{}\subfloat[\label{fig:simple_construction_correct}An example of Lemma \ref{lem:large_cells},
where $R=5$. Here $s_{1},\dots,s_{4}$ are neutral but $s_{5}$ is
not, and $R=5$. The path $p_{v}\subseteq\protect\internalBoundary C$
is marked in orange, though in this case $p_{v}=\protect\internalBoundary C$.]{\begin{centering}
\includegraphics[width=0.3\columnwidth]{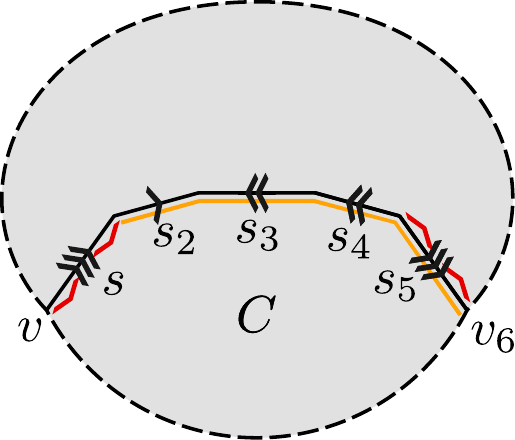}
\par\end{centering}
}\hfill{}\subfloat[\label{fig:simple_construction_counting}The piece $q$ can be contained
in paths of the form $p_{v}$ at most twice, once from each adjacent
2-cell, since if a 2-cell contains two paths of the form $p_{v}$,
they are disjoint, as shown in this diagram.]{\begin{centering}
\includegraphics[width=0.3\columnwidth]{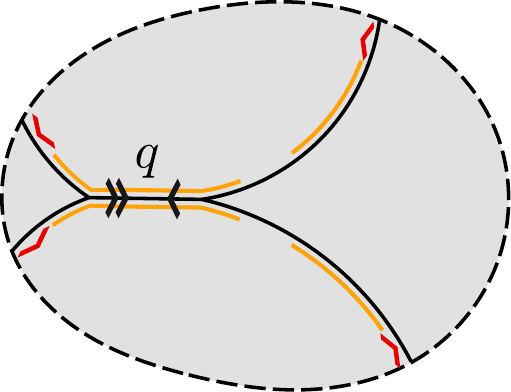}
\par\end{centering}
}\caption{\label{fig:simple_construction_all}Diagrams for Lemma \ref{lem:large_cells}.
The paths of the form $p_{v}$ are marked in orange, except for their
first side which is marked as extremal.}
\end{figure}

\begin{lem}
\label{lem:large_cells} Let $e\in T_{D}$ be adjacent to a leaf of
$T_{D}$ where $e$ has an extremal side whose 2-cell is $C$. Then
$\left|\internalBoundary C\right|\ge R$.
\end{lem}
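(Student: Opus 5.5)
We formalize the argument sketched just before the statement. Let $v$ be the leaf of $T_D$ adjacent to $e$, and let $s=s_{1}$ be the extremal side of $e$, belonging to the 2-cell $C$. Since $e$ is internal, $s_{1}\in\internalBoundary C$. By Condition \ref{enu:no_trivial_cycle_condition} of Theorem \ref{thm:simple-criterion} and Lemma \ref{lem:length_two}, $\left|\internalBoundary C\right|\ge2$. Corollary \ref{cor:boundary_leaf} says the first and last vertices of the path $\internalBoundary C$ are leaves of $T_{D}$, and there is at most one further leaf on it. Hence we may orient $\internalBoundary C$ so that $s_{1}$ lies at the end through $v$; that is, $v$ is an endpoint of $\internalBoundary C$. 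The case where $v$ is an interior vertex of $\internalBoundary C$ cannot occur: an interior vertex of $\internalBoundary C$ lies on two internal sides of $C$, so it has degree at least $2$ in $T_{D}$ and is not a leaf.

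Now we walk along $\internalBoundary C$ away from $v$, listing the consecutive sides $s_{2},s_{3},\dots$ of $\boundary C$. We claim that as long as $j\le R-1$, the side $s_{j}$ exists in $\internalBoundary C$ and its far endpoint is not the last vertex of $\internalBoundary C$. Each $s_{j}$ with $2\le j\le R-1$ is within distance $R-2$ of $s_{1}$ in $\boundary C$, so Condition \ref{enu:radius_condition} makes it neutral, and so its edge has no extremal side at all. Now suppose the internal boundary ended at the far vertex $u$ of some $s_{j}$ with $j\le R-1$. Then $u$ is the last vertex of $\internalBoundary C$, hence a leaf of $T_{D}$ by Corollary \ref{cor:boundary_leaf}. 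The edge of $s_{j}$ lies in $T_{D}$ and is adjacent to that leaf, so Lemma \ref{lem:leaf_edge_lemma} forces it to have an extremal side, which contradicts neutrality. The base case $j=2$ uses the bound $\left|\internalBoundary C\right|\ge2$ to guarantee that $s_{2}$ exists. Induction then gives sides $s_{1},\dots,s_{R-1}$ in $\internalBoundary C$ with the path continuing past $s_{R-1}$. So there is at least one more internal side $s_{R}$, and $\left|\internalBoundary C\right|\ge R$.

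The main obstacle is bookkeeping in the degenerate configurations. The sides $s_{j}$ might wrap around $\boundary C$, for instance if $\boundary C$ is short or if $\internalBoundary C$ is all of $\boundary C$; and an edge of $T_{D}$ might appear twice on $\boundary C$. The second case rules itself out: the edge would then carry two sides of $C$, and Condition \ref{enu:extremal-edge-distance-condition} together with Condition \ref{enu:radius_condition} keeps the extremal side $s_{1}$ far from any repetition, so the chain $s_{2},\dots,s_{R-1}$ does not return to $s_{1}$ before it has length $R$. For the first case, note that $\internalBoundary C\neq\boundary C$ because $D$ has no internal 2-cells, so $\internalBoundary C$ is a genuine path with two endpoints and the walk from $v$ is well defined.
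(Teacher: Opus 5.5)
Your proof is correct and follows the paper's argument. Lemma \ref{lem:length_two} supplies $s_2$, Condition \ref{enu:radius_condition} makes $s_2,\dots,s_{R-1}$ neutral, and Lemma \ref{lem:leaf_edge_lemma} together with Corollary \ref{cor:boundary_leaf} prevents $\overline{\partial}C$ from ending at any of them, so it continues to an $R$-th side. Your closing paragraph on degenerate cases is more than is needed: the only relevant repetition is a second side of $e$ next to $s_1$, Condition \ref{enu:radius_condition} alone excludes it, and Condition \ref{enu:extremal-edge-distance-condition} plays no role. This does not affect correctness.
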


Let $\mathemph{p_{v}}$ be the prefix of $\internalBoundary C$ of
length $R$, starting at $s$ (see Figure \ref{fig:simple_construction_correct}).
Since $\left|\internalBoundary C\right|\ge R$, the path $p_{v}$
is contained in $\internalBoundary C$. By Condition \ref{enu:small_cancellation_condition}
all pieces are of length at most $\simpltiny$, so the path $p_{v}$
contains at least $4$ pieces. Now it remains to show that each piece
is counted at most twice:
\begin{lem}
\label{lem:double_counting}Each piece $q$ of $D$ is contained in
at most two paths of the form $p_{v}$ for leaves $v$ of $T_{D}$.
\end{lem}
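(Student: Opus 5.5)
The plan is to exploit the fact that every piece $q$ of $D$ is a path of internal edges lying in $T_{D}$, and so it has exactly two sides. These sides lie in exactly two 2-cells $C$ and $C'$, one on each side of $q$. Any path $p_{v}$ that contains $q$ is a subpath of $\internalBoundary C$ for the 2-cell $C$ carrying the extremal side $s$ at which $p_{v}$ starts. Hence $p_{v}$ traverses $q$ along one of its two sides, and so it is associated with $C$ or with $C'$. It therefore suffices to prove the following: for a fixed 2-cell $C$, distinct paths of the form $p_{v}$ contained in $\internalBoundary C$ are disjoint. Then at most one of them can contain $q$ via $C$, and at most one via $C'$, giving the bound of two.

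To prove the disjointness, I would recall how $p_{v}$ is built. It starts at the extremal side $s$ adjacent to the leaf edge at $v$, and is the next $R-1$ sides of $\boundary C$ continuing into the interior of $\internalBoundary C$. The first step is to use Lemma \ref{lem:No_internal_extremal_sides}: the extremal side $s$ of $C$ lies on an internal edge $e$ of $T_{D}$, so $s$ must be the first (or last) side of $\internalBoundary C$. Indeed, by Corollary \ref{cor:boundary_leaf} the endpoints of $\internalBoundary C$ are leaves of $T_{D}$, and an extremal side of $C$ lying at a leaf edge of $T_{D}$ sits at an end of $\internalBoundary C$; I would verify this using Lemma \ref{lem:boundary_leaf_lemma}, since a leaf of $T_{D}$ in the middle of the path $\internalBoundary C$ is impossible. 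Consequently each $p_{v}$ in $C$ begins at one of the two ends of $\internalBoundary C$ and runs inward. So $C$ carries at most two such paths, namely one starting at each end. If $\internalBoundary C$ ends with an extremal side at both ends, orient them by their starting sides.

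The main obstacle is showing that these two paths, one from each end, do not overlap. Condition \ref{enu:extremal-edge-distance-condition} states that two extremal sides of $C$ are at distance at least $2R-1$ in $\boundary C$. The two starting sides are both extremal sides of $C$ at the two ends of $\internalBoundary C$, which is a subpath of $\boundary C$. Their distance along $\internalBoundary C$ is therefore at least $2R-1$, or else they coincide, which is impossible since they are at different ends of a path of length at least $R\ge4$ by Lemma \ref{lem:large_cells}. Each $p_{v}$ has length $R$, so two prefixes of length $R$ taken from opposite ends fit disjointly whenever $|\internalBoundary C|\ge2R$, and this follows from the distance bound. One subtlety needs checking: the distance in $\boundary C$ is measured cyclically, and the short way around might pass through non-internal sides. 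That only makes the internal path longer, so the bound still holds. Hence the paths are disjoint, each piece lies in at most one $p_{v}$ per adjacent 2-cell, and the lemma follows.
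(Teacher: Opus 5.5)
Your proposal is correct and is essentially the paper's proof. In each 2-cell $C$ the only paths $p_v$ start at the first and last sides of $\internalBoundary C$, Condition~\ref{enu:extremal-edge-distance-condition} makes those two paths disjoint, and since $q$ borders only two 2-cells it lies in at most two such paths.

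The appeal to Lemma~\ref{lem:No_internal_extremal_sides} is unnecessary. That $s$ sits at an end of $\internalBoundary C$ follows directly from the fact that a leaf of $T_D$ cannot be an interior vertex of the path $\internalBoundary C$.
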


\begin{proof}
Let $C$ be a 2-cell and let $v_{\first}$ and $v_{\last}$ be the
first and last vertices of $\internalBoundary C$, which are leaves
of $T_{D}$ by Corollary \ref{cor:boundary_leaf}, and let $s_{\first}$
and $s_{\last}$ be the first and last sides in $\internalBoundary C$.
If $s_{\last}$ and $s_{\first}$ are both extremal, then $C$ is
associated with two paths of the form $p_{v}$, namely $p_{v_{\first}}$
and $p_{v_{\last}}$. In this case, by Condition \ref{enu:extremal-edge-distance-condition}
the distance between $s_{\first}$ and $s_{\last}$ is at least $2R-1$,
so the paths $p_{v_{\first}}$ and $p_{v_{\last}}$ are disjoint.

It follows that the piece $q$ can only be contained in a path of
the form $p_{v}$ at most once for each of its two neighboring 2-cells,
so $q$ is contained in at most two paths $p_{v}$ (see Figure \ref{fig:simple_construction_counting}).
\end{proof}
Now we have the ingredients to prove (\ref{eq:bounding_pieces_from_below}):
\begin{lem}
\label{lem:piece_lower_bound}Equation (\ref{eq:bounding_pieces_from_below})
holds, i.e.,
\[
\#\text{pieces}\largeparens{T_{D}}\ \ge\ 2\cdot\#\text{leaves}\largeparens{T_{D}}.
\]
\end{lem}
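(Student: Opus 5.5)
The plan is a double-counting argument over pairs (leaf, piece). For each leaf $v$ of $T_{D}$ we use the path $p_{v}$ constructed above. Let $e$ be the edge of $T_{D}$ at $v$. By Lemma \ref{lem:leaf_edge_lemma}, $e$ has an extremal side $s$, lying on a 2-cell $C$. By Lemma \ref{lem:large_cells}, $\left|\internalBoundary C\right|\ge R$, so $p_{v}\subseteq\internalBoundary C$ is a path of $R$ internal edges starting at $s$.

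The first step is to show that $p_{v}$ meets at least $4$ pieces. Since $p_{v}$ consists of internal edges, each of its edges lies in exactly one piece of $T_{D}$. By Condition \ref{enu:small_cancellation_condition}, each piece has length at most $\frac{R}{4}$. Hence covering the $R$ edges of $p_{v}$ requires at least $4$ distinct pieces, each of which intersects $p_{v}$.

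A small point needs checking here: the statement of Lemma \ref{lem:double_counting} speaks of pieces \emph{contained} in $p_{v}$. So either we count pieces that merely intersect $p_{v}$, or we argue that the pieces are genuinely contained in it. I would work with ``intersecting'' throughout and verify that the disjointness argument of Lemma \ref{lem:double_counting} still applies. Two intersecting paths $p_{v}$ in the same cell $C$ are disjoint by Condition \ref{enu:extremal-edge-distance-condition}, since they are at distance at least $2R-1$. A piece $q$ lies on the boundary of exactly two 2-cells. So I must rule out one piece meeting both $p_{v_{\first}}$ and $p_{v_{\last}}$ inside the same cell. The gap of $2R-1-2R\ge -1$ edges is too tight to exclude this directly, so I would use that a piece has length at most $\frac{R}{4}$. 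Failing that, I would shrink the count to pieces lying entirely in $p_{v}$: the interior portion of $p_{v}$ still contains at least $4$ whole pieces if we discard the boundary pieces carefully, using $R\ge4$ and the bound $\frac{R}{4}$. This is the step I expect to need the most care.

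With the two facts in hand, we double count incidences. Each of the $\#\text{leaves}\largeparens{T_{D}}$ leaves contributes at least $4$ pieces. By Lemma \ref{lem:double_counting}, each piece is counted at most twice. Therefore
\[
4\cdot\#\text{leaves}\largeparens{T_{D}}\ \le\ 2\cdot\#\text{pieces}\largeparens{T_{D}},
\]
which is (\ref{eq:bounding_pieces_from_below}).

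We must also make sure that distinct leaves $v$ give distinct paths $p_{v}$, so that the count over leaves is honest. A leaf $v$ determines $e$, and hence $C$ and the starting side $s$. Here Lemma \ref{lem:No_internal_extremal_sides} ensures that $s$ is the first or last side of $\internalBoundary C$, so $p_{v}$ is well-defined and not shared with another leaf.
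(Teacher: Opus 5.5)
This is the paper's argument: each $p_v$ contains at least four pieces, Lemma~\ref{lem:double_counting} says each piece lies in at most two paths $p_v$, and double counting gives $2\cdot\#\text{pieces}(T_D)\ge 4\cdot\#\text{leaves}(T_D)$.

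Of your two options, use the containment version. The intersection version is not covered by Lemma~\ref{lem:double_counting}: when $|\overline{\partial}C|=2R$, the paths $p_{v_{\text{first}}}$ and $p_{v_{\text{last}}}$ abut, and a piece through their common vertex can meet both. To get four whole pieces, note that $p_v$ starts at the leaf $v$, which is an endpoint of a piece, so the pieces tile $p_v$ from its start and at most the last one protrudes. That last piece begins only after more than $\frac{3R}{4}$ edges of $p_v$, so at least four pieces lie wholly inside $p_v$; the bounds $R\ge4$ and $\frac{R}{4}$ alone would give only three.
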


\begin{proof}
Recall that every piece is of length at most $\simpltiny$, and every
path $p_{v}$ is internal and of length $R$, and so every path $p_{v}$
contains at least $4$ pieces. By Lemma \ref{lem:double_counting},
\[
2\cdot\#\text{pieces}\largeparens{T_{D}}\ \ge\ \sum_{v\in\text{leaves}\smallparens{T_{D}}}\#\text{pieces}\largeparens{p_{v}}\ \ge\ 4\cdot\#\text{leaves}\largeparens{T_{D}}\qedhere
\]
\end{proof}
This concludes the simple criterion, since (\ref{eq:bounding_pieces_from_above})
and (\ref{eq:bounding_pieces_from_below}) are contradictory, hence
no minimal counterexample exists, and $\heightened$ is bislim.

\subsection{\label{subsec:The-final-criteria}The relaxed criterion for bislim
structures}

\global\long\def\b#1{b_{#1}}%
\global\long\def\r#1{r_{#1}}%

\global\long\def\bl{\ell_{b}}%
\global\long\def\rl{\ell_{r}}%

\global\long\def\a{e_{*}}%

\global\long\def\sidedpiece{\underline{p}}%

We now generalize the simple criterion (Theorem \ref{thm:simple-criterion})
to the relaxed criterion (Theorem \ref{thm:full-criterion}). We start
with an informal overview of the approach, based on the simple criterion.
A rough sketch of the proof of the simple criterion is as follows:
\begin{enumerate}
\item Let $\heightened$ be a heightened complex, and assume that it is
not bislim.
\item Consider the minimal counterexample $D$ to the definition of bislim
structure (Definition \ref{def:bislim-definition-general}). That
is, a minimal reduced disk diagram $D$ in $X$ without internal 2-cells
such that $\inducedGamma D$ has a directed cycle.
\item For every leaf $v$ of $T_{D}$, consider the path $p_{v}$, which
is a path starting in $v$ of length $R$. We require that $R$ is
large enough so that if $p_{v}$ is internal, $p_{v}$ must contain
at least $4$ pieces.
\item \label{step:containment} We showed in Lemma \ref{lem:large_cells}
that all the edges of $p_{v}$ are internal, so $p_{v}\subseteq T_{D}$.
Thus $p_{v}$ contains at least $4$ pieces in $T_{D}$.
\item \label{step:counting_pieces}Therefore $\#\text{pieces}\largeparens{T_{D}}\ge2\cdot\#\text{leaves}\largeparens{T_{D}}$,
in contradiction with Lemma \ref{lem:counting_pieces}.
\end{enumerate}

As noted in the discussion in Section \ref{subsec:simple-criteria},
the main reason that the simple criterion fails to prove Theorem \ref{thm:generic_bislim_structures_generic_case}
is Condition \ref{enu:radius_condition}. However, Condition \ref{enu:radius_condition}
is required for Step \ref{step:containment} in the program above
(see Lemma \ref{lem:large_cells}).

\begin{wrapfigure}[23]{O}{0.35\columnwidth}%
\begin{centering}
\includegraphics[width=0.35\columnwidth]{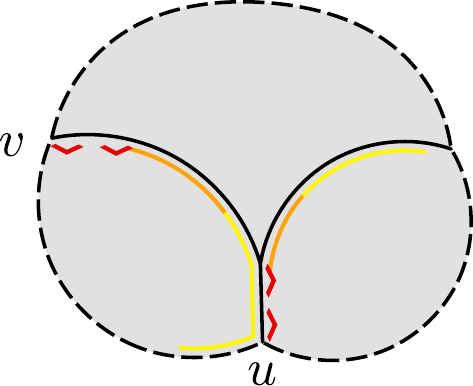}
\par\end{centering}
\caption{\label{fig:overflowing_remainder}Blockers are shown in orange except
their first side which is shown as an extremal side. Remainders are
shown in yellow (see Definition \ref{def:blockers_and_remainders}
below). The remainder $\protect\r v$ escapes out of $T_{D}$ into
the boundary $\protect\boundary D$, and $\protect\r v$ may have
too few internal pieces. To remedy this, the pieces of the next blocker
$\protect\b u$ are attributed to $v$ in addition to the pieces of
$\protect\r v$.}
\end{wrapfigure}%

To overcome this, we relax our requirements, and split up each path
$p_{v}$ into two parts: the first $\bl$ edges are the \emph{blocker}
$\b v$, and the rest of $p_{v}$ is the \emph{remainder} $\r v$.
We take the length of the blockers $\bl$ to be a global parameter
of our method which will be determined later. We still require that
$\b v$ consists only of neutral edges, ensuring that $\b v$ is contained
in $\internalBoundary C$ (Lemma \ref{lem:no_escape}). But $\b v$
is too short, so $\b v$ might contain few pieces. And while the remainder
$\r v$ is still longer than $4$ pieces, it is not required to consist
of neutral edges, so it can escape out of $\internalBoundary C$ (see
Figure \ref{fig:overflowing_remainder} and Definition \ref{def:blockers_and_remainders})
and its pieces might be non internal. In both cases, we cannot find
4 pieces, and cannot perform the equivalent of steps \ref{step:containment}
and \ref{step:counting_pieces}.

To resolve this issue, in the proof of the relaxed criterion, we count
towards each leaf $v$ more pieces in addition to the pieces contained
in $\r v$. Specifically, if $\r v$ escapes out of $\internalBoundary C$,
it intersects the last vertex $u$ of $\internalBoundary C$ (see
Figure \ref{fig:overflowing_remainder}). In this case we also count
the pieces of $\b u$ (the blocker starting in $u$) towards $\boldsymbol{v}$.
Often this is enough, but we still cannot guarantee that at least
4 pieces would be counted towards $v$ (see also Remark \ref{rem:full_construction_obstruction}).
In Section \ref{subsec:gamma'_and_D'} we use a trick to ``force''
$\b u$ to contain at least 4 pieces, finally guaranteeing that there
would be at least 4 pieces attributed to $v$. In Section \ref{subsec:Proof-of-the-full-criteria}
we finish the proof of Theorem \ref{thm:full-criterion}.

\subsubsection{Blockers}

We now turn to stating Theorem \ref{thm:full-criterion} and some
definitions which are required for the statement. In the following
$\heightened$ is a heightened complex with no trivial cycles.

In the above sketch of the proof of Theorem \ref{thm:simple-criterion},
$p_{v}$ is a path in $\skeleton D$, but in fact it is always contained
in the boundary $\boundary C$ of some 2-cell $C$. Moreover, when
we count the pieces in $p_{v}$, we conclude that each piece is counted
at most twice, once from each neighboring 2-cell. This leads us to
define:
\begin{defn}
Roughly, a \emph{sided path} in a 2-complex $X$ is a path in $X$
that goes along the boundary of a specific 2-cell. Formally, a \emph{sided
path} in a 2-complex $X$ is a pair $\smallparens{p,C}$ of a 2-cell
$C$ of $X$ and a path $p\subseteq\boundary C$.

A \emph{sided piece $\sidedpiece$} in a disk diagram $D$ is a sided
path $\smallparens{p,C}$ in $D$ where $p$ is a piece. We say that
two sided paths $\smallparens{p,C}$ and $\smallparens{p',C'}$ \emph{intersect}
at an \textit{unsided} path $x$ if $x$ is a subpath of both $p$
and $p'$.

We note that we consider paths and sided paths to be directed, though
we consider pieces and sided pieces to be undirected.
\end{defn}

We fix two positive integer parameters, $\mathemph{\bl}$ and $\mathemph{\rl}$,
which are the length of all blockers, and the length of all remainders,
respectively. To prevent clutter, we do not explicitly index blockers
and remainders by these parameters, as they remain fixed throughout
Section \ref{subsec:The-final-criteria}.

\begin{defn}
\label{def:blockers_and_remainders}Let $X$ be a 2-complex. A \emph{blocker}
in $X$ is a sided path $\smallparens{b,C}$ of length $\bl$ where
the first side of $b$ is extremal. A \emph{remainder} is a sided
path $\smallparens{r,C}$ of length $\rl$ continuing from a blocker.

\end{defn}

We now state the relaxed criterion.
\begin{thm}[The relaxed criterion]
\label{thm:full-criterion}Let $\heightened$ be a heightened complex.
Fix some choice of $\bl,\rl\in\mathbb{N}$ and a specific neutral
edge $\a$ with both of its endpoints in $X$ are the same, and fix
a direction of $\a$. If the following conditions hold, then $\heightened$
is bislim.
\begin{enumerate}
\item \label{enu:no_trivial_cycles-full}$\heightened$ has no trivial cycle.
\item \label{enu:small-condition-full}The maximal nontrivial\footnote{An intersection is trivial if it is the intersection between the remainder
and itself.} intersection length between a remainder of $X$ and any boundary
$\boundary C$ of a 2-cell $C$ of $X$ is at most $\frac{\rl}{5}$.
\item \label{enu:structure_of_blockers_condition-full}Every blocker in
$X$ is of the form $s\a^{\bl-1}$ if $s$ is high, or $s\a^{-\bl+1}$
if $s$ is low. I.e., the rest of the path traverses the edge $\a$
repeatedly, forwards if $s$ is high, and backwards if $s$ is low.
\item \label{enu:tiny_condition-full}Remainders do not contain $\a^{\frac{\bl}{5}}$
or $\a^{-\frac{\bl}{5}}$ as subpaths. In particular, the maximal
intersection length of a blocker and a remainder is at most $\frac{\bl}{5}$.
\item \label{enu:extremal-edge-distance-condition-full}Extremal sides of
the same 2-cell $C$ are separated by a distance of at least $2\rl+2\bl$
within $\boundary C$. I.e., boundaries and remainders of the same
2-cell $C$ are disjoint, other than the trivial case of 2 blockers
of the same extremal side going in opposite directions.
\end{enumerate}
\end{thm}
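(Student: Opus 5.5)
We argue by contradiction, following the skeleton of the proof of the simple criterion (Theorem \ref{thm:simple-criterion}). Assume the conditions hold but $\heightened$ is not bislim. Let $D$ be a minimal counterexample to Definition \ref{def:bislim-definition-general}, with internal tree $T_{D}$. Lemmas \ref{lem:boundary_leaf_lemma}, \ref{lem:counting_pieces}, \ref{lem:length_two}, \ref{lem:leaf_edge_lemma} and \ref{lem:No_internal_extremal_sides} used only minimality and the absence of trivial cycles (Condition \ref{enu:no_trivial_cycles-full}), so they carry over verbatim. In particular:
\begin{itemize}
\item $\#\text{pieces}(T_{D})<2\cdot\#\text{leaves}(T_{D})$;
\item every leaf $v$ of $T_{D}$ has an adjacent internal edge with an extremal side, lying in a 2-cell $C_{v}$.
\end{itemize}
The goal is again to attribute at least $4$ internal sided pieces to every leaf, with each sided piece attributed at most once. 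Since each piece has two sides, this gives $\#\text{pieces}\ge2\cdot\#\text{leaves}$, a contradiction.

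\textbf{Step 1 (blockers stay internal).} For each leaf $v$, let $(b_{v},C_{v})$ be the blocker starting at the extremal side at $v$, and $(r_{v},C_{v})$ its remainder. By Condition \ref{enu:structure_of_blockers_condition-full}, every side of $b_{v}$ after the first is a side of the neutral edge $\a$. So by Lemma \ref{lem:leaf_edge_lemma}, no vertex of $b_{v}$ after $v$ is a leaf of $T_{D}$. By Corollary \ref{cor:boundary_leaf} (the ends of $\internalBoundary C_{v}$ are leaves), it follows that $b_{v}\subseteq\internalBoundary C_{v}$.

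\textbf{Step 2 (attribution via the remainder).} Either $r_{v}\subseteq\internalBoundary C_{v}$, or $r_{v}$ passes through the last vertex $u$ of $\internalBoundary C_{v}$, which is a leaf.
\begin{itemize}
\item In the first case, Condition \ref{enu:small-condition-full} says every internal piece inside $r_{v}$ has length at most $\rl/5$, since it is a nontrivial intersection of a remainder with the boundary of the neighbouring cell. Hence $r_{v}$ contains at least $4$ sided pieces.
\item In the second case, we attribute to $v$ the sided pieces of $r_{v}\cap\internalBoundary C_{v}$ together with those of the blocker $b_{u}$.
\end{itemize}
Condition \ref{enu:extremal-edge-distance-condition-full} ensures that blockers and remainders in one 2-cell are disjoint. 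This gives the double-counting bound, as in Lemma \ref{lem:double_counting}: each sided piece is charged at most once.

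\textbf{Step 3 (main obstacle).} The hard case is when $r_{v}$ escapes quickly, so that $\internalBoundary C_{v}\cap r_{v}$ is short, and $b_{u}$ also has few pieces. The latter can happen because $b_{u}$ is a power of $\a$, and a neighbouring cell may also read a long run of $\a$, creating a long piece.

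My plan for this is the trick of Section \ref{subsec:gamma'_and_D'}. Because both endpoints of $\a$ coincide in $X$, we can modify $X$ to a complex $X'$ (and correspondingly $\Gamma$ to $\tilde{\Gamma}$, $D$ to $\tilde{D}$) by cutting runs $\a^{m}$ so that the new complex still has a counterexample if $X$ had one. The modification should force any long $\a$-run shared by two cells to be broken into pieces. Condition \ref{enu:tiny_condition-full} says remainders contain no $\a^{\pm\bl/5}$. So any piece of $b_{u}$ of length greater than $\bl/5$ can only be shared with another blocker-type run, and minimality together with the $\a$-structure should let us unglue such a piece (as in Lemma \ref{lem:leaf_edge_lemma}), contradicting minimality. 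Hence every piece of $b_{u}$ has length at most $\bl/5$, so $b_{u}$ yields at least $4$ pieces.

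Combining the two cases, each leaf receives at least $4$ sided pieces, contradicting Lemma \ref{lem:counting_pieces}. I expect the rigorous version of the ungluing/modification in Step 3 to be the main difficulty: in particular, checking that the modified diagram is still reduced and still carries a directed cycle in $\Gamma$.
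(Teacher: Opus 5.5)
Your Steps 1 and 2 are sound on the minimal counterexample $D$ itself. However, Step 3 is the entire difficulty of the theorem, and the argument you sketch for it does not work.

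First, the claim that a piece of $b_u$ longer than $\ell_b/5$ can only be shared with another blocker-type run is false. Condition \ref{enu:tiny_condition-full} constrains only remainders. So the neighbouring cell $C'$ may read $e_*^{\pm m}$ with $m\ge \ell_b$ along a stretch of $\partial C'$ lying in no blocker or remainder. Nothing in the hypotheses excludes this, and in the generic setting such runs do occur.

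Second, such a piece cannot be ``unglued'' in $D$. It sits in the interior of the tree $T_D$. The natural ungluing, at the leaf edge of $u$, deletes the $\Gamma$-edge induced by the extremal side of $b_u$, and $\Gamma(D)$ has no edge across the long piece to compensate. This is precisely the obstruction of Remark \ref{rem:full_construction_obstruction}: no local attribution on a minimal counterexample $D$ can guarantee four sided pieces per leaf. Modifying $X$ by ``cutting runs of $e_*$'' is not specified, and there is no reason it would preserve reducedness or the existence of a counterexample.

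The missing idea is to enlarge the class of bad diagrams rather than change $X$. For every sided piece $(p,C_1)$ meeting a blocker $b\subseteq\partial C_1$ in more than $\ell_b/5$ edges, add an edge to $\Gamma$ between $C_1$ and the cell $C_2$ across $p$. Orient it out of $C_1$ if the extremal side of $b$ is high and into $C_1$ if it is low; call the result $\tilde\Gamma$. Then take $\tilde D$ minimal among reduced disk diagrams without internal 2-cells such that $\tilde\Gamma(\tilde D)$ has a cycle and no two blockers meet unless they start on the same edge.

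$D$ is a candidate for $\tilde D$ by Lemma \ref{lem:no_blocker_intersection}, the directional argument using the fixed orientation of $e_*$ and Condition \ref{enu:structure_of_blockers_condition-full}. Your proposal omits this lemma, but it is indispensable: without the no-intersection requirement, minimal $\tilde\Gamma$-cycles can be degenerate even for bislim structures.

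In $\tilde D$ the ungluing you wanted becomes legitimate. A full blocker has no internal piece longer than $\ell_b/5$, because that piece's $\tilde\Gamma$-edge is parallel to the edge induced by the extremal side, so ungluing at the leaf edge keeps a $\tilde\Gamma$-cycle. The price is that a leaf edge of $\tilde D$ need not carry an extremal side. It may instead lie in a cut blocker, whose first piece is longer than $\ell_b/5$. Consequently:
\begin{itemize}
\item Step 1 must be redone, as in Lemma \ref{lem:no_escape}, again via the no-intersection property.
\item In Step 2, $b_u$ is full because its overlap with $r_v$ has length at most $\ell_b/5$ by Condition \ref{enu:tiny_condition-full}. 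Hence $b_u$ contributes at least four internal sided pieces.
\end{itemize}
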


Since all blockers consist just of the single neutral edge $\a$,
blockers can generally have large intersections, which is inconvenient.
However, we show that blockers do not intersect \emph{in minimal counterexamples,}
as follows.
\begin{lem}
\label{lem:no_blocker_intersection}Let $X$ be a 2-complex satisfying
Conditions \ref{enu:structure_of_blockers_condition-full}, \ref{enu:tiny_condition-full}
and \ref{enu:extremal-edge-distance-condition-full}, and let $D$
be a minimal counterexample for $X$. Blockers in $D$ do not intersect
unless they start on the same edge.
\end{lem}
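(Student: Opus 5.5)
Let $(b_1,C_1)$ and $(b_2,C_2)$ be blockers in $D$ that intersect in a subpath $x$ and start on different edges. The plan is to exploit that every blocker in $D$ is, by Condition \ref{enu:structure_of_blockers_condition-full}, an extremal side $s$ followed by $\bl-1$ consecutive copies of $\a$. All of these copies point forward if $s$ is high and backward if $s$ is low. Since $\a$ is neutral, the intersection $x$ consists entirely of edges mapping to $\a$, with the possible exception of an extremal first edge. Choose the intersection maximal and look at its endpoints.

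\textbf{Step 1: reduce to an endpoint.} Extend $x$ backwards along both blockers for as long as the two blockers agree. Since the blockers start on different edges, one of them, say $b_1$, reaches its first extremal side $s_1$ while the other continues along $\a$-edges. Then $s_1$ lies on an edge of $D$ that is also traversed by $b_2$ at an $\a$-position. Hence the edge of $s_1$ maps both to an extremal edge and to the neutral edge $\a$. This is impossible unless the two blockers diverge before that point. So at the vertex where $x$ begins, the two sided paths through $\partial C_1$ and $\partial C_2$ arrive along different edges.

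\textbf{Step 2: analyze the divergence vertex.} At that vertex, the preceding edges $f_1\in\partial C_1$ and $f_2\in\partial C_2$ are distinct. Each is either an $\a$-edge or the extremal side of its blocker. Since $D$ is planar and $x$ is shared, $C_1$ and $C_2$ lie on the two sides of $x$. The edges $f_1$ and $f_2$ then bound a wedge of 2-cells at this vertex, or $f_1,f_2$ lie on $\partial D$. Here I would use minimality of $D$, in the spirit of Lemma \ref{lem:leaf_edge_lemma} and the argument of Lemma \ref{lem:No_internal_extremal_sides}. The common neutral path $x$ has all edges neutral, so we can cut $D$ along $x$, ungluing $C_1$ from $C_2$ along it. Because neutral edges induce no edges of $\inducedGamma D$, the graph $\inducedGamma{}$ is unchanged. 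This produces a counterexample with fewer internal edges, which contradicts minimality, provided the cut is legitimate, i.e. it keeps a disk diagram without internal 2-cells and with the cycle intact.

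\textbf{Step 3: the main obstacle.} The main obstacle is making the ungluing legitimate. Cutting along an internal neutral path that lies strictly inside the internal tree $T_D$ may disconnect the diagram or separate the cycle of $\inducedGamma D$. I expect to handle this with Lemma \ref{lem:boundary_leaf_lemma} and Corollary \ref{cor:boundary_leaf}, which locate leaves of $T_D$. If $x$ reaches a leaf of $T_D$, the ungluing is as in Lemma \ref{lem:leaf_edge_lemma}: the last edge of $x$ adjacent to the leaf would be a neutral edge adjacent to a leaf, which is a contradiction.

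Otherwise, the branching vertices along $x$ are where other 2-cells meet. There the boundary word of those cells contains long runs of $\a^{\pm1}$. Conditions \ref{enu:tiny_condition-full} and \ref{enu:extremal-edge-distance-condition-full} force such runs to come only from blockers, and to have consistent orientation, since a high blocker runs forward and a low blocker runs backward. That lets me iterate the argument inductively along $x$ until I reach a leaf. If $x$ is short relative to $\bl/5$, then instead Condition \ref{enu:tiny_condition-full} forces the continuation of one blocker to be another blocker on the neighbouring cell. This again gives a shared extremal edge, ruled out as in Step 1 together with Condition \ref{enu:no_trivial_cycles-full}.
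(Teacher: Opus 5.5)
There is a genuine gap: none of your steps produces a contradiction. The cut in Step~2 is not available in general. Both blockers in fact start at leaves of $T_D$ and run inward (see below), so the shared path $x$ typically lies in the interior of $T_D$. By Lemma~\ref{lem:boundary_leaf_lemma} its endpoints are then interior vertices of $D$, and ungluing $C_1$ from $C_2$ along such an arc leaves a hole. The result is not a disk diagram, so minimality of $D$ gives nothing. Step~3 does not repair this. ``Iterate inductively along $x$ until a leaf'' has no concrete mechanism. Condition~\ref{enu:tiny_condition-full} constrains remainders, not the overlap of two blockers, so it cannot force anything about $x$. Step~1 also tacitly assumes that the two blockers traverse $x$ in the same direction. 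If their starting sides lie in different components of $T_D-e$, they cross the shared edge in opposite directions, and ``extending backwards along both'' is not defined. Finally, the claim that $C_1$ and $C_2$ lie on opposite sides of $x$ needs Condition~\ref{enu:extremal-edge-distance-condition-full}, to exclude both blockers lying on the same 2-cell.

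The missing idea is the global orientation of the cycle in $\Gamma(D)$, which is what the paper uses.

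\textbf{Step (a): both blockers start at leaves.} Suppose a blocker does not start at a leaf. By Lemma~\ref{lem:No_internal_extremal_sides}, it can reach the shared edge only after entering $T_D$ at an endpoint of some $\overline{\partial}C$, which is a leaf by Corollary~\ref{cor:boundary_leaf}. The edge through which it enters is then a leaf edge mapping to the neutral edge $e_{*}$, contradicting Lemma~\ref{lem:leaf_edge_lemma}. You have this argument, but only applied to $x$.

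\textbf{Step (b): the cycle fixes all directions.} The cycle of $\Gamma(D)$ runs around $\partial D$ through all leaf edges in a single rotational direction. At a leaf separating cells $C$ and $C'$ (in that order), the extremal side on the leaf edge induces the cycle edge $C\to C'$. Hence it is high if it is a side of $C$ and low if it is a side of $C'$. By Condition~\ref{enu:structure_of_blockers_condition-full}, the $e_{*}$-edges of its blocker then point away from, respectively towards, that side. Checking both cases, every $e_{*}$-edge of every such blocker is directed so that the blocker's own 2-cell lies on one fixed side of it, the same side for all blockers.

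\textbf{Conclusion.} The shared edge $e$ is internal, with $C_1$ and $C_2$ on opposite sides. Its direction, pulled back from $e_{*}$, cannot agree with both blockers, which is the contradiction. No cutting along $x$ is needed.
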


\begin{proof}
Consider two blockers $b_{1}$ and $b_{2}$ in $T_{D}$, which intersect
on an edge $e$ that they do not start on, such as in Figure \ref{fig:crossing_blockers}.
The edge $e$ must be internal since otherwise $b_{1},b_{2}\subseteq\boundary C$
for the same 2-cell $C$, which is ruled out by Condition \ref{enu:extremal-edge-distance-condition-full}.
By Condition \ref{enu:structure_of_blockers_condition-full}, $e$
is mapped into the special neutral edge $\a$ when mapped into $X$.
We give $e$ a fixed direction induced by the fixed direction of $\a$
when mapped into $D$.

We show that we can assume that $b_{1}$ and $b_{2}$ start at leaves
of $T_{D}$. Assume that $b_{i}$ does not start at a leaf $v$ of
$T_{D}$. By Lemma \ref{lem:No_internal_extremal_sides}, either the
first vertex of $b_{i}$ is outside of $T_{D}$, or $\b i$ exits
$T_{D}$ immediately. Denote by $e'$ the first edge in $b_{i}\cap T_{D}$.
By Lemma \ref{lem:leaf_edge_lemma}, the edge $e'$ has an extremal
side. But $e'$ must map to $\a$, and thus $e'$ is neutral, a contradiction.

Thus $b_{1}$ and $b_{2}$ both start at leaves $v_{1}$ and $v_{2}$
of $T_{D}$. Recall that each blocker $b_{i}$ starts at an extremal
side $s_{i}$. The edge $e$ splits $T_{D}$ into two halves. Two
possible cases are shown in Figure \ref{fig:crossing_blockers_x}
and Figure \ref{fig:crossing_blockers_y} depending on whether $s_{1}$
and $s_{2}$ are in the same half of $T_{D}$. Note that $\inducedGamma D$
is a cycle going around $D$, and so has an orientation, and without
loss of generality it is oriented clockwise. The orientation of $\inducedGamma D$
determines whether $s_{1}$ and $s_{2}$ are high or low, and this
further determines the direction of the edges of the blockers by Condition
\ref{enu:structure_of_blockers_condition-full}. E.g., in Figure \ref{fig:crossing_blockers_x}
$s_{1}$ must be low and thus the edges of its blocker $\b 1$ are
labeled by $\a$ and directed towards $s_{1}$, and $s_{2}$ must
be high and thus the edges of $\b 2$ are labeled by $\a$ and directed
away from $s_{2}$. Since $e$ is contained in both $\b 1$ and $\b 2$,
this is a contradiction, since the edge $e$ cannot be directed both
towards $s_{1}$ and towards $s_{2}$. 

In both cases (depending on whether $s_{1}$ and $s_{2}$ are in the
same half of $T_{D}$), $e$ cannot agree with both $\b 1$ and $\b 2$,
reaching a contradiction.
\end{proof}
\begin{figure}
\subfloat[\label{fig:crossing_blockers_x}]{\begin{centering}
\vspace{0bp}
\includegraphics[width=0.45\columnwidth]{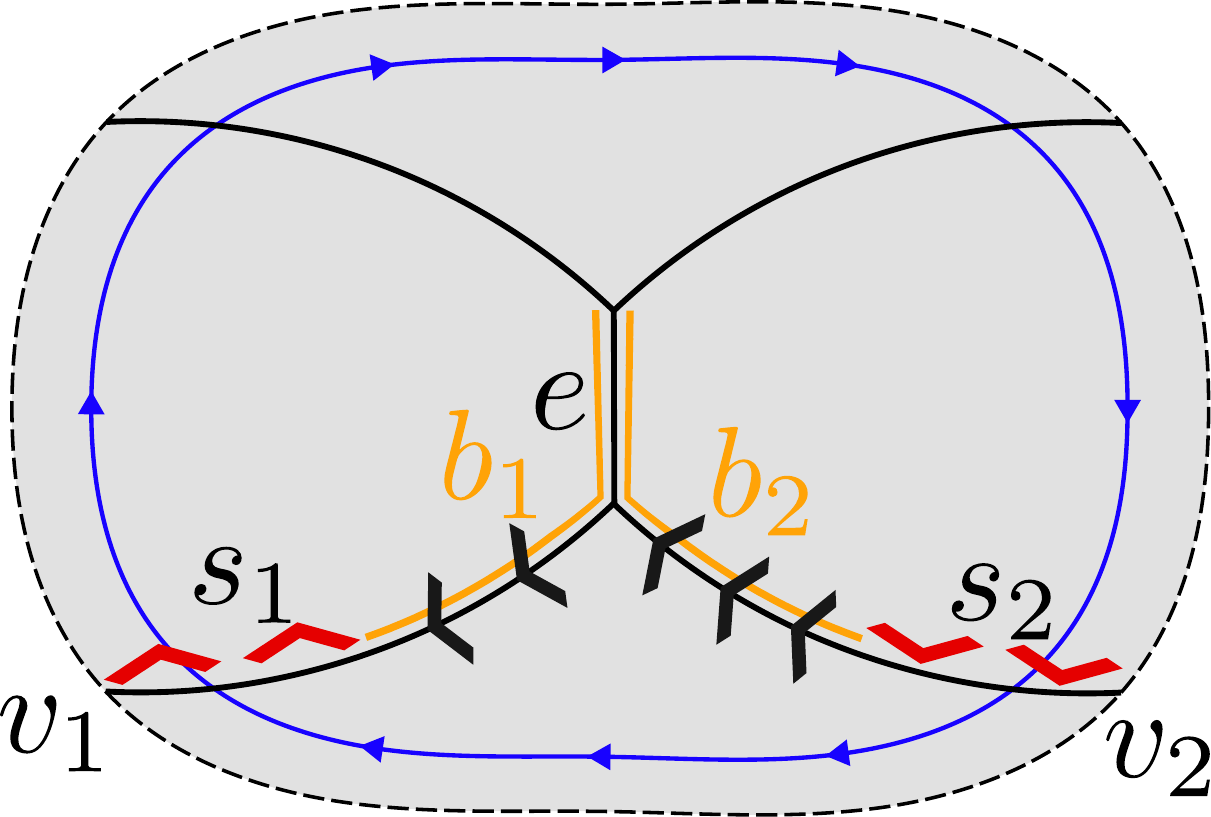}
\par\end{centering}
}\hfill{}\subfloat[\label{fig:crossing_blockers_y}]{\begin{centering}
\vspace{0bp}
\includegraphics[width=0.45\columnwidth]{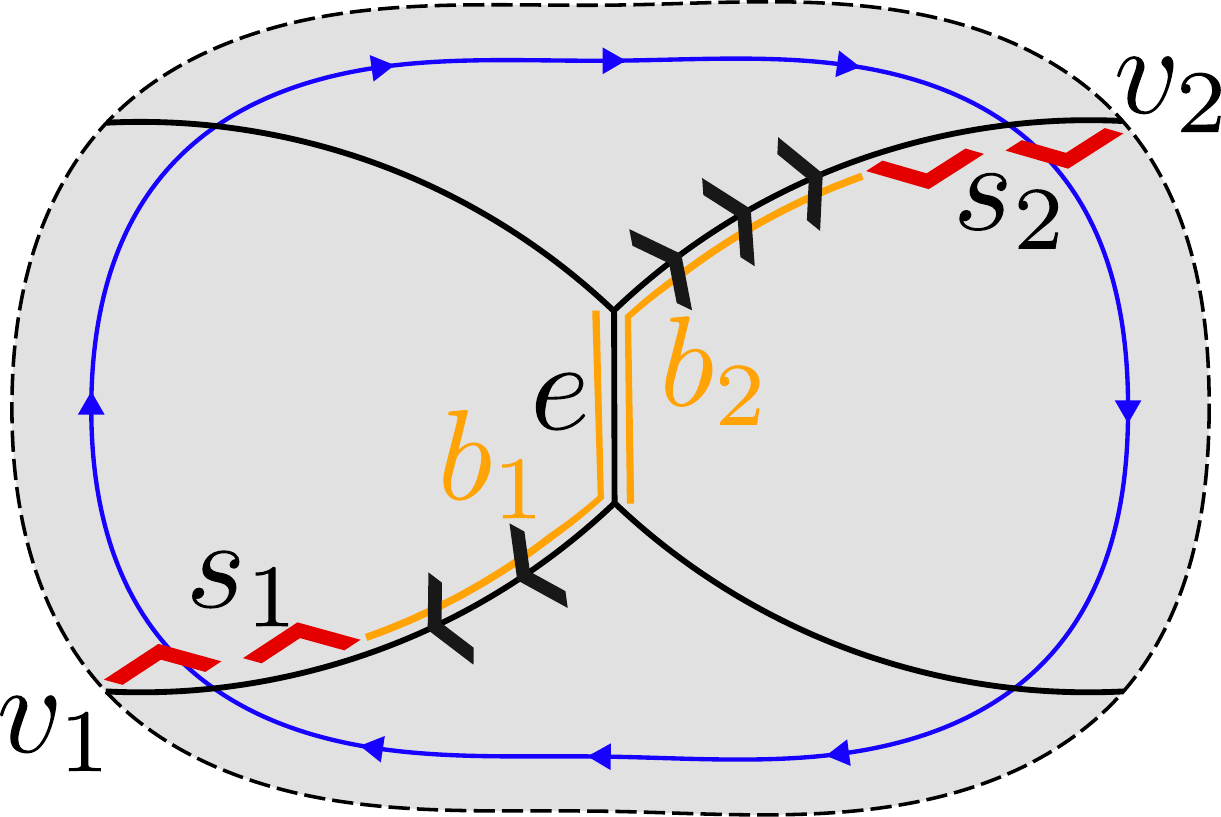}
\par\end{centering}
}

\caption{\label{fig:crossing_blockers}Figures for Lemma \ref{lem:no_blocker_intersection}.
The induced graph $\protect\inducedGamma D$ is marked in blue, extremal
sides are marked by red arrows, blockers are marked in orange, and
edges which map to the neutral edge $\protect\a$ are marked with
a single arrow, in the direction in which they map to $\protect\a$.
In both cases, the edge $e$ cannot have a consistent direction, giving
a contradiction.}
\end{figure}

\begin{rem}[An aside on the necessity of $\Dprime$]
\label{rem:full_construction_obstruction}We remark that it's impossible
to prove Theorem \ref{thm:full-criterion} in a way which is too similar
to the proof of Theorem \ref{thm:simple-criterion}, using a local
counting argument on a minimal counterexample $D$. This obstruction
is avoided in Section \ref{subsec:gamma'_and_D'} by introducing a
new disk diagram $\Dprime$, and applying a local counting argument
on it instead of $D$. This remark is included to motivate the definition
of $\Dprime$ in Section \ref{subsec:gamma'_and_D'}. However, this
remark is not needed for a formal proof of Theorem \ref{thm:full-criterion},
and can be safely skipped. 

Assume that there is an argument counting the number of sided pieces
in a reduced disk diagram $D$ where $\inducedGamma D$ has a cycle.
Also assume that similarly to the counting argument in the proof of
Theorem \ref{thm:simple-criterion}, our hypothetical counting argument
satisfies the following:
\begin{itemize}
\item It attributes at least four sided pieces per leaf of $T_{D}$.
\item Moreover, it can be strengthened to $5$ sided pieces per leaf, given
that the Conditions \ref{enu:small-condition-full} and \ref{enu:tiny_condition-full}
of Theorem \ref{thm:full-criterion} are strengthened accordingly.
Indeed an analogue of this holds for the proof of Theorem \ref{thm:simple-criterion}:
strengthening Condition \ref{enu:small_cancellation_condition} of
Theorem \ref{thm:simple-criterion} can be used to attribute $5$
(or arbitrarily many) sided pieces per leaf instead of $4$.
\item It is local: the argument only considers a bounded ``neighborhood''
of $v$ to choose which sided pieces are attributed to $v$. Distance
is measured by giving every piece of $D$ a length of $1$.
\end{itemize}
We will now use this counting argument to derive a contradiction.
In Figure \ref{fig:full_construction_obstruction} we show how to
construct a disk diagram $D$ where $\inducedGamma D$ has a large
almost-cycle, in the sense that with the addition of one edge $e$,
$\inducedGamma D\cup\left\{ e\right\} $ has an arbitrarily long cycle.
We cannot apply our counting argument to $D$ directly, since $\inducedGamma D$
does not contain a cycle. However, since the counting argument is
local, its logic holds on all 2-cells which are far enough from $e$,
so it should work correctly for all but a constant number $C$ of
2-cells. Therefore our hypothetical counting argument obtains the
following:
\[
2\text{\#pieces}\largeparens{T_{D}}=\text{\#sided-pieces}\largeparens{T_{D}}\ge5\largeparens{\text{\#leaves}\largeparens{T_{D}}-C}.
\]
By Lemma \ref{lem:counting_pieces},
\[
\text{\#pieces}\largeparens{T_{D}}<2\text{\#leaves}\largeparens{T_{D}}
\]
and together we get
\[
\text{\#leaves}\largeparens{T_{D}}<5C,
\]
i.e., the number of leaves in $T_{D}$ is bounded. However, looking
at our construction of $D$ in Figure \ref{fig:full_construction_obstruction},
the number of leaves in $T_{D}$ is clearly unbounded, a contradiction.

\begin{figure}
\centering{}\subfloat[\label{fig:full_construction_obstruction}A disk diagram $D$ where
$\protect\inducedGamma D$ has an almost-cycle. I.e., $\protect\inducedGamma D$
with the addition of the green edge has a cycle. The number of \textquotedblleft rectangles\textquotedblright{}
and the size of the almost-cycle can be increased arbitrarily.]{\centering{}\includegraphics[width=0.45\columnwidth]{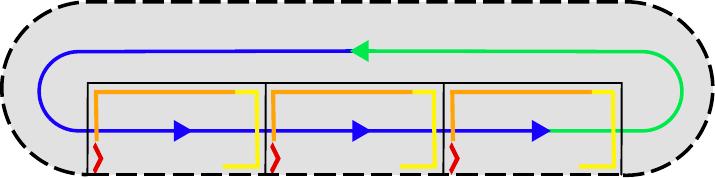}}\hfill{}\subfloat[\label{fig:full_construction_obstruction_with_gamma_prime}The same
with the new edges of $\protect\inducedprimeGamma D$ in teal. The
corresponding diagram $\protect\Dprime$ consists of only the top
2-cell and the rightmost 2-cell.]{\centering{}\includegraphics[width=0.45\columnwidth]{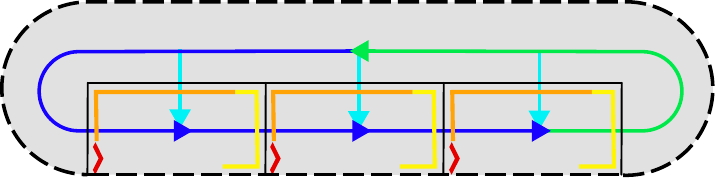}}\caption{The obstruction that explains why the use of $\protect\Gammaprime$
and $\protect\Dprime$ is necessary. Recall that blockers are shown
in orange except their first side which is shown as an extremal side
in red, and remainders are shown in yellow. }
\end{figure}
\end{rem}

\subsubsection{\label{subsec:gamma'_and_D'}$\protect\Gammaprime$ and $\protect\Dprime$}

As explained in Remark \ref{rem:full_construction_obstruction}, working
with a minimal counterexample $D$ is not sufficient to prove Theorem
\ref{thm:full-criterion}. In this section we overcome this obstruction.
We add new edges to $\inducedGamma D$ to obtain a new graph $\mathemph{\inducedprimeGamma D}$.
We then define $\mathemph{\Dprime}$ to roughly be a minimal disk
diagram where $\inducedprimeGamma{\Dprime}$ contains a cycle.
\begin{defn}
\label{def:gamma_prime}Let $D$ be a disk diagram in $X$. The graph
$\mathemph{\inducedprimeGamma D}$ is a directed graph where its vertices
are 2-cells of $D$ and its edges are the edges of $\inducedGamma D$
with additional edges as follows. For every sided piece $\sidedpiece=\smallparens{p,C_{1}}$
in $D$, let $C_{2}$ the 2-cell on the other side of $p$. If $\boundary C_{1}$
contains a blocker $b$ such that $\left|b\cap\sidedpiece\right|>\frac{\bl}{5}$,
we add an edge between $C_{1}$ and $C_{2}$. To determine the edge's
direction, consider the extremal side $s$ in $b$ ($s$ may be outside
of $\sidedpiece$). If $s$ is high, orient the edge from $C_{1}$
to $C_{2}$. If $s$ is low, orient the edge from $C_{2}$ to $C_{1}$.
See Figure \ref{fig:Gamma_prime}.
\end{defn}

Consider a counterexample $D$. Since $\inducedGamma D\subseteq\inducedprimeGamma D$
both of them have a cycle. Consider an edge $e$ in $\inducedprimeGamma D-\inducedGamma D.$
The edge $e$ closes a new cycle in $\inducedprimeGamma D$, and so
it is possible to remove part of $D$ while retaining a cycle in $\inducedprimeGamma D$.
As an example, in Figure \ref{fig:Gamma_prime}, the rightmost 2-cell
can be removed, and the new disk diagram $D'$ would retain a cycle
in $\Gammaprime\smallparens{D'}$. This way we cut off parts of the
original disk diagram $D$ to obtain a new diagram $D'$ where $\inducedprimeGamma{\Dprime}$
has a cycle. Since we started with $D$, we conserve Lemma \ref{lem:no_blocker_intersection}:
No two blockers intersect unless they start on the same edge. Thus
we formally define $\Dprime$ as follows:
\begin{defn}
\label{def:D_prime}Let $\mathemph{\Dprime}$ be a minimal reduced
disk diagram without internal 2-cells where:
\begin{itemize}
\item $\inducedprimeGamma{\Dprime}$ has a cycle.
\item $\Dprime$ satisfies Lemma \ref{lem:no_blocker_intersection}, i.e.,
no two blockers intersect unless they start on the same edge.
\end{itemize}
\end{defn}

Formally, $\Dprime$ exists since our minimal counterexample $D$
satisfies both points, even though $D$ itself is not necessarily
minimal with respect to these points.\footnote{As a reminder, $D$ exists under the assumption that our heightened
complex $\heightened$ is not bislim.} The requirement that Lemma \ref{lem:no_blocker_intersection} holds
for $\Dprime$ is necessary, since otherwise $\Dprime$ could be a
degenerate diagram such as in Figure \ref{fig:degenerate_gamma'_cycles}.
Such degenerate diagrams always exist, even when $\heightened$ is
indeed bislim.

\begin{figure}
\begin{minipage}[t]{0.55\columnwidth}%
\begin{center}
\includegraphics[width=0.75\textwidth]{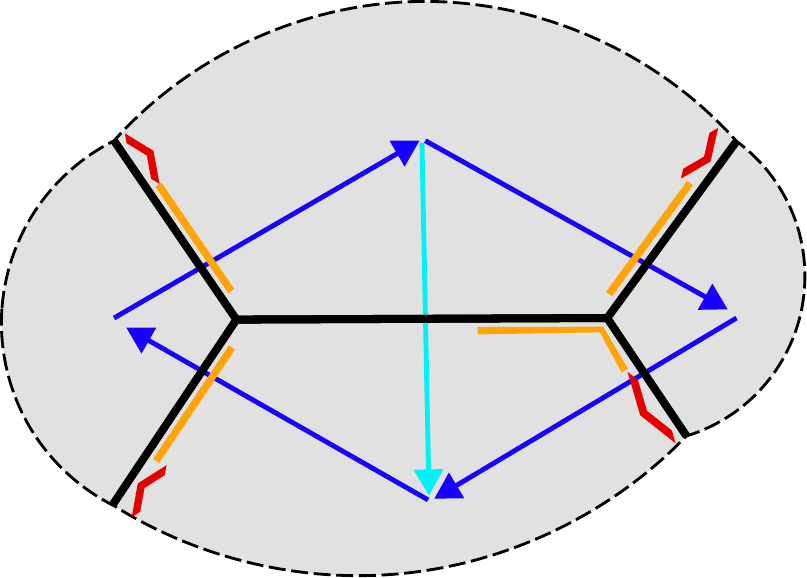}
\par\end{center}
\caption{\label{fig:Gamma_prime}A demonstration of Definition \ref{def:gamma_prime}.
The single edge of $\protect\Gammaprime$ which is not already present
in $\Gamma$ is drawn in teal. That edge is present in $\protect\Gammaprime$
since a blocker of the bottom 2-cell has a large intersection with
the top 2-cell.}
\end{minipage}\hfill{}%
\begin{minipage}[t]{0.35\columnwidth}%
\begin{center}
\includegraphics[width=0.857\textwidth]{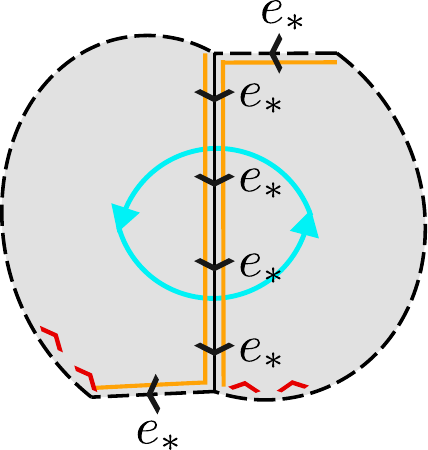}
\par\end{center}
\caption{\label{fig:degenerate_gamma'_cycles}Consider this disk diagram $D'$.
Both blockers (in orange) generate edges of $\protect\inducedprimeGamma{\protect\Dprime}$
(in teal) which form a cycle. Degenerate diagrams like this exist
even when $\protect\heightened$ is bislim. We mark the label $\protect\a$
by a single arrow.}
\end{minipage}
\end{figure}

Now we generalize Theorem \ref{lem:leaf_edge_lemma} to $\Dprime$:
\begin{lem}
\label{lem:leaf_edge_lemma_full}Let $v$ be a leaf of $T_{\Dprime}$,
and let $e$ be the edge in $T_{\Dprime}$ adjacent to $v$. Then
$e$ is contained in a blocker which is directed from $v$ to $e$,
which we denote by $\mathemph{\b v}$. Note that $v$ is not necessarily
the first vertex of $\b v$. We denote the remainder following $\b v$
by $\mathemph{\r v}$.
\end{lem}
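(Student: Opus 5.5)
We follow the strategy of Lemma \ref{lem:leaf_edge_lemma}, but replace $\inducedGamma D$ by $\inducedprimeGamma{\Dprime}$ and allow the extremal side to sit at the start of a blocker instead of on $e$ itself. Let $v$ be a leaf of $T_{\Dprime}$ and $e$ the unique edge of $T_{\Dprime}$ at $v$, with $C_{1},C_{2}$ the two 2-cells on either side of $e$. We unglue $C_{1}$ and $C_{2}$ along $e$, as in Figure \ref{fig:leaf_edge_lemma_2}. This splits $v$ into $v_{1},v_{2}$ and $e$ into $e_{1},e_{2}$, and gives a reduced disk diagram $D''$ without internal 2-cells that has strictly fewer internal edges. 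Ungluing cannot create new blocker intersections, so $D''$ still satisfies Lemma \ref{lem:no_blocker_intersection}. By the minimality of $\Dprime$ in Definition \ref{def:D_prime}, the graph $\inducedprimeGamma{D''}$ must be acyclic. So some edge of $\inducedprimeGamma{\Dprime}$ lying on every cycle is destroyed by the ungluing. Such an edge comes either from an extremal side of $e$ (an edge of $\inducedGamma{\Dprime}$), or from a sided piece through $e$ that meets a blocker in more than $\frac{\bl}{5}$ edges (the new edges of Definition \ref{def:gamma_prime}).

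We then treat the two cases. If the destroyed edge comes from an extremal side $s$ of $e$, we take $\b v$ to be the blocker that starts at $s$ and runs into the internal boundary of its 2-cell. This blocker contains $e$ and is directed from $v$ to $e$, because $v$ is a leaf and the other vertex of $e$ leads into $T_{\Dprime}$. Here $v$ is the first vertex of the blocker. If instead the destroyed edge comes from a sided piece $\sidedpiece\ni e$ with $\left|b\cap\sidedpiece\right|>\frac{\bl}{5}$ for some blocker $b$ on $\boundary C_{1}$ or $\boundary C_{2}$, we must show that $b$ actually contains $e$. The piece through $e$ ends at the leaf $v$, since $v$ has degree $1$ in $T_{\Dprime}$. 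After ungluing, the remaining portion of $\sidedpiece$ is shorter. If the new piece still met $b$ in more than $\frac{\bl}{5}$ edges, the $\inducedprimeGamma{}$-edge would survive. Hence $b$ must run into the part of the piece adjacent to $v$, namely $e$ itself. We then need to see that $b$ is directed from $v$ to $e$, i.e.\ that $b$ enters $T_{\Dprime}$ at $v$. Here we use Condition \ref{enu:structure_of_blockers_condition-full} (blockers are powers of the neutral edge $\a$ after the first side) together with Lemma \ref{lem:No_internal_extremal_sides}, which still applies in $\Dprime$ by the same argument. Together they imply that the extremal side starting $b$ is either non-internal or at a leaf, so $b$ enters $T_{\Dprime}$ through the leaf $v$.

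The main obstacle is the second case. The ungluing need not shorten $\sidedpiece$ by exactly one edge at $v$; it might instead split it or change which pieces exist near $e$. We therefore have to check carefully that pieces of $D''$ are obtained from pieces of $\Dprime$ by deleting $e$, so that large blocker intersections avoiding $e$ persist. We may also need to choose the ungluing more carefully, separating only along $e$ and leaving the rest of $\boundary C_{1}\cap\boundary C_{2}$ glued, so that the blocker–piece intersections away from $e$ are preserved. Once this is settled, the directedness claim is routine, and $\r v$ is simply defined as the remainder continuing $\b v$.
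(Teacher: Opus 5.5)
Your skeleton matches the paper's proof: unglue at $e$, use minimality of $\tilde{D}$ to find an edge of $\tilde{\Gamma}(\tilde{D})$ that disappears, then split into two cases. Either the edge comes from an extremal side of $e$, or it comes from a sided piece $\underline{p}=(p,C)$ with $|p\cap b|>\frac{\ell_b}{5}\ge|(p\setminus e)\cap b|$, which forces $e\in b$. The issue you flag about how pieces change under ungluing is not where the difficulty lies; the paper takes that inequality as the immediate effect of ungluing.

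The gap is the orientation of $b$ in the second case, which you call routine. Condition~\ref{enu:structure_of_blockers_condition-full} and Lemma~\ref{lem:No_internal_extremal_sides} only tell you that the first side $s$ of $b$ is non-internal or lies on a leaf edge. They do not tell you which leaf $b$ enters $T_{\tilde{D}}$ through.

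The configuration to exclude is this: $b$ enters $T_{\tilde{D}}$ at the other end $v'$ of $\overline{\partial}C$, runs along $\overline{\partial}C$, and leaves through $e$ at $v$. Here $s$ sits either on the leaf edge at $v'$ or on a boundary edge just before $v'$. Nothing you cite rules this out. The leaf edge at $v'$ then maps to $e_*$ and is neutral. In $D$ that would contradict Lemma~\ref{lem:leaf_edge_lemma}, which is how Lemma~\ref{lem:no_blocker_intersection} controls blocker directions. In $\tilde{D}$, however, neutral leaf edges are exactly what cut blockers produce, so that route is unavailable. Using it would also be circular, since the present lemma is its replacement for $\tilde{D}$.

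The missing ingredient is a second use of minimality. Suppose $b$ traverses $e$ towards $v$, so $e$ is its last internal edge.

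If $s\in\overline{\partial}C$, then $s$ itself induces an edge of $\tilde{\Gamma}(\tilde{D})$ at $C$. That edge points the same way relative to $C$ as the edge induced by $\underline{p}$: both out of $C$ if $s$ is high, both into $C$ if $s$ is low. This chord lets you reroute the cycle so that it avoids the $\underline{p}$-edge. The rerouted cycle survives the ungluing at $e$, contradicting minimality.

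If $s\notin\overline{\partial}C$, then $\overline{\partial}C\subseteq b$. The blocker-disjointness built into Definition~\ref{def:D_prime} then forces every edge of $\tilde{\Gamma}(\tilde{D})$ at $C$ to come from $b$, and all of them point the same way. So $C$ is a source or a sink, lies on no cycle, and can be deleted, again contradicting minimality.

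Without this step the direction claim is not proved, and it is what Lemma~\ref{lem:no_escape} and the definition of $r_v$ rely on.
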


\begin{proof}
Unglue the 2-cells adjacent to $e$ to obtain a smaller disk diagram
$\Dprime'$. By minimality of $\Dprime$, the graph $\inducedprimeGamma{\Dprime'}$
is acyclic. Thus there is an edge in $\inducedprimeGamma{\Dprime}$
missing from $\inducedprimeGamma{\Dprime'}$. There are two ways that
this edge could be induced in $\inducedprimeGamma{\Dprime}$, but
not in $\inducedprimeGamma{\Dprime'}$:
\begin{casenv}
\item \label{case:full_blocker}$\inducedGamma{\Dprime}$ also has this
edge. Thus $e$ has an extremal side which induces this edge. Thus
$\b v$ is the blocker starting at $e$.
\item \label{case:cut_blocker}This edge is induced by a sided piece $\sidedpiece=\smallparens{p,C}$
and a blocker $b$ where $\left|p\cap b\right|>\frac{\bl}{5}$, but
$\left|\smallparens{p\backslash e}\cap b\right|\le\frac{\bl}{5}$.
In particular $e$ is contained in $b$. We claim that $b$ satisfies
the required conditions: it remains to see that the direction of $b$
is from $v$ to $e$.

Assume that $b$ is oriented the other way, from $e$ to $v$, so
$e$ is the last internal edge of $b$. Consider the first side $s$
of $b$, which must be extremal.
\begin{itemize}
\item Assume $s\in\internalBoundary C$. Then $s$ also induces an edge
in $\inducedprimeGamma{\Dprime}$, which is in the same direction
as the edge induced by $\sidedpiece$ (either both are directed into
$C$ or both are directed out of $C$). The edge induced by $s$ cuts
through the cycle of $\inducedprimeGamma{\Dprime}$, so $\inducedprimeGamma{\Dprime}$
contains a cycle which goes through the edge induced by $s$. Thus
it does not contain the edge induced by $\sidedpiece$. Therefore
this cycle is included in $\inducedprimeGamma{\Dprime'}$, in contradiction
to the minimality of $\Dprime$.
\item If $s\not\in\internalBoundary C$, then $\internalBoundary C\subseteq b$.
Since blockers do not intersect (Since $\Dprime$ satisfies Lemma
\ref{lem:no_blocker_intersection}) the only edge from or into $C$
in $\inducedprimeGamma{\Dprime}$ is the edge induced by $\sidedpiece$.
Therefore the cycle of $\inducedprimeGamma{\Dprime}$ does not go
through $C$ and $C$ can be removed from $\Dprime$, in contradiction
with the minimality of $\Dprime$.
\end{itemize}
\end{casenv}
\end{proof}
See Figure \ref{fig:full_vs_cut_blockers} for demonstration of the
two cases. We now divide the blockers $\b v$ into two kinds, corresponding
to the two cases of the proof above. For now, let $v$ be a leaf of
$T_{\Dprime}$.
\begin{defn}
\label{def:blocker-types}We say that $\b v$ is a \emph{full blocker}
if it corresponds to Case \ref{case:full_blocker} in the proof above,
and we say that $\b v$ is a \emph{cut blocker} if it corresponds
to Case \ref{case:cut_blocker} in the proof above.

\end{defn}

\begin{figure}
\subfloat[\label{fig:full_blocker}]{\begin{centering}
\includegraphics[width=0.4\columnwidth]{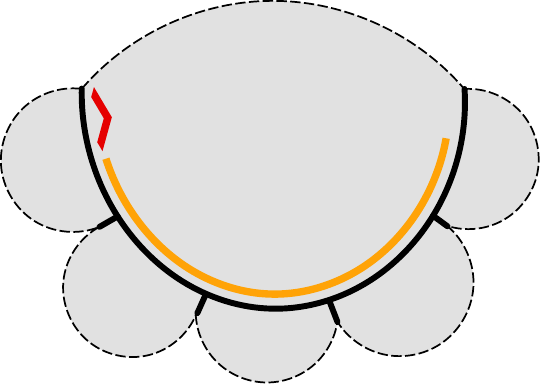}
\par\end{centering}
}\hfill{}\subfloat[\label{fig:cut_blocker}]{\begin{centering}
\includegraphics[width=0.4\columnwidth]{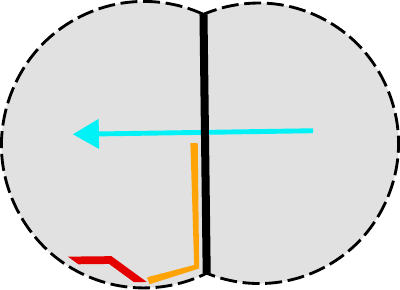}
\par\end{centering}
}\caption{\label{fig:full_vs_cut_blockers}Figure \ref{fig:full_blocker} shows
Case \ref{case:full_blocker} of Lemma \ref{lem:leaf_edge_lemma_full},
a full blocker. A full blocker must have at least five pieces by Lemma
\ref{lem:full_blocker_has_short_pieces}. Figure \ref{fig:cut_blocker}
shows Case \ref{case:cut_blocker}, a cut blocker.}
\end{figure}

We call cut blockers ``cut'' since they have been ``cut'' by the
new edges of $\Gammaprime$. We now continue the logic of the proof
above and obtain that full blockers cannot have any long internal
pieces:

\begin{lem}
\label{lem:full_blocker_has_short_pieces}If $\b v$ is a full blocker,
then $\b v$ does not contain internal pieces longer than $\frac{\bl}{5}$.
\end{lem}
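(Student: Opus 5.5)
We argue by contradiction, following the logic of Case \ref{case:cut_blocker} in the proof of Lemma \ref{lem:leaf_edge_lemma_full}. Let $\b v$ be a full blocker, lying on the 2-cell $C$. Its first side $s$ is extremal and lies on the edge $e$ adjacent to the leaf $v$. Suppose $\b v$ contains an internal piece $p$ of length greater than $\frac{\bl}{5}$. Let $C'$ be the 2-cell on the other side of $p$. Then the sided piece $\sidedpiece=\smallparens{p,C}$ satisfies $\left|\b v\cap\sidedpiece\right|>\frac{\bl}{5}$. By Definition \ref{def:gamma_prime}, $\inducedprimeGamma{\Dprime}$ therefore contains an edge between $C$ and $C'$. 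It is directed out of $C$ if $s$ is high and into $C$ if $s$ is low.

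Now compare this with the edge induced by $s$ itself. Since $\b v$ is full, $s$ induces an edge of $\inducedGamma{\Dprime}$ between $C$ and the 2-cell $C''$ across $e$. This edge has the same orientation relative to $C$ as the $\sidedpiece$-edge: both point out of $C$, or both point into $C$. The edge $e$ is the first internal edge of $\internalBoundary C$, because $v$ is a leaf and hence the first vertex of $\internalBoundary C$ by Corollary \ref{cor:boundary_leaf}. So $C''$ and $C'$ are distinct neighbours of $C$, provided $p$ does not contain $e$.

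If $p$ does contain $e$, then $C'=C''$. In that case I instead examine the sub-piece of $p$ beyond $e$. By ungluing $e$, as in the proof of Lemma \ref{lem:leaf_edge_lemma_full}, one checks that the intersection of $\b v$ with the remaining portion of $p$ still exceeds $\frac{\bl}{5}$, or else that some edge is lost. Losing an edge would already make the blocker cut rather than full, so this case should reduce to the main one.

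Next, take a simple cycle $\ell$ in $\inducedprimeGamma{\Dprime}$. The main step uses planarity, exactly as in the argument leading to Lemma \ref{lem:No_internal_extremal_sides}. Through $C$ there are two parallel-oriented edges to $C''$ and to $C'$, and these neighbours lie on different sides of the tree $T_{\Dprime}$ relative to $C$. Any cycle using one of these edges can therefore be rerouted through the other. So there is a cycle that avoids one of them.

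Suppose we keep the cycle through the $\sidedpiece$-edge. Then the edge induced by $s$ is redundant, and we can unglue $e$ as in the proof of Lemma \ref{lem:leaf_edge_lemma_full}. This gives a smaller diagram that still has a cycle in $\Gammaprime$ and still satisfies Lemma \ref{lem:no_blocker_intersection}, since ungluing creates no new blocker intersections. That contradicts the minimality of $\Dprime$ in Definition \ref{def:D_prime}.

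Suppose instead we keep the cycle through $s$. Then the $\sidedpiece$-edge is redundant. I would then try to shortcut: the cycle can travel from the $C''$ side to the $C'$ side via the $\sidedpiece$-edge. This cuts off the part of $\Dprime$ lying between $e$ and $p$ along $\boundary C$, which contains at least one internal edge because $p$ is internal and distinct from $e$. Again minimality is contradicted.

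The main obstacle is making the rerouting rigorous. We must show that the cycle can always be shortened so that some internal edge or 2-cell of $\Dprime$ is removed. We must also check that the smaller diagram remains reduced, has no internal 2-cells, and still has no intersecting blockers. I would handle this with a Jordan-curve argument on $\ell$, as in the proof of the Claim after Definition \ref{def:bislim-definition-general}. Both edges leave or enter $C$ in the same direction and land in different components of $\Dprime$ minus $C$, so exactly one of the two sub-diagrams cut off is unnecessary for the cycle.
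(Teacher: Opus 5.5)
Your overall strategy is the paper's. The edges of $\inducedprimeGamma{\Dprime}$ induced by $s$ and by $\sidedpiece$ have the same orientation at $C$. One then produces a cycle through the $\sidedpiece$-edge that avoids the $s$-edge, and ungluing at $e$ contradicts the minimality of $\Dprime$; your first case is exactly the paper's ending. The gap is the step that carries the lemma, producing that cycle: it is not established, and your sketch would not work.

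\begin{itemize}
\item The dichotomy ``some cycle avoids one of them'' is vacuous. Both edges leave $C$ (or both enter it), so any simple directed cycle uses at most one of them anyway.
\item The claim that $C'$ and $C''$ land in different components of $\Dprime$ minus $C$ is false. $C$ meets $\boundary\Dprime$ in a single arc, with $\internalBoundary C$ running between two leaves of $T_{\Dprime}$. Every other 2-cell is attached to the connected tree $T_{\Dprime}$, so removing $C$ disconnects nothing.
\item Your second case relies on an unspecified ``cutting off'' surgery, whose output is never shown to be a reduced disk diagram without internal 2-cells. Moreover, if a shortcut through the $\sidedpiece$-edge exists, it is already a cycle avoiding the $s$-edge. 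You are then back in the first case, and ungluing at $e$ suffices.
\end{itemize}

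The missing idea is the cyclic-order structure of $\Dprime$. This is what the paper's terse ``thus there is a cycle'' relies on, as in the discussion before Lemma \ref{lem:No_internal_extremal_sides} and in Case \ref{case:cut_blocker} of Lemma \ref{lem:leaf_edge_lemma_full}.

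\begin{itemize}
\item By minimality, ungluing at any leaf edge of $T_{\Dprime}$ destroys every cycle of $\inducedprimeGamma{\Dprime}$. The only edges this can delete join the two 2-cells meeting at that leaf. Hence every simple cycle $\ell$ passes through all 2-cells of $\Dprime$ in their cyclic order around $\boundary\Dprime$.
\item Here $C''$ is the neighbour of $C$ at $v$, while $C'\neq C''$ lies across a later piece of $\internalBoundary C$.
\item So $\ell$ crosses between $C$ and $C''$ in the direction of the $s$-edge. Otherwise the $s$-edge and $\ell$'s edge between $C''$ and $C$ would form a $2$-cycle, which is not of the above form, since $C,C',C''$ are three distinct 2-cells.
\item The $\sidedpiece$-edge, oriented at $C$ like the $s$-edge, is therefore a chord. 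Together with the arc of $\ell$ between $C$ and $C'$ that does not contain $C''$, it forms a directed cycle.
\item That cycle avoids the $s$-edge and $C''$ altogether, so it survives ungluing at $e$.
\end{itemize}

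With this in hand, your second case never needs separate treatment.

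Your treatment of $p\ni e$ is also not an argument. ``Full'' and ``cut'' only record which case of Lemma \ref{lem:leaf_edge_lemma_full} occurred, and both may occur at once, so no contradiction arises. The paper does not treat this case either. It is harmless in the lemma's only application, Lemma \ref{lem:count_at_least_4_full}, where the first piece of $\b u$ has length at most $\frac{\bl}{5}$.
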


\begin{proof}
Let $e$ be the edge of $T_{\Dprime}$ adjacent to $v$ as before.
Denote by $s$ the side of $e$ which is the first side of $\b v$,
which must be extremal. Let $\sidedpiece$ be an internal sided piece
which is longer than $\frac{\bl}{5}$ and contained in $\b v$. Using
the same logic as in the proof of Lemma \ref{lem:leaf_edge_lemma_full},
both $s$ and $\sidedpiece$ induce edges in $\inducedprimeGamma{\Dprime}$
in the same direction. Thus there is a cycle in $\inducedprimeGamma{\Dprime}$
which goes through the edge induced by $\sidedpiece$ and not the
edge induced by $s$, and by ungluing at $e$ we get a smaller disk
diagram where $\Gammaprime$ still has a cycle, in contradiction with
the minimality of $\Dprime$.
\end{proof}
We now prove that blockers $\b v$ do not escape $T_{\Dprime}$, similarly
to Lemma \ref{lem:large_cells}. In fact, we prove that they do not
escape in a strict sense: $\b v$ does not go through a leaf of $T_{\Dprime}$
except for $v$.
\begin{lem}
\label{lem:no_escape} Blockers of the form $\b v$ do not go through
a leaf of $T_{\Dprime}$ other than $v$.
\end{lem}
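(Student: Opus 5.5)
We argue by contradiction, following the template of Lemma \ref{lem:large_cells}. Let $v$ be a leaf of $T_{\Dprime}$ with edge $e$, and let $\b v$ be the blocker from Lemma \ref{lem:leaf_edge_lemma_full}, contained in $\boundary C$ for some 2-cell $C$ of $\Dprime$ and directed from $v$ through $e$. Suppose $\b v$ passes through a leaf $u\neq v$ of $T_{\Dprime}$. Since $u$ is a leaf, the edge of $\b v$ leaving $u$ is either a non-internal edge, or the unique edge $e_{u}$ of $T_{\Dprime}$ at $u$. Taking $u$ to be the first such leaf after $v$, the portion of $\b v$ between $v$ and $u$ lies in $\internalBoundary C$, and the edge of $\b v$ entering $u$ is $e_{u}$.

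The key step is to apply Lemma \ref{lem:leaf_edge_lemma_full} at $u$. It gives a blocker $\b u$ containing $e_{u}$ and directed from $u$ to $e_{u}$, that is, in the direction opposite to the direction in which $\b v$ traverses $e_{u}$. So $\b u$ and $\b v$ both contain $e_{u}$. By Definition \ref{def:D_prime}, $\Dprime$ satisfies Lemma \ref{lem:no_blocker_intersection}, so $\b u$ and $\b v$ must start on the same edge. I expect to split into two cases.

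\emph{Case 1: $\b u$ and $\b v$ start on the same edge.} Then either they share the same first side with the same direction, which is impossible because their directions along $e_{u}$ are opposite, or they are the trivial pair of blockers of a single extremal side going in opposite directions, or their first sides lie on different 2-cells. In each of these subcases, $e_{u}$ would have to be simultaneously the start edge and an interior edge of a blocker of length $\bl\geq2$, or the two blockers would lie on the same $\boundary C$ near one extremal side. The latter can be ruled out by the separation in Condition \ref{enu:extremal-edge-distance-condition-full} together with Condition \ref{enu:structure_of_blockers_condition-full}: the directions of the $\a$-edges are forced by whether the first side is high or low, and two blockers sharing $e_{u}$ with opposite orientations contradict the consistent direction of $e_{u}$ as a lift of $\a$, exactly as in the proof of Lemma \ref{lem:no_blocker_intersection}.

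\emph{Case 2: $\b v$ exits $T_{\Dprime}$ at $u$ through a non-internal edge.} Here I would again use that every edge of $\b v$ after the first maps to the neutral edge $\a$, and that $\b u$ starts at an extremal side while containing $e_{u}$. If $\b u$ is a full blocker, its first side is on $e_{u}$ and is extremal, but $e_{u}\subseteq\b v$ is not the first edge of $\b v$, so $e_{u}$ maps to $\a$ and is neutral, a contradiction. If $\b u$ is a cut blocker, we run the orientation argument of Lemma \ref{lem:no_blocker_intersection}: the orientation of the $\Gammaprime$-cycle around $\Dprime$ determines whether the first sides of $\b u$ and $\b v$ are high or low, which in turn fixes the direction of $e_{u}$ as a lift of $\a$, and the two requirements disagree.

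The main obstacle is this orientation bookkeeping in the cut-blocker case. There the extremal side of $\b u$ may lie outside $T_{\Dprime}$, so one must verify, as in Lemma \ref{lem:leaf_edge_lemma_full}, that the $\Gammaprime$-edge induced by the relevant sided piece is oriented consistently with the global cycle. If that is awkward, I would instead fall back on minimality: unglue $\Dprime$ at $e_{u}$ and show that the $\Gammaprime$-cycle survives, contradicting the minimality of $\Dprime$.
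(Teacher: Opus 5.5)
Your first step matches the paper: $b_v$ and $b_u$ both contain $e_u$ with opposite orientations, and the blocker non-intersection property built into Definition \ref{def:D_prime} forces them to start on the same edge. The gap is in what you conclude from this. Read precisely, as the proof of Lemma \ref{lem:no_blocker_intersection} shows, that property says a shared edge must be the \textit{first} edge of both blockers. The reason is that a non-first edge of a blocker is a lift of the neutral edge $e_{*}$ (Condition \ref{enu:structure_of_blockers_condition-full}), whereas a first edge carries an extremal side. Since $b_v$ traverses $e_v$ no later than $e_u$, this forces $e_u=e_v$. So $u$ is the other endpoint of $e_v$, and $T_{\tilde{D}}$ is a single edge.

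This is exactly the configuration your Case 1 fails to exclude. There $e_u$ is the common start edge of both blockers rather than an interior edge, and it is not a lift of $e_{*}$, so the orientation argument of Lemma \ref{lem:no_blocker_intersection} gives no contradiction. Moreover, Condition \ref{enu:extremal-edge-distance-condition-full} explicitly permits the two oppositely directed blockers of a single extremal side. Your Case 2 also silently assumes $e_u\neq e_v$ when it asserts that $e_u$ is not the first edge of $b_v$.

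The missing ingredient is the step with which the paper closes. If $T_{\tilde{D}}$ is a single edge, then $\tilde{D}$ consists of two 2-cells glued along one edge. As in the proof of Lemma \ref{lem:length_two}, a cycle of $\tilde{\Gamma}(\tilde{D})$ between these two cells can only arise from a trivial cycle, which contradicts Condition \ref{enu:no_trivial_cycles-full}. That your argument never invokes Condition \ref{enu:no_trivial_cycles-full} is a symptom of this omission.

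Separately, the cut-blocker orientation bookkeeping you flag as the main obstacle is unnecessary. You have already used the non-intersection property, which holds for $\tilde{D}$ by definition, so there is nothing to re-derive. Your Cases 1 and 2 are also not a genuine dichotomy, since once that property is invoked you are always in Case 1.
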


\begin{proof}
Assume that $\b v$ escapes out of $T_{\Dprime}$, and let $v'$ be
the last vertex in $\b v\cap T_{\Dprime}$. By the same proof as in
Lemma \ref{lem:boundary_leaf_lemma} and Corollary \ref{cor:boundary_leaf},
it follows that $v'$ is a leaf of $T_{\Dprime}$. Consider $\b{v'}$.
At $v'$, the blocker $\b v$ is exiting $T_{\Dprime}$, while $\b{v'}$
is entering $T_{\Dprime}$ so they have opposite directions. By Lemma
\ref{lem:no_blocker_intersection} this is only possible if both blockers
start at the same edge $e$. But then $v$ and $v'$ are both adjacent
to $e$, so two leaves of $T_{\Dprime}$ are neighbors, and $T_{\Dprime}$
consists of just $v$, $v'$ and $e$. This is impossible by the same
proof as Lemma \ref{lem:length_two}.
\end{proof}
\begin{cor}
\label{cor:blockers_are_contained}If $\b v$ is a full blocker, $\b v$
is contained in $T_{\Dprime}$.
\end{cor}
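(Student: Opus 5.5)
The plan is to read this off from Lemma \ref{lem:no_escape} together with the definition of a full blocker. Let $v$ be a leaf of $T_{\Dprime}$ and $e$ the edge of $T_{\Dprime}$ adjacent to $v$. Since $\b v$ is full, we are in Case \ref{case:full_blocker} of Lemma \ref{lem:leaf_edge_lemma_full}. So $e$ carries the extremal side $s$ that is the first side of $\b v$, and $\b v$ starts at $e$ and runs along $\boundary C$ away from $v$, where $C$ is the 2-cell containing $s$. In particular the first edge of $\b v$ is $e\in T_{\Dprime}$, and $v$ is the first vertex of $\b v$.

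Next I would show that $\b v$ never leaves $T_{\Dprime}$. Suppose some edge of $\b v$ is not internal, and let $e'$ be the first such edge along $\b v$. The vertex $v'$ just before $e'$ lies on $\boundary\Dprime$, and it also lies on $T_{\Dprime}$, since the preceding edge of $\b v$ is internal ($e'\neq e$). The argument of Lemma \ref{lem:boundary_leaf_lemma} and Corollary \ref{cor:boundary_leaf}, run on the minimal diagram $\Dprime$ as in the proof of Lemma \ref{lem:no_escape}, shows that $v'$ is a leaf of $T_{\Dprime}$. Moreover $v'\neq v$, because $v'$ is reached after traversing at least the edge $e$ from $v$, and $\b v$ is a path along $\boundary C$ of length $\bl$. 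One must check that it cannot return to $v$; the cleanest way is to use that $v$ is a leaf, so the only internal edge at $v$ is $e$, which $\b v$ has already used. This contradicts Lemma \ref{lem:no_escape}, which forbids $\b v$ from passing through a leaf of $T_{\Dprime}$ other than $v$. Hence every edge of $\b v$ is internal, i.e. $\b v\subseteq T_{\Dprime}$.

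The main obstacle is the degenerate possibility that $\b v$ wraps around and revisits vertices, or that $\boundary C$ is shorter than $\bl$. Condition \ref{enu:extremal-edge-distance-condition-full} rules out $\boundary C$ being too short, since $\boundary C$ has length at least $2\rl+2\bl$. A visit to $v$ again through an edge other than $e$ would exhibit a second internal edge at the leaf $v$, which is impossible. So the case analysis reduces cleanly to the leaf argument above.
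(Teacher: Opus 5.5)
Your proposal is correct and is the argument the paper intends, with no separate proof given there. A full blocker starts at the leaf $v$ with first edge $e\in T_{\tilde{D}}$, so leaving $T_{\tilde{D}}$ would force it through a vertex of $T_{\tilde{D}}\cap\partial\tilde{D}$, i.e.\ a leaf other than $v$, contradicting Lemma~\ref{lem:no_escape}.

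One small correction: $b_v$ cannot re-traverse $e$ (and so cannot return to $v$) because of Condition~\ref{enu:structure_of_blockers_condition-full}. Every later edge of $b_v$ maps to the neutral edge $e_*$, while $e$ carries an extremal side. Condition~\ref{enu:extremal-edge-distance-condition-full} does not do this job, since it bounds $\left|\partial C\right|$ only when $C$ has at least two extremal sides.
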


\subsubsection{\label{subsec:Proof-of-the-full-criteria}Proof of the relaxed criterion
for bislim structures}

We now prove that we can indeed attribute at least 4 sided pieces
of $T_{\Dprime}$ to each leaf $v$ of $T_{\Dprime}$. First, we recall
the strategy which we use to attribute sided pieces of $T_{\Dprime}$
to leaves $v$ of $T_{\Dprime}$. For each leaf $v$ of $T_{\Dprime}$
we attribute all of the internal sided pieces of the remainder $\r v$
to $v$. We stress that we do not attribute the internal sided pieces
of $\b v$ to $v$.

If the remainder $\r v$ escapes $T_{\Dprime}$, we need to attribute
additional pieces to $v$. Let $u$ be the first vertex in $\r v\cap\boundary D$.
It follows by the same logic as in Lemma \ref{lem:boundary_leaf_lemma}
and Corollary \ref{cor:boundary_leaf} that $u$ is also a leaf of
$T_{\Dprime}$. We attribute the internal pieces of $\b u$ to $\mathemph v$
(and not $u$).

From now on we assume that all of the conditions of Theorem \ref{thm:full-criterion}
hold.
\begin{lem}
\label{lem:count_at_least_4_full}Every leaf $v$ of $T_{\Dprime}$
has at least $4$ sided pieces of $T_{\Dprime}$ attributed to $v$.
\end{lem}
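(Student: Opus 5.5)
The plan is a case split according to whether the remainder $\r v$ stays inside $T_{\Dprime}$ and whether the next blocker $\b u$ is full or cut. Fix a leaf $v$ of $T_{\Dprime}$, let $C$ be the 2-cell whose boundary carries $\b v$ (Lemma \ref{lem:leaf_edge_lemma_full}), and let $\r v$ be the remainder following $\b v$. By Lemma \ref{lem:no_escape}, $\b v$ passes through no other leaf of $T_{\Dprime}$. Hence $\b v$ stays internal after its first internal edge, and $\r v$ begins at an internal vertex of $\internalBoundary C$.

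\textbf{Case 1: $\r v$ does not escape $T_{\Dprime}$.} Then $\r v\subseteq\internalBoundary C$ is a path of length $\rl$ made of internal sided pieces of $C$. Each such piece $(p,C)$ has the 2-cell $C'$ on its other side, so $p$ is an intersection of the remainder $\r v$ with $\boundary C'$. This intersection is nontrivial: $C'\neq C$, or, if $C'=C$, it is a different occurrence, since $\Dprime$ is reduced. By Condition \ref{enu:small-condition-full}, every such piece has length at most $\frac{\rl}{5}$. The pieces meeting $\r v$ cover it, so there are at least $5$ of them, and at least $4$ are sided pieces of $T_{\Dprime}$ attributed to $v$. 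The pieces at the two ends may be only partially contained in $\r v$; I expect this to be absorbed by using $5$ rather than $4$.

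\textbf{Case 2: $\r v$ escapes at a first boundary vertex $u$.} As noted before the statement, $u$ is a leaf of $T_{\Dprime}$. The pieces of $\r v$ before $u$ are attributed to $v$, together with the internal pieces of $\b u$. Condition \ref{enu:extremal-edge-distance-condition-full} and Lemma \ref{lem:no_blocker_intersection} keep the attributions to different leaves disjoint, except that each sided piece is counted from its own 2-cell. If $\b u$ is a full blocker, Corollary \ref{cor:blockers_are_contained} places $\b u$ inside $T_{\Dprime}$, and Lemma \ref{lem:full_blocker_has_short_pieces} bounds its internal pieces by $\frac{\bl}{5}$. Since $\bl$ is the length of $\b u$, this gives at least $5$ pieces, hence at least $4$ whole sided pieces.

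\textbf{Case 2, cut blocker (the main obstacle).} If $\b u$ is a cut blocker, it contains a sided piece $\sidedpiece$ with $|\sidedpiece\cap\b u|>\frac{\bl}{5}$. The pieces of $\b u$ then need not be short. My first attempt is to use that a long piece of a blocker forces the other 2-cell to contain a long run of $\a^{\pm1}$. By Condition \ref{enu:tiny_condition-full}, such a run cannot lie in a remainder of that cell, so it must lie in a blocker of that cell. The direction argument of Lemma \ref{lem:no_blocker_intersection} should then make these two blockers disagree in orientation, giving a contradiction unless the cut configuration is the one from Lemma \ref{lem:leaf_edge_lemma_full}.

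Failing that, I would fall back on the part of $\r v$ inside $\internalBoundary C$. The cut edge of $\b u$ is adjacent to the escape vertex $u$. I would try to show that this forces $\r v$ to run for a long stretch inside $T_{\Dprime}$ before reaching $u$, so that $\r v$ alone again contributes $4$ pieces by Condition \ref{enu:small-condition-full}. The key supporting fact is Condition \ref{enu:tiny_condition-full}: the remainder $\r v$ and the blocker $\b u$ overlap in at most $\frac{\bl}{5}$. Making this bookkeeping of lengths work in the cut case is where I expect the real difficulty.
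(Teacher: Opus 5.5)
Your Case~1 and the full-blocker half of Case~2 match the paper's argument. The cut-blocker subcase, which you flag as the main obstacle, is left open, and it is exactly the step the proof needs. The missing idea is that this subcase is vacuous: $b_u$ is always a full blocker.

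Let $e_u$ be the edge of $T_{\tilde D}$ at $u$, i.e.\ the last edge of $\overline{\partial}C$, which $r_v$ traverses just before it escapes.
\begin{enumerate}
\item By Condition~\ref{enu:extremal-edge-distance-condition-full}, $b_u$ cannot be a sided path of $C$, since it would meet the remainder $r_v$ of the same cell. So $b_u$ runs along the 2-cell $C'$ on the other side of $e_u$.
\item Suppose $b_u$ were cut. By Case~\ref{case:cut_blocker} of Lemma~\ref{lem:leaf_edge_lemma_full}, the sided piece $(p,C')$ with $|p\cap b_u|>\frac{\ell_b}{5}$ must contain $e_u$. So $p$ is the piece of $T_{\tilde D}$ ending at the leaf $u$, i.e.\ the first piece of $b_u$.
\item But $r_v$ traverses this same piece from the $C$ side up to $u$. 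On the $C$ side, the stretch $p\cap b_u$ cannot instead be occupied by $b_v$. That would make $b_v$ and $b_u$ intersect, which Lemma~\ref{lem:no_blocker_intersection} (holding for $\tilde D$ by definition) and Lemma~\ref{lem:no_escape} exclude.
\item Hence $p\cap b_u\subseteq r_v\cap b_u$, and Condition~\ref{enu:tiny_condition-full} gives $|p\cap b_u|\le\frac{\ell_b}{5}$, a contradiction.
\end{enumerate}
So $b_u$ is full, and your full-blocker argument already finishes the proof.

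Neither of your two attempts reaches this.

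In the first, the step ``a long $e_*$-run on the other cell cannot lie in a remainder, so it must lie in a blocker of that cell'' is a false dichotomy. A stretch of a boundary cycle need not lie in any blocker or remainder of that cell, and Condition~\ref{enu:tiny_condition-full} constrains only remainders. What is true, and what you need, is that the particular stretch next to $u$ on the $C$ side lies in $r_v$. Once you see that, Condition~\ref{enu:tiny_condition-full} gives the contradiction immediately, with no orientation argument.

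In the second, you write down the right inequality, $|r_v\cap b_u|\le\frac{\ell_b}{5}$, but aim it at the wrong target. Nothing forces $r_v$ to have a long internal stretch before $u$; it can escape after a short one. The correct use of the inequality is to exclude a cut $b_u$, not to find extra pieces elsewhere.
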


\begin{proof}
If the remainder $\r v$ of $v$ does not escape $T_{\Dprime}$, then
since $\r v$ is fully internal and $\left|\r v\right|=\rl$, so by
Condition \ref{enu:small-condition-full} $\r v$ contains at least
$4$ sided pieces. If $\r v$ does escape $T_{\Dprime}$, let $u$
be the first vertex of $\r v\cap\boundary D$, which as explained
above is a leaf of $T_{\Dprime}$. The sided pieces of $\b u$ are
attributed to $v$, so we need to show that there are at least $4$.
Since $\r v$ intersects $\b u$, by Condition \ref{enu:tiny_condition-full}
this intersection is of length at most $\frac{\bl}{5}$. Since this
is the first piece of $\b u$, it follows that $\b u$ is a full blocker
(the first piece of cut blockers must be large). Therefore all the
sided pieces of $\b u$ are of length at most $\frac{\bl}{5}$, and
so, $\b u$ must contain at least $4$ sided pieces. Finally, by Corollary
\ref{cor:blockers_are_contained}, these sided pieces are all internal.
\end{proof}
\begin{lem}
\label{lem:no_double_count_full}No sided piece of $T_{\Dprime}$
is attributed to two different leaves of $T_{\Dprime}$.
\end{lem}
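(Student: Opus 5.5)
I would prove the lemma by classifying how a sided piece can be attributed to a leaf, and then ruling out every way of being attributed twice. A sided piece $\sidedpiece=\smallparens{p,C}$ of $T_{\Dprime}$ is attributed to a leaf $v$ in one of two ways:
\begin{enumerate}
\item[(a)] $\sidedpiece$ lies in the remainder $\r v$;
\item[(b)] $\sidedpiece$ lies in the blocker $\b u$ of a leaf $u$, where $u$ is the first vertex of $\r v\cap\boundary D$.
\end{enumerate}
In both cases the carrying sided path ($\r v$ or $\b u$) lies on $\boundary C$ for the 2-cell $C$ of $\sidedpiece$. I would therefore fix $C$ and show that distinct leaves cannot obtain the same sided piece of $\boundary C$. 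There are three cases: (a) against (a), (b) against (b), and (a) against (b).

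For (a) against (a), suppose $\sidedpiece\subseteq\r v\cap\r{v'}$ with $v\neq v'$. Both remainders lie on $\boundary C$ and continue from the blockers $\b v,\b{v'}$, which are also on $\boundary C$. By Condition \ref{enu:extremal-edge-distance-condition-full}, blockers and remainders of distinct extremal sides of $C$ are disjoint. So $\b v$ and $\b{v'}$ start at the same extremal side, and, since their remainders overlap, they go in the same direction. Then $\b v=\b{v'}$. Each blocker $\b v$ contains the edge of $T_{\Dprime}$ at $v$ and, by Lemma \ref{lem:no_escape}, passes through no other leaf. Hence $\b v$ determines $v$, and $v=v'$.

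For (b) against (b), suppose $\sidedpiece\subseteq\b u$ is attributed to both $v$ and $v'$. Then $\r v$ and $\r{v'}$ both reach the same leaf $u$ and hit $\boundary D$ there first. Both lie on $\boundary C$, since $u$ and the first edge of $\b u$ are on the 2-cell of $\b u$, which is $C$ because $\b u$ shares $\sidedpiece$. I would argue that $u$ is an endpoint of $\internalBoundary C$, which by Corollary \ref{cor:boundary_leaf} has only two endpoints. Running into $u$ from inside $\internalBoundary C$ fixes the direction of travel, so the remainders $\r v$ and $\r{v'}$ overlap at least on their last internal edge before $u$. The argument of the previous case then gives $v=v'$.

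For (a) against (b), suppose $\sidedpiece\subseteq\r v\cap\b u$ and $\sidedpiece$ is also attributed to $v'$ through $u$. Here I would invoke Condition \ref{enu:tiny_condition-full}: the intersection of a remainder with a blocker has length at most $\frac{\bl}{5}$. I would combine this with the fact, from the proof of Lemma \ref{lem:count_at_least_4_full}, that $\b u$ is then a full blocker contained in $T_{\Dprime}$ (Corollary \ref{cor:blockers_are_contained}). Since $\b u$ is internal and starts at the leaf $u$, it runs along $\internalBoundary C$ away from $u$. The remainder $\r{v'}$ runs toward $u$ and exits at $u$, so $\r{v'}$ and $\b u$ occupy opposite sides of $u$ along $\boundary C$. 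The remainder $\r v$ is a different remainder on the same 2-cell. By Condition \ref{enu:extremal-edge-distance-condition-full} it would have to come from the extremal side that starts $\b u$ or $\b{v'}$, and in both cases the orientation or the disjointness of blockers from remainders contradicts $\sidedpiece\subseteq\r v\cap\b u$.

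I expect the main obstacle to be this mixed case, together with the precise bookkeeping of directions along $\boundary C$ in the presence of cut blockers. The step I would check most carefully, and attack first through the geometry of $\internalBoundary C$ and Lemma \ref{lem:no_blocker_intersection}, is the claim that a remainder entering $u$ and the blocker $\b u$ lie on opposite sides of $u$.
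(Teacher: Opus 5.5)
Your case split follows the paper's, and your (a)-against-(a) argument is fine; it even spells out, via Lemma~\ref{lem:no_escape}, why $b_v$ determines $v$. The gap is in the (b) cases, exactly where you suspected. In (b) against (b) you assert that $r_v$ and $r_{v'}$ lie on $\partial C$, where $C$ is the 2-cell of $b_u$ (and of $\underline{p}$). This is false. Your justification, that both pass through $u\in\partial C$, would not imply it anyway. Since $u$ is a leaf and $r_v$ reaches $u$ from inside $T_{\tilde D}$, it enters $u$ along $e_u$, the unique edge of $T_{\tilde D}$ at $u$. But $b_u$ also contains $e_u$. So if $r_v$ were a sided path of $C$, a remainder and a blocker of the same 2-cell would share an edge, contradicting Condition~\ref{enu:extremal-edge-distance-condition-full}. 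Hence every remainder exiting at $u$ is a sided path of the 2-cell $C'$ on the other side of $e_u$, and it traverses $e_u$ toward $u$. That is what forces $r_v$ and $r_{v'}$ to overlap (on $C'$, not on $C$), after which your (a) argument gives $v=v'$. This is exactly the paper's remark that $r_v$ lies on the other side of $b_u$, and only one remainder can pass through $u$ on that side. The claim you planned to check, that $r_{v'}$ and $b_u$ lie on opposite sides of $u$ along $\partial C$, is the wrong picture. They lie on opposite sides of the edge $e_u$, i.e.\ on different 2-cells.

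Two further repairs are needed. First, your (b) case silently assumes both attributions go through the same $u$. You must also show that a sided piece lies in at most one blocker of the form $b_u$; otherwise $\underline{p}\subseteq b_u\cap b_{u'}$ with $u\neq u'$ could be attributed to two leaves. The argument runs as follows.
By Condition~\ref{enu:extremal-edge-distance-condition-full}, two distinct blockers of $C$ sharing an edge start at the same extremal side $s$ and run in opposite directions, so $\underline{p}$ is the single internal edge of $s$. By Lemma~\ref{lem:No_internal_extremal_sides}, $s$ is then the first or last edge of $\overline{\partial}C$. The blocker running outward traverses that edge toward the leaf $u_0$ at that end of $\overline{\partial}C$, so it is not $b_{u_0}$. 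It then passes through $u_0$, so by Lemma~\ref{lem:no_escape} it is not $b_{u'}$ for any other leaf $u'$. Second, the mixed case needs neither Condition~\ref{enu:tiny_condition-full} nor the full-blocker property. The inclusion $\underline{p}\subseteq r_v\cap b_u$ already says that a remainder and a blocker of the same 2-cell share an edge, which Condition~\ref{enu:extremal-edge-distance-condition-full} forbids outright. Condition~\ref{enu:tiny_condition-full} alone could not exclude it, since it only bounds the length of such an overlap.
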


\begin{proof}
By Condition \ref{enu:extremal-edge-distance-condition-full} no two
blockers or remainders which are sided paths of the same 2-cell $C$
intersect, except the two blockers starting at the same extremal side
in opposite directions intersecting at that extremal side. In particular,
the sided pieces of $\r v$ can only be attributed to $v$.

If a sided piece $\sidedpiece$ intersects two blockers, and both
of them start at the same extremal side $s$, then $s$ must be the
first or last edge of $\internalBoundary C$, by Lemma \ref{lem:No_internal_extremal_sides}.
Only one of them can be of the form $\b u$. Thus $\sidedpiece$ can
only be contained in one blocker of the form $\b u$.

Now assume that $\sidedpiece$ is counted towards two leaves. Note
that we never count sided pieces of $\b u$ towards $u$. We may only
count the pieces of $\b u$ towards a leaf $v$ if $\r v$ goes through
$u$. Thus $\r v$ must be on the other side of $\b u$, and there
can only be one remainder going through $u$ on that side. Thus $\sidedpiece$
can only be attributed to one leaf $v$ of $T_{\Dprime}$.
\end{proof}
We now prove Theorem \ref{thm:full-criterion}:
\begin{proof}[Proof of Theorem \ref{thm:full-criterion} ]
Assume that the conditions of Theorem \ref{thm:full-criterion} hold,
and that $\left(X,H,L\right)$ is not bislim, and we will derive a
contradiction. Let $\Dprime$ be as in Definition \ref{def:D_prime},
i.e., $\Dprime$ is a minimal reduced disk diagram without internal
2-cells such that $\inducedprimeGamma{\Dprime}$ has a cycle and $\Dprime$
satisfies Lemma \ref{lem:no_blocker_intersection}. As we have shown,
the lemmas of Section \ref{subsec:gamma'_and_D'} apply.

By Lemmas \ref{lem:count_at_least_4_full} and \ref{lem:no_double_count_full},
we have attributed at least $4$ sided pieces of $T_{\Dprime}$ to
every leaf $v$ of $T_{\Dprime}$. Thus
\[
\#\text{sided pieces}\largeparens{T_{\Dprime}}\ \ge\ 4\cdot\#\text{leaves}\largeparens{T_{\Dprime}},
\]
in contradiction with \ref{lem:counting_pieces}.
\end{proof}

\subsection{\label{subsec:Generic-words-analysis}Generic words have alternating
bislim structures}

\global\long\def\prob#1{\mathbb{P}\left[\,#1\,\right]}%

\global\long\def\pfcycle#1{\overline{E_{1}\smallparens{#1}}}%
\global\long\def\pscycle#1{E_{1}\smallparens{#1}}%

\global\long\def\pfsmall{\overline{E_{2}}}%
\global\long\def\pssmall{E_{2}}%

\global\long\def\pfblockers#1{\overline{E_{3}\smallparens{#1}}}%
\global\long\def\psblockers#1{E_{3}\smallparens{#1}}%

\global\long\def\pftiny#1{\overline{E_{4}\smallparens{#1}}}%
\global\long\def\pstiny#1{E_{4}\smallparens{#1}}%

\global\long\def\pfall#1{\overline{E_{1,3,4}\smallparens{#1}}}%
\global\long\def\psall#1{E_{1,3,4}\smallparens{#1}}%

\global\long\def\fam{\mathcal{A}}%

\global\long\def\pfany{\overline{E_{1,3,4}\smallparens{\fam}}}%
\global\long\def\psany{E_{1,3,4}\smallparens{\fam}}%

\global\long\def\lrtiny{\frac{\bl}{5}}%
\global\long\def\lrsmall{\frac{\rl}{5}}%
\global\long\def\len{\ell}%
\global\long\def\l{\ell}%

In this section we prove Theorems \ref{thm:generic_bislim_structures_one_relator_case}
and \ref{thm:generic_bislim_structures_generic_case} using Theorem
\ref{thm:generic_words_with_explicit_probability}. In fact, including
explicit parameters, we prove the following theorem:
\begin{thm}
\label{thm:generic_words_with_explicit_probability}For every choice
of nonnegative integers $r$, $k$ and $a_{1},\dots,a_{k}$ such that
$\sum_{i}a_{i}<r$, a random $r$-generator $k$-relator presentation
$\left\langle x_{1},\dots,x_{r}\vert w_{1},\dots,w_{k}\right\rangle $
with $\left|w_{i}\right|=\len$ admits an $\left(a_{1},\dots,a_{k}\right)$-alternating
bislim structure with probability at least $1-e^{-\Omega\smallparens{\len^{c}}}$
where $c=\frac{1}{2+20\cdot\sum_{i}a_{i}}$.
\end{thm}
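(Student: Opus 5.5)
The plan is to verify the hypotheses of the relaxed criterion, Theorem \ref{thm:full-criterion}, for a suitable choice of heightened structure $\heightened$ on the presentation complex $X$, where that structure is built from the random relators themselves.

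\textbf{Choice of letters.} Since $X$ has a single vertex, every edge is a loop. So we may take $\a$ to be the edge of the generator $x_{r}$, with its positive orientation. Since $\sum_i a_i\le r-1$, we can assign to the $j$-th high/low pair of extremal sides (over all relators) its own generator $y_j\in\{x_1,\dots,x_{r-1}\}$. Each $y_j$ then receives at most one high side and at most one low side, and $\a$ receives none. This gives Condition \ref{enu:no_trivial_cycles-full} (no trivial cycle) for free, and keeps $\a$ neutral.

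\textbf{Shape of extremal occurrences.} Condition \ref{enu:structure_of_blockers_condition-full} forces the neighbourhood of each extremal occurrence to have a prescribed form. The two blockers from a high occurrence of $y$ must read $\a$ forwards going away from $y$ in both directions, so we need a subword $x_r^{-(\bl-1)}\,y\,x_r^{\bl-1}$ of $w_i$. A low occurrence instead needs $x_r^{\bl-1}\,y\,x_r^{-(\bl-1)}$, with $y$ or $y^{-1}$ according to the orientation of the side.

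\textbf{Probabilistic step.} First, cut each $w_i$ into $N\approx\ell/B$ disjoint consecutive blocks of length $B\ge 2\bl+2\rl+1$. For each block, look at the event that its middle has one of the required patterns of length $2\bl-1$ and its two flanks of length $\rl$ contain no $x_r^{\pm\bl/5}$. These events are independent across blocks, and each has probability $q\gtrsim(2r-1)^{-2\bl}$. We then need, in every $w_i$, $2a_i$ successful blocks carrying the prescribed letters $y_j$ in alternating high/low cyclic order. For this it suffices to split $w_i$ into $2a_i$ consecutive arcs and ask for one success of the right type in each arc. A Chernoff bound makes the failure probability at most $\exp(-\Omega(\ell q/B))$. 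Taking every other block also guarantees the separation $2\rl+2\bl$ required in Condition \ref{enu:extremal-edge-distance-condition-full}.

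Second, Condition \ref{enu:small-condition-full} is handled by a union bound that does not depend on the blocks chosen. Consider any subword of length $\rl/5$ of any $w_j^{\pm1}$. The probability that it reappears in a nontrivial position of some $w_{j'}^{\pm1}$ is at most $O(k^2\ell^2(2r-1)^{-\rl/5})$. One subtlety is the periodic runs $x_r^{m}$ inside blockers; these are excluded because remainders contain no $x_r^{\bl/5}$ and $\bl/5<\rl/5$. Condition \ref{enu:tiny_condition-full} is built into the block events, apart from the boundary effect where a run could straddle a blocker/remainder junction. I would handle that by requiring the letter right after each $x_r^{\bl-1}$ to differ from $x_r^{\pm1}$, which is again built into the block pattern. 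The remaining (non-chosen) parts of the relators play no role in Conditions \ref{enu:tiny_condition-full} and \ref{enu:extremal-edge-distance-condition-full}, since those conditions only concern blockers and remainders.

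\textbf{Main obstacle.} The main difficulty is the quantitative balancing. The parameter $\bl$ must be small enough that $\ell q/B=\ell(2r-1)^{-2\bl\cdot(\text{const})}/B$ is a power of $\ell$; here the number $2\sum_i a_i$ of required patterns, each of cost about $2\bl$ plus the $\bl/5$-run exclusions, is what should produce the $20\sum_i a_i$ in the exponent. At the same time $\rl$ must be large enough that $(2r-1)^{-\rl/5}\ell^2$ is $e^{-\Omega(\ell^c)}$. My first attempt is to take $\rl\asymp\ell^{c}$ and $\bl\asymp\rl$ up to constants. I would then choose the pattern-finding scheme so that the success probability per attempt is only $e^{-O(\bl)}$ while there are about $\ell/\rl$ attempts, and optimize $c$ so that the two error terms match. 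I expect the difficulty to be exactly this matching, together with making sure the exclusion of $x_r^{\pm\bl/5}$ inside remainders is paid for only once per chosen block and not once per position. If the naive block scheme turns out too lossy, I would fall back to a sequential exposure argument: scan each $w_i$ and expose letters only as needed, so that each extremal window is found within $\ell/(2\sum a_i)$ letters with the stated probability.
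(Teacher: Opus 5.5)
The proposal has a genuine gap in the quantitative step. The regime you propose, $\ell_b\asymp\ell_r\asymp\ell^{c}$, fails outright, and no balancing of exponents rescues it. At a given position the window $x_r^{-(\ell_b-1)}\,y\,x_r^{\ell_b-1}$ has probability about $(2r-1)^{-2\ell_b}=e^{-\Theta(\ell^{c})}$. So its expected number of occurrences in a word of length $\ell$ is $\ell e^{-\Theta(\ell^{c})}\to 0$: generically no blocker window exists at all, and your bound $\exp(-\Omega(\ell q/B))$ tends to $1$. In that regime the unrestricted Condition 2 event you wrote down is also incompatible with the runs $x_r^{\ell_b-1}$ themselves, since they contain repeated subwords of length $\ell_r/5$.

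The missing observation is that the only real lower constraint on $\ell_b$ comes from Condition 4 of Theorem \ref{thm:full-criterion}. A union bound over the $O(\ell_r)$ starting points in the two flanks shows they avoid $x_r^{\pm\ell_b/5}$ with probability at least $1/2$ once $(2r-1)^{\ell_b/5}\ge C\ell_r$. So you take $\ell_b=\Theta(\log\ell_r)$, exactly as the paper does. Then a window costs only $(2r-1)^{-2\ell_b}=\Theta(\ell_r^{-10})$, a polynomial loss. Also $\ell_b-1<\ell_r/5$, so the global small-cancellation event is compatible with the blockers.

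With this correction your argument goes through, and it is a genuinely different and stronger route than the paper's. The paper fixes in advance $\Theta(\ell/\ell_r)$ evenly spaced candidate configurations $(H_j,L_j)$. It requires all $2\sum_i a_i$ windows of one configuration to succeed simultaneously, which costs $(2r-1)^{-4\ell_b\sum_i a_i}=\Theta(\ell_r^{-20\sum_i a_i})$ per configuration. That simultaneity is the sole source of the $20\sum_i a_i$ in $c$. So attributing that exponent to your own scheme is misplaced, and you do not need to reproduce it.

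Choosing one window per arc adaptively is legitimate. Conditions 1, 3, 4 and 5 depend only on the chosen places and their neighbourhoods, and Condition 2 is controlled by a union bound independent of the choice. Each arc has $\Omega(\ell/\ell_r)$ blocks, so your failure probability is $\exp(-\Omega(\ell\,\ell_r^{-11}))+O(\ell^{2}(2r-1)^{-\ell_r/5})$. Taking $\ell_r=\ell^{1/12}$ gives $e^{-\Omega(\ell^{1/12})}$. This implies the stated bound, since $c\le 1/22$ whenever $\sum_i a_i\ge 1$; for $\sum_i a_i=0$ the structure $H=L=\emptyset$ is trivially bislim.

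One minor point: the block events are not exactly independent for a uniform cyclically reduced word. Justify the geometric bound with your sequential-exposure variant, i.e.\ a lower bound $\Omega(q)$ on each block's success probability conditional on all earlier letters.
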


As a reminder, here a random $r$-generator $k$-relator presentation
is given by picking the relators uniformly out of all cyclically reduced
words of length $\len$ in the letters $x_{1},\dots,x_{r}$. The multiplicative
constant in $\Omega\smallparens{\len^{c}}$, and all multiplicative
constant in Big-$O$ and Big-$\Omega$ notations in the following,
are dependent only on $r$ and $k$.

We prove this by showing that for a random presentation complex $X$,
with high probability, the relaxed criterion (Theorem \ref{thm:full-criterion})
applies to at least one $\left(a_{1},\dots,a_{k}\right)$-alternating
heightened structure $\heightened$ on $X$.

Let $X$ be a presentation complex of a presentation $\left\langle x_{1},\dots,x_{r}\vert w_{1},\dots,w_{k}\right\rangle $.
Recall that the skeleton $\skeleton X$ is a bouquet of $r$ circles,
each of them corresponding to the $r$ generators $x_{1},\dots,x_{r}$.
Then $X$ is constructed as $\skeleton X$ with the addition of a
2-cell $C_{i}$ for every relator $w_{i}$, where $\boundary C_{i}$
is glued onto the closed path in $X$ that reads $w_{i}$. Therefore
every side $\s\in\boundary C_{i}$ of $X$ can be described as a pair
$\left(i,j\right)$ where $\s$ is the $j$'th side in $\boundary C_{i}$,
and its corresponding edge corresponds to $w_{i,j}$, the $j$'th
letter in $w_{i}$. We formally define a \emph{place} to be a pair
$\left(i,j\right)$ with $1\le i\le k$ and $1\le j\le\len$. Given
the relators $w_{1},\dots,w_{k}$, the letter at a given place $\left(i,j\right)$
is $w_{i,j}\in\left\{ x_{1},x_{1}^{-1},x_{2},x_{2}^{-1},\dots,x_{r}^{-1}\right\} $.

Fix two sets of places $H$ and $L$. Then, given random relators
$w_{1},\dots,w_{k}$ of length $\len$, we can form the presentation
complex $X$ of $\left\langle x_{1},\dots,x_{r}\vert w_{1},\dots,w_{k}\right\rangle $
and the heightened structure $\heightened$ on $X$. Effectively,
this allows us to fix the heightened complex independently of the
complex $X$, and then choose $X$ randomly. We thus ask, given fixed
sets of positions $H$ and $L$, what is the probability that for
a random complex $X$, $\heightened$ satisfies the relaxed criterion?
We now answer this question.

Note that whether $\heightened$ is $\left(a_{1},\dots,a_{k}\right)$-alternating
depends only on $H$ and $L$ and not on $w_{1},\dots,w_{k}$ or $X$.
Thus in the following we assume that $H$and $L$ are $\left(a_{1},\dots,a_{k}\right)$-alternating,
and in particular $\left|H\right|=\left|L\right|=\sum_{i}a_{i}<r$.

Recall the definition of blockers and remainders. In this setting,
blockers and remainders are specific cyclic subwords of the relators.\footnote{That is, subwords of a cyclic rotation of $w_{i}$ or $w_{i}^{-1}$
for some $i$.} Specifically, given a position $\left(i,j\right)\in H\cup L$, its
blockers are the two subwords starting at that position of length
$\bl$, i.e., the subwords $w_{i,j}w_{i,j+1}\dots w_{i,j+\bl-1}$
and $\left(w_{i,j-\bl+1}\dots w_{i,j}\right)^{-1}$, and the remainders
are the two subwords of length $\rl$ following them, i.e., $w_{i,j+\bl}\dots w_{i,j+\bl+\rl-1}$
and $\left(w_{i,j-\bl-\rl+1}\dots w_{i,j-\bl}\right)^{-1}$. 

Now we translate the conditions of the relaxed criterion (Theorem
\ref{thm:full-criterion}) into the language of this setting. First
of all, the criterion requires a choice of parameters $\bl$ and $\rl$.
For now, we will leave them as global parameters, and fix them as
specific functions of $\len$ by the end of the proof. Additionally,
there is required to be a specific chosen neutral letter $\a$. We
will just choose it to be the letter $x_{1}$, though we will still
name it $\a$ to express its unique status. In our terms, it means
that $w_{i,j}\neq\a^{\pm}$ for all $\left(i,j\right)\in H\cup L$
where $\a=x_{1}$. The rest of the conditions of the relaxed criterion
are as follows:

\begin{enumerate}
\item \label{enu:no_trivial_cycles-probability}There is no pair of distinct
positions $\left(i,j\right),\left(i',h'\right)\in H$ with $w_{i,j}=w_{i,j'}^{\pm}$.
Similarly there is no such pair with $\left(i,j\right),\left(i',j'\right)\in L$.
\item \label{enu:small-condition-probability}Every nontrivial common subword
of a remainder and any subword of any relator is of length at most
$\textbackslash lrsmall$.
\item \label{enu:structure_of_blockers_condition-probability}Every blocker
in $X$ is of the form $x\a^{\bl-1}$ if it starts in a position in
$H$, or $x\a^{-\bl+1}$ if it starts in a position in $L$.
\item \label{enu:tiny_condition-probability}Remainders do not contain $\a^{\pm\frac{\bl}{5}}$
as a subword.
\item \label{enu:extremal-edge-distance-condition-probability}Places in
$H\cup L$ in the same relator are separated by a distance of at least
$2\rl+2\bl$. I.e., boundaries and remainders in the same relator
are disjoint.
\end{enumerate}
First of all, Condition \ref{enu:extremal-edge-distance-condition-probability}
depends only on the choice of $H$ and $L$ themselves, independent
of the relators $w_{1},\dots,w_{k}$. In the following we will only
choose pairs $\smallparens{H,L}$ which satisfy Condition \ref{enu:extremal-edge-distance-condition-probability}.

Condition \ref{enu:small-condition-probability} can be strengthened
to require that there is no nontrivial common subword of relators
longer than $\lrsmall$ anywhere in the presentation. This stronger
condition is independent of the choice of $H$ and $L$. Denote the
event that this strengthened condition holds by $\mathemph{\pssmall}$.
It holds with high probability, as shown in small cancellation theory.
\begin{lem}
$\prob{\pfsmall}\le O\largeparens{\len^{2}\smallparens{2r-1}^{-\lrsmall}}$
\end{lem}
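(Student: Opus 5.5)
We bound $\prob{\pfsmall}$ by a union bound over all pairs of positions. The event $\pfsmall$ holds exactly when some nontrivial common subword of length $m=\lceil\lrsmall\rceil$ occurs twice in the presentation. Such an occurrence is a pair of distinct cyclic subword positions $(i,j,\epsilon)$ and $(i',j',\epsilon')$, where $\epsilon,\epsilon'\in\{\pm1\}$ record whether we read $w_i$ or $w_i^{-1}$, whose length-$m$ subwords coincide. There are at most $(2k\len)^2=O(\len^2)$ such pairs, so it suffices to show that each fixed nontrivial pair coincides with probability $O\largeparens{\smallparens{2r-1}^{-m}}$.

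For a fixed pair we reveal the letters one at a time. The uniform cyclically reduced word of length $\len$ is, up to a bounded multiplicative factor coming from the cyclic-reduction constraint at the wraparound, a non-backtracking random walk. At each step the next letter is uniform among $2r-1$ options given the previous letter. I would first handle the case where the two windows are disjoint (including when they lie in different relators). Condition on the letters of the first window, then reveal the second window letter by letter. At each step the required letter is a single specified value, hit with probability at most $\frac{1}{2r-1}$, up to constant factors at window boundaries. This gives $O\largeparens{(2r-1)^{-m}}$.

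The main obstacle is overlapping windows, in which a subword coincides with a shift of itself, or with its own inverse in the same relator. In that case I would reveal the letters in the order of the union of the two windows. Since the pair is nontrivial, there is a positive shift $t$ or a reflection relating the windows. For a shift, each letter of the later window beyond the overlap is forced to equal an earlier, already revealed letter. For a reflection, the forced letter is the inverse of an earlier letter, and the overlap must have length at least one letter away from the center, because $xx^{-1}$ is excluded by reducedness. Once the first of the forced letters is fixed, every subsequent newly revealed letter is again determined, so at least roughly $m$ of them (or $m/2$ in the self-inverse case) each cost a factor $\frac{1}{2r-1}$.

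The self-inverse case needs one more check. A subword equal to its own inverse, such as a palindrome-like pattern $u u^{-1}$, is impossible in a reduced word, since it would force backtracking at the center. So the reflected overlap in fact forces about $m$ constrained letters rather than $m/2$, after counting both halves. If this fails and the count really gives only $m/2$, the bound is still $O(\len^{2}(2r-1)^{-m/2})$. To recover the stated exponent, I would fall back on the standard small-cancellation counting argument from the literature, e.g.\ Ollivier's estimates, which the text alludes to with ``as shown in small cancellation theory''.

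Summing over the $O(\len^2)$ pairs then gives $\prob{\pfsmall}\le O\largeparens{\len^{2}\smallparens{2r-1}^{-\lrsmall}}$.
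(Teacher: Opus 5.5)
Your proposal is correct and follows the paper's argument. Both use a union bound over the $O(\ell^{2})$ pairs of locations, with each fixed pair of windows of length $\ell_r/5$ agreeing with probability $O\bigl((2r-1)^{-\ell_r/5}\bigr)$.

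Your treatment of overlapping windows goes beyond the paper, which does not address them. For an overlapping reflected pair, the overlap region would be a subword equal to its own inverse. That forces either a backtrack $xx^{-1}$ or a letter equal to its own inverse at the center, so this case has probability zero, and neither the ``$m/2$'' hedge nor the appeal to Ollivier is needed.
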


\begin{proof}
There are at most $2{k\len \choose 2}$ possibilities for two locations
of two appearances of a common subword. For each specific pair of
locations, the probability that the two subwords of length $\frac{\rl}{5}$
in these locations will be the same is at most $\smallparens{2r-1}^{-\lrsmall}$.
Thus by the union bound,
\[
\prob{\pfsmall}\le O\largeparens{\len^{2}\smallparens{2r-1}^{-\lrsmall}}.\qedhere
\]
\end{proof}
Denote by $\mathemph{\pscycle{H,L}}$ the event that the letter $\a=x_{1}$,
is indeed neutral, i.e., $w_{i,j}\neq\a^{\pm}$ for all $\left(i,j\right)\in H\cup L$,
and additionally, that Condition \ref{enu:no_trivial_cycles-probability}
holds. This event means that the letters $w_{i,j}$ for $\left(i,j\right)\in H$
are distinct (ignoring signs), and distinct from $\a$, and similarly
the same for $L$. Since the number of possible letters is $r$ and
$\left|H\right|=\left|L\right|=\sum_{i}a_{i}<r$, this is possible,
so $\mathemph{\pscycle{H,L}}$ happens with positive probability.
Moreover, the event depends only on less than $2r$ letters $w_{i,j}$
for places $\left(i,j\right)\in H\cup L$, and so $\prob{\pscycle{H,L}}$
depends only on $r$, i.e., holds with a positive constant probability.

In the following, it will be convenient to assume that for any ``far''
events are approximately independent. Suppose that an event $E$ depends
only on the subword $w_{a}w_{a+1}\dots w_{b}$ and an event $E'$
depends only on the subword $w_{a'}w_{a'+1}\dots w_{b'}$, for a random
reduced word $w$. Let $\mu_{\left[x,y\right]}$ be the distribution
of $w_{x}\dots w_{y}$. By bounding the total variation distance between
the distribution of $\left(w_{a}\dots w_{b},w_{a'}\dots w_{b'}\right)$
and the product distribution $\mu_{\left[a,b\right]}\times\mu_{\left[a',b'\right]}$
by $\varepsilon$, we can conclude that $\left|\prob{E\cap E'}-\prob E\prob{E'}\right|\le\varepsilon$.
By a simple computation, this total variation distance is indeed bounded
by $e^{-\Omega\smallparens d}$ where $d$ is the distance between
$\left[a,b\right]$ and $\left[a',b'\right]$. Since in the following
these error terms end up being much smaller than our actual bounds,
we choose to treat far events as just approximately independent events.

It remains to analyze the probabilities of Conditions \ref{enu:structure_of_blockers_condition-probability}
and \ref{enu:tiny_condition-probability}. Denote by $\mathemph{\psblockers{H,L}}$
and $\mathemph{\pstiny{H,L}}$ the events that the respective conditions
hold.

\begin{lem}
Given a pair $\left(H,L\right)$, 
\[
\prob{\psblockers{H,L}\vert\pscycle{H,L}}\ge\Omega\left(\smallparens{2r-1}^{-4\bl\cdot\sum_{i}a_{i}}\right).
\]
\end{lem}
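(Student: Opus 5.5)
The event $\psblockers{H,L}$ prescribes a fixed set of letters. For each extremal place $(i,j)\in H\cup L$, the two blockers starting there (one forwards, one backwards) must read $w_{i,j}\a^{\pm(\bl-1)}$. Concretely, the $\bl-1$ letters on each side of $w_{i,j}$ must all be $\a$ or all be $\a^{-1}$, with the exponent sign fixed by whether the place is high or low. So the plan is to count the constrained letters and bound the probability that a uniformly random cyclically reduced word of length $\len$ has these prescribed letters at these positions, conditioned on the letters at the extremal places themselves.

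First I will note that by Condition \ref{enu:extremal-edge-distance-condition-probability} the windows $[j-\bl+1,j+\bl-1]$ around distinct extremal places of the same relator are disjoint. Different relators are independent. Hence the constraints involve exactly $2(\bl-1)$ prescribed letters per extremal place, for a total of $2(\bl-1)\cdot|H\cup L| = 4(\bl-1)\sum_i a_i$ letters, all lying in disjoint windows.

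Next I will check compatibility with reducedness and with the conditioning on $\pscycle{H,L}$. On $\pscycle{H,L}$ the letter $w_{i,j}$ is not $\a^{\pm1}$, so the words $\a^{-(\bl-1)}w_{i,j}\a^{\bl-1}$ (read appropriately around the extremal place) are reduced. The prescribed pattern is therefore realizable whatever value $w_{i,j}$ takes.

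For the probability estimate I will use the standard fact that a uniformly random cyclically reduced word of length $\len$ is, up to $e^{-\Omega(\len)}$ total-variation error away from the wraparound, a non-backtracking random walk. Each letter is uniform over the $2r-1$ letters that do not cancel the previous one. Conditioned on any admissible letter at the start of a window, the probability of a prescribed reduced continuation of length $m$ is $(2r-1)^{-m}$. Independently of earlier windows, it is also at least a constant times $(2r-1)^{-m}$ to enter a window with a prescribed letter compatible with the previous free letter. This can be arranged with an extra letter of slack, or by invoking the approximate-independence remark for far events, since the windows are separated by at least $2\rl$.

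Multiplying over the $2\sum_i a_i$ windows and the conditioning on the center letters gives $\prob{\psblockers{H,L}\vert\pscycle{H,L}} \ge c^{O(\sum a_i)}(2r-1)^{-4\bl\sum_i a_i}$, where $c>0$ depends only on $r$. Since $\sum_i a_i<r$ is bounded, this is $\Omega\bigl((2r-1)^{-4\bl\sum_i a_i}\bigr)$.

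I expect the main obstacle to be the boundary effect of cyclic reducedness. The first and last letters interact, and an extremal place may sit near the wraparound position. I will handle this by cyclically rotating so that no window crosses the seam, which is harmless since the uniform distribution on cyclically reduced words is rotation-invariant. Alternatively, I will absorb the effect into the $e^{-\Omega(d)}$ total-variation bound stated earlier; that error is negligible against the main term only if $\bl$ is at most a small multiple of $\len$, which will be ensured by the eventual choice of parameters.
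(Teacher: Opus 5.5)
Your proposal is correct and follows essentially the same route as the paper. Condition 5 makes the windows around the places of $H\cup L$ disjoint, and $E_1(H,L)$ guarantees $w_{i,j}\neq e_*^{\pm1}$, so the forced pattern $e_*^{\mp(\ell_b-1)}w_{i,j}e_*^{\pm(\ell_b-1)}$ is reduced. Each place then contributes a factor of order $(2r-1)^{-2\ell_b}$, and these factors are multiplied over the $2\sum_i a_i$ places using approximate independence of far events. Your explicit handling of the wraparound seam and of the conditioning on the center letters spells out details that the paper simply absorbs into its approximate-independence remark.
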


\begin{proof}
Since $\left(H,L\right)$ satisfy Condition \ref{enu:extremal-edge-distance-condition-probability},
the places in $H\cup L$ are far from each other, which makes the
behavior in each of their neighborhoods independent up to an error
on the order of $e^{-O\smallparens{\rl}}$, which we can ignore, as
discussed above. Around each place $\left(i,j\right)$ in $H\cup L$,
the two blockers of this extremal place continue for a distance of
$\bl$ in either direction, and Condition \ref{enu:structure_of_blockers_condition-probability}
requires each of these letters to be $\a$ in a specific direction.
Note that if $w_{i,j}$ itself was either $\a$ or $\a^{-1}$, this
would be impossible, as $w_{i}$ would not be a reduced word. However,
since Condition \ref{enu:no_trivial_cycles-probability} holds, $w_{i,j}\neq\a,\a^{-1}$.
Thus the probability that Condition \ref{enu:structure_of_blockers_condition-probability}
holds around $\left(i,j\right)$ is $\Omega\largeparens{\smallparens{2r-1}^{-2\bl}}$.
Since there are $2\sum_{i}a_{i}$ places in $H\cup L$, the result
follows.
\end{proof}
\begin{lem}
Given a pair $\left(H,L\right)$, 
\[
\prob{\pstiny{H,L}\vert\psblockers{H,L}\cap\pscycle{H,L}}\ge1-O\largeparens{\rl\cdot\smallparens{2r-1}^{-\lrtiny}}.
\]
\end{lem}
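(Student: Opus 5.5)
We bound the complement by a union bound over all possible occurrences of $\a^{\pm\frac{\bl}{5}}$ inside a remainder. There are $2\cdot2\sum_{i}a_{i}=O\smallparens 1$ remainders (two per extremal place, and $\sum_{i}a_{i}<r$ is a constant). Each remainder has length $\rl$, hence fewer than $\rl$ starting positions for a subword of length $\frac{\bl}{5}$. For each such position and each sign, we show that the conditional probability that the subword equals $\a^{\frac{\bl}{5}}$ (respectively $\a^{-\frac{\bl}{5}}$) is $O\largeparens{\smallparens{2r-1}^{-\frac{\bl}{5}}}$. Summing over $O\smallparens{\rl}$ events then gives
\[
\prob{\pftiny{H,L}\vert\psblockers{H,L}\cap\pscycle{H,L}}\ \le\ O\largeparens{\rl\cdot\smallparens{2r-1}^{-\lrtiny}},
\]
which is the claim.

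The single-event estimate rests on conditional independence. The conditioning event $\psblockers{H,L}\cap\pscycle{H,L}$ depends only on the letters at the places of $H\cup L$ and on the $\bl-1$ letters of each blocker on both sides of those places. Since $\smallparens{H,L}$ satisfies Condition \ref{enu:extremal-edge-distance-condition-probability}, the remainders are disjoint from all blockers, and any remainder letter far from them is approximately independent of the conditioning, in the sense discussed above. For a random cyclically reduced word, the law of a letter given everything before it is uniform over the $2r-1$ letters that are not the inverse of its predecessor, up to exponentially small corrections. Revealing the letters of a fixed window one at a time, each letter equals the prescribed $\a^{\pm1}$ with probability at most $\frac{1}{2r-1}\smallparens{1+o\smallparens 1}$. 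The window therefore matches with probability $O\largeparens{\smallparens{2r-1}^{-\frac{\bl}{5}}}$, and for large $\bl$ the multiplicative error $\smallparens{1+o\smallparens 1}^{\bl/5}$ is absorbed.

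The main obstacle is the window adjacent to the end of a blocker. There the conditioning forces the preceding letter to be $\a^{\pm1}$, and the first letters of the remainder are strongly correlated with the blocker. The fix is to treat the letters of the window as a Markov chain, the non-backtracking walk, started from whatever state the conditioning imposes. Conditioning on the blocker only fixes the starting state of this chain, and from any starting state each subsequent step still has probability at most $\frac{1}{2r-1}$ of producing the prescribed letter. This holds because a reduced word never has a letter followed by its inverse, so the bound is never worse than uniform. Hence the bound holds even for windows starting right after the blocker, and the cyclic wraparound at the end of the relator changes only exponentially small terms.
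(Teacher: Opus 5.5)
Your proposal is correct and is essentially the paper's argument. Both use a union bound over the $4\sum_{i}a_{i}\cdot\rl=O(\rl)$ possible positions of $\a^{\pm\frac{\bl}{5}}$ inside the remainders, and each position matches with conditional probability at most $(2r-1)^{-\frac{\bl}{5}}$ because the conditioning only involves the extremal and blocker letters, which are disjoint from the remainders. Your non-backtracking Markov chain, revealed outward from the blocker, makes explicit the paper's remark that the remainders stay ``largely uniform'' after conditioning. The one point to state precisely is that the per-letter correction is exponentially small in $\rl$, since every remainder letter is at distance about $\rl$ from the next conditioned letter; a bare $o(1)$ would not survive the $\frac{\bl}{5}$-fold product, but this exponential smallness does.
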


\begin{proof}
Condition \ref{enu:extremal-edge-distance-condition-probability}
requires that the remainders do not contain $\a^{\pm\lrtiny}$ as
a subword. We note that the events $\psblockers{H,L}$ and $\pscycle{H,L}$
depend only on the values of the letters on the blockers, which are
disjoint from the remainders. Thus conditioning on $\psblockers{H,L}$
and $\pscycle{H,L}$ still leaves the distribution of the adjacent
remainders largely uniform.

As before, there are only $4\sum_{i}a_{i}\cdot\rl=O\smallparens{\rl}$
possible places for $\a^{\pm\lrtiny}$ to appear as a subword of a
remainder, and in each specific place, the probability of this subword
to appear is at most $\smallparens{2r-1}^{-{\tiny \lrtiny}}$. By
the union bound, we obtain that $\prob{\pftiny{H,L}\vert\psblockers{H,L}\cap\pscycle{H,L}}\le O\largeparens{\rl\cdot\smallparens{2r-1}^{-\lrtiny}}$.
\end{proof}
In light of this lemma, we will set $\bl$ such that $\smallparens{2r-1}^{-\lrtiny}=\frac{C}{\rl}$
with a small enough multiplicative constant $C$, so that $\prob{\pstiny{H,L}\vert\psblockers{H,L}\cap\pscycle{H,L}}\ge\frac{1}{2}$
for large enough $\rl$. Thus $\bl=\Theta\smallparens{\log\rl}$.

Denote by $\psall{H,L}$ the event that conditions \ref{enu:no_trivial_cycles-probability},
\ref{enu:structure_of_blockers_condition-probability} and \ref{enu:tiny_condition-probability}
all hold with respect to $H$ and $L$. Taking the last two lemmas,
together with the fact that $\prob{\pscycle{H,L}}$ is a positive
constant, we obtain the following:
\begin{lem}
Given a pair $\left(H,L\right)$,
\[
\prob{\psall{H,L}}\ge\Omega\largeparens{\smallparens{2r-1}^{-4\bl\cdot\sum_{i}a_{i}}}=\Omega\largeparens{\rl^{-20\cdot\sum_{i}a_{i}}}.
\]
\end{lem}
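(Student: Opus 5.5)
The bound follows by chaining the three events $\pscycle{H,L}$, $\psblockers{H,L}$ and $\pstiny{H,L}$:
\[
\prob{\psall{H,L}}=\prob{\pscycle{H,L}}\cdot\prob{\psblockers{H,L}\vert\pscycle{H,L}}\cdot\prob{\pstiny{H,L}\vert\psblockers{H,L}\cap\pscycle{H,L}}.
\]
I will bound each factor from below using the preceding discussion and the two preceding lemmas, and then substitute the chosen relation between $\bl$ and $\rl$.

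For the first factor, $\pscycle{H,L}$ depends only on the fewer than $2r$ letters at the places of $H\cup L$. These places are pairwise far apart by Condition \ref{enu:extremal-edge-distance-condition-probability}, so up to an $e^{-\Omega\smallparens{\rl}}$ error they behave as independent near-uniform letters. Since $\left|H\right|=\left|L\right|<r$, there is an assignment of distinct non-$\a$ letters to $H$, and likewise to $L$. Hence this probability is bounded below by a positive constant depending only on $r$.

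The second factor is at least $\Omega\largeparens{\smallparens{2r-1}^{-4\bl\sum_{i}a_{i}}}$ by the first of the two preceding lemmas. The third factor is at least $1-O\largeparens{\rl\smallparens{2r-1}^{-\lrtiny}}$ by the second lemma. With $\bl$ chosen so that $\smallparens{2r-1}^{-\lrtiny}=C/\rl$ for a small constant $C$, this is at least $\frac{1}{2}$ once $\rl$ is large. Multiplying the three factors gives the first stated bound, after absorbing the constants and the $e^{-\Omega\smallparens{\rl}}$ approximate-independence errors, which are negligible next to the polynomial main terms.

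For the equality with $\Omega\largeparens{\rl^{-20\sum_{i}a_{i}}}$, I substitute the choice of $\bl$. From $\smallparens{2r-1}^{\bl/5}=\rl/C$ we get $\smallparens{2r-1}^{\bl}=\smallparens{\rl/C}^{5}$. Therefore $\smallparens{2r-1}^{-4\bl\sum_{i}a_{i}}=\smallparens{C/\rl}^{20\sum_{i}a_{i}}$, which is $\Omega\largeparens{\rl^{-20\sum_{i}a_{i}}}$ because $C$ and $\sum_{i}a_{i}$ are constants.

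The main care point is integrality: $\bl$ and $\bl/5$ must be integers, so $\bl$ cannot be chosen to satisfy the equation exactly. I would take $\bl$ to be the multiple of $5$ with $\smallparens{2r-1}^{\bl/5}$ closest to $\rl/C$ from above. This keeps the tiny-condition bound at least $\frac{1}{2}$ and changes $\smallparens{2r-1}^{\bl}$ by at most a constant factor $\smallparens{2r-1}^{5}$, which the $\Omega$ absorbs. The remaining point is to check that conditioning on $\pscycle{H,L}$ does not bias the blocker letters in a way that breaks the constant-factor estimates. This is covered by the reducedness remark in the preceding lemma: the letter at an extremal place is not $\a^{\pm}$, so the run of $\a$'s on either side of it is admissible in a reduced word.
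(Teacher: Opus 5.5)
Your proposal is correct and is essentially the paper's argument. You bound $\Pr[E_{1,3,4}(H,L)]$ by the product of the constant lower bound on $\Pr[E_1(H,L)]$, the blocker lemma, and the tiny-condition lemma (made at least $\tfrac12$ by the choice $(2r-1)^{-\ell_b/5}=C/\ell_r$), and then substitute $(2r-1)^{\ell_b}=\Theta(\ell_r^{5})$. Your handling of the integrality of $\ell_b/5$ is an extra detail the paper leaves implicit, and it is correct.
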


Applying this bound to a specific pair of $H$ and $L$ is definitely
not enough to prove Theorem \ref{thm:generic_words_with_explicit_probability}.
Instead, we consider a family $\fam$ of pairs $\left\{ \largeparens{H_{j},L_{j}}\right\} _{j}$,
and consider the probability that $\psall{H,L}$ will hold for at
least some $\left(H,L\right)\in\fam$. In order to analyze this, we
require that $\left\{ \psall{H,L}\right\} _{\left(H,L\right)\in\fam}$
forms a family of approximately independent events. We will ensure
this by requiring all places within $\bigcup_{\smallparens{H,L}\in\fam}H\cup L$
to be far apart from each other, specifically with distance of $2\bl+3\rl$.
Overall, we require $\fam$ to satisfy the following:
\begin{itemize}
\item $H$ and $L$ must be $\smallparens{a_{1},\dots,a_{k}}$-alternating
for all $\smallparens{H,L}\in\fam$. In particular, for all $i$,
$\left|H\cap w_{i}\right|=\left|L\cap w_{i}\right|=a_{i}$ and $H\cap w_{i}$
and $L\cap w_{i}$ alternate.
\item For all $\left(H,L\right)\in\fam$, Condition \ref{enu:extremal-edge-distance-condition-probability}
must be satisfied, so the places in $H$ and $L$ must be far enough
apart from each other.
\item All places in $\bigcup_{\smallparens{H,L}\in\fam}H\cup L$ are of
distance at least $2\bl+3\rl$ from each other.
\end{itemize}
To construct such a family $\fam$, first pick $\mathcal{B}$ to be
a set containing $\frac{\len}{2\bl+3\rl}$ places in each relator
$w_{i}$, spaced evenly around $w_{i}$ with distances of at least
$2\bl+3\rl$. Then divide $\mathcal{B}$ into sets $\mathcal{B}_{1},\mathcal{B}_{2},\dots$
of size $2\sum_{i}a_{i}$ each, each containing $2a_{i}$ places in
each relator $w_{i}$. This way, we can get at least $n$ sets $\mathcal{B}_{1},\dots,\mathcal{B}_{n}$
with $n=\frac{\len}{2r\left(2\bl+4\rl\right)}$.

Now divide each set $\mathcal{B}_{j}$ of $2\sum_{i}a_{i}$ places
in an alternating manner into two sets $H_{j}$ and $L_{j}$, so $H_{j}$
and $L_{j}$ are $\left(a_{1},\dots,a_{k}\right)$-alternating. Finally
$\fam=\left\{ \left(H_{j},L_{j}\right)\right\} _{j=1}^{n}$. This
way we get a family $\fam$ of $n=\Omega\largeparens{\frac{\len}{\rl}}$
pairs satisfying all requirements above.

Denote by $\psany$ the event that some pair $\smallparens{H,L}\in\fam$
satisfies Conditions \ref{enu:no_trivial_cycles-probability}, \ref{enu:structure_of_blockers_condition-probability}
and \ref{enu:tiny_condition-probability}. Thus
\begin{eqnarray*}
\prob{\pfany} & \approx & \prod_{\smallparens{H,L}\in\fam}\prob{\pfall{H,L}}\\
 & = & \largeparens{1-\prob{\psall{H,L}}}^{\Omega\largeparens{\frac{\len}{\rl}}}\\
 & \le & e^{-\prob{\psall{H,L}}\cdot\Omega\largeparens{\frac{\len}{\rl}}}\\
 & = & e^{-\Omega\largeparens{\len\cdot\rl^{-20\cdot\Sigma_{i}a_{i}-1}}.}
\end{eqnarray*}
Where the first equality is not exact, since the events are approximately
independent. We can ignore the error term, as discussed above. This
allows us to conclude:
\begin{proof}[Proof of Theorem \ref{thm:generic_words_with_explicit_probability}]
If $X$ does not admit an $\smallparens{a_{1},\dots,a_{k}}$-alternating
bislim structure, then either $\pfsmall$ or $\pfany$ holds. This
happens with probability at most
\[
\len^{2}e^{-\Omega\smallparens{\rl}}+e^{-\Omega\largeparens{\len\cdot\rl^{-20\cdot\Sigma_{i}a_{i}-1}}}.
\]
Equating the two exponents, we set $\rl=\len^{c}$ where $c=\frac{1}{2+20\cdot\sum_{i}a_{i}}$,
obtaining a probability of $e^{-\Omega\largeparens{\len^{c}}}$.
\end{proof}

\section{\label{sec:Applications-to-word-measures}Applications to word measures}

Let $w\in F_{r}$ be a word in the free group with $r$ generators,
and let $\psi_{n}$ be a stable character of $S_{n}$ or $\U n$,
as defined in Section \ref{subsec:intro:Word-measures}. The dimension
$\dim\psi_{n}$ is always a polynomial in $n$,\footnote{For $S_{n}$ this holds by the hook length formula, and for $\U n$
this holds by \cite[Equation~3.10]{Puder2023}.} and we denote by $\mathemph{\deg\psi}$ the degree of this polynomial.

\subsection{\label{subsec:Background}Background}

\global\long\def\summand{\text{P}}%

Recently \cite{cassidy,MageeDeLaSalle2024} have proven striking new
results on word measures in $S_{n}$ and $\U n$, namely that $\mathbb{E}_{w}\left[\psi_{n}\right]=O\largeparens{\frac{1}{\dim\psi_{n}}}$
in the case of $S_{n}$ (\cite[Theorem~1.6]{cassidy}) and $\mathbb{E}_{w}\left[\psi_{n}\right]=O\largeparens{\smallparens{\dim\psi_{n}}^{-\frac{1}{6}}}$
in the case of $\U n$ (\cite[Corollary~11.3]{MageeDeLaSalle2024}),
assuming that $w$ is not a proper power. Both papers \cite{cassidy,MageeDeLaSalle2024}
are based on the same approach. In each paper, the word measure $\expectation{\psi_{n}}$
is first analyzed by clever representation theoretic arguments, and
is decomposed into a finite sum 
\begin{equation}
\expectation{\psi_{n}}=\sum_{H\in\mathcal{A}}\summand_{H}\label{eq:decomposition_into_parts}
\end{equation}
 of many different parts $\summand_{H}$, where each part $\summand_{H}$
is a rational function in $n$. Each part $\summand_{H}$ is indexed
by structured data $H$, which is a certain kind of labeled graph
in the case of $S_{n}$, and certain combinatorial surfaces in the
case of $\U n$. The order of magnitude of $\summand_{H}$ can be
bounded by combinatorial properties of $H$. In the case of $S_{n}$
\cite{cassidy} shows that $\summand_{H}$ can be bounded by applying
a new slight generalization of the $w$-cycle theorem on $H$ to obtain
that $\summand_{H}=O\largeparens{\frac{1}{\dim\psi_{n}}}$, proving
Theorem \cite[Theorem~1.6]{cassidy}. In the case of $\U n$, a crude
version of the $w$-cycle theorem is applied, to obtain that $\summand_{H}=O\largeparens{\smallparens{\dim\psi_{n}}^{-\frac{1}{6}}}$,
proving \cite[Corollary~11.3]{MageeDeLaSalle2024}.

In fact, in the discussion of \cite{MageeDeLaSalle2024} the authors
suspect that for $\U n$ the stronger bound of $O\largeparens{\frac{1}{\dim\psi_{n}}}$
also holds. We show in Theorem \ref{thm:case_of_Un_non_power} that
the methods of \cite{MageeDeLaSalle2024} can be extended using existing
techniques (see Remark \ref{rem:existing_methods_remark}) to prove
the analogous result:
\begin{thm}
\label{thm:case_of_Un_non_power}Let $w$ be a non-power word and
let $\psi_{n}$ be a stable character of $\U n$. Then
\[
\expectation{\psi_{n}}=O\largeparens{\frac{1}{\dim\psi_{n}}}.
\]
\end{thm}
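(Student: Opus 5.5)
We follow the route of \cite{MageeDeLaSalle2024}: start from their decomposition (\ref{eq:decomposition_into_parts}), $\expectation{\psi_{n}}=\sum_{H\in\mathcal{A}}\summand_{H}$. Here $\psi_{n}=\sn$ with $\lambda\vdash k$, $\mu\vdash\ell$, and $\deg\psi=k+\ell$. Each part $\summand_{H}$ is indexed by a combinatorial surface (a ``matching'' datum). Its boundary is glued from copies of the $w$-cycle: $k$ copies of $w$ and $\ell$ copies of $w^{-1}$, or more precisely a collection of $w$-cycles of total degree $k+\ell$. Each $\summand_{H}$ is a rational function whose order of magnitude is $n^{\chi\smallparens{H}}$ (up to bounded factors coming from the Weingarten asymptotics). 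Since $\mathcal{A}$ is finite for fixed $\lambda,\mu$ and $\dim\psi_{n}=\Theta\smallparens{n^{k+\ell}}$, it suffices to show that $\chi\smallparens{H}\le-\smallparens{k+\ell}$ for every contributing $H$.

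\textbf{Reduction to a graph statement.} Following the argument that yields the $-\frac{1}{6}$ exponent, we pass from the surface $H$ to its underlying $\Sigma$-labeled graph $\Gamma_{H}$: the 1-skeleton obtained by gluing the boundary $w$-cycles along the letters matched by the Weingarten data. This produces the commutative square of Theorem \ref{thm:w_cycles_version_with_S}, with $\mathbb{S}$ the disjoint union of the boundary cycles and $W=S_{w}$. By the defining structure of the Weingarten pairing, the near-immersion condition should hold automatically: no two edges of $\mathbb{S}$ have the same image in both $W$ and $\Gamma_{H}$.

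The crude bound in \cite{MageeDeLaSalle2024} arises because they only use that edges are covered \emph{at least once}. Instead, we reduce to the case where every edge of $\Gamma_{H}$ is covered at least twice. Edges covered only once correspond to free faces, and these can be collapsed without changing the contribution's order, or they contribute a strictly smaller power of $n$. Since $w$ is not a proper power, it admits a 1-alternating bislim structure by \cite[Proposition~2.4]{helferwise}. Theorem \ref{thm:w_cycles_version_with_S} then gives $\chi\smallparens{\Gamma_{H}}\le-\smallparens{k+\ell}$.

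\textbf{Main obstacle.} The hard step is relating the exponent of $n$ in $\summand_{H}$ to $\chi\smallparens{\Gamma_{H}}$, rather than to the Euler characteristic of the surface, and handling the Weingarten error terms. Concretely, we must show that $\summand_{H}=O\smallparens{n^{\chi\smallparens{\Gamma_{H}}}}$, or at least $O\smallparens{n^{-\smallparens{k+\ell}}}$, whenever the graph bound holds. The first approach to try is the existing technique of Remark \ref{rem:existing_methods_remark}: the Magee--Puder style bound in \cite{MageePuder2019}, which estimates $\summand_{H}$ by $n^{\chi}$ of a surface retracting onto $\Gamma_{H}$, combined with the observation that such a surface has Euler characteristic at most $\chi\smallparens{\Gamma_{H}}$. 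Surfaces in which some edge is covered only once would be dealt with by showing they either cancel in pairs or can be folded to a surface with the same $\Gamma$ but a better exponent.

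Summing the finitely many parts then gives $\expectation{\psi_{n}}=O\smallparens{n^{-\smallparens{k+\ell}}}=O\smallparens{1/\dim\psi_{n}}$. This extends linearly to stable characters that are sums of irreducible ones.
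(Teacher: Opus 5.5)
Your outline follows the paper's route: reduce via \cite[Proposition~11.1]{MageeDeLaSalle2024} to bounding $\chi(\Sigma)$, pass to a labeled graph, and apply Theorem~\ref{thm:w_cycles_version_with_S} with the $1$-alternating structure from \cite[Proposition~2.4]{helferwise}. As written, though, it does not close. The step you call the main obstacle is left open, and the hypotheses of Theorem~\ref{thm:w_cycles_version_with_S} rest on unjustified claims.

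The missing observation is that the graph you want is exactly the ribbon graph $R$ attached to $\Sigma$ in \cite[Section~11.1]{MageeDeLaSalle2024}, and $\Sigma$ is a thickening of $R$. This settles two things at once:
\begin{enumerate}
\item $\Sigma$ is homotopy equivalent to $R$, so $\chi(\Sigma)=\chi(R)$. No analysis of Weingarten error terms or Magee--Puder type estimates is needed, since Proposition~11.1 already gives $\expectation{\psi_{n}}=O(n^{\max\chi(\Sigma)})$.
\item Every edge of $R$ has two sides in $\Sigma$, so the boundary $w$-cycles cover each edge of $R$ \emph{exactly} twice.
\end{enumerate}
There is therefore no ``covered once'' case. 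Your proposed treatment of it (collapsing free faces, cancellation in pairs, folding) should be dropped. As stated it is not an argument, since collapsing a free face changes both the graph and the family of $w$-cycles, and nothing controls the resulting exponent.

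The near-immersion hypothesis is also not automatic from the Weingarten pairing. Two boundary edges at the same position of $w$ that run over the same edge of $R$ correspond to matching a letter in a copy of $w$ with the same letter in a copy of $w^{-1}$. Such matchings genuinely occur in general: for the reducible character $|\mathrm{tr}|^{2}$, matching all letters this way gives an annulus with $\chi=0$, i.e.\ a contribution of order $1$. They are excluded only by the ``forbidden matchings'' condition of \cite[Proposition~11.1]{MageeDeLaSalle2024}, which is where the irreducibility of $\psi_{n}$ enters, and you need to invoke it explicitly.

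The same example shows your last sentence is false. The bound does not extend linearly to sums of irreducible stable characters: $\dim\psi_{n}$ is governed by the highest-degree constituent, while the expectation can be as large as that of the lowest-degree one. Adding the trivial character already gives an expectation that does not tend to $0$. The statement is to be read for irreducible $\psi_{n}$, as in Theorem~\ref{thm:case_of_Un}.
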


This proves Conjecture \ref{conj:asymptotic_growth_conjecture_Un}
when $\ssql\smallparens w=1$.

Now assume that $w$ admits a $k$-alternating bislim structure. As
we have shown, this implies stronger bounds in the $w$-cycle theorem.
In Sections \ref{subsec:Character-expectations-on-Sn} and \ref{subsec:Character-expectations-on-Un}
we will show that these stronger bounds are applicable to the methods
of \cite{cassidy,MageeDeLaSalle2024}, obtaining that $\summand_{H}=O\largeparens{\smallparens{\dim\psi_{n}}^{-k}}$
in both cases, and therefore
\[
\expectation{\psi_{n}}=O\largeparens{\smallparens{\dim\psi_{n}}^{-k}},
\]
proving Theorem \ref{thm:alternating_both_cases}. Moreover, by Theorem
\ref{thm:generic_bislim_structures_one_relator_case} generic words
in $F_{r}$ admit $\smallparens{r-1}$-alternating bislim structures,
so for generic words $w\in F_{r}$ we have that
\[
\expectation{\psi_{n}}=O\largeparens{\smallparens{\dim\psi_{n}}^{1-r}}
\]
for generic words. This is definitive progress towards Conjecture
\ref{conj:asymptotic_growth_conjecture_Un}, although since $\ssql\smallparens w$
is unbounded for a generic word $w$ (see \cite[Section~7]{Puder2023}),
this cannot prove Conjecture \ref{conj:asymptotic_growth_conjecture_Un}
for generic words.
\begin{rem}
The main goal of \cite{cassidy} and \cite{MageeDeLaSalle2024} is
to show strong convergence, and so we consider the effects of this
paper on strong convergence. For one, as explained in \cite[Section~7.2]{MageeDeLaSalle2024},
the stronger bound of $\expectation{\psi_{n}}=O\largeparens{\smallparens{\dim\psi_{n}}^{-1}}$
was already known for \textit{polynomial} irreducible stable characters,
and thus these specific cases obtain a stronger result. In the same
way, Theorem \ref{thm:case_of_Un_non_power} gives the same result
for the non-polynomial case.\footnote{We note that this improvement to the methods of \cite{MageeDeLaSalle2024}
is possible with existing methods (see Remark \ref{rem:existing_methods_remark}),
and a stronger result was also proved by \cite{Chen2024} using different
methods.} Similarly, it may be possible that bounds such as Theorem \ref{thm:alternating_both_cases}
in the case where $\alt_{w}\ge2$ could strengthen the results of
\cite{cassidy} or \cite{MageeDeLaSalle2024} further. However, the
failure probability of Theorem \ref{thm:generic_words_with_explicit_probability}
is too large for this to follow directly from this work.
\end{rem}

\subsection{\label{subsec:Character-expectations-on-Sn}Stable Fourier coefficients
of word measures on $S_{n}$}

\global\long\def\del{\operatorname{del}}%
\global\long\def\partition{\rho}%

\global\long\def\pivec{\left\langle \pi\right\rangle }%
\global\long\def\overpivec{\left\langle \overline{\pi}\right\rangle }%

\global\long\def\gma#1#2{\Gamma\largeparens{#1,#2}}%

We elaborate on the proof of \cite[Theorem 1.6]{cassidy}, which shows
that for every non-power word $w\neq1$ and every stable irreducible
character $\psi_{n}$,
\begin{equation}
\expectation{\psi_{n}}=O\largeparens{\smallparens{\dim\psi_{n}}^{-1}}.\label{eq:cassidy_character_bounds}
\end{equation}
This is proven by decomposing $\expectation{\psi_{n}}$ into parts,
as explained in Section\ref{subsec:Background} (see (\ref{eq:decomposition_into_parts})).
In more detail,
\begin{equation}
\expectation{\psi_{n}}=\sum_{\partition}^{\star}\sum_{\pi_{1},\dots,\pi_{\ell\smallparens w}}^{\le S_{\deg\psi}}V\largeparens{\partition,\pi_{1},\dots,\pi_{\ell\smallparens w}},\label{eq:cassidy_decomposition}
\end{equation}
where the summation variables $\partition$ and $\pivec=\left(\pi_{1},\dots,\pi_{\ell\smallparens w}\right)$
are independent of $n$ and explained properly in Section \ref{subsec:The-graph-Gamma}
below, and $V$ is a complicated expression not detailed here, which
is a rational function in $n$ for large enough $n$. In \cite[Sections~4.4~and~5.4.2]{cassidy}
Cassidy shows how each part corresponds to a labeled graph $\Gamma=\gma{\partition}{\pivec}$
such that
\[
V\largeparens{\partition,\pivec}=O\largeparens{n^{\chi\smallparens{\Gamma}-\sum_{i}\del\pi_{i}}},
\]
where $\chi$ denotes the Euler characteristic. Finally, Cassidy proves
a variant of the $w$-cycle theorem based on the proof of \cite{Louder_Wilton_2017}
using stackings (implicit in \cite[Section~4.5]{cassidy}), concluding
that $\chi\smallparens{\Gamma}-\sum_{i}\del\pi_{i}\le-\deg\psi$ and
thus proving (\ref{eq:cassidy_character_bounds}).

We strengthen Cassidy's use of the $w$-cycle theorem as follows:
\begin{thm}
\label{thm:case_of_Sn}Assume that $w$ admits an alternating bislim
structure with alternation degree $\alt_{w}$, that $\psi_{n}$ is
a stable character of $S_{n}$, and that $\partition,\pivec$ appear
in the sum in (\ref{eq:cassidy_decomposition}). Then 
\[
\chi\largeparens{\gma{\partition}{\pivec}}-\sum_{i}\del\pi_{i}\ \le\ -\deg\psi\cdot\alt_{w}.
\]
\end{thm}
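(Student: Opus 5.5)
We will follow Cassidy's argument in \cite[Sections~4.4--4.5]{cassidy} step by step. Everything except the final combinatorial inequality stays the same, and at that point we substitute Theorem \ref{thm:w_cycles_version_with_S} for the stacking argument. Recall that for each summation index $\smallparens{\partition,\pivec}$ Cassidy builds a $\Sigma$-labeled graph $\Gamma=\gma{\partition}{\pivec}$ together with a collection of closed $w$-paths in it. Roughly, these are $\deg\psi$ cycles, each reading $w$, with each letter position $i$ of $w$ contributing the identifications encoded by $\pi_{i}$. The graph is not necessarily a core graph, and the cycles may fail to cover each edge twice. The permutations $\pi_{i}$ contribute the correction $\sum_{i}\del\pi_{i}$, which records how far the gluing at position $i$ is from a matching. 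This is exactly why we want Theorem \ref{thm:w_cycles_version_with_S}: it only needs a diagram $\mathbb{S}\to H$, $\mathbb{S}\to W$, with no core-graph assumption.

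The first step is to build a modified graph $H$ from $\Gamma$ so that the deletion terms are absorbed into an Euler characteristic. The natural choice is $\chi(H)=\chi(\Gamma)-\sum_{i}\del\pi_{i}$, or at least $\chi(\Gamma)-\sum_{i}\del\pi_{i}\le\chi(H)$. Cassidy's implicit stacking argument already does this locally: each non-identity cycle structure at position $i$ either adds extra edges or splits vertices, and each such operation changes $\chi$ by the corresponding deletion count. We will copy that surgery. We then take $\mathbb{S}$ to be the disjoint union of the $\deg\psi$ $w$-cycles, so that $\mathbb{S}\to W=S_{w}$ has total covering degree $\deg\psi$, where $W\to\Omega$ is the bouquet map. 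Next we verify the hypotheses of Theorem \ref{thm:w_cycles_version_with_S}. The injectivity condition, that no two edges of $\mathbb{S}$ share both their image in $W$ and their image in $H$, should follow because the $\pi_{i}$ are permutations: two distinct cycles passing through the same edge at the same position $i$ would be identified. For the covering-twice condition, we will prune edges of $H$ covered once, together with the cycles using them, exactly as Cassidy does. Each such removal should not decrease the relevant quantity, since removing a hanging edge that is covered once only simplifies the picture; if the resulting $\mathbb{S}$ is empty we need a separate trivial estimate. Finally, by the correspondence stated before Theorem \ref{thm:w_cycles_version_with_S}, an alternating bislim structure on the presentation complex of $\left\langle \Sigma\vert w\right\rangle$ is an alternating partial stacking of $W\to\Omega$. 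Applying Theorem \ref{thm:w_cycles_version_with_S} then gives
\[
\chi\largeparens{\gma{\partition}{\pivec}}-\sum_{i}\del\pi_{i}\ \le\ \chi(H)\ \le\ -\deg\psi\cdot\alt_{w}.
\]

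We expect the main obstacle to be the pruning and reduction step. We must show that the edges covered only once, together with the cycles discarded with them, cost at most what they remove. Concretely, every deleted cycle loses $\alt_{w}$ on the right-hand side, so it must also be paid for by at least $\alt_{w}$ on the left. In Cassidy's setting, with $\alt_{w}=1$, each discarded cycle frees at least one unit of Euler characteristic. With larger alternation degree this is no longer automatic. Our first attempt will avoid pruning altogether and instead use Theorem \ref{thm:cycle_counting_complex_version_with_boundary}, which bounds $\chi$ with a boundary term $2\left|\boundary Y\right|$. We would then check that boundary edges, meaning edges covered once, are compensated by the $\del\pi_{i}$ terms. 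If this fails, we fall back on showing that the summation constraint $\le S_{\deg\psi}$ in (\ref{eq:cassidy_decomposition}) forces full double covering.
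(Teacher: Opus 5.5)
Your endpoint is right: feed a diagram $\mathbb{S}\to H$, $\mathbb{S}\to W$ into Theorem~\ref{thm:w_cycles_version_with_S}. But the steps that make its hypotheses true are either missing or would fail.

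\textbf{The $\operatorname{del}\pi_i$ terms.} The $\pi_i$ are partial bijections between subsets of $\{v^+_{i,j}\}_j$ and $\{v^-_{i,j}\}_j$, not permutations. When they are genuinely partial, the $\pi$-gluings of $d\dot{W}$ do not give a cover of $W$, so there are no closed $w$-cycles of total degree $\deg\psi$ to take as $\mathbb{S}$. Instead of an unspecified local surgery, complete each $\pi_i$ to a full bijection $\overline{\pi_i}$. This adds at most $\sum_i\operatorname{del}\pi_i$ vertex identifications and no edge identifications, since edge gluing in $\Gamma$ depends only on $\rho$. Hence $\chi(\Gamma(\rho,\langle\pi\rangle))-\sum_i\operatorname{del}\pi_i\le\chi(\Gamma(\rho,\langle\overline{\pi}\rangle))$, and $\mathbb{S}=\Gamma(\emptyset,\langle\overline{\pi}\rangle)$ is a genuine $d$-fold cover of $W$, where $d=\deg\psi$. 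Cassidy cannot make this reduction, because it destroys the property that same-position vertices are never glued in $\Gamma$; Theorem~\ref{thm:w_cycles_version_with_S} never uses that property. Your justification of the injectivity hypothesis is also misplaced. It has nothing to do with the $\pi_i$; it comes from Condition~3 of Definition~\ref{def:star_conditions}: no two vertices of the same position lie in one block of $\rho$.

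\textbf{The covering-twice hypothesis.} This is the real gap, which you flag but do not resolve. Pruning cannot work. Discarding components of $\mathbb{S}$ lowers the covering degree below $d$, so at best you get $-d'\operatorname{alt}_w$ with $d'<d$, and nothing on the left pays for the difference; the removals can also cascade. Both fallbacks fail too. Once-covered edges already occur with admissible $\rho$ and full bijections, i.e. with all $\operatorname{del}\pi_i=0$ (Figure~\ref{fig:W_Gamma}). So they are neither ruled out by the $\le S_{\deg\psi}$ constraint nor compensated by $\operatorname{del}$ terms in the boundary estimate of Theorem~\ref{thm:cycle_counting_complex_version_with_boundary}.

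The missing idea is to glue more rather than delete. Let $H$ be obtained from $\Gamma(\rho,\langle\overline{\pi}\rangle)$ by identifying any two edges whose source vertices are $\rho$-equivalent, even when their targets are not. Folding two edges with a common source lowers $|E|$ by one and $|V|$ by at most one, so $\chi(\Gamma(\rho,\langle\overline{\pi}\rangle))\le\chi(H)$. The edge classes of $H$ are exactly the sets of edges of $d\dot{W}$ whose sources lie in a single $\rho$-block. Consequently:
\begin{itemize}
\item Condition~1 (one type per block) makes $H\to\Omega_r$ well defined.
\item Condition~2 (no singleton blocks) makes every edge of $H$ covered at least twice by $\mathbb{S}$.
\item Condition~3 gives injectivity: two edges of $\mathbb{S}$ at the same position of $W$ with the same image in $H$ would have same-position sources in one block.
\end{itemize}
Theorem~\ref{thm:w_cycles_version_with_S} then gives $\chi(H)\le -d\cdot\operatorname{alt}_w$, and chaining the inequalities proves the statement.
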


As explained above, this proves Theorem \ref{thm:alternating_both_cases}
in the case of $S_{n}$. When $w$ is not a proper power, it admits
a $1$-alternating bislim structure by \cite[Proposition~2.4]{helferwise},
and we recover the original \cite[Theorem 1.6]{cassidy}. By applying
Theorem \ref{thm:generic_bislim_structures_one_relator_case} it follows
that Conjectures \ref{conj:asymptotic_growth_conjecture_Sn_pi} and
\ref{conj:asymptotic_growth_conjecture_Sn_spi} hold for generic words.

\subsubsection{\label{subsec:The-graph-Gamma}The graph $\Gamma$}

We now elaborate on the definitions of $\partition$, $\pivec$ and
$\Gamma$, and prove Theorem \ref{thm:case_of_Sn} by reducing it
to a version of the $w$-cycles theorem, namely, to Theorem \ref{thm:w_cycles_version_with_S}.

\global\long\def\w{W}%
\global\long\def\ww{\dot{W}}%
\global\long\def\kw{d\ww}%
\global\long\def\dw{\kw}%

\global\long\def\broken#1{\widehat{#1}}%

\global\long\def\graphfinal{H}%

Let $d$ be $\deg\psi$, so $\dim\psi=\Theta\largeparens{n^{d}}$,
and let $\ell$ be the length of $w$. Let $\mathemph{\w}$ be a cycle
spelling $w$, i.e., the graph with $\ell$ vertices $u_{1},\dots,u_{\ell}$,
with an edge which is labeled and directed by the $i$'th letter of
$w$ between $u_{i}$ and $u_{i+1}$ for all $i\in\left[\ell\right]$
(see Figure \ref{fig:W}).\footnote{It is understood that $v_{\ell+1}=v_{1}$.}
Let $\mathemph{\ww}$ be a graph consisting of $\ell$ disjoint edges
and $2\ell$ vertices $v_{1}^{-},\dots,v_{\ell}^{-}$ and $v_{1}^{+},\dots,v_{\ell}^{+}$
with an edge which is labeled and directed by the $i$'th letter of
$w$ between $v_{i}^{-}$ and $v_{i+1}^{+}$ for all $i\in\left[\ell\right]$
(see Figure \ref{fig:W_split}). Thus there is a natural map $\ww\to\w$
mapping $v_{i}^{-}$ and $v_{i}^{+}$ to $u_{i}$, and the edges of
$\ww$ correspond exactly to the edges of $\w$.

Duplicate $\ww$ $d$ times to form $\mathemph{\kw}$ as in Figure
\ref{fig:dW_split}. We name its vertices $v_{i,j}^{\pm}$ where $j\in\left[d\right]$
is the copy of $\ww$ that $v_{i,j}^{\pm}$ is in. Then $\kw$ has
$2d\ell$ vertices and admits a natural projection $p:\kw\to\ww$
given by $p\smallparens{v_{i,j}^{\pm}}=v_{i}^{\pm}$. We characterize
each vertex $v$ of $\kw$ by its \emph{position,} which is its projection
$p\smallparens v$ into $\ww$. 

Now, the summation variable $\mathemph{\pi_{i}}$ in \ref{eq:cassidy_decomposition}
is a bijection between a subset $A$ of $p^{-1}\smallparens{v_{i}^{+}}=\left\{ v_{i,j}^{+}\vert j\in\left[d\right]\right\} $
and a subset $B$ of $p^{-1}\smallparens{v_{i}^{-}}=\left\{ v_{i,j}^{-}\vert j\in\left[d\right]\right\} $.
In (\ref{eq:cassidy_decomposition}), the sum is only over such $\pi_{i}$,
and this is denoted by $\le S_{d}$ above the summation symbol. We
denote by $\mathemph{\del\pi_{i}}$ the amount of vertices which are
not bijected, i.e., $d-\left|A\right|$. In addition to position,
each vertex $v$ of $\kw$ can also be characterized by its \emph{type,
}which is the label of its adjacent edge $e$ and whether $e$ is
directed towards $v$ or away from $v$.

\begin{figure}
\subfloat[\label{fig:W}The graph $\protect\w$ with $w=abab^{-1}$]{\begin{centering}
\includegraphics[width=0.29\textwidth]{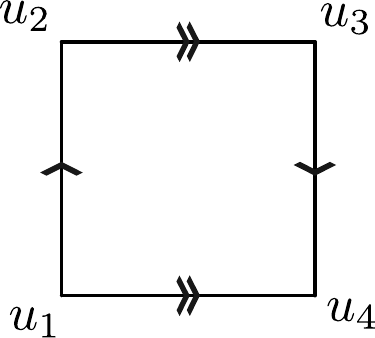}
\par\end{centering}
\centering{}}\hfill{}\subfloat[\label{fig:W_split}The graph $\protect\ww$]{\begin{centering}
\includegraphics[width=0.3\textwidth]{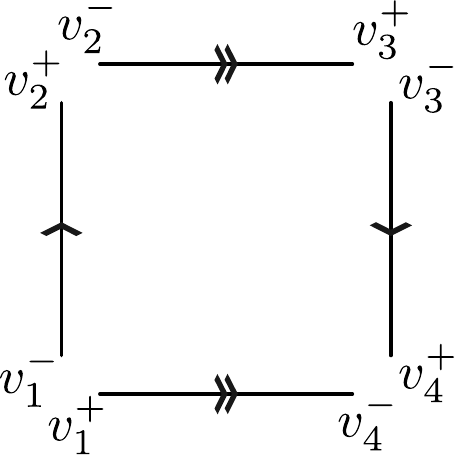}
\par\end{centering}
}\hfill{}\subfloat[\label{fig:dW_split}The graph $\protect\kw$ with $d=2$. The blue
dashed lines depict $\left(\pi_{1},\dots,\pi_{4}\right)$ and are
not part of $\protect\dw$.]{\begin{centering}
\includegraphics[width=0.33\textwidth]{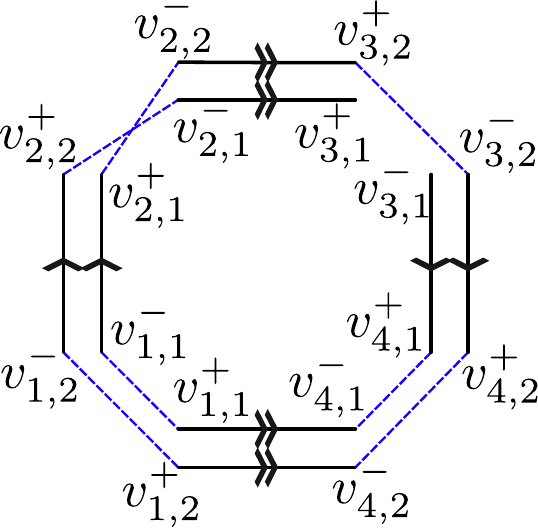}
\par\end{centering}
}

\caption{Examples of $\protect\w$, $\protect\ww$ and $\protect\kw$ where
$w=abab^{-1}$ and $d=2$, as defined in Section \ref{subsec:The-graph-Gamma}.
The label $a$ is drawn as a single arrow while $b$ is drawn as a
double arrow.}
\end{figure}

The summation variable $\partition$ in (\ref{eq:cassidy_decomposition})\footnote{In \cite{cassidy} $\partition$ is split into $2r$ partitions: $\sigma_{f}$
is the subpartition of all of the vertices $v$ adjacent to label
$f$ oriented away from $v$, and $\tau_{f}$ is the subpartition
of all of the vertices $v$ adjacent to label $f$ oriented towards
$v$.} is a partition of the vertices of $\dw$ that satisfies the following:
\begin{defn}[{Following \cite[Page 22]{cassidy}}]
\label{def:star_conditions}We say that a partition $\partition$
of the vertices of $\kw$ is admissible if the following conditions
hold:
\end{defn}

\begin{enumerate}
\item \label{enu:cond_type}Any two vertices in the same block of $\partition$
have the same type.
\item \label{enu:cond_2_cover}There cannot be singleton blocks in $\partition$,
i.e., every block is of size at least 2.
\item \label{enu:cond_efficiency}There cannot be two vertices of the same
position in the same block of $\partition$.
\end{enumerate}
These conditions are taken from the definition of $\overset{\star}{\text{Part}}$
in \cite[Page 22]{cassidy}. In Equation (\ref{eq:cassidy_decomposition})
the sum is only over admissible partitions $\partition$, and this
is denoted by $\star$ above the summation symbol. An example of an
admissible partition $\partition$, where $\kw$ is as in Figure \ref{fig:dW_split},
is given by $\left\{ v_{1,1}^{-},v_{3,2}^{-}\right\} ,\left\{ v_{1,2}^{-},v_{3,1}^{-}\right\} ,\left\{ v_{2,1}^{+},v_{4,1}^{+}\right\} ,\left\{ v_{2,2}^{+},v_{4,2}^{+}\right\} ,\left\{ v_{2,1}^{-},v_{1,1}^{+}\right\} ,\left\{ v_{2,2}^{-},v_{1,2}^{+}\right\} ,\left\{ v_{3,1}^{+},v_{4,1}^{-}\right\} ,\left\{ v_{3,2}^{+},v_{4,2}^{-}\right\} $.

Now let $\partition$ be a partition of the vertices of $\dw$, which
is not necessarily admissible, and let $\pivec=\left(\pi_{1},\dots,\pi_{\ell}\right)$
be partial bijections as above. We construct the graph $\gma{\partition}{\pivec}$
as follows: start from $\kw$, and glue together any two vertices
which are bijected by any $\pi_{i}$, as in Figure \ref{fig:W_cover}.
Then, also glue together any two $\partition$-equivalent vertices.
Finally, glue together any two edges $e_{1}$ and $e_{2}$ of the
same label in $\kw$ where their two initial vertices are $\partition$-equivalent
and their two final vertices are also $\partition$-equivalent. 

See an example of $\gma{\partition}{\pivec}$ in Figure (\ref{fig:W_Gamma})
below. We clarify that there may be parallel edges of the same label
and direction in $\gma{\partition}{\pivec}$ as long as either their
initial or final vertices were glued together indirectly by a combination
of the gluings induced by the partial bijections $\pivec$ and the
partition $\partition$, rather than being directly $\partition$-equivalent,
as in Figure (\ref{fig:W_Gamma}) below.

\subsubsection{\label{subsec:Proof-of-Theorem}Proof of Theorem \ref{thm:case_of_Sn}}

Let $w$ be a word that admits an alternating bislim structure with
alternation degree $\alt_{w}$, let $\psi$ be a stable character
of $S_{n}$ of dimension $\Theta\largeparens{n^{d}}$, let $\pivec=\largeparens{\pi_{1},\dots,\pi_{\ell}}$
be partial bijections as described above, and let $\partition$ be
an admissible partition (Definition \ref{def:star_conditions}). We
show that
\[
\chi\left(\gma{\partition}{\pivec}\right)-\sum_{i}\del\pi_{i}\ \le\ -d\cdot\alt_{w}.
\]

\begin{lem}
We may assume without loss of generality that $\pi_{i}$ are full
bijections.
\end{lem}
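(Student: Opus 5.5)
The plan is to show that extending a partial bijection to a full one never increases the quantity $\chi\left(\gma{\partition}{\pivec}\right)-\sum_{i}\del\pi_{i}$. It then suffices to prove the inequality for full bijections. Concretely, suppose some $\pi_{i}$ is not a full bijection. Then there is an unbijected vertex $a\in p^{-1}\smallparens{v_{i}^{+}}$ and an unbijected vertex $b\in p^{-1}\smallparens{v_{i}^{-}}$, since $\pi_{i}$ is a bijection between subsets of two sets of size $d$. Let $\pi_{i}'=\pi_{i}\cup\left\{ a\mapsto b\right\}$, and let $\pivec'$ be $\pivec$ with $\pi_{i}$ replaced by $\pi_{i}'$. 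The partition $\partition$ is unchanged, so admissibility is preserved. We also have $\del\pi_{i}'=\del\pi_{i}-1$. Iterating this step at most $d\ell$ times yields full bijections.

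The key step is to compare $\chi\left(\gma{\partition}{\pivec'}\right)$ with $\chi\left(\gma{\partition}{\pivec}\right)$. The graph $\gma{\partition}{\pivec'}$ is obtained from $\gma{\partition}{\pivec}$ by identifying the images of $a$ and $b$. This may trigger further edge identifications, since the edge-gluing rule only glues edges whose endpoints are directly $\partition$-equivalent. By construction, however, the rule is stated in terms of $\partition$-equivalence, and $\partition$ has not changed. So the only new operation is the single vertex identification of the images of $a$ and $b$. If these images are distinct vertices, the vertex count drops by one and the edge count is unchanged, so $\chi$ decreases by exactly $1$. If they already coincide, nothing changes and $\chi$ stays the same. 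In either case
\[
\chi\left(\gma{\partition}{\pivec'}\right)-\sum_{j}\del\pi_{j}'\ \ge\ \chi\left(\gma{\partition}{\pivec}\right)-\sum_{j}\del\pi_{j}.
\]
Hence, if the target bound $\le-d\cdot\alt_{w}$ holds for $\pivec'$, it also holds for $\pivec$.

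The main point to verify carefully is that the edge-gluing step really depends only on $\partition$ and not on the vertex gluings induced by $\pivec$. This is what the paper's clarification about permitted parallel edges indicates. If it held instead that new $\pivec$-gluings could merge further edges, I would handle it by bounding the change in $\chi$ as a quotient of graphs. Merging two vertices in a graph changes $\chi$ by $-1$, or by $0$ if they are already equal. Subsequently folding two parallel edges with identical endpoints increases $\chi$ by $1$. That would break monotonicity, so I would rely on the stated construction, in which edges are glued only along direct $\partition$-equivalences. This yields the clean one-vertex identification argument.
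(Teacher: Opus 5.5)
Your proposal is correct and follows essentially the paper's argument. Completing the $\pi_i$ amounts to at most $\sum_i\del\pi_i$ vertex identifications. Each lowers $\chi$ by at most $1$, and the edges are untouched because they are glued only along direct $\partition$-equivalences. This gives $\chi\largeparens{\gma{\partition}{\pivec}}-\sum_i\del\pi_i\le\chi\largeparens{\gma{\partition}{\overpivec}}$; the paper does it in one step rather than one pair at a time.

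One correction to your closing aside: extra edge folds would only increase $\chi$. So they would still preserve the per-step inequality $\chi\largeparens{\gma{\partition}{\pivec'}}\ge\chi\largeparens{\gma{\partition}{\pivec}}-1$, rather than break it.
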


\begin{proof}
Denote by $\overline{\pi_{i}}$ a completion of $\pi_{i}$ to a full
bijection, and denote $\overpivec=\left(\overline{\pi_{1}},\dots,\overline{\pi_{\ell\smallparens w}}\right)$.
The graph $\gma{\partition}{\overpivec}$ can be obtained from $\gma{\partition}{\pivec}$
by gluing at most $\sum_{i}\del\pi_{i}$ pairs of vertices. Gluing
one pair of vertices reduces the Euler characteristic by exactly $1$,
so
\[
\chi\largeparens{\gma{\partition}{\pivec}}-\sum_{i}\del\pi_{i}\ \le\ \chi\largeparens{\gma{\partition}{\overpivec}},
\]
and it is enough to prove that $\chi\largeparens{\gma{\partition}{\overpivec}}\le-d\cdot\alt_{w}$.
\end{proof}
Cassidy remarks that it is possible to assume that $\pi_{i}$ are
full bijections in \cite[Remark 4.13]{cassidy}. However, Cassidy
cannot use this assumption since using it forfeits a property that
is assumed in Cassidy's proof: that no two vertices of $\dw$ of the
same position can be glued together in $\gma{\partition}{\pivec}$.
We do not need this property, so we may assume that $\pi_{1},\dots,\pi_{\ell\smallparens w}$
are full bijections.

Let $\emptyset$ be the discrete partition (note that $\emptyset$
is not admissible) and consider the graph $\gma{\emptyset}{\overpivec}$,
obtained from $\kw$ by only doing the $\overline{\pi_{i}}$-gluings.
Then $\gma{\emptyset}{\overpivec}$ is a covering graph of $\w$ of
degree $d$ (see Figure \ref{fig:W_cover}). The graphs fit into the
following commutative diagram:
\[\begin{tikzcd}
	{\Gamma \left(\emptyset, \left< \overline{\pi} \right> \right)} & {\Gamma \left(\rho, \left< \overline{\pi} \right> \right)} \\
	W & {\Omega_r}
	\arrow[from=1-1, to=1-2]
	\arrow["{\text{d-cover}}"', from=1-1, to=2-1]
	\arrow[from=1-2, to=2-2]
	\arrow[from=2-1, to=2-2]
\end{tikzcd}\] Where $\Omega_{r}$ is the bouquet of $r$ circles, with one edge
for each generator in $F_{r}$. Note that if the $\pi_{i}$ are not
full bijections, then $\gma{\emptyset}{\pivec}$ is ``broken'' and
is not a covering graph of $W$.

If this commutative diagram were to satisfy all of the requirements
of Theorem \ref{thm:w_cycles_version_with_S}, we would conclude that
$\chi\left(\gma{\partition}{\overpivec}\right)\le-d\cdot\alt_{w}$,
which is our goal. It does satisfy all of the requirements except
for one: $\gma{\emptyset}{\overpivec}$ does not necessarily cover
every edge of $\gma{\partition}{\overpivec}$ twice. Figure \ref{fig:W_Gamma}
depicts an example where some edges are only covered once by $\gma{\emptyset}{\overpivec}$.

We define a new graph $\mathemph{H=H\largeparens{\partition,\overpivec}}$
as follows: Similarly to the construction of $\gma{\partition}{\overpivec}$,
we start with $\kw$, glue together all $\overline{\pi_{i}}$-equivalent
vertices and all $\partition$-equivalent vertices. However, the last
step is modified: we glue together any two edges $e_{1}$ and $e_{2}$
where their initial vertices are $\partition$-equivalent, even if
their final vertices are not $\partition$-equivalent. Note that there
is a natural map $\varphi:\gma{\partition}{\overpivec}\to\graphfinal$,
since any edges glued in the last step of the construction of $\gma{\partition}{\overpivec}$
are also glued in $\graphfinal$.

\begin{figure}
\subfloat[\label{fig:W_cover}The graph $\protect\gma{\emptyset}{\protect\overpivec}$,
which is a covering graph of $W$, and is given by starting with $\protect\kw$
and gluing any two vertices which are bijected by $\protect\overpivec$
(see Figure \ref{fig:dW_split}).]{\centering{}\includegraphics[width=0.28\textwidth]{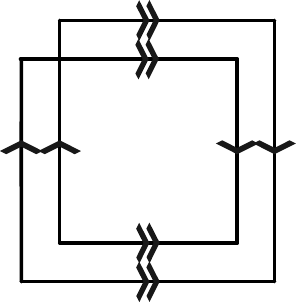}}\hfill{}\subfloat[\label{fig:W_Gamma}The graph $\protect\gma{\protect\partition}{\protect\overpivec}$.
In the natural map $\protect\gma{\emptyset}{\protect\overpivec}\to\protect\gma{\protect\partition}{\protect\overpivec}$,
the edges marked by one arrow are covered once, and the edges labeled
by double arrows are covered twice.]{\centering{}\includegraphics[width=0.33\textwidth]{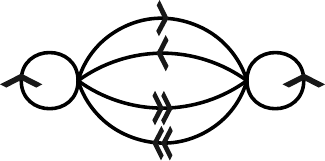}}\hfill{}\subfloat[\label{fig:W_H}The graph $H=H\left(\protect\partition,\protect\overpivec\right)$.
This graph satisfies all of the conditions of Theorem \ref{thm:w_cycles_version_with_S}. ]{\centering{}\includegraphics[width=0.33\textwidth]{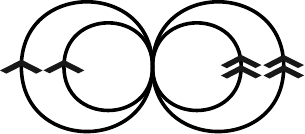}}\caption{The graphs $\protect\gma{\emptyset}{\protect\overpivec}$, $\protect\gma{\protect\partition}{\protect\overpivec}$
and $H$, where $\protect\pivec$ is given as in Figure \ref{fig:dW_split},
and $\protect\partition$ is $\left\{ v_{1,1}^{-},v_{3,2}^{-}\right\} ,\left\{ v_{1,2}^{-},v_{3,1}^{-}\right\} ,\left\{ v_{2,1}^{+},v_{4,1}^{+}\right\} ,\left\{ v_{2,2}^{+},v_{4,2}^{+}\right\} ,\left\{ v_{2,1}^{-},v_{1,1}^{+}\right\} ,\left\{ v_{2,2}^{-},v_{1,2}^{+}\right\} ,\left\{ v_{3,1}^{+},v_{4,1}^{-}\right\} ,\left\{ v_{3,2}^{+},v_{4,2}^{-}\right\} $.}

\end{figure}

We can realize the map $\varphi$ by gluing pairs of edges one at
a time. Every pair of edges we glue has $\partition$-equivalent source
vertices, which are thus already glued in $\gma{\partition}{\overpivec}$.
Gluing two edges with the same source vertex cannot decrease the Euler
characteristic of the graph, and thus $\chi\largeparens{\gma{\partition}{\overpivec}}\ \le\ \chi\smallparens H$.

By Condition \ref{enu:cond_type} of the admissibility conditions,
we only glue together edges of the same label and direction, so there
is a natural map $\graphfinal\to\Omega_{r}$. Thus we have the following
commutative diagram: 
\[\begin{tikzcd}
	{\Gamma \left(\emptyset,\left< \pi \right> \right)} & {H} \\
	{W} & {\Omega_r}
	\arrow[from=1-1, to=1-2]
	\arrow["{\text{d-cover}}"', from=1-1, to=2-1]
	\arrow[from=1-2, to=2-2]
	\arrow[from=2-1, to=2-2]
\end{tikzcd}\]We show that the conditions of Theorem \ref{thm:w_cycles_version_with_S}
apply. Recall the admissibility conditions (Definition \ref{def:star_conditions}).
\begin{itemize}
\item By Condition \ref{enu:cond_2_cover} of the admissibility conditions,
the map $\gma{\emptyset}{\overpivec}\to\graphfinal$ covers each edge
of $H$ at least twice.
\item Assume that two edges $e_{1}$ and $e_{2}$ of $\gma{\emptyset}{\overpivec}$
map to the same edge in $W$ and also map to the same edge in $\graphfinal$.
Then their initial vertices $v_{1}$ and $v_{2}$ in $\kw$ are of
the same position and are also $\partition$-equivalent, in contradiction
with Condition \ref{enu:cond_efficiency} of the admissibility conditions.
\end{itemize}
Thus, all conditions of Theorem \ref{thm:w_cycles_version_with_S}
are met, and so:
\[
\chi\largeparens{\gma{\partition}{\overpivec}}\ \le\ \chi\smallparens{\graphfinal}\ \le\ -d\cdot\alt_{w},
\]
concluding the proof of Theorem \ref{thm:case_of_Sn}.

\subsection{\label{subsec:Character-expectations-on-Un}Stable Fourier coefficients
of word measures on $\protect\U n$}

In \cite[Corollary~11.3]{MageeDeLaSalle2024} it is shown that for
every non-power word $w\neq1$, and every stable irreducible character
$\psi_{n}$,
\begin{equation}
\expectation{\psi_{n}}=O\largeparens{\smallparens{\dim\psi_{n}}^{-\frac{1}{6}}}.\label{eq:magee_de_la_salle_theorem_3.1(2)}
\end{equation}
This is proven by decomposing $\expectation{\psi_{n}}$ into a sum
of parts, as explained above in (\ref{eq:decomposition_into_parts}).
In \cite[Pages~195-197]{MageeDeLaSalle2024} it is shown how each
part corresponds to a compact surface $\Sigma$, such that the part
is bounded by $O\largeparens{n^{\chi\smallparens{\Sigma}}}$ where
$\chi$ denotes the Euler characteristic. The surface $\Sigma$ has
additional structure, given by \cite[Definition 9.2]{MageeDeLaSalle2024}
and some technical conditions relating to $\psi_{n}$. All of this
culminates in \cite[Proposition~11.1]{MageeDeLaSalle2024}, which
shows that
\begin{equation}
\expectation{\psi_{n}}=O\largeparens{n^{\max\chi\smallparens{\Sigma}}},\label{eq:magee_de_la_salle_decomposition}
\end{equation}
where the maximum is taken over all such surfaces $\Sigma$. Finally,
\cite[Proposition 11.2]{MageeDeLaSalle2024} shows that each surface
$\Sigma$ satisfies $\chi\smallparens{\Sigma}\ \le\ -\frac{\deg\psi}{6}$,
proving (\ref{eq:magee_de_la_salle_theorem_3.1(2)}). This is essentially
a crude version of the $w$-cycle theorem, and here we explain how
to derive the following stronger bound.
\begin{thm}
\label{thm:case_of_Un}Assume that $w$ admits an alternating bislim
structure with alternation degree $\alt_{w}$, that $\psi_{n}$ is
an irreducible stable character of $\U n$, and that $\Sigma$ is
a surface satisfying the conditions of \cite[Proposition 11.1]{MageeDeLaSalle2024}.
Then $\chi\smallparens{\Sigma}\ \le\ -\deg\psi\cdot\alt_{w}$.
\end{thm}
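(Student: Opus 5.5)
The plan is to mirror the proof of Theorem \ref{thm:case_of_Sn}: extract from each surface $\Sigma$ of \cite[Proposition 11.1]{MageeDeLaSalle2024} a commutative square of graphs as in Theorem \ref{thm:w_cycles_version_with_S}, and then bound $\chi\smallparens{\Sigma}$ by the Euler characteristic of the graph in the top-right corner.

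First, recall the structure of $\Sigma$ from \cite[Definition 9.2]{MageeDeLaSalle2024}. It is built from $\deg\psi=k+\ell$ annuli/discs whose boundary components read $w$ or $w^{-1}$, glued along arcs labeled by generators. Concretely, there is a graph $G\subset\Sigma$ (the union of the labeled arcs, retracted) carrying a labeling map $G\to\Omega_{r}$, and a disjoint union $\mathbb{S}$ of $\deg\psi$ cycles, each spelling $w^{\pm1}$, mapping to $G$. Each cycle of $\mathbb{S}$ is a degree-one cover of $W$, after reversing orientation for the $w^{-1}$ components (the $\mu$ part), which is harmless because bislim structures are symmetric under inversion. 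So $\mathbb{S}\to W$ is a covering of total degree $d=\deg\psi$. I would show that $\chi\smallparens{\Sigma}\le\chi\smallparens G$; typically $\Sigma$ deformation retracts onto $G$ union some discs, and discs can only be counted against us. To avoid this, I would choose $G$ to be the 1-skeleton of the cell structure whose 2-cells are the faces of $\Sigma$ not coming from $w$-boundaries. The technical conditions on $\psi_{n}$ (no ``trivial'' faces) should give that these extra faces contribute nonpositively, in the same way the crude bound $-\deg\psi/6$ was derived in \cite[Proposition 11.2]{MageeDeLaSalle2024}.

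Second, I would verify the hypotheses of Theorem \ref{thm:w_cycles_version_with_S}. The double-covering condition should follow from the requirement in \cite[Definition 9.2]{MageeDeLaSalle2024} that every arc of $\Sigma$ is matched between two boundary letters, i.e. each labeled edge of $G$ is traversed by two sides. This is the analogue of Condition \ref{enu:cond_2_cover} of the admissibility conditions. The near-immersion condition, that no two edges of $\mathbb{S}$ have the same image in both $W$ and $G$, is the analogue of Condition \ref{enu:cond_efficiency}. If it fails, I would modify $G$ as was done for $H\smallparens{\partition,\overpivec}$, folding edges with common source so that Euler characteristic does not decrease, or alternatively completing partial matchings to full ones at a cost exactly compensated in $\chi$. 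Theorem \ref{thm:w_cycles_version_with_S} then gives $\chi\smallparens G\le-d\cdot\alt_{w}$, hence $\chi\smallparens{\Sigma}\le-\deg\psi\cdot\alt_{w}$.

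The main obstacle I expect is the first step: comparing $\chi\smallparens{\Sigma}$ with $\chi$ of a graph satisfying the hypotheses. Two issues arise there. The surfaces of \cite{MageeDeLaSalle2024} may contain faces not bounded by $w$, and the gluings may not respect the ``no two sides map to the same side'' condition. My first attempt would be to fold the offending edges, as in the $S_{n}$ case, and to argue that each fold or each extra disc changes $\chi$ in the favorable direction. This would use \cite[Remark 4.13]{cassidy}-style completion tricks, and the ``existing techniques'' alluded to in Remark \ref{rem:existing_methods_remark}.
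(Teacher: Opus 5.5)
You have the right architecture, and it is the paper's: extract from $\Sigma$ the square of Theorem \ref{thm:w_cycles_version_with_S} and compare Euler characteristics. The gap is in the step you yourself flag as the main obstacle. You leave it open, and the mechanism you propose cannot work.

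Suppose $\Sigma$ did deformation retract onto a graph $G$ with some discs attached. Then $\chi(\Sigma)=\chi(G)+\#\{\text{discs}\}\ge\chi(G)$. An upper bound on $\chi(G)$ would then say nothing about $\chi(\Sigma)$, and no technical condition on $\psi_n$ can make a disc contribute nonpositively.

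The missing input is a structural fact, which the paper uses. By the construction recalled in \cite[Section~11.1]{MageeDeLaSalle2024}, $\Sigma$ is a thickening of a directed, labeled ribbon graph $R$. Hence $\Sigma\simeq R$ and $\chi(\Sigma)=\chi(R)$ exactly; there are no faces at all beyond the boundary. The boundary components of $\Sigma$ read $w$-cycles in $R$, of total degree $\deg\psi$. Because they bound a thickened graph, they traverse every edge of $R$ exactly twice. This supplies $\mathbb{S}\to W$ and the double-covering hypothesis with $H=R$, with no comparison inequality needed.

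The near-immersion hypothesis likewise needs no repair. It is precisely the ``Forbidden matchings'' condition of \cite[Proposition~11.1]{MageeDeLaSalle2024}: no two edges of the boundary $w$-cycles in the same position of $w$ traverse the same edge of $R$.

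Your fallback would not have worked anyway. Folding edges of $G$ only identifies more edges, so it can never separate two edges of $\mathbb{S}$ that already share their images in both $W$ and $G$. In the $S_n$ argument, the fold to $H$ served the double-covering hypothesis; the near-immersion property came from admissibility Condition \ref{enu:cond_efficiency}. Completing partial bijections is also beside the point: it serves to make $\mathbb{S}\to W$ a genuine covering, not this condition.

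Once these three facts are read off from Proposition~11.1, Theorem \ref{thm:w_cycles_version_with_S} applies directly to $R$. It gives $\chi(\Sigma)=\chi(R)\le-\deg\psi\cdot\operatorname{alt}_{w}$.
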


Together with (\ref{eq:magee_de_la_salle_decomposition}) this proves
Theorem \ref{thm:alternating_both_cases} for the case of $\U n$.
The case of $\alt_{w}=1$ proves Theorem \ref{thm:case_of_Un_non_power}.
\begin{proof}
Unfortunately, the surfaces $\Sigma$ as defined in \cite[Definition 9.2]{MageeDeLaSalle2024}
do not have the 2-complex structure which our methods use. However,
as explained in \cite[Section~11.1]{MageeDeLaSalle2024}, a surface
$\Sigma$ gives a ribbon graph\footnote{A ribbon graph is a graph where each vertex has an associated cyclic
ordering on its neighboring half edges. However, we do not use the
fact that $R$ is a ribbon graph.} $R$, which is directed and labeled over the same alphabet as $w$.
Additionally, the conditions of \cite[Proposition 11.1]{MageeDeLaSalle2024}
translate to $R$ as follows:
\begin{itemize}
\item The boundary components of $\Sigma$ give a family of $w$-cycles
of $R$, which cover each edge of $R$ exactly twice.
\item The number of $w$-cycles in this family, counted by their degrees,
is $\deg\psi$.
\item No two edges of these $w$-cycles which are in the same position within
$w$ traverse the same edge of $R$ (this corresponds to the ``Forbidden
matchings'' condition of \cite[Proposition 11.1]{MageeDeLaSalle2024}).
\end{itemize}
These conditions are precisely the conditions that are required to
apply Theorem \ref{thm:w_cycles_version_with_S}, proving that $\chi\smallparens R\ \le\ -\deg\psi\cdot\alt_{w}$.
However, $\Sigma$ is homotopic to $R$ by construction, so $\chi\smallparens{\Sigma}=\chi\smallparens R$,
proving Theorem \ref{thm:case_of_Un}.
\end{proof}
\begin{rem}
\label{rem:existing_methods_remark}We note that the case of $\alt_{w}=1$
in Theorem \ref{thm:case_of_Un} and Theorem \ref{thm:case_of_Un_non_power}
are provable by existing methods, even though it might not seem so.
Note that the graph $R$ might not be a core graph, and even though
no two edges of the $w$-cycles of $R$ which are in the same position
within $w$ traverse the same edge of $R$, it may be that two vertices
of $w$-cycles may be in the same position within $w$ and also map
to the same vertex of $R$. These two missing properties prevent applying
\cite[Theorem~2]{Louder_Wilton_2017} and \cite[Theorem~4.1]{helferwise}.
However, a close reading of both proofs will show that these two missing
properties are not required in either proof with small adaptations.
In \cite[Section~4.5]{cassidy} a version of the $w$-cycle theorem
that also does not require $R$ to be a core graph is developed ,
although it is not written as a standalone theorem.
\end{rem}

\subsection{\label{subsec:extension_to_multiple_words}Extension to multiple
words}

The algebraic methods of \cite{cassidy} and \cite{MageeDeLaSalle2024}
can be extended to the case of multiple words. Let $G_{n}$ be either
$S_{n}$ or $\U n$. For a sequence $w_{1},\dots,w_{k}$ of words
over an alphabet $x_{1},\dots,x_{r}$ and a sequence $\psi^{1},\dots,\psi^{k}$
of stable irreducible characters over $G_{n}$, consider the character
expectation
\[
\expectation[x_{1},\dots,x_{r}\in G_{n}]{\psi_{n}^{1}\smallparens{w_{1}}\cdot\dots\cdot\psi_{n}^{k}\smallparens{w_{k}}}.
\]

Using the methods of this paper, we can show:
\begin{thm}
Let $X$ be the presentation complex of $\left\langle x_{1},\dots,x_{r}\vert w_{1},\dots,w_{k}\right\rangle $,
and let $G_{n}$ be either $S_{n}$ or $\U n$. If $X$ admits an
alternating bislim structure, then for every series of stable irreducible
characters $\psi^{1},\dots,\psi^{k}$ of $G_{n}$,
\[
\expectation[x_{1},\dots,x_{r}\in G_{n}]{\psi_{n}^{1}\smallparens{w_{1}}\cdot\dots\cdot\psi_{n}^{k}\smallparens{w_{k}}}=O\largeparens{\smallparens{\dim\psi^{1}}^{\alt_{w_{1}}}\cdot\dots\cdot\smallparens{\dim\psi^{k}}^{\alt_{w_{k}}}}.
\]
\end{thm}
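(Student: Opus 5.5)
We read the exponents in the statement as $-\alt_{w_{i}}$; this is the multi-word analogue of Theorem \ref{thm:alternating_both_cases}, and with positive exponents the bound would be trivial. The plan is to rerun the arguments of Sections \ref{subsec:Character-expectations-on-Sn} and \ref{subsec:Character-expectations-on-Un} with the single cycle $W$ replaced by the disjoint union $W=W_{1}\sqcup\dots\sqcup W_{k}$, where $W_{i}$ is a cycle spelling $w_{i}$. The map $W\to\Omega_{r}$ is exactly the data of the presentation complex $X$. Theorem \ref{thm:w_cycles_version_with_S} is already stated for $W$ a disjoint union of cycles with covering degrees $d_{w}$ varying per component, so it can be used verbatim.

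First, we extend the algebraic decomposition (\ref{eq:decomposition_into_parts}) to the product $\psi^{1}_{n}(w_{1})\cdots\psi^{k}_{n}(w_{k})$. For $S_{n}$ we follow \cite{cassidy}. Each factor $\psi^{i}_{n}$ of degree $d_{i}=\deg\psi^{i}$ is expanded via the same Schur--Weyl style expression, giving $d_{i}$ copies $d_{i}\dot{W}_{i}$ of the split cycle together with partial bijections $\pi^{i}_{j}$ along positions of $w_{i}$. Since the generators $x_{1},\dots,x_{r}$ are shared, the Weingarten/partition sum becomes a single sum over partitions $\partition$ of the vertices of $\bigsqcup_{i}d_{i}\dot{W}_{i}$. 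These partitions satisfy the admissibility conditions of Definition \ref{def:star_conditions}; the ``same position'' condition refers to positions in the disjoint union $\bigsqcup_{i}\dot{W}_{i}$. Each part is then $O(n^{\chi(\Gamma)-\sum\del\pi})$ for the analogous glued graph $\Gamma(\partition,\pivec)$. For $\U n$ we do the same with \cite{MageeDeLaSalle2024}: the surfaces $\Sigma$ now have boundary components reading $w_{i}$-cycles, $d_{i}$ of them (counted with degree) for each $i$. The forbidden-matching condition again says that no two boundary edges at the same position of the same $w_{i}$ traverse the same ribbon-graph edge. We expect this first step to be the main obstacle. It is not deep, but it requires checking that the representation-theoretic computations of \cite{cassidy,MageeDeLaSalle2024} factor over independent characters evaluated on different words with common letters, and that no cross-terms violate admissibility. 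We would first try to verify this by writing the product of the characters as a single character of $G_{n}$ acting on $\bigotimes_{i}(\mathbb{C}^{n})^{\otimes d_{i}}\otimes((\mathbb{C}^{n})^{*})^{\otimes e_{i}}$ and integrating each letter once.

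Second, the combinatorial bound. As in the lemma of Section \ref{subsec:Proof-of-Theorem}, we complete each $\pi^{i}_{j}$ to a full bijection, losing at most $\sum\del\pi$ in $\chi$. This makes $\Gamma(\emptyset,\overpivec)$ a disjoint union, over $i$, of degree-$d_{i}$ covers of $W_{i}$. We then pass to the graph $H(\partition,\overpivec)$ by gluing edges with $\partition$-equivalent sources, which does not decrease $\chi$. Admissibility gives double covering of each edge of $H$ and the no-repeated-edge condition, exactly as before. Theorem \ref{thm:w_cycles_version_with_S} then yields
\[
\chi(\Gamma)-\sum\del\pi\ \le\ \chi(H)\ \le\ -\sum_{i=1}^{k}d_{i}\alt_{w_{i}}.
\]
In the unitary case the ribbon graph $R$ directly satisfies the hypotheses of Theorem \ref{thm:w_cycles_version_with_S}, and $\chi(\Sigma)=\chi(R)$.

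Finally, since $\dim\psi^{i}_{n}=\Theta(n^{d_{i}})$ and the number of parts is independent of $n$, summing gives
\[
O\left(n^{-\sum_{i}d_{i}\alt_{w_{i}}}\right)=O\left(\prod_{i}(\dim\psi^{i}_{n})^{-\alt_{w_{i}}}\right).
\]
Characters with $d_{i}=0$ are trivial and contribute a factor of $1$; if all $d_{i}=0$, the statement is immediate.
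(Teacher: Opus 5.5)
Your proposal is the approach the paper has in mind. The paper states this theorem without a formal proof, saying only that it follows from its methods once the representation-theoretic parts of \cite{cassidy} and \cite{MageeDeLaSalle2024} are redone for several words. Your combinatorial step is the single-word argument of Section~\ref{subsec:Proof-of-Theorem} run with $W=\bigsqcup_i W_i$, which Theorem~\ref{thm:w_cycles_version_with_S} already accommodates, and your reading of the exponents as $-\alt_{w_i}$ is the intended one.

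The step you flag as the main obstacle is left unverified by the paper as well: that the multi-word expansion yields parts indexed by admissible partitions of the vertices of $\bigsqcup_i d_i\dot{W}_i$, with the same-position condition imposed only within a single $w_i$, and each part bounded by $O(n^{\chi(\Gamma)-\sum\del\pi})$. So your proposal, like the paper, remains a sketch at exactly that point.
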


However, the algebraic details required to reproduce the relevant
parts of \cite{cassidy} and \cite{MageeDeLaSalle2024} are too involved
to detail here, so we leave this without a formal proof.

\section{\label{sec:nonpositive_and_negative_immersions}Nonpositive and negative
immersions}

The original motivation of the $w$-cycle theorem was to prove the
nonpositive immersions, negative immersions and coherence properties
for various group presentations. We show that generic $r$-generator
$k$-relator presentations have negative immersions when $r-k\ge2$
and nonpositive immersions when $r-k\ge1$. We additionally derive
a new proof of the fact that when $r-k\ge1$ generic presentations
are coherent, and prove some results on the maximal irreducible curvature
(see \cite[Definition~6.3]{wilton_curvature_invariants}) of a generic
presentation complex.

Before we begin, we point the curious reader to \cite{wilton_curvature_invariants}
for an elegant survey of nonpositive immersions, negative immersions,
and many interesting conjectures.

\subsection{Nonpositive immersions}
\begin{defn}[{\cite[Definition~1.2]{Wise2020}$\,$}]
A complex $X$ has (contracting) \emph{nonpositive immersions} if
for every immersion $Y\looparrowright X$ with a finite connected
complex $Y$, either $\chi\smallparens Y\le0$ or $Y$ is contractible.
\end{defn}

This concept is intimately related to bislim structures.
\begin{thm}[{\cite[Corollary 4.4]{helferwise}$\,$}]
If $X$ is bislim then it has nonpositive immersions.
\end{thm}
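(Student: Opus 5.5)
Given an immersion $Y\looparrowright X$ with $Y$ finite and connected, I will show that either $\chi(Y)\le0$ or $Y$ is contractible. Here $X$ is bislim, so the heightened structure is good and we may assume it is $1$-alternating, as remarked after Definition \ref{def:alternating_heightened_complex}. An immersion is in particular a branched near-immersion with all branching degrees $1$. Hence $Y$ inherits an alternating bislim structure in which every 2-cell $C$ has $\alt_C\ge1$.

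The first step is a reduction to the case where every edge of $Y$ is internal. If $Y$ has an edge $e$ with at most one side, I will collapse it. When $e$ has no sides, deleting it either disconnects $Y$ or lowers the edge count; either way $\chi$ does not decrease. When $e$ has exactly one side, lying on a cell $C$, I remove the open edge together with $C$. This is an elementary collapse, so it preserves the homotopy type and $\chi$. The result is again a complex immersing into $X$, with the induced structure. I repeat until no such edges remain, obtaining a connected subcomplex $Y'\subseteq Y$. It is a deformation retract of $Y$ up to deletions of free edges (those with no sides), and each such deletion can only raise $\chi$. More carefully, I will track it as follows: collapses keep $\chi$ and homotopy type, while deleting a side-free edge $e$ in a cycle lowers $b_1$ and raises $\chi$ by one. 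So if $\chi(Y)>0$ then $\chi(Y')\ge\chi(Y)>0$. Deleting a separating side-free edge splits the complex, and I continue on a component of positive Euler characteristic, which is again an immersed subcomplex.

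In the reduced situation every edge is internal. If $Y'$ has no 2-cells, it is a graph with no edges, hence a single vertex. Otherwise Theorem \ref{thm:cycle_counting_complex_version_single_complex} gives $\chi(\skeleton{Y'})\le-\#\{\text{2-cells}\}$, so $\chi(Y')=\chi(\skeleton{Y'})+\#\{\text{2-cells}\}\le0$. Therefore, if $\chi(Y)>0$, every stage of the reduction must end in a single vertex. Tracing back, the reduction used only elementary collapses together with deletions of separating free edges, because a non-separating deletion would force $b_1(Y)>0$. Then $Y$ is built from a point by expansions and wedging along trees, so $Y$ is contractible. This establishes contracting nonpositive immersions.

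I expect the main obstacle to be the bookkeeping in the reduction: making sure the leftover components are still connected and immersed, and that the case analysis really yields contractibility of $Y$ rather than just $\chi(Y)\le1$. A cleaner route I would try first is to work component by component. I collapse greedily, which preserves homotopy type, until no free faces remain, giving $Y''$. If $Y''$ still contains a side-free edge, I argue via $H_1$ and the graph of components that $\chi\le0$ unless the whole thing is a tree of points, which is contractible.
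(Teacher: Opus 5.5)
Your overall route is the paper's: collapse along edges with exactly one side, which is a homotopy equivalence, until every edge is internal. Then apply the cycle-counting theorem with $\alt_C\ge 1$ (the paper cites Theorem \ref{thm:w_cycles_complex_version_two_complexes}, you cite Theorem \ref{thm:cycle_counting_complex_version_single_complex}). The paper's proof is a three-line sketch that never mentions edges with no sides, so you are right that they need handling.

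Your main argument, however, does not dispose of them. The step ``a non-separating deletion would force $b_1(Y)>0$'' proves nothing: $b_1>0$ is compatible with $\chi>0$ when $b_2$ is large, and you have no control over $b_2$. More seriously, at each separating deletion you keep one component of positive Euler characteristic and never examine the other. Knowing that the followed branch ends at a point tells you nothing about the discarded components. A priori one of them could have $\chi\ge 2$, or be non-contractible with $\chi=1$. That would permit a non-separating deletion (a wedged circle) or a non-contractible piece of $Y$ while still $\chi(Y)>0$. So the conclusion that $Y$ is ``built from a point by expansions and wedging along trees'' is unsupported. Also, once you pass to a component, $\chi(Y')\ge\chi(Y)$ is no longer justified; only positivity survives.

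What is missing is a bound valid for \emph{every} connected immersed complex, not just the branch you follow: $\chi(Z)\le 1$, with equality only if $Z$ is contractible. You can get it by induction on the number of cells, or by carrying out the route you mention in your last paragraph, as follows.

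Collapse greedily to a connected $Z\simeq Y$ in which no edge has exactly one side. Let $F$ be the set of side-free edges of $Z$, and let $Z_1,\dots,Z_m$ be the components of $Z$ after removing the open edges of $F$. Deleting side-free edges does not change the sides of any other edge, and every closed 2-cell lies in a single $Z_j$. Hence each $Z_j$ is a finite connected immersed complex all of whose edges are internal, so $Z_j$ is either a point or has $\chi(Z_j)\le 0$. Since $Z$ is connected, the graph with vertices $Z_j$ and edges $F$ is connected, so $|F|\ge m-1$. Letting $p$ be the number of $Z_j$ that are points,
\[
\chi(Y)=\chi(Z)=\sum_{j}\chi(Z_j)-|F|\le p-(m-1)\le 1 .
\]
Equality forces $p=m$ and $|F|=m-1$, i.e.\ $Z$ is a tree, so $Y$ is contractible. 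This replaces your ``tracing back'' step and completes the proof.
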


For completeness, we reprove their connection:
\begin{proof}
If all edges of $Y$ are internal then Theorem \ref{thm:w_cycles_complex_version_two_complexes}
shows that $\chi\smallparens Y\le0$ and we are done. If $Y$ has
a non-internal edge then removing its neighboring 2-cell is a homotopy
equivalence. Keep removing 2-cells until only internal edges remain,
in which case we are done, or $Y$ becomes a graph, in which case
it is contractible.
\end{proof}
By Theorem \ref{thm:generic_bislim_structures_generic_case} a generic
$r$-generator $\smallparens{r-1}$-relator presentation admits a
good bislim structure, and therefore we obtain
\begin{thm}
\label{thm:generic-deficiency-1-NPI}A generic $r$-generator $\smallparens{r-1}$-relator
presentation has nonpositive immersions.
\end{thm}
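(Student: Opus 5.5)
The plan is to combine Theorem \ref{thm:generic_bislim_structures_generic_case} with the preceding theorem that bislim complexes have nonpositive immersions (\cite[Corollary 4.4]{helferwise}, reproved above).

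First, take $k=r-1$ relators and choose alternation degrees $a_{1}=\dots=a_{r-1}=1$. Then $\sum_{i}a_{i}=r-1<r$. Theorem \ref{thm:generic_bislim_structures_generic_case} therefore shows that a generic $r$-generator $(r-1)$-relator presentation complex $X$ admits a bislim structure in which every relator is $1$-alternating. In particular every 2-cell $C$ has $H_{C}$ and $L_{C}$ nonempty, so the structure is good and alternating. The quantitative form, Theorem \ref{thm:generic_words_with_explicit_probability}, even gives failure probability $e^{-\Omega(\ell^{c})}$, although only genericity is needed here.

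Second, apply the reproved statement to this $X$. Let $Y\looparrowright X$ be an immersion with $Y$ finite and connected. An immersion is in particular a branched near-immersion, so the bislim structure pulls back. If every edge of $Y$ is internal and $Y$ is not a point, Theorem \ref{thm:w_cycles_complex_version_two_complexes} gives
\[
\chi\largeparens{\skeleton Y}\le-\sum_{C}\#_{C}(Y)\,\alt_{C}=-\#\text{2-cells}(Y).
\]
Adding the 2-cells back then yields $\chi(Y)\le0$.

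Otherwise, $Y$ has a non-internal edge. Removing the adjacent 2-cell together with that free edge is a collapse, which preserves homotopy type. Restricting the immersion to the smaller subcomplex keeps it an immersion, so we iterate. The process ends in one of two ways. Either every remaining edge is internal, and then $\chi\le0$ as above, with $\chi$ unchanged by the collapses. Or a graph remains, and that graph has $\chi\le0$ unless it is a tree, in which case $Y$ is contractible.

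The only point needing care is the degenerate situations in these steps: $Y$ collapsing to a single vertex, and the collapses preserving connectivity. Both are routine, and everything else is direct invocation of earlier results.
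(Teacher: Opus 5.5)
Your route is the paper's. Theorem \ref{thm:generic_bislim_structures_generic_case} with $a_1=\dots=a_{r-1}=1$ gives a good $1$-alternating bislim structure. The Helfer--Wise argument reproved just before the statement then gives nonpositive immersions: Theorem \ref{thm:w_cycles_complex_version_two_complexes} when every edge is internal, collapses otherwise. Your collapse step, which removes the free edge together with its 2-cell, is stated more carefully than in the paper's sketch. So is your treatment of a leftover graph that is not a tree.

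There is one hole, which the paper's sketch shares: your final dichotomy is not exhaustive. A non-internal edge may have \emph{no} sides at all, and then there is no 2-cell to collapse it with. So the process can a priori stop at a complex $Y_0$ with the following properties. It still has 2-cells, and every edge lying on a 2-cell is internal, since no edge with exactly one side is left. But it also contains edges lying on no 2-cell. Such a $Y_0$ is neither a graph nor a complex all of whose edges are internal, so Theorem \ref{thm:w_cycles_complex_version_two_complexes} does not apply to it.

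The repair takes a few lines. Let $Z_1,\dots,Z_c$, with $c\ge 1$, be the connected components of the union of the closed 2-cells of $Y_0$. Each $Z_j$ immerses in $X$ and is not a vertex. Each $Z_j$ also has all edges internal, because every 2-cell carrying a side of an edge of $Z_j$ lies in $Z_j$. Hence $\chi(Z_j)\le 0$. Let $V'$ and $E'$ be the vertices and edges of $Y_0$ not in $\bigcup_j Z_j$. Contracting each $Z_j$ to a point gives a connected graph with $c+|V'|$ vertices and $|E'|$ edges, so $|E'|\ge c+|V'|-1$. Therefore
\[
\chi(Y)=\chi(Y_0)=\sum_{j}\chi(Z_j)+|V'|-|E'|\le 1-c\le 0 .
\]
With this case added, the proof is complete.
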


Previously, a somewhat weaker variant of this statement was known
to hold with positive probability. By \cite[Theorem A]{Kielak2022},
with positive asymptotic probability, a generic $r$-generator $\smallparens{r-1}$-relator
presentation complex $X$ has a free-by-cyclic fundamental group.
By \cite[Theorem~6.1]{Wise2020}, any free-by-cyclic group $G$ has
a complex $X'$ with nonpositive immersions where $\pi_{1}\smallparens{X'}=G$.
However, $X'$ is not guaranteed to be the same as $X$. It is true
that with high probability both $X$ and $X'$ are $K\smallparens{G,1}$
spaces, so $X$ and $X'$ are homotopic, however, this is not enough
to preserve nonpositive immersions.

\subsection{Negative immersions}
\begin{defn}[{\cite[Definition~1.2 ]{Wise2020}$\,$}]
A complex $X$ has (contracting) \emph{negative immersions} if there
exists a positive constant $c$ such that for every immersion $Y\looparrowright X$
with finite connected $Y$ without non-internal edges, either $\chi\smallparens Y\le-c\cdot\#\text{2-cells}\smallparens Y$
or $Y$ is a single vertex.
\end{defn}

In fact there are multiple alternative definitions of negative immersions
in the literature (e.g., \cite[Definition~14.1]{Wise2004}, \cite[Section~3.3]{Louder2021}
and \cite[Definition~6.3]{wilton_curvature_invariants}), though the
differences are relatively minor and our results apply to all variants
of negative immersions with small adjustments.

Let $X$ be a generic $r$-generator $k$-relator presentation complex.
If $r>2k$ then $X$ admits a $\smallparens{2,\dots,2}$-alternating
bislim structure by Theorem \ref{thm:generic_bislim_structures_generic_case}.
By Theorem \ref{thm:w_cycles_complex_version_two_complexes}, if $Y$
is a finite complex without non-internal edges which immerses into
$X$, then $\chi\smallparens Y\le-\text{\#2-cells}\smallparens Y$,
and thus $X$ has negative immersions.

This method can only work when $r>2k$, since otherwise $X$ cannot
admit a $\smallparens{2,\dots,2}$-alternating bislim structure. In
this section we strengthen this idea into the following:
\begin{thm}
\label{thm:generic_deficiency_2}A generic $r$-generator $k$-relator
presentation with $r\ge k+2$ has negative immersions.
\end{thm}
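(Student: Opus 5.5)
We need $r\ge k+2$, so $\sum a_i$ can be as large as $r-1\ge k+1$. A uniform $(2,\dots,2)$ structure is unavailable when $r\le 2k$, so the plan is to use several bislim structures at once. For each $j\in\{1,\dots,k\}$, let $\mathbf{a}^{(j)}$ be the vector with $a_j=2$ and $a_i=1$ for $i\neq j$. Then $\sum_i a^{(j)}_i=k+1<r$. By Theorem \ref{thm:generic_bislim_structures_generic_case} (applied $k$ times, with a union bound over the finitely many failure events), a generic presentation complex $X$ simultaneously admits, for every $j$, a bislim structure $\smallparens{X,H^{(j)},L^{(j)}}$ in which $C_j$ is $2$-alternating and every other 2-cell is $1$-alternating.

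Now let $Y\looparrowright X$ be an immersion with $Y$ finite, connected, with no non-internal edges, and not a single vertex. An immersion is in particular a branched near-immersion (with no branching), so Theorem \ref{thm:w_cycles_complex_version_two_complexes} applies to each of the $k$ structures. Applied to the $j$-th structure it gives
\[
\chi\largeparens{\skeleton Y}\ \le\ -\sum_{i}\#_{C_i}\smallparens Y-\#_{C_j}\smallparens Y .
\]
Averaging these $k$ inequalities over $j$ gives
\[
\chi\largeparens{\skeleton Y}\le-\left(1+\tfrac1k\right)\#\text{2-cells}\smallparens Y .
\]
Since $\chi\smallparens Y=\chi\largeparens{\skeleton Y}+\#\text{2-cells}\smallparens Y$, we obtain $\chi\smallparens Y\le-\frac1k\#\text{2-cells}\smallparens Y$. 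This is negative immersions with $c=1/k$. Equivalently, one can take the maximum over $j$ instead of the average: picking $j$ with $\#_{C_j}\smallparens Y$ maximal gives $c=1/k$ as well.

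Remaining checks:
\begin{itemize}
\item Every $\mathbf{a}^{(j)}$ satisfies the hypothesis $\sum_i a_i<r$, which uses exactly $r\ge k+2$.
\item Theorem \ref{thm:w_cycles_complex_version_two_complexes} counts preimages of $C_i$, which here equals the number of 2-cells of $Y$ mapping to $C_i$, since there is no branching.
\item If $Y$ has no 2-cells, the definition is vacuous: $Y$ is a graph with no non-internal edges, and a graph has no sides at all, so every edge would be non-internal. Hence $Y$ has no edges, and being connected it is a single vertex.
\end{itemize}
The main point needing care is the genericity step: each application of Theorem \ref{thm:generic_bislim_structures_generic_case} holds with probability tending to $1$, and intersecting finitely many such events still has probability tending to $1$.
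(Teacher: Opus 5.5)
Your proof is correct and is essentially the paper's argument. The paper also averages the bounds of Theorem \ref{thm:w_cycles_complex_version_two_complexes} over several generic bislim structures (convexity of $\mathcal{A}_X$), but it averages the non-good vectors $(0,\dots,0,r-1,0,\dots,0)$ to get $\left(\frac{r-1}{k},\dots,\frac{r-1}{k}\right)\in\mathcal{A}_X$. This gives $c=\frac{r-1-k}{k}$, which equals your $c=\frac{1}{k}$ when $r=k+2$ and is better for larger $r$; your vectors $(1,\dots,1,2,1,\dots,1)$ have the minor advantage of using only good bislim structures.
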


We note that by \cite[Theorem~1.4]{Wise2020}, negative immersions
implies coherence, and therefore we get as a corollary a new proof
that generic $r$-generator $\smallparens{r-2}$-relator presentations
are coherent. This was first proven by \cite[Theorem B]{Kielak2022}.
For more about coherence, see Section \ref{subsec:Coherence}. We
now introduce a way to mix together multiple bislim structures in
order to prove Theorem \ref{thm:generic_deficiency_2}:
\begin{defn}
\label{def:euler_characteristic-polyhedron}Let $X$ be a 2-complex
with $k$ 2-cells $C_{1},\dots,C_{k}$. We say that a branched near-immersion
$Y\to X$ is \emph{admissible} if $Y$ is finite, all the edges in
$Y$ are internal, and $Y$ is not a single point. We denote by $\#_{C_{i}}Y$
the number of 2-cells of $Y$ which map to $C_{i}$. The\emph{ alternation
degree set} is the set $\mathemph{\mathcal{A}_{X}}\subseteq\mathbb{R}^{k}$
defined by
\[
\mathcal{A}_{X}=\left\{ \smallparens{\alpha_{1},\dots,\alpha_{k}}\in\mathbb{R}^{k}\,\middle\vert\,\substack{\chi\largeparens{\skeleton Y}\,\le\,-\sum_{i}\alpha_{i}\cdot\#_{C_{i}}Y\\
\text{for all admissible }Y\to X
}
\right\} .
\]
\end{defn}

It follows by definition that $\mathcal{A}_{X}$ is convex. If $\mathcal{A}_{X}$
contains the vector $\smallparens{1,\dots,1}$ then $X$ has nonpositive
immersions. Similarly, if $\mathcal{A}_{X}$ contains a vector where
all entries are greater than $1$ then $X$ has negative immersions.
\begin{lem}
\label{lem:euler_characteristic_polyhedra_generic_case}For integers
$r$ and $k$, a generic $r$-generator $k$-relator presentation
complex $X$ satisfies 
\[
\largeparens{\frac{r-1}{k},\dots,\frac{r-1}{k}}\in\mathcal{A}_{X}.
\]
\end{lem}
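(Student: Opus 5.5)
The plan is to show that $\mathcal{A}_X$ contains enough integer vectors for $(\frac{r-1}{k},\dots,\frac{r-1}{k})$ to lie in their convex hull, and then use convexity of $\mathcal{A}_X$. Concretely, for every tuple of nonnegative integers $(a_1,\dots,a_k)$ with $\sum_i a_i=r-1$, Theorem \ref{thm:generic_bislim_structures_generic_case} applies, since $\sum_i a_i<r$. So a generic presentation admits a bislim structure in which $w_i$ is $a_i$-alternating. There are finitely many such tuples, depending only on $r$ and $k$, so by a union bound a generic $X$ admits all of these structures simultaneously.

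Fix one such structure. For any admissible $Y\to X$, Theorem \ref{thm:w_cycles_complex_version_two_complexes} gives
\[
\chi\largeparens{\skeleton Y}\le-\sum_i a_i\cdot\#_{C_i}Y ,
\]
where the branching multiplicity is already absorbed into $\#_{C_i}Y$. This is exactly the statement $(a_1,\dots,a_k)\in\mathcal{A}_X$. There is one subtlety: when $a_i=0$ the structure is not good, so we need Theorem \ref{thm:w_cycles_complex_version_two_complexes} for not-necessarily-good alternating structures. This is fine, because that theorem, Theorem \ref{thm:cycle_counting_complex_version_single_complex} and Lemma \ref{lem:no_tree_islands} are stated for general alternating bislim structures in the sense of Definition \ref{def:bislim-definition-general}. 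Their proofs only use Theorem \ref{thm:bislim_equivalent_condition_simply_connected_no_internal_2_cell} and the count $|H|=\sum\alt_C$, and a 2-cell with $\alt_C=0$ simply contributes nothing. I would also check that the relaxed criterion, and hence Theorem \ref{thm:generic_bislim_structures_generic_case}, indeed covers the case where some $a_i=0$; in that case relator $i$ carries no extremal sides and the conditions on it are vacuous except the small cancellation one.

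To finish, take the $k$ cyclic shifts of a balanced integer vector, or more simply, average over all permutations of one fixed tuple $(a_1,\dots,a_k)$ with sum $r-1$. Each coordinate of this average equals $\frac{r-1}{k}$, and $\mathcal{A}_X$ is convex, so $(\frac{r-1}{k},\dots,\frac{r-1}{k})\in\mathcal{A}_X$. For example, one can use the tuple $(r-1,0,\dots,0)$ and its $k$ permutations, which needs only $k$ generic events.

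The main obstacle I expect is the degenerate structures with zero alternation on some relators. I need to confirm that Theorem \ref{thm:generic_bislim_structures_generic_case} and the Euler characteristic bound remain valid there. If that fails, the fallback is to use only tuples with all $a_i\ge1$ when $r-1\ge k$. If $r-1<k$, this fallback is unavailable and the zero entries are unavoidable, so the non-good case must be handled directly.
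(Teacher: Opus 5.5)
Your proposal is correct and follows the paper's proof: Theorems \ref{thm:generic_bislim_structures_generic_case} and \ref{thm:w_cycles_complex_version_two_complexes} put the $k$ vectors $(0,\dots,0,r-1,0,\dots,0)$ in $\mathcal{A}_X$ for generic $X$, and convexity of $\mathcal{A}_X$ then gives their average $(\frac{r-1}{k},\dots,\frac{r-1}{k})$. The zero-alternation subtlety you flag, which the paper passes over silently, is indeed covered: Theorem \ref{thm:generic_bislim_structures_generic_case} explicitly allows $a_i=0$, and the Euler characteristic bounds are proved for general, not necessarily good, alternating bislim structures in the sense of Definition \ref{def:bislim-definition-general}.
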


\begin{proof}
Theorems \ref{thm:generic_bislim_structures_generic_case} and \ref{thm:w_cycles_complex_version_two_complexes}
show that all nonnegative integer vectors $\smallparens{\alpha_{1},\dots,\alpha_{k}}$
where $\sum_{i}\alpha_{i}<r$ are contained in $\mathcal{A}_{X}$
for generic $X$. Thus for a generic $r$-generator $k$-relator 2-complex
$X$ we know that all vectors of the form $\smallparens{0,\dots,0,r-1,0,\dots,0}$
are contained in $\mathcal{A}_{X}$. Their average is $\largeparens{\frac{r-1}{k},\dots,\frac{r-1}{k}}$,
so it is contained in $\mathcal{A}_{X}$.
\end{proof}
Theorem \ref{thm:generic_deficiency_2} follows: indeed, if $r-k\ge2$,
we have that $\frac{r-1}{k}>1$, so a generic $r$-generator $k$-relator
presentation complex $X$ has negative immersions.

\subsection{\label{subsec:Hyperbolicity-and-Negative-immersions}Hyperbolicity
and Negative immersions}

In this section we show how our techniques can be used to prove that
2-complexes have hyperbolic fundamental groups. This is related to
a conjecture of Wise that if a finite 2-complex $X$ has negative
immersions then $\pi_{1}\smallparens X$ is hyperbolic \cite[Conjecture~14.2]{Wise2004}.
Louder and Wilton repeat this conjecture with a slightly weaker notion
of negative immersions \cite[Conjecture~1.9]{Louder2018}. Linton
proves this conjecture for one-relator groups \cite[Theorem~7.2]{Linton2022}.

We show that whenever our techniques prove that a 2-complex $X$ has
negative immersions, we can also prove that $\pi_{1}\smallparens X$
is hyperbolic, corroborating the conjecture.

We expand the definition of the Euler characteristic polyhedron by
allowing complexes $Y$ with non-internal edges:

\global\long\def\aa{\overline{\mathcal{A}}}%

\begin{defn}
\label{def:expanded_euler_characteristic_polyhedron}Let $X$ be a
complex with $k$ 2-cell $C_{1},\dots,C_{k}$. We say that a branched
near-immersion $Y\to X$ is \emph{boundary-admissible} if $Y$ is
finite and $Y$ is not a single point. The\emph{ expanded alternation
degree set} is the set $\mathemph{\aa_{X}}\subseteq\mathbb{R}^{k+1}$
defined by
\[
\mathcal{\aa}_{X}=\left\{ \smallparens{\beta;\alpha_{1},\dots,\alpha_{k}}\in\mathbb{R}^{k+1}\,\middle\vert\,\substack{\chi\largeparens{\skeleton Y}\,\le\,-\sum_{i}\alpha_{i}\cdot\#_{C_{i}}Y+\beta\left|\boundary Y\right|,\,\beta>0\\
\text{for all boundary-admissible }Y\to X
}
\right\} ,
\]
where $\boundary Y$ is the set of non-internal edges of $Y$.
\end{defn}

As before, $\aa_{X}$ is a convex set. By Theorem \ref{thm:cycle_counting_complex_version_with_boundary}
it follows that:
\begin{prop}
\label{prop:bislim_structure_to_expanded_euler_characteristic_polyhedron}If
a 2-complex $X$ admits an $\smallparens{\alpha_{1},\dots,\alpha_{k}}$-alternating
bislim structure, then $\smallparens{2;\alpha_{1},\dots,\alpha_{k}}\in\aa_{X}$.
\end{prop}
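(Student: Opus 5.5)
The plan is to reduce the proposition to Theorem \ref{thm:cycle_counting_complex_version_with_boundary}, applied to the heightened structure pulled back to $Y$. Fix an $\smallparens{\alpha_{1},\dots,\alpha_{k}}$-alternating bislim structure $\heightened$ on $X$, and let $\varphi:Y\to X$ be a boundary-admissible branched near-immersion. Since $\varphi$ is a branched near-immersion, it induces a heightened structure $\heightened[Y][\varphi^{-1}\smallparens H][\varphi^{-1}\smallparens L]$ on $Y$. The first step is to check that this induced structure is alternating and bislim.

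Alternation is local to each 2-cell. A 2-cell $C'$ of $Y$ mapping to $C_{i}$ with branching degree $d$ has boundary that reads the boundary of $C_{i}$ exactly $d$ times. Its extremal sides therefore alternate, and $\alt_{C'}=d\cdot\alt_{C_{i}}\ge d\cdot\alpha_{i}$, as in the proof of Theorem \ref{thm:w_cycles_complex_version_two_complexes}.

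For bislimness we use Definition \ref{def:bislim-definition-general}, since the structure need not be good. Let $D\to Y$ be a reduced disk diagram without internal 2-cells. The composition $D\to Y\to X$ is a branched near-immersion, because composites of branched near-immersions are again branched near-immersions: distinct sides of an edge map to distinct sides at each stage. So $D$ is a reduced disk diagram in $X$ without internal 2-cells. Its induced heightened structure from $Y$ coincides with the one induced from $X$, so $\inducedGamma D$ is acyclic. Non-degeneracy of $Y$ follows from that of $X$: a contractible attaching loop in $Y$ would map to a contractible loop in $X$.

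Now apply Theorem \ref{thm:cycle_counting_complex_version_with_boundary} to $Y$, which is finite and not a single point:
\[
\chi\largeparens{\skeleton Y}\ \le\ -\sum_{C'}\alt_{C'}+2\left|\boundary Y\right|\ \le\ -\sum_{i}\alpha_{i}\cdot\#_{C_{i}}Y+2\left|\boundary Y\right|.
\]
Here $\#_{C_{i}}Y$ counts 2-cells with multiplicity by branching degree, matching the convention of Theorem \ref{thm:w_cycles_complex_version_two_complexes}. With $\beta=2>0$, this is exactly the defining inequality of $\aa_{X}$.

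The main point needing care is that Theorem \ref{thm:cycle_counting_complex_version_with_boundary} holds for alternating bislim structures that are not good. This matters because in $Y$ some extremal sides may lie on non-internal edges, and some 2-cells may have $\alpha_{i}=0$. Its proof goes through Lemma \ref{lem:no_tree_islands_with_boundary}, which in turn uses Lemma \ref{lem:neighborhood_technical_lemma} and Theorem \ref{thm:bislim_equivalent_condition_simply_connected_no_internal_2_cell}. Both of these are stated for general bislim structures and require only simply connected $M$ without internal 2-cells. So I would check that this chain of results never uses goodness, and then the argument above closes.
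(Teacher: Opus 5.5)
Your proposal is correct and follows the paper's route: the paper obtains the proposition directly from Theorem~\ref{thm:cycle_counting_complex_version_with_boundary} applied to the heightened structure pulled back along the boundary-admissible map $Y\to X$, with $\beta=2$. This is the same pattern by which Theorem~\ref{thm:w_cycles_complex_version_two_complexes} follows from Theorem~\ref{thm:cycle_counting_complex_version_single_complex}, and your checks (composites of branched near-immersions, $\alt_{C'}=d\cdot\alt_{C_i}$, and that goodness is never used) are exactly the details the paper leaves implicit. Whether $\#_{C_i}Y$ counts branching multiplicity does not matter, since $\alpha_i\ge 0$.
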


\begin{lem}
\label{lem:proving_hyperbolicity}Let $X$ be a 2-complex. If $\smallparens{\beta;\alpha_{1},\dots,\alpha_{k}}\in\aa_{X}$
where all $\alpha_{i}>1$, then $\pi_{1}\smallparens X$ is hyperbolic.
\end{lem}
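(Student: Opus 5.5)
We plan to prove hyperbolicity through a linear isoperimetric inequality for reduced disk diagrams. Since $\pi_1(X)$ is finitely presented by the 1-skeleton and 2-cells of $X$, it is enough to find a constant $K$ such that every reduced disk diagram $D\to X$ satisfies $\#\text{2-cells}(D)\le K\cdot|\partial_{\text{path}} D|$, where $\partial_{\text{path}} D$ is the length of the boundary path. Minimal-area diagrams are reduced, because a reduction pair can be cancelled. Branched 2-cells do not occur in minimal diagrams, and in any case they only raise the count. So a linear Dehn function follows from such a bound, and a linear Dehn function is equivalent to hyperbolicity.

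The main step is to apply the defining inequality of $\aa_X$ to $Y=D$. A reduced disk diagram $D$ with at least one 2-cell is a finite branched near-immersion that is not a single point, so it is boundary-admissible. Set $\alpha=\min_i\alpha_i>1$. Then
\[
\chi\largeparens{\skeleton D}\ \le\ -\alpha\cdot\#\text{2-cells}(D)+\beta\left|\boundary D\right|,
\]
where $\boundary D$ is the set of non-internal edges of $D$. Because $D$ is contractible, $\chi(\skeleton D)=1-\#\text{2-cells}(D)$. Substituting gives
\[
(\alpha-1)\,\#\text{2-cells}(D)\ \le\ \beta\left|\boundary D\right|-1.
\]
Every non-internal edge lies on the boundary path of $D$, so $|\boundary D|\le|\partial_{\text{path}}D|$. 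This yields $\#\text{2-cells}(D)\le\frac{\beta}{\alpha-1}|\partial_{\text{path}}D|$, which is the desired linear isoperimetric inequality.

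The main obstacle is checking the bookkeeping in this substitution. First, edges of $D$ with no sides at all, such as spurs or tree-like parts of the diagram, are non-internal and lie on the boundary path, possibly traversed twice. They therefore only help the inequality. Second, a diagram that is a single vertex, or a tree with no 2-cells, has area zero and needs no argument. Third, disk diagrams that are not homeomorphic to disks, for example several disks joined at cut vertices or by arcs, are still contractible, so $\chi=1$ continues to hold. With these points settled, the standard theorem that a finitely presented group with a linear Dehn function is hyperbolic (Gromov; see also Bridson--Haefliger) finishes the proof.
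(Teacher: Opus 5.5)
Your proposal is correct and follows the paper's own proof. You apply the defining inequality of $\overline{\mathcal{A}}_X$ to a reduced disk diagram, use $\chi(D)=1$ to obtain $(\alpha-1)\,\#\text{2-cells}(D)<\beta\,|\partial D|$ with $\alpha=\min_i\alpha_i$, and conclude hyperbolicity from this linear isoperimetric inequality. Your additional bookkeeping — that minimal-area diagrams are reduced, that $|\partial D|$ is at most the boundary path length, and that non-disk diagrams are still contractible — fills in details the paper leaves implicit.
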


\begin{proof}
Let $Y$ be a reduced disk diagram in $X$. Since it is topologically
a disk, we have $\chi\smallparens Y=1$. Since it is reduced, the
map $Y\to X$ is a near-immersion, so $Y\to X$ is boundary-admissible.
Denote $\min_{i}\alpha_{i}$ by $\alpha$, and denote by $\text{\#Area}\smallparens Y$
the number of 2-cells in $Y$. 
\begin{eqnarray*}
1 & = & \chi\smallparens Y\\
 & = & \text{\#Area}\smallparens Y+\chi\largeparens{\skeleton Y}\\
 & \le & \text{\#Area}\smallparens Y-\alpha\sum_{i}\#_{C_{i}}Y+\beta\left|\boundary Y\right|\\
 & = & -\smallparens{\alpha-1}\text{\#Area}\smallparens Y+\beta\left|\boundary Y\right|
\end{eqnarray*}
and we get that
\[
\smallparens{\alpha-1}\text{\#Area}\smallparens Y\ <\ \beta\left|\boundary Y\right|.
\]
Since $\beta>0$ and $\alpha>1$, this is a linear isoperimetric inequality,
that applies to all reduced disk diagrams in $X$. Thus $X$ is hyperbolic.
\end{proof}
Therefore, whenever bislim structures show that a vector $\smallparens{\alpha_{1},\dots,\alpha_{k}}$
belongs to $\mathcal{A}_{X}$, then by using Theorem \ref{thm:cycle_counting_complex_version_with_boundary},
it can also be shown that $\smallparens{2;\alpha_{1},\dots,\alpha_{k}}\in\aa_{X}$.
Thus whenever bislim structures show that $X$ has negative immersions,
they also show that $X$ is hyperbolic.

The following lemma expands the range of complexes which can be shown
to have negative immersions and hyperbolicity.
\begin{lem}
\label{lem:expanded_euler_characteristic_polyhedron_adding_cells}Let
$X$ be a 2-complex with $k$ 2-cells $C_{1},\dots,C_{k}$. Let $X'$
be the complex obtained by removing $C_{k}$ from $X$, so $X'$ has
$k-1$ 2-cells. If $\smallparens{\beta;\alpha_{1},\dots,\alpha_{k-1}}\in\aa_{X'}$,
then 
\[
\largeparens{\beta;\alpha_{1},\dots,\alpha_{k-1},-\beta\cdot\left|\boundary C_{k}\right|}\in\aa_{X}.
\]
\end{lem}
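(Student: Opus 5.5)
Given a boundary-admissible $Y\to X$, we must show
\[
\chi\largeparens{\skeleton Y}\ \le\ -\sum_{i<k}\alpha_{i}\,\#_{C_{i}}Y+\beta\,\left|\boundary C_{k}\right|\,\#_{C_{k}}Y+\beta\left|\boundary Y\right|.
\]
The plan is to delete the preimages of $C_{k}$ from $Y$, which yields a complex over $X'$, apply the hypothesis there, and then account for how the boundary grows.

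Let $Y'$ be obtained from $Y$ by removing every 2-cell that maps to $C_{k}$. The 1-skeleton is untouched, so $\skeleton{Y'}=\skeleton Y$. Moreover $\#_{C_{i}}Y'=\#_{C_{i}}Y$ for $i<k$. The restricted map $Y'\to X'$ is still a branched near-immersion, since distinct sides of an edge in $Y'$ were already distinct sides in $Y$ and their images avoid $C_{k}$. The complex $Y'$ is finite, and it is not a single point because $Y$ is not a single point and removing 2-cells does not change the skeleton. The only exceptional case is a $Y$ with no edges, which cannot happen for a non-degenerate $Y$ carrying 2-cells; if $Y$ has no edges it has no 2-cells, and then $Y'=Y$. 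Hence $Y'\to X'$ is boundary-admissible, and the hypothesis gives
\[
\chi\largeparens{\skeleton Y}\ \le\ -\sum_{i<k}\alpha_{i}\#_{C_{i}}Y+\beta\left|\boundary Y'\right|.
\]

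The key counting step is the bound $\left|\boundary Y'\right|\le\left|\boundary Y\right|+\left|\boundary C_{k}\right|\cdot\#_{C_{k}}Y$. An edge of $Y'$ that is non-internal is either already non-internal in $Y$, or it lost at least one side when the $C_{k}$-cells were removed. Every lost side is a side of some removed 2-cell $C'$. The boundary $\boundary C'$ has $d\cdot\left|\boundary C_{k}\right|$ sides, where $d$ is the branching degree of $C'$. With $\#_{C_{k}}Y$ counted with multiplicity by branching degree, as in Theorem \ref{thm:w_cycles_complex_version_two_complexes}, the number of newly non-internal edges is therefore at most $\left|\boundary C_{k}\right|\cdot\#_{C_{k}}Y$. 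Combining this with the previous display, and using $\beta>0$, gives the required inequality. The condition $\beta>0$ itself is inherited from the hypothesis.

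The only delicate point is the bookkeeping in the boundary estimate. First, $\#_{C_{k}}$ must be counted with multiplicity so that branched cells are handled correctly. If the definition of $\#_{C_{i}}$ counts cells without multiplicity, I would first check whether branched cells can be replaced, or whether the intended reading is with multiplicity; with multiplicity the argument goes through directly. Second, an edge that was non-internal in $Y$ and remains so must not be counted twice. This is harmless, because the bound is obtained by splitting the non-internal edges of $Y'$ into those already in $\boundary Y$ and those charged to a removed side, and this split is a union bound.
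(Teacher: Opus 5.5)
Your proposal is correct and follows the paper's proof essentially step for step. The paper also deletes the $C_k$-cells to get a boundary-admissible $Y'\to X'$ with the same 1-skeleton, applies the hypothesis, and uses $|\partial Y'|\le|\partial Y|+|\partial C_k|\cdot\#_{C_k}Y$, with cells over $C_k$ counted with branching multiplicity (the paper's ``counted correctly with multiplicity''). Your extra checks, namely that the restriction is a near-immersion, that $Y'^{(1)}=Y^{(1)}$, and the use of $\beta>0$ when substituting, only make explicit what the paper leaves implicit.
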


\begin{proof}
By definition of $\aa_{X}$ we need to show that for all boundary-admissible
$Y\to X$, 
\[
\chi\largeparens{\skeleton Y}\ \le\ -\sum_{i=1}^{k-1}\alpha_{i}\#_{C_{i}}Y+\beta\left|\boundary C_{k}\right|\#_{C_{k}}Y+\beta\left|\boundary Y\right|.
\]
 Construct a new complex $Y'$ with a new boundary-admissible map
$Y'\to X'$ by removing from $Y$ all 2-cells mapping to $C_{k}$.
Since $\smallparens{\beta;\alpha_{1},\dots,\alpha_{k-1}}\in\aa_{X'}$
we know that 
\begin{equation}
\chi\largeparens{\skeleton{Y'}}\ \le\ -\sum_{i=1}^{k-1}\alpha_{i}\#_{C_{i}}Y+\beta\left|\boundary Y'\right|.\label{eq:hyperbolicity_something}
\end{equation}
 Removing a 2-cell which maps to $C_{k}$ from $Y$ can create at
most $\left|\boundary C_{k}\right|$ new non-internal edges, when
counted correctly with multiplicity, so $\left|\boundary Y'\right|\le\left|\boundary Y\right|+\left|\boundary C_{k}\right|\cdot\#_{C_{k}}Y$.
Together with \ref{eq:hyperbolicity_something} we get the desired
result.
\end{proof}
\begin{example}
Consider a presentation complex $X=\left\langle F\vert w_{1},w_{2}\right\rangle $
where $X'=\left\langle F\vert w_{1}\right\rangle $ admits a 2-alternating
bislim structure, and $X=\left\langle F\vert w_{1},w_{2}\right\rangle $
admits a $\smallparens{1,2}$-alternating bislim structure, but not
a $\smallparens{2,1}$-alternating bislim structure. Thus $\smallparens{2;2}\in\aa_{X'}$,
so by the above lemma we have $v_{1}=\largeparens{2;2,-2\left|w_{2}\right|}\in\aa_{X}$.
We also know that $v_{2}=\smallparens{2;1,2}\in\aa_{X}$. Thus for
some small enough $\varepsilon>0$ the vector $\varepsilon v_{1}+\smallparens{1-\varepsilon}v_{2}\in\aa_{X}$
has all entries bigger than $1$, so $X$ has negative immersions
and $\fundamental X$ is hyperbolic.
\end{example}

We will now write $1^{+}$ for entries that are larger than $1$.
In general, we obtain this proposition:
\begin{lem}
\label{lem:relative_negative_imm_prop}Let $X$ be a 2-complex with
$k$ 2-cells and let $X'$ be a subcomplex of $X$ obtained by removing
the $k$'th 2-cell from $X$. If $X$ has a vector of the form $\smallparens{\beta;1,\dots,1,1^{+}}\in\aa_{X}$,
and $X'$ has a vector $\smallparens{\beta';1^{+},\dots,1^{+}}\in\aa_{X'}$,
then $X$ also has a vector of the form $\smallparens{\beta'';1^{+},\dots,1^{+}}\in\aa_{X}$.
\end{lem}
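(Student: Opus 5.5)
The plan is to combine the two given vectors by a convex combination, exactly as in the Example preceding this lemma. First, apply Lemma \ref{lem:expanded_euler_characteristic_polyhedron_adding_cells} to the vector $\smallparens{\beta';\alpha'_{1},\dots,\alpha'_{k-1}}\in\aa_{X'}$, where all $\alpha'_{i}>1$. This lifts it to
\[
v_{1}=\largeparens{\beta';\alpha'_{1},\dots,\alpha'_{k-1},-\beta'\left|\boundary C_{k}\right|}\in\aa_{X}.
\]
Let $v_{2}=\smallparens{\beta;1,\dots,1,\alpha_{k}}\in\aa_{X}$ with $\alpha_{k}>1$ be the vector given by hypothesis.

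Next, note that $\aa_{X}$ is convex. Both defining conditions are preserved under convex combinations: the inequality $\chi\largeparens{\skeleton Y}\le-\sum_{i}\alpha_{i}\#_{C_{i}}Y+\beta\left|\boundary Y\right|$ is linear in $\smallparens{\beta;\alpha}$ for each fixed boundary-admissible $Y$, and the condition $\beta>0$ is convex. Hence for every $\varepsilon\in[0,1]$ the vector $v_{\varepsilon}=\varepsilon v_{1}+\smallparens{1-\varepsilon}v_{2}$ lies in $\aa_{X}$.

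Finally, choose $\varepsilon$ so that every $\alpha$-coordinate of $v_{\varepsilon}$ exceeds $1$.
\begin{itemize}
\item For $i<k$, the coordinate is $\varepsilon\alpha'_{i}+\smallparens{1-\varepsilon}\cdot1$. This is greater than $1$ for every $\varepsilon\in(0,1]$, since $\alpha'_{i}>1$.
\item For $i=k$, the coordinate is $-\varepsilon\beta'\left|\boundary C_{k}\right|+\smallparens{1-\varepsilon}\alpha_{k}$. It tends to $\alpha_{k}>1$ as $\varepsilon\to0$, so it is greater than $1$ for all sufficiently small $\varepsilon>0$. Explicitly, it suffices that $\varepsilon\smallparens{\alpha_{k}+\beta'\left|\boundary C_{k}\right|}<\alpha_{k}-1$.
\end{itemize}
The $\beta$-coordinate $\beta''=\varepsilon\beta'+\smallparens{1-\varepsilon}\beta$ is positive. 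Thus $v_{\varepsilon}$ has the form $\smallparens{\beta'';1^{+},\dots,1^{+}}$, as required.

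The only step needing any care is checking convexity of $\aa_{X}$, together with the fact that Lemma \ref{lem:expanded_euler_characteristic_polyhedron_adding_cells} applies verbatim because $X'$ is obtained from $X$ by deleting $C_{k}$. The rest is the elementary choice of $\varepsilon$ above.
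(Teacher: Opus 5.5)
Your proposal is correct and is essentially the paper's argument. The paper gives no separate proof; it presents the lemma as the general form of the preceding Example, which lifts the $X'$ vector into $\overline{\mathcal{A}}_X$ via Lemma \ref{lem:expanded_euler_characteristic_polyhedron_adding_cells} and then takes a convex combination $\varepsilon v_1+(1-\varepsilon)v_2$ with $\varepsilon>0$ small, exactly as you do.
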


This suggests that there should be a notion of relative negative immersions.
Additionally, this also suggests that if $X$ has negative immersions
relative to $X'$, then $X$ should also be hyperbolic relative to
$X'$.

\subsection{\label{subsec:Wilton's-curvature-invariant}Wilton's curvature invariant}

In \cite[Definition~6.3]{wilton_curvature_invariants}, Wilton introduces
the \emph{maximal irreducible curvature} $\rho_{+}\smallparens X$
of a combinatorial 2-complex $X$, as the supremum of $1+\frac{\chi\largeparens{\skeleton Y}}{\#\text{2-cells}\smallparens Y}$
over certain well-behaved combinatorial branched maps $Y\to X$, which
in particular, are branched near-immersions where all the edges of
$Y$ are internal. In this context, $\#\text{2-cells}\smallparens Y$
counts 2-cells with multiplicity according to their branching degree
when mapped to $X$.

The maximal irreducible curvature $\rho_{+}\smallparens X$ offers
a quantitative variant of nonpositive and negative immersions: A version
of nonpositive immersions can be defined by $\rho_{+}\smallparens X\le0$,
and a version of negative immersions can be defined by $\rho_{+}\smallparens X<0$.
We obtain the following:
\begin{lem}
\label{lem:polyhedron_to_maximal_irreducible_curvature}Let $X$ be
a 2-complex.
\end{lem}

\begin{itemize}
\item If $X$ admits an $\smallparens{\alpha_{1},\dots,\alpha_{r}}$-alternating
bislim structure, and $\alpha_{i}\ge c$ for all $i$, then $\rho_{+}\smallparens X\le1-c$.
\item If $\smallparens{c,\dots,c}\in\mathcal{A}_{X}$ then $\rho_{+}\smallparens X\le1-c$.
\end{itemize}
\begin{proof}
The first item follows from Theorem \ref{thm:w_cycles_complex_version_two_complexes},
and the second item follows directly from the definition of $\rho_{+}\smallparens X$.
\end{proof}
This proves a conjecture of Wilton (\cite[Conjecture~11.13]{wilton_curvature_invariants}):
\begin{thm}
\label{thm:generic_bound_on_maximal_irreducible_curvature}For integers
$r$ and $k$, a generic $r$-generator $k$-relator presentation
complex $X$ satisfies $\rho_{+}\smallparens X=1-\frac{r-1}{k}$.
\end{thm}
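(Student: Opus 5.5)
The equality $\rho_{+}\smallparens X=1-\frac{r-1}{k}$ splits into an upper bound and a lower bound, which I would prove separately.

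\emph{Upper bound.} This should follow from what precedes the statement. By Lemma \ref{lem:euler_characteristic_polyhedra_generic_case}, a generic $X$ satisfies $\largeparens{\frac{r-1}{k},\dots,\frac{r-1}{k}}\in\mathcal{A}_{X}$. This uses that $\mathcal{A}_{X}$ is convex and contains each vector $(0,\dots,0,r-1,0,\dots,0)$. Those vectors come from the $(0,\dots,r-1,\dots,0)$-alternating bislim structures supplied by Theorem \ref{thm:generic_bislim_structures_generic_case}. Such structures are not good, so Theorem \ref{thm:w_cycles_complex_version_two_complexes} must be used in the non-good generality allowed by Definition \ref{def:bislim-definition-general}. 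This is where Lemma \ref{lem:no_tree_islands} was proven via Theorem \ref{thm:bislim_equivalent_condition_simply_connected_no_internal_2_cell}. Lemma \ref{lem:polyhedron_to_maximal_irreducible_curvature} then gives $\rho_{+}\smallparens X\le1-\frac{r-1}{k}$ for every $Y$ in Wilton's class, since those maps are admissible branched near-immersions. I would check that multiplicities in $\#_{C_{i}}Y$ match Wilton's count by branching degree.

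\emph{Lower bound.} I need a single map $Y\to X$ in Wilton's class attaining the value. The natural candidate is $Y=X$ with the identity map. Then
\[
1+\frac{\chi\largeparens{\skeleton X}}{k}=1+\frac{1-r}{k}=1-\frac{r-1}{k}.
\]
The things to check are that the identity is a branched near-immersion, which is trivial, and that every edge of $X$ is internal. For a generic presentation, each generator occurs at least twice among the relators with overwhelming probability, so every edge has at least two sides. I would also need to verify Wilton's additional ``irreducibility'' or well-behavedness conditions in \cite[Definition~6.3]{wilton_curvature_invariants} for the identity map. I expect these to hold generically: $X$ is generically aspherical, and no free face collapses.

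\emph{Main obstacle.} The delicate point is matching Wilton's precise class of maps with the hypotheses of the earlier results. For the upper bound, Wilton's maps must satisfy the admissibility hypotheses of Theorem \ref{thm:w_cycles_complex_version_two_complexes}: finite, all edges internal, and not a point. For the lower bound, the identity (or some finite cover, if the identity is excluded) must belong to the class. If the identity fails some technical condition, my first fallback would be a finite cover $Y\to X$ of degree $n$. Such a cover gives the same ratio, since $\chi\largeparens{\skeleton Y}=n(1-r)$ and $\#\text{2-cells}\smallparens Y=nk$.
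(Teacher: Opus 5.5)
Your proposal is correct and follows the paper's proof. The upper bound comes, exactly as you describe, from Lemma \ref{lem:euler_characteristic_polyhedra_generic_case} together with Lemma \ref{lem:polyhedron_to_maximal_irreducible_curvature}, and the lower bound from the identity map $X\to X$, which gives $1+\chi(X^{(1)})/k=1-\frac{r-1}{k}$. The paper does not verify Wilton's well-behavedness conditions for the identity either; it cites Wilton's discussion of his Conjecture 11.13 that the identity lies in the relevant class with probability $1$, which settles the check you deferred.
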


\begin{proof}
The bound $\rho_{+}\smallparens X\le1-\frac{r-1}{k}$ follows directly
from Lemmas \ref{lem:euler_characteristic_polyhedra_generic_case}
and \ref{lem:polyhedron_to_maximal_irreducible_curvature}. Additionally,
as noted in the discussion of \cite[Conjecture~11.13]{wilton_curvature_invariants},
the identity map $X\to X$ is considered in the supremum in the definition
of $\rho_{+}$ with probability 1, and therefore the bound $\rho_{+}\smallparens X\le1+\frac{\chi\left(\skeleton X\right)}{\#\text{2-cells}\smallparens X}=1-\frac{r-1}{k}$
holds for a generic $r$-generator $k$-relator presentation complex
$X$.
\end{proof}
The same bound as Lemma \ref{lem:polyhedron_to_maximal_irreducible_curvature}
also holds for a weighted version: \cite[Definition~6.3]{wilton_curvature_invariants}
also defines $\rho_{+}\smallparens X$ for \textit{weighed} 2-complexes.
\begin{lem}
Let $X$ be a 2-complex where the $i$'th 2-cell is weighed by $w_{i}$.
\begin{itemize}
\item If $X$ admits an $\smallparens{\alpha_{1},\dots,\alpha_{r}}$-alternating
bislim structure, and $\alpha_{i}\ge w_{i}c$ for all $i$, then $\rho_{+}\smallparens X\le1-c$.
\item If $\smallparens{cw_{1},\dots,cw_{r}}\in\mathcal{A}_{X}$ then $\rho_{+}\smallparens X\le1-c$.
\end{itemize}
\end{lem}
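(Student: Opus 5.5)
The weighted statement reduces to the unweighted one by rescaling, so I would redo the proof of Lemma \ref{lem:polyhedron_to_maximal_irreducible_curvature} with the weights carried along. First I need the form of the weighted curvature. Following \cite[Definition~6.3]{wilton_curvature_invariants}, I take $\rho_{+}\smallparens X$ for a weighted complex to be the supremum, over the same class of admissible branched near-immersions $Y\to X$, of
\[
1+\frac{\chi\largeparens{\skeleton Y}}{\sum_{i}w_{i}\#_{C_{i}}Y},
\]
where the denominator counts the 2-cells of $Y$ weighted by the weight of their image, with multiplicity given by branching degree. As in the unweighted case, every map in this supremum is a branched near-immersion, all edges of $Y$ are internal, and $Y$ is not a single point. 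So every such $Y\to X$ is admissible in the sense of Definition \ref{def:euler_characteristic-polyhedron}. I assume all $w_{i}>0$ so that the denominator is positive, and I do not expect the precise normalization of Wilton's definition to matter beyond this.

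For the second item, suppose $\smallparens{cw_{1},\dots,cw_{r}}\in\mathcal{A}_{X}$. Then for every admissible $Y\to X$,
\[
\chi\largeparens{\skeleton Y}\ \le\ -c\sum_{i}w_{i}\#_{C_{i}}Y .
\]
Dividing by the positive quantity $\sum_{i}w_{i}\#_{C_{i}}Y$ shows that each term in the supremum is at most $1-c$, so $\rho_{+}\smallparens X\le1-c$.

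For the first item, Theorem \ref{thm:w_cycles_complex_version_two_complexes} gives $\smallparens{\alpha_{1},\dots,\alpha_{r}}\in\mathcal{A}_{X}$. Since $\#_{C_{i}}Y\ge0$ and $\alpha_{i}\ge cw_{i}$, we get
\[
-\sum_{i}\alpha_{i}\#_{C_{i}}Y\ \le\ -c\sum_{i}w_{i}\#_{C_{i}}Y ,
\]
so $\smallparens{cw_{1},\dots,cw_{r}}\in\mathcal{A}_{X}$ as well. The first item then follows from the second.

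The only real obstacle is confirming that Wilton's weighted definition has exactly this form and is restricted to maps for which the hypotheses of Theorem \ref{thm:w_cycles_complex_version_two_complexes} hold. If his class of maps is broader, for example allowing non-internal edges, then I would first pass to the subcomplex of internal cells by repeatedly removing 2-cells adjacent to non-internal edges, as in the proof of nonpositive immersions, and check that this does not decrease the ratio.
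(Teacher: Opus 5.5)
Your proof is correct and is essentially the paper's argument: the paper proves the unweighted Lemma \ref{lem:polyhedron_to_maximal_irreducible_curvature} by citing Theorem \ref{thm:w_cycles_complex_version_two_complexes} for the first item and the definition of $\rho_{+}(X)$ for the second, and says the weighted version holds the same way. Your reduction of the first item to the second, via $\alpha_i\ge cw_i$ and $\#_{C_i}Y\ge 0$ giving $(cw_1,\dots,cw_r)\in\mathcal{A}_X$, together with the weighted denominator $\sum_i w_i\#_{C_i}Y$, is exactly that argument carried over; the paper already states that the maps in Wilton's supremum are branched near-immersions with all edges of $Y$ internal, so your contingency step is not needed.
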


\subsection{\label{subsec:Coherence}Coherence}
\begin{defn}
A group $G$ is \emph{coherent} if every finitely generated subgroup
$H\le G$ is finitely presented.
\end{defn}

\begin{thm}
\label{cor:generic-deficency-1-coherent}A generic $r$-generator
$\smallparens{r-1}$-relator presentation is coherent.
\end{thm}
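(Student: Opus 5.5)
We will derive coherence from nonpositive immersions, as in the original program of Wise and of Louder--Wilton. By Theorem \ref{thm:generic-deficiency-1-NPI}, a generic $r$-generator $(r-1)$-relator presentation complex $X$ has nonpositive immersions. Indeed, Theorem \ref{thm:generic_bislim_structures_generic_case} with all $a_i=1$ gives a good bislim structure, since $\sum_i a_i=r-1<r$. The natural route is to invoke the theorem of Louder and Wilton (and of Wise) that a 2-complex with nonpositive immersions whose fundamental group is one-relator-like is coherent. However, the general statement ``nonpositive immersions implies coherence'' is still only conjectural, so we need a stronger input.

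\textbf{Choice of input.} The strongest available tool is that a bislim structure yields more than nonpositive immersions. Helfer--Wise show that bislim complexes have the property that every finitely generated subgroup of $\pi_1 X$ is the fundamental group of a compact subcomplex of some cover (Wise's ``$\chi\le0$ for all compact immersed subcomplexes'' route). Concretely, we would follow Wise's argument \cite{Wise2020} that negative or nonpositive immersions in the strong ``contracting'' form, together with torsion-free local indicability, give coherence. We plan to:
\begin{enumerate}
\item Show that $\pi_1 X$ is locally indicable. Bislim structures give an ordering argument: the acyclicity of $\Gamma(\widetilde X)$ provides the analogue of a left-orderable structure, as in \cite{helferwise}.
\item Take a finitely generated $H\le\pi_1X$ and the cover $\widehat X\to X$ with group $H$. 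Consider an exhaustion of the relevant core by compact subcomplexes $Y_1\subseteq Y_2\subseteq\cdots$, each carrying $H$ on $\pi_1$ and immersing into $X$.
\item Apply Theorem \ref{thm:cycle_counting_complex_version_with_boundary}, or the nonpositive immersion bound after folding away free faces, to bound $\chi$ of each $Y_i$ from above uniformly, i.e.\ $\chi(Y_i)\le$ a constant.
\item Bound $b_2(Y_i)$ using the fact that $X$ is aspherical (bislim complexes are aspherical, since $\Gamma(\widetilde X)$ is acyclic and hence there are no reduced spherical diagrams). Conclude that $H_2$ stabilizes, and run the standard argument that $H$ is finitely presented, as in the Louder--Wilton proof of coherence of one-relator groups with torsion.
\end{enumerate}

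\textbf{Main obstacle.} The main obstacle is step 3 to step 4: passing from Euler-characteristic control to finite presentability. Here an ascending chain of compact cores must stabilize, and this uses that $\pi_1$ of each reduced compact immersed complex injects. That fact should follow from the asphericity and the bislim structure being inherited by $Y_i$, by Theorem \ref{thm:equivalent_conditions_for_bislim_structure}.

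\textbf{Fallback.} If this direct route is too delicate, we will cite \cite[Theorem B]{Kielak2022}, or the known result that generic deficiency-one presentations are coherent. We would then present the bislim argument as giving a new proof only in the form ``nonpositive immersions plus local indicability implies coherence'' via \cite{Wise2020}.
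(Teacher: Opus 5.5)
\textbf{Gap.} Your steps 2--4 try to prove directly that nonpositive immersions imply coherence. As you note yourself, that is Wise's open conjecture, and the steps you propose do not close it.

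The exhaustion argument of Wise and Louder--Wilton needs negative immersions. A bound of the form $\chi(Y)\le -c\cdot\#\text{2-cells}(Y)$ caps the number of 2-cells of a compact complex carrying $H$, and that is what forces the chain $Y_1\subseteq Y_2\subseteq\cdots$ to stabilize. A generic deficiency-one complex has no such bound:
\begin{itemize}
\item $\chi(X)=1-r+(r-1)=0$ with every edge internal, so $\rho_+(X)=0$ (Theorem~\ref{thm:generic_bound_on_maximal_irreducible_curvature} with $k=r-1$).
\item The finite covers of $X$ are immersed complexes with $\chi=0$ and arbitrarily many 2-cells.
\end{itemize}
So the bound $\chi(Y_i)\le 0$ you get in step 3 controls nothing.

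The $\pi_1$-injectivity you invoke under ``main obstacle'' does not follow from Theorem~\ref{thm:equivalent_conditions_for_bislim_structure}. That theorem only concerns acyclicity of $\Gamma$ on simply connected complexes. The injectivity also fails in general: the cycle graph $S_w$ immerses into $X$, but its $\pi_1$ dies in $\pi_1X$. Even stabilization of $H_2(Y_i)$ would only be homological information, not a finite presentation.

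The fallback does not work either:
\begin{itemize}
\item \cite[Theorem B]{Kielak2022} is the deficiency-two result, and \cite[Theorem A]{Kielak2022} holds only with positive probability. The generic deficiency-one result is \cite[Corollary~C]{Fisher2024}, and citing it is not a proof.
\item \cite{Wise2020} gives negative immersions $\Rightarrow$ coherence, and nonpositive immersions $\Rightarrow$ local indicability. Adding local indicability to nonpositive immersions therefore gains nothing.
\item The Helfer--Wise property you attribute (every finitely generated subgroup is $\pi_1$ of a compact subcomplex of a cover) would already be coherence.
\end{itemize}

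\textbf{Missing idea.} The paper splits coherence into a homological statement plus an upgrade.
\begin{itemize}
\item From nonpositive immersions (Theorem~\ref{thm:generic-deficiency-1-NPI}), \cite[Theorem~1.2]{JaikinZapirain2023} gives that $\pi_1X$ is homologically coherent: every finitely generated subgroup is of type $\mathrm{FP}_2$. This algebraic theorem is what replaces your exhaustion argument.
\item To upgrade $\mathrm{FP}_2$ to finitely presented, the paper uses that a generic presentation is small cancellation ($C(7)$). Hence $X$ is aspherical, so $\pi_1X$ has cohomological dimension $2$, and $\pi_1X$ is hyperbolic.
\item By Gersten \cite{Gersten1996}, $\mathrm{FP}_2$ subgroups of a hyperbolic group of cohomological dimension $2$ are hyperbolic, hence finitely presented.
\end{itemize}
Your observation that a good bislim structure rules out reduced spherical diagrams is fine and could replace $C(7)$ for asphericity. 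Hyperbolicity, however, must come from small cancellation: in deficiency one no vector with all $\alpha_i>1$ is available for Lemma~\ref{lem:proving_hyperbolicity}. Your proposal is missing both the homological coherence theorem and the hyperbolicity input needed for the upgrade.
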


This was first shown to hold with positive asymptotic probability
by \cite[Theorem A]{Kielak2022}, and was later fully proven by the
recent \cite[Corollary~C]{Fisher2024}. Both show the stronger property
of being virtually free-by-cyclic, which implies coherence by \cite{Feighn1999}.
However, we present a simpler proof. 
\begin{proof}
Let $X$ be a generic $\smallparens{r+1}$-generator $r$-relator
presentation. By Theorem \ref{thm:generic-deficiency-1-NPI} $X$
has nonpositive immersions. By \cite[Theorem~1.2]{JaikinZapirain2023}
this implies that $\fundamental X$ is homologically coherent.

We also know that $X$ satisfies the small cancellation condition
$C\smallparens 7$, so $X$ is also aspherical, and thus $X$ is a
$K\smallparens{G,1}$ space and $\pi_{1}\smallparens X$ is of cohomological
dimension $2$. Together with hyperbolicity and homological coherence,
by \cite{Gersten1996} it follows that $\pi_{1}\smallparens X$ is
coherent.
\end{proof}

\section{\label{subsec:Algorithms}Algorithms}

\subsection{Finding bislim structures}

By \cite{partial_stackings_and_bislim_structures}, $\smallparens{a_{1},\dots,a_{k}}$-alternating
bislim structures are equivalent to $\smallparens{a_{1},\dots,a_{k}}$-alternating
partial stackings. The definition of alternating partial stackings
is combinatorial and finite, so it is possible to find whether a given
2-complex $X$ admits a $\smallparens{a_{1},\dots,a_{k}}$-alternating
bislim structure in finite time. In \cite{partial_stackings_and_bislim_structures}
we present the following algorithm, which does this in polynomial
time:
\begin{lyxalgorithm}[Algorithm from \cite{partial_stackings_and_bislim_structures}]
\label{alg:check_bislimness}It can be determined whether a given
heightened complex $\heightened$ is bislim in time $O\largeparens{n^{3}}$,
where $n=\sum_{C\in X}\left|\boundary C\right|$ and the sum ranges
over all 2-cells $C$ of $X$.
\end{lyxalgorithm}

Therefore, we can search for bislim structures like so:
\begin{lyxalgorithm}
\label{alg:find_best_bislim_structure}Given a 2-complex $X$ with
$k$ 2-cells and an alternation vector $\smallparens{a_{1},\dots,a_{k}}\in\mathbb{Z}_{\ge0}^{k}$,
we can find whether $X$ admits an $\smallparens{a_{1},\dots,a_{k}}$-alternating
bislim structure in time $O\largeparens{n^{2\sum_{i}a_{i}+3}}$, where
$n=\sum_{C\in X}\left|\boundary C\right|$ and the sum ranges over
all 2-cells $C$ of $X$.
\end{lyxalgorithm}

\begin{proof}
There are at most $n^{2\sum_{i}a_{i}}$ heightened complexes $\heightened$
which are $\smallparens{a_{1},\dots,a_{k}}$-alternating. Check each
of them using algorithm \ref{alg:check_bislimness}.
\end{proof}
In order to expand the usefulness of alternating bislim structures,
we note the following observations. Let $a$ and $b$ be two independent
uniformly random permutations. Then $\left(ab,b^{-1}\right)$ are
also two independent uniformly random permutations. Therefore, we
always have that $\expectation[a^{2}b^{2}]{\psi}=\expectation[\smallparens{ab}^{2}b^{-2}]{\psi}=\expectation[abab^{-1}]{\psi}$
for all characters $\psi$. In general, we say that two words $w_{1}$
and $w_{2}$ in a free group $F$ are \emph{automorphic} if there
is an automorphism $\eta\in\aut F$ such that $\eta\smallparens{w_{1}}=w_{2}$
(e.g., the automorphism defined by $\eta\smallparens a=ab$ and $\eta\smallparens b=b^{-1}$).
Automorphic words always induce the same word measures, and thus Conjectures
\ref{conj:asymptotic_growth_conjecture_Un}, \ref{conj:asymptotic_growth_conjecture_Sn_pi},
\ref{conj:spi_equals_pi-1} and \ref{conj:asymptotic_growth_conjecture_Sn_spi}
hold for $w_{1}$ if and only if they hold for an automorphic word
$w_{2}$. Additionally, $\pi\smallparens{w_{1}}=\pi\smallparens{w_{2}}$
and $\spi\smallparens{w_{1}}=\spi\smallparens{w_{2}}$ are also invariant
(see \cite[Claim~4.3]{Puder2023}).

However, alternating stackings and bislim structures are not invariant
under automorphisms. As an example, $abcab^{-1}c^{-1}$ does not admit
a $2$-alternating partial stacking, although the automorphic word
$a^{2}b^{2}c^{2}$ does. Therefore, one may prove that Conjectures
\ref{conj:asymptotic_growth_conjecture_Un}, \ref{conj:asymptotic_growth_conjecture_Sn_pi},
\ref{conj:spi_equals_pi-1} or \ref{conj:asymptotic_growth_conjecture_Sn_spi}
hold for a word $w$ by finding an automorphic word $w'$ which admits
an appropriate alternating stacking, even if $w$ does not admits
one. This is in fact done in Section \ref{subsec:Algorithms} as part
of the proof of Theorem \ref{thm:conjecture_holds_for_small_words}.

\subsection{Checking for hyperbolicity and negative immersions}

The computational problem of checking hyperbolicity of a given group
presentation is undecidable by the Adian-Rabin theorem (\cite{Rabin1958})
and is in $\text{RE}$ (\cite{Papasoglu_1995}), i.e., there is an
algorithm which eventually accepts a given group presentation if and
only if the group presentation is hyperbolic, but otherwise might
never terminate. One such algorithm is implemented in the software
package \cite{kbmag}.

We say that an algorithm $A$ \emph{certifies} that a property $P$
holds if the property $P$ holds for every possible input $X$ which
is accepted by $A$. We say that $A$ is \emph{complete} if it is
guaranteed to accept every input $X$ for which the property $P$
holds.

Since the running time of complete algorithms for certifying hyperbolicity
is inherently unbounded and slow in practice, there are a number of
incomplete certifying algorithms and criteria for hyperbolicity. Although
such algorithms may not work on many hyperbolic inputs, they can often
be used to certify that groups are hyperbolic faster than complete
certifying algorithms. They range from fast algorithms which cover
relatively few cases (such as small cancellation conditions) to slower
algorithms which cover a larger set of cases (e.g., \cite{Holt2019}).

As for negative immersions, \cite{Wilton_rationality_theorem} constructs
an algorithm which fully determines whether a given 2-complex has
negative immersions.\footnote{There are multiple alternative definitions of negative immersions;
This applies to a specific definition.} However, this algorithm requires doubly-exponential running time
and it has not been fully run on nontrivial examples to the best of
our knowledge.

In this section we provide algorithms which certify that a given 2-complex
$X$ has negative immersions and has a hyperbolic fundamental group.
We clump these two properties together since when our results can
prove one property, they can prove both.

Recall the results of Section \ref{subsec:Hyperbolicity-and-Negative-immersions},
which can be used to show that a 2-complex $X$ has negative immersions
and hyperbolicity given that $X$ has sufficiently alternating bislim
structures. We obtain the following two algorithms, which can certify
that a given 2-complex $X$ has negative immersions and hyperbolicity.
The first version is a more lightweight one:
\begin{lyxalgorithm}
Given a 2-complex $X$, check if $X$ admits a $\smallparens{1,\dots,1,2,1,\dots,1}$-alternating
bislim structure, by going over all such heightened complexes and
checking if any one is bislim. If there is a $\smallparens{1,\dots,1,2,1,\dots,1}$-alternating
bislim structure, pick one. Remove the 2-cell which is 2-alternating
from $X$ to obtain a new 2-complex $X'$, and then recursively start
from the beginning of the algorithm with $X'$.

If we do not find such a bislim structure, we finish and declare that
we do not know whether the original 2-complex $X$ is hyperbolic nor
whether it has negative immersions.

If we remove the last 2-cell, we finish and declare that the original
2-complex $X$ is hyperbolic and has negative immersions.
\end{lyxalgorithm}

It follows by repeated applications of Lemma \ref{lem:relative_negative_imm_prop}
that if the algorithm accepts, $X$ indeed has negative immersions
and hyperbolicity. A-priori, the choice of which bislim structure
is picked in the algorithm influences which 2-cell is removed, which
may change the future execution of the algorithm and its final result.
It can be proved that regardless of which choices are picked, the
result will be the same. This algorithm has time complexity $O\largeparens{n^{2r+6}}$
where $r$ is the number of 2-cells in $X$, and $n$ is $\sum_{C\in X}\left|\boundary C\right|$
where the sum is over all 2-cells $C$ in $X$.

\begin{lyxalgorithm}
Given a 2-complex $X$, go over all alternating heightened complexes
$\heightened[X']$, for complexes $X'$ which are formed by removing
some 2-cells from $X$. For each alternating heightened complex $\heightened[X']$,
check if $\heightened[X']$ is bislim.

Recall the expanded Euler characteristic polyhedron $\aa$ (Definition
\ref{def:expanded_euler_characteristic_polyhedron}). By Proposition
\ref{prop:bislim_structure_to_expanded_euler_characteristic_polyhedron}
and Lemma \ref{lem:expanded_euler_characteristic_polyhedron_adding_cells},
each heightened complex $\heightened[X']$ which is bislim induces
a vector in $\aa$.

Gather the generated vectors in $\aa$, and check whether their convex
hull contains a vector $v=\smallparens{\beta,\alpha_{1},\dots,\alpha_{r}}$
with coefficients $\alpha_{1},\dots,\alpha_{r}$ all larger than $1$.
If it does, declare that $X$ is hyperbolic and has negative immersions.
\end{lyxalgorithm}

If $v$ exists, then $v$ is in $\aa$, and by definition of $\aa$
the 2-complex $X$ has negative immersions. Additionally, by Lemma
\ref{lem:proving_hyperbolicity} the 2-complex $X$ is hyperbolic.

This algorithm has a much larger complexity. Let $r$ be the number
of 2-cells in $X$, let $k$ be the number of edges in $X$, and let
$n$ be $\sum_{C\in X}\left|\boundary C\right|$ for all 2-cells $C$
in $X$. Then the number of possible alternating heightened complexes
that need to be checked is at most $2^{r}{n \choose 2k-1}$ and the
overall time complexity of the algorithm is $\text{poly}\largeparens{2^{r}{n \choose 2k-1}}$.

\subsection{\label{subsec:Computational-experiments}Computational results}

\global\long\def\slcpi{\text{PCI}^{\pm}}%
\global\long\def\slpci{\text{PCI}^{\pm}}%
\global\long\def\pci{\text{PCI}^{\pm}}%

Using Algorithm \ref{alg:find_best_bislim_structure} we conducted
a computational experiment to find which words admit alternating bislim
structures in practice. We prove Theorem \ref{thm:conjecture_holds_for_small_words},
i.e., that Conjecture \ref{conj:asymptotic_growth_conjecture_Sn_spi}
holds for all words $w\in F_{4}$ of length at most $10$. In fact,
for all words $w\in F_{4}$ of length at most $10$, there is an automorphic
word $w'$ which admits a $\smallparens{\pi\smallparens w-1}$-alternating
bislim structure. This also holds for all words of length at most
$11$ except six equivalence classes with the following representatives:
\begin{itemize}
\item $\left(aba^{-1}b^{-1}\right)^{2}cac^{-1}$
\item $\left(abab^{-1}\right)^{2}cac^{-1}$
\item $\left(abab^{-1}\right)^{2}ca^{-1}c^{-1}$
\item $\left(abab^{-1}\right)^{2}cbc^{-1}$
\item $a^{2}\left(bc\right)^{2}a^{-1}\left(bc^{-1}\right)^{2}$
\item $a^{2}\left(bc\right)^{2}a^{-1}\left(c^{-1}b\right)^{2}$.
\end{itemize}
In the previous statement, we consider two words $w_{1}$ and $w_{2}$
to be \emph{equivalent} if $w_{1}$ is automorphic to either $w_{2}$
or $w_{2}^{-1}$. Since $\sigma$ and $\sigma^{-1}$ are conjugate
for every $\sigma\in S_{n}$, the words $w$ and $w^{-1}$ induce
the same word measures on $S_{n}$ for every word $w$, and so they
can be considered to be equivalent for our purposes.

The first four of these words $w$ satisfy $\pi\smallparens w=3$,
$\spi\smallparens w=1.5$. Indeed, Figure \ref{fig:spi_counterexample}
shows a core graph $\Gamma$ with two $w$-cycles where $w=\left(abab^{-1}\right)^{2}cac^{-1}$
and $\chi\smallparens{\Gamma}=-3$, and $\Gamma$ satisfies the conditions
of the definition of $\spi$ given in \cite[Definition~4.1]{Puder2023},
showing that $\spi\smallparens w\le1.5$. Moreover, it can also be
computed for this word that 
\[
\expectation{\psi_{n}}=\frac{2n^{5}-24n^{4}+94n^{3}-96n^{2}-112n-80}{n\smallparens{n-1}^{2}\smallparens{n-2}^{2}\smallparens{n-3}\smallparens{n-4}\smallparens{n-5}}=\Omega\left(n^{-3}\right)
\]
\begin{wrapfigure}[15]{o}{0.3\columnwidth}%
\begin{centering}
\includegraphics[width=0.3\columnwidth]{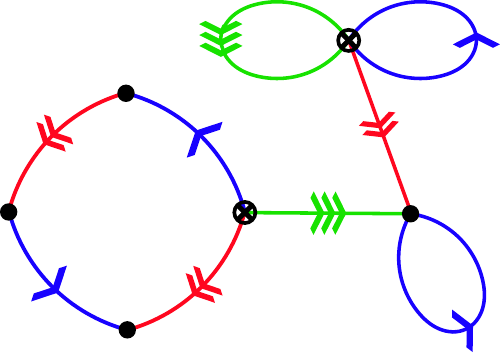}
\par\end{centering}
\caption{\label{fig:spi_counterexample}A core graph $\Gamma$ with two $w$-cycles
for the word $w=\left(abab^{-1}\right)^{2}cac^{-1}$, which start
on the two marked vertices}
\end{wrapfigure}%
where $\psi_{n}=\bigwedge^{2}\chi_{\text{std},n}$ is the exterior
power of the standard character of $S_{n}$. Thus this word is a counterexamples
to Conjectures \ref{conj:asymptotic_growth_conjecture_Sn_pi} and
\ref{conj:spi_equals_pi-1}.

However, for these four words $w$ it can be proven that $\spi\smallparens w=1.5$
using a concrete combinatorial argument, which together with the analysis
of Section \ref{subsec:Character-expectations-on-Sn} also implies
that Conjecture \ref{conj:asymptotic_growth_conjecture_Sn_spi} holds
for them.\footnote{We intend to include these arguments in a separate paper.}

We also note that the probability bounds of Theorem \ref{thm:generic_bislim_structures_one_relator_case}
seem to be far from tight, as a large number of words $w$ admit an
$\spi\smallparens w$-alternating bislim structure directly.

We now show how these results were computed. Recall that if words
$w_{1}$ and $w_{2}$ are equivalent then Conjectures \ref{conj:asymptotic_growth_conjecture_Sn_pi},
\ref{conj:spi_equals_pi-1} and \ref{conj:asymptotic_growth_conjecture_Sn_spi}
hold for $w_{1}$ if and only if they hold for $w_{2}$, while it
may be that $w_{1}$ admits a $\smallparens{\pi\smallparens w-1}$-alternating
bislim structure even though $w_{2}$ does not.\footnote{E.g., $w_{1}=a^{2}b^{2}c^{2}$ and $w_{2}=abcab^{-1}c^{-1}$.}
However, there are some symmetries which do preserve bislim structures:
\begin{defn}[{\cite[Section~4.1]{Cashen_experimental_verification_of_relation_of_primitivity_rank_and_hyperbolicity}}]
Two words $w_{1}$ and $w_{2}$ in $F_{r}$ are $\slcpi$-equivalent
if they can be made equivalent by the following operations:
\begin{itemize}
\item Permuting the generators of $F_{r}$
\item Replacing all occurrences of a generator $x$ of $F_{r}$ by $x^{-1}$
\item Replacing $w$ with a cyclic rotation of $w$
\item Replacing $w$ by $w^{-1}$.
\end{itemize}
\end{defn}

Words $w$ which are $\pci$-equivalent have isomorphic presentation
complexes $\left\langle F\vert w\right\rangle $, and therefore have
the same bislim structures, so we only need to check one representative
of each $\pci$-equivalence class. Note that although $w$ and $w^{-1}$
are not necessarily automorphic, they are still equivalent.

\subsubsection{An Algorithm}

Given a word $w$, let $\mathemph{\alt_{w}}$ be the maximal integer
for which $w$ admits an $\alt_{w}$-alternating bislim structure.
It is possible to compute $\alt_{w}$ (Algorithm \ref{alg:find_best_bislim_structure})
and the primitivity rank $\pi\smallparens w$ (\cite{Cashen_experimental_verification_of_relation_of_primitivity_rank_and_hyperbolicity,Puder2011})
in polynomial time, assuming that the rank of the ambient free group
is fixed. In this section we define Algorithm \ref{alg:exhaustive_search},
which for checks all equivalence classes of words $w\in F_{r}$ of
length at most $\ell$ whether they admit $\left(\pi\smallparens w-1\right)$-alternating
bislim structures. Since this is true for most equivalence classes,
the algorithm only returns representatives of equivalence classes
for which this might not hold. Every equivalence class which is not
returned by this algorithm is guaranteed to have some word $w$ which
admits a $\smallparens{\pi\smallparens w-1}$-alternating bislim structure,
and therefore, Conjectures \ref{conj:asymptotic_growth_conjecture_Sn_pi},
\ref{conj:spi_equals_pi-1} and \ref{conj:asymptotic_growth_conjecture_Sn_spi}
all hold for any such equivalence class.

The paper \cite[Section~4.1]{Cashen_experimental_verification_of_relation_of_primitivity_rank_and_hyperbolicity}
details an algorithm which iterates over the representatives of $\pci$
equivalence classes without iterating over all words, which is instrumental
in exhaustively checking a large number of words. Additionally, since
most words do admit optimally alternating bislim structures, we can
save a significant amount of memory by storing only the words which
do not admit optimally alternating bislim structures. The resulting
algorithm is as follows:
\begin{lyxalgorithm}
\label{alg:exhaustive_search}For every $\slcpi$-representative $w\in F_{r}$
of length $\ell$, compute $\alt_{w}$, the primitivity rank $\pi\smallparens w$,
and whether $w$ is whitehead minimal.
\begin{itemize}
\item If $w$ is not whitehead minimal, skip $w$.
\item If $\alt_{w}=\pi\smallparens w-1$, skip $w$.
\end{itemize}
Collect all of the remaining words into a list of the whitehead minimal
words which might not satisfy Conjecture \ref{conj:asymptotic_growth_conjecture_Sn_spi}.
For each such word $w$, compute the set of whitehead minimal words
which are automorphic to $w$. If any of them is missing from the
list, then they admit a $\smallparens{\pi\smallparens w-1}$-alternating
bislim structure, so we can remove $w$ from the list as well.

For each equivalence class for which Conjecture \ref{conj:asymptotic_growth_conjecture_Sn_spi}
has not yet been verified, consider the words in its equivalence class
which are not necessarily whitehead minimal, and check whether they
admit a $\smallparens{\pi\smallparens w-1}$-alternating bislim structure.
Since each equivalence class is infinite, cap the search at some arbitrary
bound.

Finally, return one representative for each remaining equivalence
class.
\end{lyxalgorithm}

The space complexity of this algorithm is dominated by the number
of words which do not admit optimal bislim structures, which is a
small fraction of all of the searched words in practice. The algorithm
is implemented in \cite{my_code}. 


\let\origpath\path
\let\origurl\url
\renewcommand{\path}[1]{\texttt{\detokenize{#1}}}
\renewcommand{\url}[1]{\href{#1}{\texttt{\detokenize{#1}}}}

\bibliographystyle{alphaurl}
\bibliography{citation_library}

\let\path\origpath
\let\url\origurl

\appendix

\section{\label{sec:equivalence-to-bi-slim}Equivalence to the original definition
of bislim structures}

We show that our definition of good bislim structures is equivalent
to the original definition of bislim structures, \cite[Definition 2.1]{helferwise}.
\begin{defn}[Preorders and partial orders]
A \emph{preorder} $\preceq$ is a transitive reflexive relation.
A \emph{partial order} is a preorder which is antisymmetric\emph{.
}We denote by $a\prec b$ the statement that $a\preceq b$ and $b\not\preceq a$.
\end{defn}

\global\long\def\setboundary{\boundary^{*}}%

\begin{defn}[Multiset boundary]
For a 2-cell $C$ in a complex $X$, denote by $\mathemph{\setboundary C}$
the multiset of edges of $X$ which are adjacent to $C$. This differs
from $\boundary C$ in that $\boundary C$ consists of a cyclic path
sides of $X$, and $\setboundary C$ is the multiset of the corresponding
edges of $X$. This is required to state the original definition of
bislim structures precisely.
\end{defn}

Instead of working with the original definition (\cite[Definition 2.1]{helferwise})
directly, we start from \cite[Definition~3.1]{Bamberger2024}, which
is a slightly reworked version (see \cite[Remark~3.2]{Bamberger2024}).
\begin{defn}[{bislim structure \cite[Definition~3.1]{Bamberger2024}}]
\label{def:original_bislim}A connected combinatorial 2-complex $X$
is bislim if the following conditions hold:
\begin{enumerate}
\item There is a $\pi_{1}X$-invariant preorder $\preceq$ on the set of
edges of $\universalCover X$ (the universal cover of $X$).
\item \label{enu:distinguished_high_and_low_edge_in_original_bislim_definition}The
boundary $\setboundary C$ has two distinguished edges $e_{C}^{+}$
and $e_{C}^{-}$ for each 2-cell $C$ of $\universalCover X$.

Moreover, $e_{C}^{+}$ is traversed exactly once by the boundary $\setboundary C$.
\item The edges $e_{C}^{+}$ and $e_{C}^{-}$ are $\pi_{1}X$-invariant,
i.e., $ge_{C}^{\pm}=e_{gC}^{\pm}$ for every 2-cell $C$ of $\universalCover X$
and $g\in\pi_{1}X$ acting on $\universalCover X$.
\item \label{enu:original_bislim_e+}For distinct 2-cells $C_{1}$ and $C_{2}$
of $\universalCover X$, if $e_{C_{1}}^{+}\in\setboundary C_{2}$
then $e_{C_{1}}^{+}\prec e_{C_{2}}^{+}$.
\item \label{enu:original_bislim_e-}For distinct 2-cells $C_{1}$ and $C_{2}$
of $\universalCover X$, if $e_{C_{2}}^{-}\in\setboundary C_{1}$
then $e_{C_{1}}^{+}\prec e_{C_{2}}^{+}$.
\end{enumerate}
\end{defn}

By \cite[Remark 3.4]{Bamberger2024} we can assume without loss of
generality that $\preceq$ is antisymmetric (i.e. $\preceq$ is a
partial order). We will prove that Definition \ref{def:original_bislim}
is equivalent to our definition of bislim structures (Definition \ref{def:bislim_structure_for_good_complex})
by using Condition \ref{enu:bislim-definition-universal-cover} of
Theorem \ref{thm:equivalent_conditions_for_bislim_structure}, which
states that a good heightened complex $\heightened$ is bislim if
and only if $\Gamma\largeparens{\universalCover X}$ is acyclic. In
order to show the similarity of the two definitions, we define a preorder
$\trianglelefteq$ which parallels $\preceq$ in Definition \ref{def:original_bislim}.
\begin{defn}
Let $\heightened$ be a heightened complex. Define\emph{ }$\mathemph{\trianglelefteq}$
to be the preorder on the set of 2-cells of $\universalCover X$ which
is the transitive closure of $\Gamma\largeparens{\universalCover X}$.
Namely, $C_{1}\trianglelefteq C_{2}$ if there is a directed path
in $\Gamma\largeparens{\universalCover X}$ from $C_{1}$ to $C_{2}$.
\end{defn}

Before proving the equivalence between our definition of bislim structure
and the original definition, we will informally explain how the two
definitions correspond to each other. First, note that if $\heightened$
is a good bislim structure, we may assume without loss of generality
that $\left|H_{C}\right|=\left|L_{C}\right|=1$ for every 2-cell $C$
of $X$ by removing extra sides from $H$ and $L$. Now, $e_{C}^{+}$
corresponds to the only high side of $C$ and $e_{C}^{-}$ corresponds
to the only low side of $C$. Additionally, the preorders $\preceq$
and $\trianglelefteq$ capture the same information: the statement
$C_{1}\trianglelefteq C_{2}$ roughly corresponds to the statement
$e_{C_{1}}^{+}\preceq e_{C_{2}}^{+}$. Namely, the edge $e_{C}^{+}$
in the preorder $\preceq$ represents the 2-cell $C$ in the preorder
$\trianglelefteq$.

We now prove that both definitions are indeed equivalent.
\begin{thm}
\label{thm:equivalence_between_new_and_old_definitions_of_bislim_structures}A
connected complex $X$ admits a good bislim structure in the sense
of Definition \ref{def:bislim_structure_for_good_complex} if and
only if $X$ is bislim in the sense of Definition \ref{def:original_bislim}.
\end{thm}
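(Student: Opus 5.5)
We prove both directions, using Condition \ref{enu:bislim-definition-universal-cover} of Theorem \ref{thm:equivalent_conditions_for_bislim_structure}: a good heightened complex is bislim iff $\Gamma(\universalCover X)$ is acyclic. Throughout, we lift the heightened structure $(X,H,L)$ to $\universalCover X$ via the covering map, which is a branched near-immersion.

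\emph{Our definition $\Rightarrow$ Definition \ref{def:original_bislim}.} As remarked before the statement, we first shrink $H$ and $L$ so that $|H_C|=|L_C|=1$ for every 2-cell $C$ of $X$; bislimness is preserved. For a 2-cell $C$ of $\universalCover X$, let $e_C^+$ and $e_C^-$ be the edges of the unique high and low sides of $C$; $\pi_1X$-invariance follows because everything is lifted from $X$. If $e_C^+$ were traversed twice by $\partial C$, the high side and another side of $C$ on that edge would induce a self-loop $C\to C$ in $\Gamma(\universalCover X)$, which is impossible.

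Now define the preorder on edges of $\universalCover X$ by $e\preceq e'$ iff $e=e'$, or $e=e_{C_1}^+$ and $e'=e_{C_2}^+$ with $C_1\trianglelefteq C_2$. This is $\pi_1X$-invariant and transitive. We also need $e_C^+$ to determine $C$: two distinct 2-cells cannot share the same high edge, since that would give a trivial cycle $C_1\to C_2\to C_1$. Hence the map $C\mapsto e_C^+$ is injective, and $\preceq$ inherits antisymmetry from acyclicity.

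We then verify the remaining two conditions. For Condition \ref{enu:original_bislim_e+}: if $e_{C_1}^+\in\setboundary C_2$ with $C_1\neq C_2$, the high side of $C_1$ induces an edge $C_1\to C_2$, so $C_1\trianglelefteq C_2$. Acyclicity makes this strict, giving $e_{C_1}^+\prec e_{C_2}^+$. For Condition \ref{enu:original_bislim_e-}: if $e_{C_2}^-\in\setboundary C_1$, the low side of $C_2$ induces an edge $C_1\to C_2$, and we conclude in the same way.

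\emph{Definition \ref{def:original_bislim} $\Rightarrow$ ours.} Take $\preceq$ to be a partial order, as permitted by \cite[Remark 3.4]{Bamberger2024}. By invariance the distinguished edges descend to $X$. Let $H$ consist of the unique side of $C$ on $e_C^+$. For $L$, pick one side of $C$ on $e_C^-$; if $e_C^-=e_C^+$, Condition \ref{enu:original_bislim_e-} together with the condition for $e_C^+$ would force a strict inequality of the form $e_C^+\prec e_C^+$, so a distinct low side exists. The resulting heightened complex is good.

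To check that $\Gamma(\universalCover X)$ is acyclic, we show each edge $C_1\to C_2$ of $\Gamma(\universalCover X)$ satisfies $e_{C_1}^+\prec e_{C_2}^+$. There are two sources of such edges.
\begin{itemize}
\item A high side of $C_1$ adjacent to $C_2$: this means $e_{C_1}^+\in\setboundary C_2$.
\item A low side of $C_2$ adjacent to $C_1$: this means $e_{C_2}^-\in\setboundary C_1$.
\end{itemize}
In either case Conditions \ref{enu:original_bislim_e+} and \ref{enu:original_bislim_e-} give $e_{C_1}^+\prec e_{C_2}^+$, provided $C_1\neq C_2$. A directed cycle would then yield $e^+\prec e^+$, which is a contradiction.

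\emph{Main obstacle.} The delicate point is self-loops, i.e.\ $C_1=C_2$, which the original conditions exclude because they only compare distinct cells. A self-loop arises when the extremal side's edge is traversed twice by $\partial C$. For the high side this is ruled out by the ``traversed exactly once'' clause. For the low side we must choose the low side carefully. If $e_C^-$ is traversed twice by $\partial C$, we expect to use the lift to $\universalCover X$ together with nondegeneracy to rule this out. If that fails, we instead pass to a reduced disk-diagram argument as in Theorem \ref{thm:equivalent_conditions_for_bislim_structure}.
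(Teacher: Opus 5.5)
Your first direction is correct and is essentially the paper's Part~I: you reduce to one high and one low side per cell, get injectivity of $C\mapsto e^+_C$ from the absence of length-2 cycles, and define the order on edges through $\trianglelefteq$. The converse has a genuine gap, exactly at the point you call the main obstacle. Nothing in your argument rules out a self-loop of $\Gamma(\widetilde{X})$ coming from a low side.

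Conditions~\ref{enu:original_bislim_e+} and~\ref{enu:original_bislim_e-} of Definition~\ref{def:original_bislim} only compare \emph{distinct} 2-cells. So they say nothing about whether $\partial\widetilde{C}$ traverses $e^-_{\widetilde{C}}$ more than once.

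Choosing the low side carefully cannot repair this. Suppose $e^-_{\widetilde{C}}$ is traversed twice. Whichever side $s$ of $\widetilde{C}$ on that edge you put in $L$, the other side of $\widetilde{C}$ on the same edge is a distinct side. By Definition~\ref{def:gamma}, $s$ then induces an edge $\widetilde{C}\to\widetilde{C}$.

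Nondegeneracy does not help either. It only says that the whole of $\partial C$ is not null-homotopic in $X^{(1)}$. A twice-traversed edge of $\widetilde{X}$ instead means that a proper subpath of $\partial\widetilde{C}$ closes up in $\widetilde{X}$, which nondegeneracy does not address. Your fallback to reduced disk diagrams is only announced ("we expect \dots, if that fails \dots"), not carried out, and passing to diagrams does not by itself supply the missing fact.

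The missing ingredient is this statement: for a complex bislim in the sense of Definition~\ref{def:original_bislim}, neither $e^+_{\widetilde{C}}$ nor $e^-_{\widetilde{C}}$ is traversed twice by $\partial\widetilde{C}$. The paper closes this step by citing \cite[Corollary~4.6]{Bamberger2024}. You need either that result or an independent proof of it; it does not follow from Conditions~\ref{enu:original_bislim_e+} and~\ref{enu:original_bislim_e-} applied to a single cell.

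Your argument that $e^-_C\neq e^+_C$ has the same flaw in milder form. The contradiction only appears if some other cell $C'\neq C$ contains that edge. Then Conditions~\ref{enu:original_bislim_e+} and~\ref{enu:original_bislim_e-} give $e^+_C\prec e^+_{C'}$ and $e^+_{C'}\prec e^+_C$. When the edge is non-internal in $\widetilde{X}$ the conditions are silent, so that case needs separate handling. One option is the paper's remark that disjointness of $H$ and $L$ is inessential for non-internal sides.
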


\begin{proof}[\textbf{Part I}]
If $X$ admits a good bislim structure in the sense of Definition
\ref{def:bislim_structure_for_good_complex} then $X$ is bislim in
the sense of Definition \ref{def:original_bislim}.

Assume without loss of generality that $\left|H_{C}\right|=\left|L_{C}\right|=1$
for every 2-cell $C$ of $X$ by removing extra sides from $H$ and
$L$. The induced bislim structure of $\universalCover X$ is $\largeparens{\universalCover X,\phi^{-1}\smallparens H,\phi^{-1}\smallparens L}$
where $\phi:\universalCover X\to X$ is the canonical projection.
Thus $\phi^{-1}\smallparens H$ and $\phi^{-1}\smallparens L$ are
$\pi_{1}X$-invariant, and so $\Gamma\largeparens{\universalCover X}$
and the preorder $\trianglelefteq$ are also $\pi_{1}X$-invariant.
By Theorem \ref{thm:equivalent_conditions_for_bislim_structure},
$\Gamma\largeparens{\universalCover X}$ is acyclic, and so $\trianglelefteq$
is antisymmetric.

It also follows that $\left|H_{C}\right|=\left|L_{C}\right|=1$ for
every 2-cell $C$ of $\universalCover X$, so we denote $H_{C}=\left\{ s_{C}^{+}\right\} $,
$L_{C}=\left\{ s_{C}^{-}\right\} $ and denote by $e_{C}^{+}$ ($e_{C}^{-}$)
the edge of $\universalCover X$ corresponding to the side $s_{C}^{+}$
($s_{C}^{-}$, respectively).

Denote $f\smallparens C=e_{C}^{+}$ for every 2-cell $C$ of $\universalCover X$.
The mapping $f$ is injective, since otherwise $\Gamma\largeparens{\universalCover X}$
would have a directed cycle of length 2. We define a preorder $\preceq$
on the set of edges of $\universalCover X$ like so: $e_{1}\preceq e_{2}$
if either $e_{1}=e_{2}$ or $f^{-1}\smallparens{e_{1}}$ and $f^{-1}\smallparens{e_{2}}$
exist and $f^{-1}\smallparens{e_{1}}\trianglelefteq f^{-1}\smallparens{e_{2}}$.
This preorder satisfies Definition \ref{def:original_bislim}:
\begin{enumerate}
\item The preorder $\preceq$ is $\fundamental X$-invariant since $\trianglelefteq$
and the edges $e_{C}^{+}$ are $\fundamental X$-invariant.
\item The edge $e_{C}^{+}$ is traversed exactly once by the boundary path
$\setboundary C$: otherwise $\Gamma\largeparens{\universalCover X}$
would have a self loop at $C$, in contradiction with $\Gamma\largeparens{\universalCover X}$
being acyclic.
\item The edges $e_{C}^{+}$ and $e_{C}^{-}$ are indeed chosen in a $\pi_{1}X$-invariant
way.
\item If $C_{1}$ and $C_{2}$ are distinct 2-cells in $\universalCover X$
and $e_{C_{1}}^{+}\in\setboundary C_{2}$ then $e_{C_{1}}^{+}\prec e_{C_{2}}^{+}$
: since $e_{C_{1}}^{+}\in\setboundary C_{2}$ , the side $s_{C_{1}}^{+}$
induces an edge from $C_{1}$ to $C_{2}$ in $\Gamma\largeparens{\universalCover X}$
by the definition of $\Gamma\largeparens{\universalCover X}$. Thus
$C_{1}\trianglelefteq C_{2}$. Since $C_{1}\neq C_{2}$ we have $C_{1}\triangleleft C_{2}$,
and so $e_{C_{1}}^{+}\prec e_{C_{2}}^{+}$.
\item If $C_{1}$ and $C_{2}$ are distinct 2-cells in $\universalCover X$
and $e_{C_{2}}^{-}\in\setboundary C_{1}$ then $e_{C_{1}}^{+}\prec e_{C_{2}}^{+}$:
since $e_{C_{2}}^{-}\in\setboundary C_{1}$ , the side $s_{C_{2}}^{-}$
induces an edge from $C_{1}$ to $C_{2}$ in $\Gamma\largeparens{\universalCover X}$
by the definition of $\Gamma\largeparens{\universalCover X}$. Thus
$C_{1}\trianglelefteq C_{2}$. Since $C_{1}\neq C_{2}$ we have $C_{1}\triangleleft C_{2}$,
and so $e_{C_{1}}^{+}\prec e_{C_{2}}^{+}$.
\end{enumerate}
\end{proof}
\begin{proof}[\textbf{Part II}]
 If $X$ is bislim in the sense of Definition \ref{def:original_bislim},
then it admits a good bislim structure in the sense of Definition
\ref{def:bislim_structure_for_good_complex}.

By Condition \ref{enu:distinguished_high_and_low_edge_in_original_bislim_definition}
of Definition \ref{def:original_bislim}, the complex $X$ is necessarily
non-degenerate. For each 2-cell $\universalCover C$ of $\universalCover X$
let $s_{\universalCover C}^{+}$ be the side of $\universalCover C$
corresponding to $e_{\universalCover C}^{+}$. Additionally let $s_{\universalCover C}^{-}$
be any side of $\universalCover C$ corresponding to $e_{\universalCover C}^{-}$,
picked in a $\pi_{1}X$-invariant manner. Let $\pi:\universalCover X\to X$
be the canonical projection, and for a 2-cell $C$ of $X$ let $s_{C}^{+}=\pi\largeparens{s_{\universalCover C}^{+}}$
and $s_{C}^{-}=\pi\largeparens{s_{\universalCover C}^{-}}$ where
$\universalCover C$ is any lift of $C$ to $\universalCover X$,
so $\pi\largeparens{\universalCover C}=C$. Note that $s_{C}^{+}$
and $s_{C}^{-}$ are indeed sides of $C$.

Let $H=\left\{ s_{C}^{+}\vert C\text{ is a 2-cell of }X\right\} $
and $L=\left\{ s_{C}^{-}\vert C\text{ is a 2-cell of }X\right\} $.
Then $\heightened$ is a good heightened complex, $H_{C}=\left\{ s_{C}^{+}\right\} $,
$L_{C}=\left\{ s_{C}^{-}\right\} $. To show that $\heightened$ is
bislim, it remains to show that the induced graph $\Gamma\largeparens{\universalCover X}$
is acyclic.

Recall that the induced heightened structure of $\universalCover X$
is $\largeparens{\universalCover X,\pi^{-1}\smallparens H,\pi^{-1}\smallparens L}$.
Since the choices of $s_{\universalCover C}^{\pm}$ are $\pi_{1}X$-invariant,
we have $\pi^{-1}\smallparens H=\left\{ s_{\universalCover C}^{+}\ \vert\ \universalCover C\text{ is a 2-cell of }\universalCover X\right\} $
and $\pi^{-1}\smallparens L=\left\{ s_{\universalCover C}^{-}\ \vert\ \universalCover C\text{ is a 2-cell of }\universalCover X\right\} $.

Suppose that some 2-cell $\universalCover C$ in $\Gamma\left(\universalCover X\right)$
has a self loop. By the definition of $\Gamma$, it follows that either
$e_{\universalCover C}^{+}$ or $e_{\universalCover C}^{-}$ is traversed
twice by $\setboundary\universalCover C$, which is impossible by
\cite[Corollary~4.6]{Bamberger2024}.

Now suppose that there is a directed cycle $\universalCover{C_{1}},\universalCover{C_{2}},\dots,\universalCover{C_{n}}$
in $\Gamma\largeparens{\universalCover X}$ with $\universalCover{C_{i}}\neq\universalCover{C_{i+1}}$
for all $i$. By the definition of $\Gamma\largeparens{\universalCover X}$,
the edge from $\universalCover{C_{i}}$ to $\universalCover{C_{i+1}}$
in $\Gamma\text{\ensuremath{\left(\universalCover X\right)}}$ is
induced by a side $s$ in $H_{\universalCover{C_{i}}}\cup L_{\universalCover{C_{i+1}}}$
which is adjacent to both $C_{i}$ and $C_{i+1}$. Thus $s=s_{\universalCover{C_{i}}}^{+}$
or $s=s_{\universalCover{C_{i+1}}}^{-}$. In both cases, $e_{\universalCover{C_{i}}}^{+}\prec e_{\universalCover{C_{i+1}}}^{+}$
by either Condition \ref{enu:original_bislim_e+} or \ref{enu:original_bislim_e-}
of Definition \ref{def:original_bislim}. Therefore,
\[
e_{\universalCover{C_{1}}}^{+}\prec e_{\universalCover{C_{2}}}^{+}\prec\dots\prec e_{\universalCover{C_{n}}}^{+}\prec e_{\universalCover{C_{1}}}^{+},
\]
a contradiction. Thus $\Gamma\largeparens{\universalCover X}$ is
acyclic and $\heightened$ is bislim by Condition \ref{enu:bislim-definition-universal-cover}
of Theorem \ref{thm:equivalent_conditions_for_bislim_structure}.
\end{proof}

\end{document}